%% file: main.tex
\documentclass{amsart}
\usepackage{graphicx} 
\usepackage{amsthm, amsmath, amssymb, tikz, bm}
\usepackage{mathtools}
\usepackage{xifthen}
\usepackage{comment}
\usepackage[margin=4.5cm]{geometry}
\usepackage[shortlabels]{enumitem}
\usepackage[colorlinks=true,
linkcolor=blue,citecolor=blue,
urlcolor=blue]{hyperref}

\usetikzlibrary{calc,shapes, backgrounds}
\usetikzlibrary[patterns]

\newtheorem{theorem}{Theorem}[section]
\newtheorem{lemma}[theorem]{Lemma}
\newtheorem{sublemma}{}[theorem]
\newtheorem{corollary}[theorem]{Corollary}

\newtheorem{proposition}[theorem]{Proposition}

\newcommand{\romannum}[1]{\romannumeral#1\relax}

\makeatletter
\newdimen\p@renwd \setbox0=\hbox{\phantom B} \p@renwd=\wd0
\def\bordersquare#1{\begingroup \m@th
  \setbox0=\vbox{\def\cr{\crcr\noalign{\kern2pt\global\let\cr=\endline}}
      \ialign{$##$\hfil\kern2pt\kern\p@renwd&\thinspace\hfil$##$\hfil
        &&\quad\hfil$##$\hfil\crcr
        \omit\strut\hfil\crcr\noalign{\kern-\baselineskip}
        #1\crcr\omit\strut\cr}}
  \setbox2=\vbox{\unvcopy0 \global\setbox1=\lastbox}
  \setbox2=\hbox{\unhbox1 \unskip \global\setbox1=\lastbox}
  \setbox2=\hbox{$\kern\wd1\kern-\p@renwd \left[ \kern-\wd1
    \global\setbox1=\vbox{\box1\kern2pt}
    \vcenter{\kern-\ht1 \unvbox0 \kern-\baselineskip} \,\right]$}
  \null\;\vbox{\kern\ht1\box2}\endgroup}
\makeatother

\newcommand{\cC}{\mathcal{C}}

\newcommand{\cI}{\mathcal{I}}

\newcommand{\cP}{\mathcal{P}}
\newcommand{\cQ}{\mathcal{Q}}
\newcommand{\cR}{\mathcal{R}}

\newcommand{\cT}{\mathcal{T}}
\newcommand{\cS}{\mathcal{S}}

\newcommand{\si}{\operatorname{si}}
\newcommand{\cl}{\operatorname{cl}}
\newcommand{\GF}{\operatorname{GF}}
\newcommand{\AG}{\operatorname{AG}}
\newcommand{\PG}{\operatorname{PG}}
\newcommand{\dist}{\operatorname{dist}}

\makeatletter
\newcommand*{\rom}[1]{\expandafter\@slowromancap\romannumeral #1@}

\makeatother

\title{Unavoidable flats in connected regular matroids}
\author{Wayne Ge}
\address{Mathematics Department \\
Louisiana State University \\ Baton Rouge, LA}
\email{yge4@lsu.edu}

\date{\today}
\subjclass[2020]{Primary 05B35; Secondary 05C40}
\keywords{Unavoidable flats, regular matroids, connected matroids}

\begin{document}

\begin{abstract}
This paper proves that every $2$-connected regular matroid of sufficiently large rank has a $2$-connected graphic flat of large rank. Furthermore, we explicitly determine a list of such unavoidable flats.
\end{abstract}
\maketitle

\section{Introduction}

A classical theme in combinatorics is that sufficiently large objects contain large highly structured subobjects. Ramsey's Theorem~\cite{Ramsey1930} implies that every sufficiently large graph contains either a large clique or a large stable set. Moreover, every sufficiently large connected graph contains, as an induced subgraph, a large clique, a large star, or a long path~\cite[Proposition~9.4.1]{Diestel}.

Beyond these Ramsey-type results, induced subgraphs are central objects in structural graph theory. For matroids, flats play a role analogous to that of induced subgraphs in graphs. Recently, Geelen and Kroeker~\cite[Theorem~1.2]{unavoidable_flats} proved the following matroidal analogue of Ramsey's theorem for matroids representable over a fixed prime field.

\begin{theorem}
\label{thm:Geelen-Kroeker}
    For every prime $p$ and positive integer $k$, there is an integer $N_p(k)$ such that every simple $\GF(p)$-representable matroid $M$ with $r(M)\geq N_p(k)$ has a rank-$k$ flat $F$ such that $M|F$ is independent, isomorphic to $\AG(k-1,p)$, or isomorphic to $\PG(k-1,p)$.
\end{theorem}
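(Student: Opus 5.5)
Since this theorem is quoted from Geelen and Kroeker, the plan is a self-contained Ramsey-type argument. It works by induction on $k$ and extracts the flat inside a representation $M \subseteq \PG(n-1,p)$ with $n = r(M)$. The key device is a \emph{density dichotomy} for flats. Take a rank-$m$ flat $F$ of $M$. Either $M|F$ is a large fraction of $\PG(m-1,p)$, or some large-rank flat is nearly empty.

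We aim to produce a chain of flats $F_1 \subset F_2 \subset \cdots$ that behave homogeneously. That is, for each $i$, the points of $F_{i+1}$ that lie off $F_i$ all look alike relative to $F_i$. There are three possible patterns for these new points:
\begin{itemize}
\item none except a basis-like transversal, which leads to the independent outcome;
\item exactly the points of one affine coset, which leads to $\AG$;
\item every point, which leads to $\PG$.
\end{itemize}
Homogeneity is obtained by colouring. For each point $e$ of $\PG(n-1,p)$, record whether $e \in E(M)$. Then apply a Ramsey/Hales--Jewett type theorem for vector spaces over $\GF(p)$, namely the Graham--Leeb--Rothschild theorem. This gives a large-dimensional subspace on which the membership pattern is as regular as possible: the colour of a point depends only on its "type" with respect to a fixed coordinate flag.

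Concretely, the steps are as follows.

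First, choose a flag of subspaces $V_1 \subset \cdots \subset V_N$ of dimension $N$ in the ambient space. Colour each point by membership in $E(M)$. By the affine/projective Ramsey theorem, find a rank-$K$ subspace $W$ and a basis $b_1,\dots,b_K$ of $W$ with the following property: whether a point $\sum \lambda_i b_i$ lies in $M$ depends only on the position of the last nonzero coefficient and on that coefficient modulo scaling. Equivalently, there is a canonical $\GF(p)^*$-orbit structure.

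Second, classify the resulting patterns. There are only boundedly many, and for each one exhibit a rank-$k$ flat of $M|W$ of the desired type. Points with leading index $j$ either all lie in $M$ or none do, up to the finitely many leading coefficient classes.
\begin{itemize}
\item If all the "new" points at infinitely many levels are present, then $M$ restricted to the span of those basis vectors is $\PG$.
\item If only the points with leading coefficient $1$ relative to a fixed vector are present, we get $\AG$.
\item If none are present apart from the basis vectors themselves, we get an independent set.
\end{itemize}
Note that $M|W$ is the restriction to a subspace, so every flat of $M|W$ is a flat of $M$, because $\cl_M(X) \subseteq \cl(X)$ and $W$ is closed in the ambient projective space.

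Third, check that the flats obtained are genuinely flats of $M$. For this we need $E(M) \cap \operatorname{span}(F) = F$, which holds because we took $F$ as $E(M)$ intersected with a subspace.

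The main obstacle is the first step. Standard Graham--Leeb--Rothschild gives monochromatic subspaces for colourings of \emph{subspaces of fixed dimension}, not an "ordered" canonical form for point colourings. Monochromatic for points would only give $\PG$ or the empty set, and so it misses $\AG$ and the independent case, since $M$ is only a 2-colouring of points.

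I would therefore instead run a density increment argument, in the style of the density Hales--Jewett theorem or Bose--Burton. If some large flat has density bounded away from $0$ and is not full, iterate inside a hyperplane complement to find an affine-type structure. If density tends to $0$ along a chain, greedily build an independent flat by choosing points outside the closure. Making the bounds close and guaranteeing that the complement structure is exactly $\AG(k-1,p)$ is where I expect the real difficulty to lie.
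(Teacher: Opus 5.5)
The paper gives no proof of this statement; it is quoted from Geelen and Kroeker and used only as motivation. So your proposal has to stand on its own, and it does not: the case that carries the difficulty is never handled.

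\textbf{Your step one is not the real obstacle.} The canonical form you want is attainable. The points of $\PG(n-1,p)$ off a hyperplane $H$ form an affine space. The multidimensional Hales--Jewett theorem therefore gives an affine subspace $a+U$ with $U\subseteq H$ whose points all have the same colour. That is one level of constant colour, and recursing inside $U$ yields a flag in which every level $W_j-W_{j-1}$ is monochromatic.

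From this flag, $\PG$ and $\AG$ do follow. Flatness is automatic, because you intersect $E(M)$ with a subspace.
\begin{itemize}
\item If levels $j_1<\dots<j_k$ lie in $E(M)$, then $\operatorname{span}(b_{j_1},\dots,b_{j_k})$ meets $E(M)$ in a copy of $\PG(k-1,p)$.
\item If level $J$ lies in $E(M)$ and levels $i_1<\dots<i_{k-1}<J$ do not, then $\operatorname{span}(b_{i_1},\dots,b_{i_{k-1}},b_J)$ meets $E(M)$ in a copy of $\AG(k-1,p)$.
\end{itemize}

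Your third pattern (``only the basis vectors'') is incompatible with the canonical form. The level of $b_j$ is the whole coset $b_j+W_{j-1}$, so in fact every independent flat of $M|W$ has rank at most $2$.

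\textbf{The genuine gap.} What remains is the case where all levels contained in $E(M)$ lie among the first $2k-3$. There the colouring tells you nothing. This case cannot be excluded, because the Ramsey subspace is chosen with no regard for the rank of $M$. Take $M=M(K_n)$ over $\GF(2)$. For $k\ge 3$ the only admissible rank-$k$ flats are independent (matchings). Yet every subspace in your canonical form meets $E(M)$ in rank at most $3$, since a level $j\ge 4$ contains $\AG(3,2)$, which is not graphic. So no argument that only colours points of the ambient space can produce the independent outcome.

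\textbf{The density fallback does not close the gap.} The dense branch is fine in outline: density Hales--Jewett places a whole top level inside $E(M)$, and the level argument below it then gives $\AG$ or $\PG$. The sparse branch, however, is the real content of the theorem and is not argued.
\begin{itemize}
\item Choosing points outside the closure gives independence, not flatness. In $M(K_n)$ the edges $12,34,13$ are independent and $13\notin\cl(\{12,34\})$, yet $\cl(\{12,34,13\})$ also contains $14,23,24$.
\item ``Density tends to $0$ along a chain'' is the wrong hypothesis. For $k\ge 3$, the direct sum of $k-1$ copies of $\PG(m-1,p)$ has density tending to $0$ along the chain of flats $\PG(i-1,p)\oplus\PG(i-1,p)$ taken in two components. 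But it has no independent flat of rank $k$, since an independent flat meets each component in at most one point.
\end{itemize}

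\textbf{What is missing.} You need a proof that if no flat of large rank has density bounded away from $0$, then a rank-$k$ independent flat exists. That requires a device that keeps the rank of $M$ under control. One natural option is to work in coordinates relative to a basis $B$ of $M$. Apply Ramsey's theorem to subsets of $B$, coloured by the configuration of $E(M)$ in their span; every span you then consider has $M$-rank equal to its dimension. You would still have to show that the resulting order-invariant configurations always contain one of the three flats. For example, configurations of bounded support give independent flats spanned by disjointly supported elements.
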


It is natural to try to determine unavoidable highly structured $2$-connected flats in large $2$-connected matroids in particular classes. In this paper, we prove such a result for the class of regular matroids.

For regular matroids, an earlier counterpart of Theorem~\ref{thm:Geelen-Kroeker} follows from a result of Bonnice and Edelstein~\cite{Bonnice-Edelstein}.

\begin{theorem}
    For every integer $k\geq 2$, every simple rank-$(2k-1)$ real-representable matroid contains an independent rank-$k$ flat.
\end{theorem}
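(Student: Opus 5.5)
We argue by induction on $k$, using the Sylvester--Gallai-type theorem of Bonnice and Edelstein as the engine. The real content is the following claim: if $M$ is a simple real-representable matroid of rank at least $2k-1$ and $I$ is an independent flat of rank $j<k$, then $M$ has an independent flat of rank $j+1$. For $k=2$ there is nothing to prove, since in a simple matroid every pair of elements spans an independent rank-$2$ flat (a line containing just those two points need not exist, but any two points form an independent set). So the real work starts with rank-$3$ flats, i.e. planes containing exactly three points in general position. We build such flats one rank at a time.

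For the inductive step, let $F$ be an independent rank-$j$ flat of $M$. Contract it and look at $\si(M/F)$, which is simple, real-representable, and of rank $r(M)-j\ge 2k-1-j$. We want an element $e\notin F$ such that $\cl(F\cup e)=F\cup e$. This holds exactly when the parallel class of $e$ in $M/F$ is the singleton $\{e\}$, and when no other element of $E(M)-F$ lies on $\cl(F\cup e)$. In other words, we need a point of $M/F$ whose parallel class has size one.

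The natural tool is a Sylvester--Gallai argument on the real point configuration. We project from the flat $F$ and apply a Kelly--Moser / Motzkin style counting: in a real configuration of rank at least $3$ that is not contained in few hyperplanes, there is an \emph{ordinary} point relative to $F$. Here ordinary means that the flat $\cl(F\cup e)$ contains no points other than $F\cup e$. The rank hypothesis $2k-1$ is what guarantees enough room for this at every stage up to $j=k-1$. Concretely, we reduce to the case $j=1$ by a suitable generic projection, then apply the classical Sylvester--Gallai theorem. That theorem says a real non-collinear point set has a line with exactly two points. We use it to extend by one, paying two units of rank per step. This accounts for the bound $2(k-1)+1=2k-1$.

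The main obstacle is that contraction destroys simplicity and multiplicity information. Sylvester--Gallai applied directly in $M/F$ gives an ordinary line of $\si(M/F)$, not a point of multiplicity one. The remedy I would try first is to avoid contracting. Instead, choose $F$ together with a complementary flat $G$ of rank $r(M)-j$ such that the configuration restricted to $G$ still has rank at least $2(k-j)-1$, and apply Sylvester--Gallai inside $G$, intersecting with the extra rank. The cost of two ranks per step suggests that this bookkeeping is the heart of the Bonnice--Edelstein argument. If it becomes too technical, we instead cite their theorem on ordinary flats in real configurations in its original form and simply deduce the statement from it.
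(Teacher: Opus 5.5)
\paragraph{Assessment.} There is a genuine gap in the argument you sketch. The paper gives no proof of this statement; it quotes it from Bonnice and Edelstein. Your closing fallback of citing their theorem is therefore, in effect, what the paper does. The argument before that fallback does not work.

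\paragraph{The base case $k=2$ is not trivial.} An independent rank-$2$ flat is a flat $F$ with $M|F$ independent, that is, a line containing exactly two points. That any two points form an independent set is irrelevant, because their closure may contain further points; in $F_7$ every line has three points. So the case $k=2$ is exactly the Sylvester--Gallai theorem in matroid form: a simple real-representable rank-$3$ matroid has a two-point line. This is already the place where real-representability is needed.

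\paragraph{The inductive step fails as set up.} The claim you call the real content has two readings. If the rank-$(j+1)$ flat is required to contain $I$, the claim is false. If it is not, the claim is just the theorem again. Your inductive step uses the first reading: it extends a fixed independent flat $F$ by an element $e$ forming a singleton parallel class of $M/F$. Such an $e$ need not exist for a given $F$, however large the rank. Take the standard basis $b_0,\dots,b_{2k-2}$ of $\mathbb{R}^{2k-1}$, and the vectors $b_0$, $b_i$ and $b_0+b_i$ for $1\le i\le 2k-2$. They form a simple rank-$(2k-1)$ real matroid in which $\{b_0\}$ is an independent flat, yet every line through $b_0$ has exactly three points. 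Hence no counting ``relative to $F$'', generic projection, or complementary flat $G$ can rescue the step. The flat to be extended must be produced together with its extension, and your proposal has no mechanism for that. This is exactly the multiplicity obstacle in $\si(M/F)$ that you flag and leave open.

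\paragraph{A way to supply the missing idea.} One option is Hansen's ordinary-hyperplane theorem, in matroid language: a simple real-representable matroid $M$ of rank $r\geq 3$ has a hyperplane $H$ and an element $p\in H$ that is a coloop of $M|H$. Then $H-\{p\}$ is a flat of $M$ of rank $r-2$. Restricting to it, rather than contracting, preserves simplicity and real-representability. So by induction $M|(H-\{p\})$ has an independent rank-$(k-1)$ flat $F'$, and $F'$ is also a flat of $M$. Then $F'\cup\{p\}$ is an independent rank-$k$ flat of $M$, because $\cl_M(F'\cup\{p\})\subseteq H$ and $M|H=M|(H-\{p\})\oplus M|\{p\}$. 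This is where the cost of two units of rank per step genuinely comes from, matching the bound $2k-1$. In rank $3$, Hansen's theorem reduces to Sylvester--Gallai.
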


Our main result, which is stated next, shows that, under the additional hypothesis of $2$-connected\-ness, one can force more structure than an independent flat. When we say that a flat $F$ of $M$ is graphic or $2$-connected, we mean that the restriction $M|F$ has the corresponding property.

\begin{theorem}\label{thm:large_graphic_flat}
    For every positive integer $k$, there is an integer $f_{\ref{thm:large_graphic_flat}}(k)$ such that every $2$-connected regular matroid $M$ of rank at least $f_{\ref{thm:large_graphic_flat}}(k)$ has a $2$-connected graphic flat of rank at least $k$.
\end{theorem}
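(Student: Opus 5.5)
The plan is to build the desired flat as the closure of a union of circuits that is forced, by Ramsey-type arguments, to look like a long ``chain'' or ``theta'' of circuits, and then to show that such closures are graphic. I first reduce to the simple case, since simplification preserves $2$-connectedness (for rank at least $2$) and regularity. Parallel elements can be added back to a graphic $2$-connected flat without destroying either property.

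\textbf{Step 1: the circumference is large or small.} Fix $c=c(k)$. If every circuit of $M$ has size at most $c$, I use connectivity of $M$ to produce many circuits that pairwise meet in a controlled way. Take a large independent flat $I$; it exists by the Bonnice--Edelstein theorem, or by Theorem~\ref{thm:Geelen-Kroeker} with $p=2$, because $\AG(k-1,2)$ and $\PG(k-1,2)$ are not regular for $k\ge 3$ and $k \geq 4$ respectively. Since $M$ is $2$-connected, I take a family of circuits of bounded size that meet $I$. Applying Ramsey's theorem (and a sunflower lemma, using that the circuits have bounded size) to their traces on a fixed basis, I extract a large subfamily that forms either a sunflower of bounded-size circuits with a common core or a long path-like sequence in which only consecutive circuits meet. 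In the first case the union resembles a thickened $K_{2,n}$ or a bundle of cycles through a common path. In the second case it resembles a long ``ladder'' of cycles. If the circumference is large, I instead start from a single long circuit $C$. The restriction $M|\cl(C)$ is $2$-connected, since $C$ is a spanning circuit of $M|\cl(C)$ and every other element of $\cl(C)$ lies in a circuit with elements of $C$. So $\cl(C)$ is already a $2$-connected flat of rank $|C|-1$. It remains to force graphicness, possibly after shrinking $C$.

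\textbf{Step 2: forcing graphicness.} This is the main obstacle. The natural tool is Seymour's decomposition: $M|\cl(X)$ is built from graphic pieces, cographic pieces and copies of $R_{10}$ via $1$-, $2$- and $3$-sums. Equivalently, it is graphic unless it has an $R_{10}$-, $M^*(K_5)$- or $M^*(K_{3,3})$-minor. My first attempt is to pass to a sub-configuration whose closure adds only ``short chords''. Among the Ramsey-extracted circuits (or among subpaths of the long circuit $C$), I choose a large family whose closure is minimal with respect to the number of elements outside the union. I then argue that every element of $\cl(X)\setminus X$ is spanned by a bounded number of consecutive pieces. The closure is then a chain of bounded-rank $2$-connected pieces glued along elements, that is, essentially a path-like sequence of $2$-sums. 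A $2$-sum of pieces is graphic if each piece is, so graphicness reduces to bounded-rank pieces.

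\textbf{Step 3: handling non-graphic bounded pieces.} Each bounded-rank piece either is graphic or contains one of the three excluded minors. There are only finitely many isomorphism types of bounded-rank pieces with a distinguished gluing structure. Ramsey's theorem therefore lets me pass to a long subsequence of identical type. If that type is non-graphic, I select a sub-circuit inside each piece that avoids the non-graphic part, for instance by rerouting the circuit through a graphic cycle of the piece. This produces a new long chain whose closure has graphic pieces. Here one must check that the closure of the rerouted chain does not pick up new non-graphic elements, and I expect this to be the hardest check. I would handle it using the fact that binary closures are determined by the cycle space, so new elements must lie in the span of the chosen circuits. The chain's rank then gives the bound $f_{\ref{thm:large_graphic_flat}}(k)$ as an iterated Ramsey function of $k$.
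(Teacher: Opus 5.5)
\textbf{Gap: graphicness of the constructed flat is never established.} The proposal never carries out the step on which the theorem hinges, namely certifying that the flat you build is graphic, and the mechanisms you sketch for it do not work.

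In the large-circumference branch, a long circuit $C$ gives $2$-connectedness of $\cl(C)$ for free but no control over graphicness. For instance, let $G$ consist of two long paths together with all edges between them. Then $G$ is non-planar, and its vertex set splits into two parts each inducing a tree. The edges between the parts form a bond $C$ of $G$, hence a circuit of $M^*(G)$. Since $G\backslash C$ has no cycles, $\cl(C)=E(M^*(G))$, which is non-graphic.

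``Shrinking $C$'' needs a mechanism, and the missing idea is to measure length by \emph{distance} rather than circumference. If $C$ is a shortest circuit containing two given elements $e$ and $f$, then every circuit of $M|\cl(C)$ containing $e$ and $f$ is spanning. The structural theorem that a simple $2$-connected regular matroid with this property is the cycle matroid of a clean ladder (Lemma~\ref{lem: M is a ladder}) then makes $\cl(C)$ a $2$-connected graphic flat of rank $\dist(e,f)-1$. So the right dichotomy is large versus small \emph{diameter}, and some such characterization is indispensable. Nothing in your Steps~2--3 distinguishes graphic closures from non-graphic ones.

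\textbf{Step~2 is false as stated.} The small-diameter side is where the real work lies. Choosing the family to minimize chords does not make the elements of $\cl(X)-X$ spanned by boundedly many consecutive pieces. In $M(K_n)$, the closure of any connected union of circuits covering $m$ vertices is $M(K_m)$. This is $3$-connected, so it is never a path-like chain of $2$-sums of bounded-rank pieces. There the failure is harmless because everything is graphic. In general, though, dense pieces and pieces glued along triangles are exactly where flats stop decomposing: flats do not pass through $3$-sums. This is the obstacle the paper spends most of its effort on, via quasi-$3$-sums, Lemma~\ref{lem: flats in 3-sum}, and the enlarged-wheel replacement Theorem~\ref{thm: a graph that keeps a triangle}. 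Step~3's rerouting has the same defect, and ``binary closures are determined by the cycle space'' restates the problem rather than solving it.

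\textbf{The paper's route.} The paper starts from the canonical tree decomposition:
\begin{itemize}
\item a long path forces large diameter;
\item a large circuit vertex, or a high-degree cocircuit vertex, yields a graphic flat via $2$-sums with closures of short circuits;
\item otherwise some $3$-connected vertex label has large rank.
\end{itemize}
It then takes a $\triangle$-tree of that label and handles three cases:
\begin{itemize}
\item long paths, by analysing parallel, intersecting and skew distinguished triangles;
\item high degree, by finding many parallel triangles;
\item a large-rank graphic or cographic vertex, by Theorem~\ref{thm:unavoidable induced subgraph} and the fan-type cc-minor Theorem~\ref{thm: large_3_con(cc-minors)}.
\end{itemize}
Your sunflower argument would in effect have to reprove those unavoidable-flat theorems.

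\textbf{Minor point.} $\AG(2,2)\cong U_{3,4}$ is regular. It is $\PG(k-1,2)$ that is non-regular for $k\ge 3$, and $\AG(k-1,2)$ for $k\ge 4$. Also, the independent flat $I$ plays no visible role after it is introduced.
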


This theorem implies that every $2$-connected regular matroid of sufficiently large rank has a flat whose restriction is the cycle matroid of a large $2$-connected graph. Thus the problem of finding unavoidable $2$-connected flats in regular matroids is reduced to the problem of finding unavoidable $2$-connected induced subgraphs in large $2$-connected graphs. The latter problem was solved by Allred, Ding, and Oporowski~\cite{SDO2020}. By combining their theorem with Theorem~\ref{thm:large_graphic_flat}, we obtain the following explicit characterization. This theorem involves clean ladders, which are graphs that admit a path decomposition in which each vertex is labeled by a copy of $K_4$, $K_3$, or the planar dual of $K_3$. The formal definition of clean ladders will be given in Section~\ref{sec: preliminaries} while, in Section~\ref{sec: distance in matroids}, we explain why, in $2$-connected graphs and regular matroids, clean ladders play a role analogous to that of induced paths in connected graphs.

\begin{theorem}\label{thm: explicit characterization}
    For every positive integer $k$, there is an integer
    $f_{\ref{thm: explicit characterization}}(k)$ such that every simple $2$-connected regular matroid $M$ of rank at least
    $f_{\ref{thm: explicit characterization}}(k)$ has a $2$-connected flat $F$ such that $M|F=M(H)$ for some graph $H$. Moreover, $H$ is one of the following:
    \begin{enumerate}
        \item[(\romannum{1})] $K_{k+1}$;
        \item[(\romannum{2})] a subdivision of $K_{2,k+1}$;
        \item[(\romannum{3})] a graph obtained from a subdivision of $K_{2,k+1}$ by adding an edge joining the two vertices of degree $k+1$; or
        \item[(\romannum{4})] a clean ladder with at least $k+1$ vertices.
    \end{enumerate}
\end{theorem}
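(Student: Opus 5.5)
We deduce the statement from Theorem~\ref{thm:large_graphic_flat} together with the theorem of Allred, Ding, and Oporowski~\cite{SDO2020} on unavoidable $2$-connected induced subgraphs. We take their result in the following form. For every $k$ there is $g(k)$ such that every $2$-connected graph with at least $g(k)$ vertices has a $2$-connected induced subgraph $H$ that is isomorphic to $K_{k+1}$, to a subdivision of $K_{2,k+1}$, to such a subdivision with an edge added between the two branch vertices of degree $k+1$, or to a clean ladder with at least $k+1$ vertices. Here ``induced'' is with respect to the simple graph. We set $f_{\ref{thm: explicit characterization}}(k)=f_{\ref{thm:large_graphic_flat}}(g(k))$.

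\textbf{Step 1: pass to a large $2$-connected graph.} Let $M$ be simple, $2$-connected, regular, and of rank at least $f_{\ref{thm: explicit characterization}}(k)$. By Theorem~\ref{thm:large_graphic_flat}, $M$ has a $2$-connected graphic flat $F_0$ with $r(F_0)\ge g(k)$. Since $M$ is simple, so is $M|F_0$. Hence $M|F_0=M(G)$ for some simple graph $G$. Since $M(G)$ is $2$-connected, we may take $G$ to be $2$-connected and without isolated vertices; any loopless graph with a $2$-connected cycle matroid and at least three vertices is $2$-connected after discarding isolated vertices. Because $|V(G)|=r(F_0)+1>g(k)$, the graph $G$ contains a $2$-connected induced subgraph $H$ from the list.

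\textbf{Step 2: induced subgraphs give flats.} Let $F=E(H)\subseteq F_0$. We claim that $F$ is a flat of $M(G)$. Suppose an edge $e=uv\in E(G)\setminus F$ lies in $\cl_{M(G)}(F)$. Then $u$ and $v$ are joined by a path in $H$, so $u,v\in V(H)$. Because $H$ is induced and $G$ is simple, $e\in E(H)$, which is a contradiction. A flat of a flat is a flat: $\cl_M(F)\subseteq\cl_M(F_0)=F_0$, so $\cl_M(F)=\cl_{M|F_0}(F)=F$. Thus $F$ is a flat of $M$ with $M|F=M(H)$.

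\textbf{Step 3: check the remaining properties.} Each $H$ in the list is $2$-connected, so $M(H)$ is $2$-connected; the only exception would be $H=K_2$, which does not arise for $k\ge 1$ except in degenerate cases, and those are handled by choosing $g$ large. The main thing to verify is that the Allred--Ding--Oporowski statement matches items (i)--(iv) exactly, including the definition of clean ladders and the vertex-count threshold. If their bound is stated in terms of a different parameter, such as the number of vertices or edges of $H$, we adjust $g$ accordingly. This bookkeeping is the only point needing care. The substantive difficulty lies entirely in Theorem~\ref{thm:large_graphic_flat}, which we assume.
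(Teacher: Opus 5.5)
Your proposal is correct and is essentially the paper's proof. Theorem~\ref{thm:large_graphic_flat} yields a large $2$-connected graphic flat, which is simple because $M$ is; the Allred--Ding--Oporowski theorem applied inside that flat then produces the required $F$, using that induced subgraphs give flats of the cycle matroid and that a flat of a flat is a flat. The only difference is that you unpack Corollary~\ref{cor: graphic case} by hand rather than citing it, and your aside about $H=K_2$ is unnecessary: that case occurs only for $k=1$, where any single element of the simple matroid $M$ is already a connected flat with restriction $M(K_2)$.
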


To prove Theorem~\ref{thm:large_graphic_flat}, we introduce a notion of distance in matroids. Let $e$ and $f$ be distinct elements of a $2$-connected matroid $M$. The {\it distance} between $e$ and $f$ in $M$ is the size of a smallest circuit containing both $e$ and $f$. In Section~\ref{sec: distance in matroids}, we show that, in a regular matroid, a pair of elements at large distance forces the existence of a large-rank flat whose restriction is the cycle matroid of a clean ladder.

We combine these distance arguments with several standard decomposition theorems, including the canonical tree decomposition of Cunningham and Edmonds~\cite{CE1980} and Seymour's decomposition theorem for regular matroids~\cite{Seymour1980}. The canonical tree decomposition is well suited to our distance arguments. A long path in this decomposition forces large distance, while the absence of such a path forces either a large-rank $3$-connected vertex label or a high-degree vertex from which a large-rank graphic flat can be constructed. By contrast, controlling distance through Seymour's decomposition is much more delicate, since the triangles involved in different $3$-sums may interact in a less controlled way. We explain this issue in Section~\ref{sec: 3-con cases} and develop some new tools to overcome this significant obstacle.

Although Seymour's decomposition theorem reduces regular matroids to graphic matroids, cographic matroids, and copies of $R_{10}$ via $1$-, $2$-, and $3$-sums, Theorem~\ref{thm:large_graphic_flat} is not a routine consequence of the graphic and cographic cases. The obstruction is that flats, unlike minors, do not pass cleanly through $3$-sums. A flat of a $3$-sum may meet both sides of the decomposition, which is especially delicate because we seek flats that are both graphic and $2$-connected. We address this by developing tools to control flats across $3$-sums. In Section~\ref{sec: downgrade 3-sums}, we prove an enlarged-wheel replacement theorem, which permits one side of a $3$-sum to be replaced by a controlled graphic flat while retaining the common triangle. This replacement is a key ingredient in constructing large $2$-connected graphic flats across Seymour decompositions.
\section{Preliminaries}\label{sec: preliminaries}

We assume familiarity with the basic notions of matroid theory as in, for example,~\cite{Oxl11}. All matroids considered in this paper are finite and may have loops or parallel elements. For graph-theoretic terminology not explicitly defined in this paper, we refer to Bondy and Murty~\cite{Bondy-Murty}.

Our main tools are decomposition theorems that allow us to decompose large regular matroids into smaller pieces and to simplify the structure within each piece. Before introducing these decomposition theorems, we recall the definitions of $t$-sums for $t\in\{1,2,3\}$, as they are essential for understanding these results.

\subsection{$1$-, $2$-, $3$-sum and quasi-$3$-sum}
This subsection follows~\cite{Seymour1980} and~\cite[Section~9.3]{Oxl11}. To define $t$-sums for $t\in\{1,2,3\}$, we restrict to binary matroids. For sets $A$ and $B$, the {\it symmetric difference} of $A$ and $B$, denoted $A\triangle B$, is $(A-B)\cup(B-A)$. Let $M_1$ and $M_2$ be binary matroids.

\begin{enumerate}
\item[(\romannum{1})] If $E(M_1)\cap E(M_2)=\emptyset$, then the {\it $1$-sum} of $M_1$ and $M_2$ is the direct sum $M_1\oplus M_2$.

\item[(\romannum{2})] If $E(M_1)\cap E(M_2)=\{p\}$, $\min\{|E(M_1)|,|E(M_2)|\}\geq 2$, and neither $M_1$ nor $M_2$ has $p$ as a loop or a coloop, then the {\it $2$-sum} of $M_1$ and $M_2$, denoted $M_1\oplus_2 M_2$, is the matroid with ground set $E(M_1)\triangle E(M_2)$ whose set of circuits is
\[
\cC(M_1\backslash p)\cup\cC(M_2\backslash p)\cup\{C\triangle D:\; p\in C\in\cC(M_1)\text{ and }p\in D\in\cC(M_2)\}.
\]

\item[(\romannum{3})] If $E(M_1)\cap E(M_2)=T=\{a,b,c\}$, $\min\{|E(M_1)|,|E(M_2)|\}\geq 7$, and $T$ is a coindependent triangle in both $M_1$ and $M_2$, then the {\it $3$-sum} of $M_1$ and $M_2$, denoted $M_1\oplus_3 M_2$, is the matroid with ground set $E(M_1)\triangle E(M_2)$ whose circuits are the members of $\cC(M_1\backslash T)\cup \cC(M_2\backslash T)$ together with the minimal sets of the form $C_1\triangle C_2$, where $C_i\in\cC(M_i)$, $C_1\cap T=C_2\cap T$, and this common intersection has size one (see~\cite[(2.7)]{Seymour1980} and~\cite[Lemma~9.3.3]{Oxl11}).
\end{enumerate}

We note that, in Seymour's definition~\cite{Seymour1980}, $2$-sum is defined only for matroids with at least three elements. For technical reasons, we allow $U_{1,2}$ to occur as one part of a $2$-sum, consistent with~\cite[p.~260]{Oxl11}. In the rest of the paper, when we refer to the $1$-, $2$-, or $3$-sum of two binary matroids $M_1$ and $M_2$, we implicitly assume that the corresponding conditions in (\romannum{1})--(\romannum{3}) are satisfied. We omit the straightforward proof of the following result, which gives the rank of a $3$-sum; see, for example,~\cite[Proposition 9.3.4]{Oxl11}.

\begin{proposition}\label{prop: rank of 3-sum}
Let $M_1$ and $M_2$ be binary matroids. Then
\[
r(M_1\oplus_3 M_2)=r(M_1)+r(M_2)-2.
\]
\end{proposition}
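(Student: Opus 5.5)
We compute the rank of $M=M_1\oplus_3 M_2$ directly from the definition of the $3$-sum as a binary matroid. We do not try to count circuits. Instead we use the standard description of the $3$-sum as a restriction of a generalized parallel connection. Since $T$ is a triangle of both $M_1$ and $M_2$, and binary matroids are uniquely representable, $M_1$ and $M_2$ have a modular common restriction $M_1|T=M_2|T\cong U_{2,3}$. So the generalized parallel connection $P_T(M_1,M_2)$ exists. By the known identification (\cite[(2.7)]{Seymour1980}, \cite[Section~9.3]{Oxl11}), we have
\[
M_1\oplus_3 M_2=P_T(M_1,M_2)\backslash T.
\]
The plan has three steps: compute $r(P_T(M_1,M_2))$, show that deleting $T$ does not drop the rank, and conclude.

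For the first step, recall that in a generalized parallel connection across a modular flat $T$ we have $r(P_T(M_1,M_2))=r(M_1)+r(M_2)-r(T)$, since $E(M_1)$ and $E(M_2)$ form a modular pair with intersection $T$. Here $r(T)=2$, so $r(P_T(M_1,M_2))=r(M_1)+r(M_2)-2$. One can also see this concretely with binary representations. Choose matrices $A_1,A_2$ over $\GF(2)$ representing $M_1,M_2$ in which the columns for $a$ and $b$ are the first two unit vectors. Then $c=a+b$ in both. Glue the two matrices along these two shared coordinates, giving a matrix with $r(M_1)+r(M_2)-2$ rows. This matrix represents $P_T(M_1,M_2)$, and its row space visibly has the stated dimension.

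The second step is the main point. We must show that deleting $T$ does not lower the rank, that is, $T\subseteq\cl(E(M)-T)$ in $P_T(M_1,M_2)$. This is where coindependence of $T$ in each $M_i$ is used. Because $T$ is coindependent in $M_1$, we have $r(M_1\backslash T)=r(M_1)$, so $T\subseteq\cl_{M_1}(E(M_1)-T)$. Closures in $M_1$ are preserved in $P_T(M_1,M_2)$, since $M_1$ is a restriction of it. Hence each element of $T$ lies in the closure of $E(M_1)-T\subseteq E(M)$. Similarly $E(M_2)-T$ spans $E(M_2)$. Therefore $E(M)=(E(M_1)-T)\cup(E(M_2)-T)$ spans all of $P_T(M_1,M_2)$, and so $r(M)=r(P_T(M_1,M_2))=r(M_1)+r(M_2)-2$.

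The only potentially delicate point is the identification of the circuit-based definition (\romannum{3}) with $P_T(M_1,M_2)\backslash T$. If we avoid citing it, we can verify it with the binary matrix above. Circuits of the deletion are minimal nonempty supports of linear dependencies among columns outside $T$. Such a dependency splits into a part from the $M_1$ columns and a part from the $M_2$ columns, whose sums agree and lie in the span of $a,b$. If the common sum is $0$, we get a circuit of $M_1\backslash T$ or $M_2\backslash T$. If it equals the column of a single $t\in T$, we get a set $C_1\triangle C_2$ with $C_1\cap T=C_2\cap T=\{t\}$. This matches the description in (\romannum{3}), including the minimality condition. Since the paper treats this as standard, citing \cite[Proposition 9.3.4]{Oxl11} for the identification suffices.
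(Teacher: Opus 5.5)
Your proof is correct and is essentially the standard argument the paper omits and defers to \cite[Proposition~9.3.4]{Oxl11}. You realize the $3$-sum as the quasi-$3$-sum $P_T(M_1,M_2)\backslash T$, use $r(P_T(M_1,M_2))=r(M_1)+r(M_2)-r(T)$, and use coindependence of $T$ to see that $E(M_1)\triangle E(M_2)$ is spanning; coindependence in just one of $M_1$ and $M_2$ would already suffice.

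The only imprecision is your stated reason for the existence of $P_T(M_1,M_2)$. What is needed is that $\cl_{M_i}(T)$ is a modular flat, and this holds because a triangle of a binary matroid is a full line of the ambient binary projective geometry, not because of unique representability. Your explicit glued $\GF(2)$ matrix makes this point moot anyway.
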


Since we seek unavoidable $2$-connected flats in this paper, such flats often meet both sides of a $2$- or $3$-sum. For $2$-sums, the following elementary proposition is well suited to our purposes.

\begin{proposition}\label{prop: 2-sum of two flats}
    Let $M$ be the $2$-sum $M_1\oplus_2 M_2$ with basepoint $p$. For each $i\in\{1,2\}$, let $F_i$ be a $2$-connected flat of $M_i$ such that $p\in F_i$ and $|F_i|\geq 2$. Then $(F_1\cup F_2)-\{p\}$ is a $2$-connected flat $F$ of $M$. Moreover, $M|F=(M_1|F_1)\oplus_2 (M_2|F_2)$.
\end{proposition}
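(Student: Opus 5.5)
Set $F=(F_1\cup F_2)-\{p\}$. The plan is to prove the restriction identity first and then obtain $2$-connectivity and flatness from it.

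\textbf{Step 1: the restriction identity.} Since $p\in F_i$ and $p$ is neither a loop nor a coloop of $M_i$, the restriction $M_i|F_i$ is a $2$-connected matroid on at least two elements containing $p$. Hence $p$ is not a loop or coloop of $M_i|F_i$, so the $2$-sum $(M_1|F_1)\oplus_2(M_2|F_2)$ is defined. The circuits of $M|F$ are the circuits of $M$ contained in $F$. By the circuit description of the $2$-sum, these are of three kinds:
\begin{itemize}
\item circuits of $M_1\backslash p$ contained in $F_1-\{p\}$;
\item circuits of $M_2\backslash p$ contained in $F_2-\{p\}$;
\item sets $C\triangle D$ with $p\in C\in\cC(M_1)$, $p\in D\in\cC(M_2)$ and $C\triangle D\subseteq F$.
\end{itemize}
In the third case, $C-\{p\}\subseteq F_1$ and $D-\{p\}\subseteq F_2$, since $E(M_1)$ and $E(M_2)$ meet only in $p$. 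Because $p\in F_i$, we get $C\subseteq F_i$-type containments $C\subseteq F_1$ and $D\subseteq F_2$. Therefore the circuits of $M|F$ are exactly the circuits of $(M_1|F_1)\oplus_2(M_2|F_2)$, which proves the ``moreover'' clause.

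\textbf{Step 2: $2$-connectivity.} A $2$-sum of two $2$-connected matroids is $2$-connected (see, for example, Oxley, Proposition~7.1.22 or its analogue). Alternatively, this can be checked directly. Any two elements on the same side lie in a common circuit of that side. Such a circuit either avoids $p$, or it contains $p$; in the latter case we take its symmetric difference with a circuit through $p$ on the other side, which exists because $M_j|F_j$ is $2$-connected with $|F_j|\geq 2$. Two elements on opposite sides are handled by choosing a circuit through $e$ and $p$ on one side and a circuit through $f$ and $p$ on the other, then taking the symmetric difference.

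\textbf{Step 3: flatness.} Suppose $x\in E(M)-F$, say $x\in E(M_1)-F_1$, lies in $\cl_M(F)$. Then some circuit $Z$ of $M$ satisfies $x\in Z\subseteq F\cup\{x\}$. There are two cases.
\begin{itemize}
\item If $Z$ is a circuit of $M_1\backslash p$, then $Z\subseteq F_1\cup\{x\}$, so $x\in\cl_{M_1}(F_1)=F_1$, a contradiction.
\item If $Z$ is a circuit of $M_2\backslash p$, then it cannot contain $x$, since $x\notin E(M_2)$.
\item Otherwise $Z=C\triangle D$ with $p\in C\in\cC(M_1)$ and $p\in D\in\cC(M_2)$. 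Then $C\subseteq F_1\cup\{x\}$ with $x\in C$, because $p\in F_1$. Hence $x\in\cl_{M_1}(F_1)=F_1$, again a contradiction.
\end{itemize}
The case $x\in E(M_2)-F_2$ is symmetric.

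The main care point is bookkeeping with $p$, namely that $p\in F_i$ is used to place $C$ inside $F_1\cup\{x\}$. Otherwise every step is routine.
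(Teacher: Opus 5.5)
Your proof is correct. The paper calls this proposition elementary and omits its proof, and your argument is the routine verification it has in mind: you identify the circuits of $M|F$ through the circuit description of the $2$-sum, glue circuits through $p$ to get connectivity, and get flatness by pulling a closure circuit back into $M_1$ using $p\in F_1$.

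Two small expository points. In Step~1, the $2$-connectivity of $M_i|F_i$ is a hypothesis, and it is this (together with $|F_i|\geq 2$) that makes $p$ neither a loop nor a coloop of $M_i|F_i$; your sentence runs the implication the other way. Step~3 announces two cases but then treats three.
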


For $3$-sums, the situation is more complicated. We therefore introduce quasi-$3$-sums. Let $M_1$ and $M_2$ be binary matroids such that $E(M_1)\cap E(M_2)=T$, where $T$ is a triangle of both $M_1$ and $M_2$. The {\it generalized parallel connection} $P_T(M_1,M_2)$ of $M_1$ and $M_2$ across $T$ is the matroid on $E(M_1)\cup E(M_2)$ whose flats are those subsets $X$ of $E(M_1)\cup E(M_2)$ such that $X\cap E(M_i)$ is a flat of $M_i$ for each $i\in\{1,2\}$. The {\it quasi-$3$-sum} of $M_1$ and $M_2$, denoted by $M_1\oplus_3' M_2$, is the matroid $P_T(M_1,M_2)\backslash T$. Every $3$-sum is a quasi-$3$-sum, but the converse does not hold, since a $3$-sum imposes additional conditions on the common triangle $T$ and on the sizes of $M_1$ and $M_2$.

Let $G$ and $H$ be graphs such that $V(G)\cap V(H)=\{x,y,z\}$, $E(G)\cap E(H)=\{e,f,g\}$, and $\{e,f,g\}$ is a triangle with vertex set $\{x,y,z\}$. The {\it quasi-$3$-sum} of $G$ and $H$, denoted by $G\oplus_3' H$, is the graph $(G\cup H)\backslash \{e,f,g\}$. The following proposition is immediate. It tells us that the quasi-$3$-sum of two graphic matroids is graphic.

\begin{proposition}\label{prop: graph of quasi-3-sum}
Let $G$ and $H$ be graphs such that $V(G)\cap V(H)=\{x,y,z\}$, $E(G)\cap E(H)=\{e,f,g\}$, and $\{e,f,g\}$ is a triangle with vertex set $\{x,y,z\}$. Then
\[
M(G)\oplus_3' M(H)=M(G\oplus_3' H).
\]
\end{proposition}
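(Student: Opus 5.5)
The plan is to compare flats on both sides, using the definition of $P_T$ through flats. Set $G\oplus_3' H$ and write $G\cup H$ for the graph on $V(G)\cup V(H)$ with edge set $E(G)\cup E(H)$; the triangle $\{e,f,g\}$ is shared. First I will show that
\[
P_T(M(G),M(H))=M(G\cup H).
\]
Then I will delete $T=\{e,f,g\}$ from both sides. Since deletion commutes with taking cycle matroids, this gives $M(G)\oplus_3' M(H)=P_T(M(G),M(H))\backslash T=M((G\cup H)\backslash T)=M(G\oplus_3' H)$.

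To prove the displayed identity, I will check that a set $X\subseteq E(G)\cup E(H)$ is a flat of $M(G\cup H)$ exactly when $X\cap E(G)$ is a flat of $M(G)$ and $X\cap E(H)$ is a flat of $M(H)$. That is precisely the definition of the flats of $P_T$, and a matroid is determined by its flats.

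\emph{Flats of $G\cup H$ restrict to flats.} In $G\cup H$, the subgraphs $G$ and $H$ meet only in the vertex set $\{x,y,z\}$ and the edge set $T$. The restriction $M(G\cup H)|E(G)$ is $M(G)$, so a flat of $M(G\cup H)$ meets $E(G)$ in a flat of the restriction. The same holds for $H$.

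\emph{The converse, which is the main step.} Suppose $X\cap E(G)$ and $X\cap E(H)$ are flats, but some edge $u\notin X$ lies in $\cl_{G\cup H}(X)$. Then there is a cycle $C\subseteq X\cup\{u\}$ with $u\in C$; say $u\in E(G)$, so $u\notin T$ or $u\in T$, and I treat both alike.
\begin{enumerate}
\item[(a)] If $C\subseteq E(G)$, this contradicts closedness of $X\cap E(G)$. So $C$ uses edges of $H-T$.
\item[(b)] The cycle $C$ crosses between the two sides only through the vertices $x,y,z$. Hence $C\cap (E(H)-T)$ splits into paths in $H$ with ends in $\{x,y,z\}$, and there are at most two such paths with distinct endpoints.
\item[(c)] Each such path, say with ends $x$ and $y$, together with the triangle edge $xy$ forms a cycle of $H$. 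Since that path lies in $X\cap E(H)$, which is a flat, the edge $xy$ lies in $X$.
\item[(d)] Replace each $H$-path by its corresponding triangle edge. This produces a closed walk in $G$ using $u$ and edges of $X\cap E(G)$. That walk contains a cycle through $u$, because the edge $u$ is used once and so is not cancelled. This contradicts closedness of $X\cap E(G)$.
\end{enumerate}

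The only delicate point is the bookkeeping in (d) when a replaced edge already occurs in $C$ or when $u\in T$. Working binarily handles this: the symmetric difference of $C$ with the triangle-plus-path cycles still contains $u$. Since this is routine, I expect the whole proposition to follow quickly, consistent with the paper calling it immediate.
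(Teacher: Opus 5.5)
Your argument is correct and is the reasoning the paper leaves implicit when it calls the proposition immediate: you identify $P_T(M(G),M(H))$ with $M(G\cup H)$ through the flat characterization and then delete $T$. The one slip is in (d): when $u\in T$ is itself the triangle edge joining the ends of the $H$-path, the symmetric difference does not contain $u$, so your claim that it ``still contains $u$'' fails there, but in that case step (c) already gives $u\in X$, which is a contradiction. A minor point in (b): since each of $x,y,z$ has degree two in $C$, the cycle switches sides at exactly two of them, so there is only one $H$-path to replace, not two.
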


The motivation for introducing quasi-$3$-sums is as follows. Let $M$ be the $3$-sum of $M_1$ and $M_2$ across a common triangle $T$. Suppose that we want to find a flat of $M$ that meets both sides of the decomposition. A natural approach is to choose flats $F_1$ of $M_1$ and $F_2$ of $M_2$, and then consider their union with the elements of $T$ deleted. However, if both $F_1$ and $F_2$ contain the triangle $T$ and we wish to describe the structure of the resulting flat of $M$, the ordinary $3$-sum of $M_1|F_1$ and $M_2|F_2$ need not be well defined. The following result follows immediately from the definition of generalized parallel connection.

\begin{lemma}\label{lem: flats in 3-sum}
    Let $M$ be the quasi-$3$-sum of $M_1$ and $M_2$ across a common triangle $T$. Let
    $F_1$ and $F_2$ be flats of $M_1$ and $M_2$, respectively. Then
    \[
        F=(F_1\cup F_2)-T
    \]
    is a flat of $M$. Moreover,
    \begin{enumerate}
        \item[(\romannum{1})] if $F_1\cap T=F_2\cap T=\emptyset$, then
        \[
            M|F=M_1|F_1\oplus M_2|F_2;
        \]
        \item[(\romannum{2})] if $F_1\cap T=F_2\cap T=\{p\}$ and both
        $F_1-F_2$ and $F_2-F_1$ are nonempty, then
        \[
            M|F=M_1|F_1\oplus_2 M_2|F_2;
        \]
        \item[(\romannum{3})] if $F_1\cap F_2=T$, then
        \[
            M|F=M_1|F_1\oplus_3' M_2|F_2.
        \]
    \end{enumerate}
\end{lemma}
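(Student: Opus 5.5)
We work directly in the generalized parallel connection $P=P_T(M_1,M_2)$, using $M=P\backslash T$. The flat claim is immediate. By the definition of $P$, the set $X=F_1\cup F_2$ is a flat of $P$ exactly when $X\cap E(M_i)$ is a flat of $M_i$ for each $i$. Since $E(M_1)\cap E(M_2)=T$, we need $X\cap E(M_1)=F_1\cup(F_2\cap T)$ to be a flat of $M_1$, and symmetrically for $M_2$. This holds when $F_1\cap T=F_2\cap T$, which is the situation in all three cases. In general we replace $F_1$ and $F_2$ by $F_1'=\cl_{M_1}(F_1\cup(F_2\cap T))$ and $F_2'=\cl_{M_2}(F_2\cup(F_1\cap T))$ and iterate until the traces on $T$ agree. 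This terminates because $T$ is finite. The statement as given intends $F$ to be the trace of a flat of $P$, so we then obtain a flat $X$ of $P$ with $X-T=F$, and restricting to $E(P)-T$ shows that $F$ is a flat of $M$. Flats of a deletion are exactly the intersections of flats with the remaining ground set.

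For the structural claims we use that restricting $P$ to a flat commutes with generalized parallel connection. Concretely, if $X$ is a flat of $P$ with $X\cap E(M_i)=F_i$, then $P|X=P_{X\cap T}(M_1|F_1,M_2|F_2)$. We verify this by comparing flats. A flat of $P|X$ is a flat $Y$ of $P$ contained in $X$, and its traces $Y\cap E(M_i)$ are flats of $M_i$ contained in $F_i$, hence flats of $M_i|F_i$. The converse holds because a flat of $M_i|F_i$ is a flat of $M_i$, since $F_i$ is itself a flat of $M_i$. Then $M|F=(P|X)\backslash (X\cap T)$, and we split according to $X\cap T$.

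\begin{enumerate}
\item[(\romannum{1})] If $X\cap T=\emptyset$, the parallel connection along the empty set is the direct sum, giving $M_1|F_1\oplus M_2|F_2$.
\item[(\romannum{2})] If $X\cap T=\{p\}$, then $p$ is a point common to both restrictions, so $P|X$ is the parallel connection $P(M_1|F_1,M_2|F_2)$ at $p$, and deleting $p$ gives the $2$-sum. Here we must check that $p$ is neither a loop nor a coloop in either side. Since $T$ is a triangle, $p$ is not a loop. The hypothesis that $F_1-F_2$ and $F_2-F_1$ are nonempty guarantees each side has at least two elements. The coloop condition is subtler: if $p$ were a coloop of $M_i|F_i$, the resulting restriction would be a direct sum rather than a $2$-sum. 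We will appeal to the convention of the paper, in which the $2$-sum of matroids where $p$ is a coloop is interpreted via the parallel-connection formula, or else add this as an implicit assumption, as the paper does for $t$-sums generally.
\item[(\romannum{3})] If $F_1\cap F_2=T$, then $P|X=P_T(M_1|F_1,M_2|F_2)$, and deleting $T$ gives the quasi-$3$-sum by definition.
\end{enumerate}

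The main obstacle is the compatibility of the traces on $T$: ensuring $F_1\cup F_2$ (or its closure) is a flat of $P$, together with the commutation of restriction with $P_T$. Both follow from the flat-based definition, and these are the only points needing care.
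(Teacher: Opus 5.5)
Your argument for (i)--(iii) is fine, but your treatment of the general claim that $F=(F_1\cup F_2)-T$ is a flat has a real gap, and that claim cannot be proved as stated. The iteration $F_1\mapsto \cl_{M_1}(F_1\cup(F_2\cap T))$, $F_2\mapsto \cl_{M_2}(F_2\cup(F_1\cap T))$ does end at a flat $X$ of $P=P_T(M_1,M_2)$. However, these closures can add elements outside $T$, so you only get $X-T\supseteq F$, and nothing justifies $X-T=F$.

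Here is a concrete failure. Let $M_1=M(K_4)$ on vertex set $\{u,x,y,z\}$ and $M_2=M(K_4)$ on $\{v,x,y,z\}$, with $T=\{xy,xz,yz\}$. By Proposition~\ref{prop: graph of quasi-3-sum}, $M=M(K_{2,3})$, which has rank $4$. The triangles $F_1=\{ux,uy,xy\}$ and $F_2=\{vx,vz,xz\}$ are flats. Yet $F=\{ux,uy,vx,vz\}$ is a spanning tree of $K_{2,3}$, so $\cl_M(F)=E(M)\neq F$; here your iteration returns $X=E(P)$.

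So the first assertion of the lemma needs the extra hypothesis $F_1\cap T=F_2\cap T$. This is exactly the reinterpretation you slipped in when you said $F$ is ``the trace of a flat of $P$''. The hypothesis holds in each of (i)--(iii). It is also satisfied wherever the paper uses the lemma to build flats: in Lemmas~\ref{lem:many parallel triangles} and~\ref{lem: a star with many parallel triangles} and in Theorem~\ref{thm: high-rank vertex in Delta-tree}, the $F_i$ are chosen with matching traces on the common triangles. The right move is therefore to add this hypothesis, not to attempt the unrestricted statement.

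Under that hypothesis your proof is correct, and it is the argument the paper has in mind when it says the lemma follows immediately from the definition of the generalized parallel connection. Since $X=F_1\cup F_2$ satisfies $X\cap E(M_i)=F_i$, it is a flat of $P$, so $F=X-T$ is a flat of $P\backslash T$. Your flat-by-flat check that $P|X=P_{X\cap T}(M_1|F_1,M_2|F_2)$, followed by deleting $X\cap T$, then gives the three cases.

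In (ii), drop the first option you offer. The paper has no convention that interprets a ``$2$-sum'' with $p$ a coloop via parallel connection. Its standing convention is that a $2$-sum is written only when it is defined. When $p$ is a coloop of some $M_i|F_i$, the restriction $M|F$ is actually the direct sum $(M_1|F_1)\backslash p\oplus(M_2|F_2)\backslash p$, so the implicit-assumption reading is the right one.
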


\subsection{Decomposition theorems}
Throughout the paper, we use several decomposition theorems to analyze regular matroids. In this subsection, we state these theorems and fix the terminology used later.

A \emph{matroid-labeled tree} is a tree $\cT$ with vertex set $\{M_1,M_2,\dots,M_k\}$ for some positive integer $k$, satisfying the following conditions for all distinct $i,j\in\{1,2,\dots,k\}$:
\begin{enumerate}
    \item[(\romannum{1})] each $M_i$ is a matroid with at least three elements;
    \item[(\romannum{2})] if $M_i$ and $M_j$ are joined by an edge of $\cT$, then that edge corresponds to a $2$-sum or a $3$-sum of $M_i$ and $M_j$; and
    \item[(\romannum{3})] if $M_i$ and $M_j$ are not adjacent in $\cT$, then $E(M_i)\cap E(M_j)=\emptyset$.
\end{enumerate}
We call the matroids $M_1,M_2,\dots,M_k$ the {\it vertex labels} of $\cT$. This definition differs slightly from the one in~\cite[Section~8.3]{Oxl11}, since we allow edges of a decomposition tree to represent $3$-sums. This convention makes the terminology compatible with Seymour's decomposition theorem for regular matroids and its variants.

We say that a matroid-labeled tree $\cT$ is a {\it tree decomposition} of a matroid $M$ if $M$ can be constructed from the vertex labels of $\cT$ by performing the $2$- and $3$-sums corresponding to the edges of $\cT$. A {\it path decomposition} of $M$ is a tree decomposition whose underlying tree is a path.

When $G$ is a graph, a tree decomposition of $G$ is a graph-labeled tree defined analogously. We do not repeat the definition here, and refer the reader to~\cite[p.~3]{cc-minors} for the graph-theoretic version.

Cunningham and Edmonds~\cite{CE1980} proved that every $2$-connected matroid $M$ has a tree decomposition satisfying the conditions in the following theorem. This tree decomposition is called the {\it canonical tree decomposition} of $M$.

\begin{theorem}
    Let $M$ be a $2$-connected matroid with at least three elements. Then $M$ has a tree decomposition $\cT$ such that
    \begin{enumerate}
        \item[(\romannum{1})] every vertex label is $3$-connected, a circuit, or a cocircuit;
        \item[(\romannum{2})] every edge corresponds to a $2$-sum of its adjacent vertex labels; and
        \item[(\romannum{3})] no two adjacent vertices are both labeled by circuits or are both labeled by cocircuits.
    \end{enumerate}
    Moreover, $\cT$ is unique up to relabeling of the basepoints used in the $2$-sums.
\end{theorem}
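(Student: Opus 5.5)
This is the Cunningham–Edmonds canonical tree decomposition theorem; I would prove it in two parts, existence by induction on $|E(M)|$ and uniqueness by identifying the edges of $\cT$ with intrinsic $2$-separations of $M$.

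\textbf{Existence.} If $M$ is $3$-connected, a circuit, or a cocircuit, the one-vertex tree works. Otherwise $M$ has an exact $2$-separation $(X,Y)$ with $|X|,|Y|\ge 2$. By the standard correspondence between $2$-separations and $2$-sums~\cite[Section~8.3]{Oxl11}, we can write $M=M_X\oplus_2 M_Y$, where $E(M_X)=X\cup\{p\}$ and $E(M_Y)=Y\cup\{p\}$. Both $M_X$ and $M_Y$ are $2$-connected, have at least three elements, and are isomorphic to minors of $M$ with fewer elements. Induction gives tree decompositions of $M_X$ and $M_Y$ satisfying (i)--(iii). We glue the two trees along an edge joining the vertex containing $p$ in each tree; this already satisfies (i) and (ii). If the glued edge joins two circuits, or two cocircuits, we contract that edge: the $2$-sum of two circuits is a circuit, and dually for cocircuits. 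This restores (iii) without creating any new violations. Since contracting merges two labels of the same type, any neighbour of the merged vertex still has a different type.

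\textbf{Uniqueness.} Call a $2$-separation $(X,Y)$ of $M$ \emph{crossing} if there is another $2$-separation $(X',Y')$ such that all four intersections $X\cap X'$, $X\cap Y'$, $Y\cap X'$ and $Y\cap Y'$ are nonempty. Otherwise call it \emph{non-crossing}. Each edge of a tree decomposition induces a $2$-separation of $M$ via the two components of $\cT$ minus that edge. The key claim is that, under conditions (i)--(iii), the edges of $\cT$ induce exactly the non-crossing $2$-separations of $M$, counted without repetition. Granting this claim, the tree is determined by the set of non-crossing $2$-separations: its vertices correspond to the classes of $E(M)$ cut out by these laminar separations, and each label is recovered up to the names of the basepoints. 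So $\cT$ is unique.

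To prove the claim, we first need to know how $2$-separations of a $2$-sum behave. Every $2$-separation of $M$ either is induced by an edge of $\cT$, or comes from a $2$-separation of a single vertex label $N$ in which the basepoints at $N$ are distributed between the two sides. A $3$-connected label has no $2$-separations, so only circuit and cocircuit labels contribute. In a circuit label with at least four elements, any partition of its elements into two arcs, each of size at least two, is a $2$-separation. These separations cross one another, and the same holds for cocircuits by duality. This shows that every non-edge $2$-separation is crossing.

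Conversely, we must show that each edge separation is non-crossing, and this is where (iii) is needed. If an edge separation were crossed, the crossing would force the two labels at the ends of that edge to be of the same type, both circuits or both cocircuits, which (iii) forbids. I expect this step to be the main obstacle: carefully checking that a crossing $2$-separation must come from a series or parallel class straddling an edge. I would handle it with the uncrossing inequality for the connectivity function, $\lambda(X\cap X')+\lambda(X\cup X')\le\lambda(X)+\lambda(X')$, which shows that the four corners of two crossing $2$-separations are themselves $2$-separating. I would then argue that a crossing pair of $2$-separations forces a circuit or cocircuit structure in some vertex label that contains elements from both sides of the edge in question.
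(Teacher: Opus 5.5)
The paper does not prove this theorem; it quotes it from Cunningham and Edmonds, so your proposal has to stand on its own. Your existence half does: splitting along an exact $2$-separation, inducting, gluing, and contracting a glued edge whose ends are both circuits or both cocircuits is the standard argument, and it is correct.

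The uniqueness half has a genuine gap, and it sits in the step you present as routine. You assert, as a general fact about $2$-sums, that every $2$-separation of $M$ is either induced by an edge of $\cT$ or displayed by a $2$-separation of a single vertex label. For decompositions satisfying only (i) and (ii) this is false. If two adjacent labels are the triangles $\{a,b,p\}$ and $\{c,d,p\}$, then $M$ is a $4$-element circuit, and $(\{a,c\},\{b,d\})$ is a $2$-separation of neither kind. Under (i)--(iii) the dichotomy is true, but proving it is essentially the whole content of uniqueness.

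Once the dichotomy is granted, your ``main obstacle'' disappears. A $2$-separation induced by another edge, or displayed at one label, never splits a branch of $\cT$, so it cannot cross an edge separation. Thus (iii) is needed for the dichotomy, not for the converse. As written, the step that actually uses (iii) is assumed rather than proved.

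What is missing has two parts.

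\emph{First, a lemma:} no $2$-separation $(A,B)$ of $M$ crosses the separation $(X_1,X_2)$ induced by an edge with basepoint $p$. Write $M=M_1\oplus_2 M_2$ across that edge.
\begin{enumerate}
\item Uncrossing gives $\lambda_M(A\cap X_i)=\lambda_M(B\cap X_i)=1$. Moreover $\lambda_M(S)=\lambda_{M_i}(S)$ for $S\subseteq X_i$.
\item From this, in $M_i$ either $p\in\cl_{M_i}(A\cap X_i)\cap\cl_{M_i}(B\cap X_i)$, in which case $M_i/p$ is disconnected, or $p$ lies in neither closure, in which case $M_i\backslash p$ is disconnected.
\item The rank formula for $2$-sums gives $\lambda_M(A)=1+\alpha_1+\alpha_2-2\alpha_1\alpha_2$, where $\alpha_i\in\{0,1\}$ is $1$ exactly when the first alternative holds in $M_i$. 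So the same alternative holds on both sides.
\item Deleting (respectively, contracting) an element of a $3$-connected label with at least four elements, or of a cocircuit (respectively, circuit) label, leaves a connected matroid. Also, $2$-sums of connected matroids are connected. Hence the two labels containing $p$ are both circuits or both cocircuits, contradicting (iii).
\end{enumerate}

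\emph{Second, an orientation argument.} If some edge has both sides contained in $A$ or in $B$, then $(A,B)$ is that edge's separation. Otherwise, by the lemma, each edge has exactly one such side; orient the edge away from it. Some vertex $N$ then has all its incident edges directed into it. So every branch at $N$ lies in $A$ or in $B$, and $(A,B)$ is displayed by a $2$-separation of $N$. Only with both parts does your characterization of the edges as exactly the non-crossing $2$-separations follow.

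A smaller point concerns rebuilding $\cT$ from this laminar family. Vertices need not correspond to classes of $E(M)$, because a label may consist entirely of basepoints (for example, a $U_{1,3}$ joined to three $3$-connected labels). You need the standard fact that a pairwise compatible family of bipartitions determines a unique tree in which every vertex of degree at most two carries an element of $E(M)$.
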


We also use Seymour's decomposition theorem for regular matroids~\cite{Seymour1980}. The matroid $R_{10}$ is a ten-element rank-$5$ regular matroid that is neither graphic nor cographic. It is represented over $\GF(2)$ by the ten column vectors of length five that have exactly three ones per column.

\begin{theorem}\label{thm: Seymour decomp}
    A matroid is regular if and only if it can be constructed from graphic matroids, cographic matroids, and copies of $R_{10}$ by a sequence of $1$-, $2$-, and $3$-sums.
\end{theorem}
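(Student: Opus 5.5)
The statement to prove is Seymour's decomposition theorem (Theorem~\ref{thm: Seymour decomp}). Since it is a classical external result, the plan is to follow Seymour's original strategy~\cite{Seymour1980} rather than derive it from anything earlier in this paper.

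\textbf{Easy direction.} I first show that every matroid built in this way is regular. Graphic matroids, cographic matroids, and $R_{10}$ are regular. So it suffices to show that regularity is preserved under $1$-, $2$-, and $3$-sums. I will use the characterization of regular matroids as binary matroids with no $F_7$- or $F_7^*$-minor, or work directly with totally unimodular representations.
\begin{itemize}
\item For direct sums, the block-diagonal matrix is totally unimodular.
\item For a $2$-sum, I take a generalized parallel connection along $p$ and delete $p$. With compatible representations glued along the column of $p$, total unimodularity is preserved.
\item For a $3$-sum, I scale and pivot the totally unimodular representations of $M_1$ and $M_2$ so that the triangle $T$ is represented identically on both sides. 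The standard block form $\begin{bmatrix} A_1 & D \\ 0 & A_2\end{bmatrix}$, with $D$ a rank-$\le 2$ product block, is then totally unimodular by a direct subdeterminant argument. Deleting $T$ preserves this.
\end{itemize}

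\textbf{Hard direction.} Let $M$ be regular. By induction on $|E(M)|$ it suffices to show the following: if $M$ is $3$-connected and has no exact $3$-separation with both sides of size at least $4$ (internally $4$-connected, up to small sides), then $M$ is graphic, cographic, or isomorphic to $R_{10}$. If $M$ is not $2$-connected or not $3$-connected, we split it as a $1$- or $2$-sum of smaller regular minors and apply induction. If $M$ has an exact $3$-separation $(X,Y)$ with $|X|,|Y|\ge 4$, I use binarity to write $M=M_1\oplus_3 M_2$, where each $M_i$ is a minor of $M$ and hence regular. This requires checking the size condition $|E(M_i)|\ge 7$ and that $T$ is a coindependent triangle, possibly after choosing the separation carefully. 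Then we again apply induction.

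\textbf{Key step and main obstacle.} The main obstacle is the structural core: an internally $4$-connected regular matroid that is neither graphic nor cographic must be $R_{10}$. I would split this into two cases.
\begin{itemize}
\item If $M$ has an $R_{10}$-minor, I show that $R_{10}$ is a splitter for regular matroids. That is, every $3$-connected regular single-element extension or coextension of $R_{10}$ fails, which is a finite check by the splitter theorem. Hence $M\cong R_{10}$.
\item Otherwise, since $M$ is not graphic or cographic, it has an $M^*(K_{3,3})$- or $M(K_{3,3})$-minor, by Tutte's excluded-minor characterizations. Here I would use Seymour's theorem that every $3$-connected regular matroid with an $R_{12}$-minor has an exact $3$-separation with both sides of size at least $4$. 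Then I show that a non-graphic, non-cographic, internally $4$-connected matroid without an $R_{10}$-minor has an $R_{12}$-minor, via a splitter-theorem growth analysis from $M(K_5)$, $M(K_{3,3})$, and their duals.
\end{itemize}
This $R_{12}$-step is the deepest part. I would attempt it first by exhaustive extension analysis, supported by the splitter theorem.
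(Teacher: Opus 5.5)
The paper gives no proof of this statement. It is Seymour's theorem, quoted from~\cite{Seymour1980} and used as a black box. Your outline follows the architecture of Seymour's proof:
\begin{enumerate}
\item[(1)] regularity is closed under $1$-, $2$-, $3$-sums;
\item[(2)] the hard direction reduces, via the binary $3$-separation lemma, to internally $4$-connected matroids;
\item[(3)] $R_{10}$ is a splitter;
\item[(4)] an $R_{12}$-minor forces a large exact $3$-separation;
\item[(5)] every $3$-connected regular matroid that is neither graphic nor cographic has an $R_{10}$- or $R_{12}$-minor.
\end{enumerate}
Those reductions, and the easy direction in outline, are sound. However, ingredient (5), which you rightly call the deepest, is not proved, and the method you propose for it cannot work as stated.

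The splitter theorem preserves one prescribed minor at a time. After dualizing if necessary, it gives $3$-connected matroids $M(K_{3,3})=N_0\preceq N_1\preceq\cdots\preceq N_m=M$, each a single-element extension or coextension of its predecessor. Every $N_i$ is non-cographic. The first non-graphic one, $N_{i+1}$, is a single-element extension or coextension of $N_i=M(G)$, where $G$ is an \emph{arbitrary} $3$-connected graph with a $K_{3,3}$-minor. For example, if $M$ is a $3$-sum of a huge $3$-connected graphic matroid with $M^*(K_{3,3})$, the chain may pass through that huge graphic matroid first.

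So no exhaustive check of extensions and coextensions of $M(K_5)$, $M(K_{3,3})$ and their duals ever reaches the relevant configuration. Passing to a minor-minimal counterexample does not help either. The splitter theorem cannot retain a non-graphic witness and a non-cographic witness simultaneously, and internal $4$-connectivity is not inherited along the chain.

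The missing idea is a structural result about how a non-graphic element can sit over a large $3$-connected graphic matroid. Namely, you need that every $3$-connected non-graphic regular single-element extension or coextension of such an $M(G)$ already has an $R_{10}$- or $R_{12}$-minor. In Seymour's paper this requires a long graph-theoretic analysis of binary extensions of graphic matroids. Without some such argument, the key step remains unproved.

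A smaller slip concerns your use of Tutte's excluded-minor theorems. They alone do not give an $M(K_{3,3})$- or $M^*(K_{3,3})$-minor, because they leave open the case where $M$ has both $M(K_5)$- and $M^*(K_5)$-minors but no $K_{3,3}$-type minor. The splitter theorem repairs this:
\begin{enumerate}
\item[(a)] Since $M$ is non-graphic, $M\not\cong M(K_5)$, so $M$ has a $3$-connected single-element extension or coextension of $M(K_5)$ as a minor.
\item[(b)] A simple rank-$4$ regular matroid has at most $10$ elements, so $M(K_5)$ has no $3$-connected regular single-element extension.
\item[(c)] The only $3$-connected graphic coextension comes from splitting a vertex of $K_5$ into two vertices of degree three, and that graph contains $K_{3,3}$.
\item[(d)] A non-graphic regular coextension has rank $5<6=r(M^*(K_5))$, so by Tutte it has an $M^*(K_{3,3})$-minor.
\end{enumerate}
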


When a regular matroid is $3$-connected, we use the following refinement due to Aprile and Fiorini~\cite[Theorem~6]{Aprile-Fiorini}.

\begin{theorem}\label{thm: Aprile and Fiorini}
Let $M$ be a $3$-connected regular matroid other than $R_{10}$. Then there is a matroid-labeled tree $\cT$ such that
\begin{enumerate}
    \item[(\romannum{1})] each vertex of $\cT$ is labeled by a graphic or cographic matroid;
    \item[(\romannum{2})] each edge of $\cT$ corresponds to a $3$-sum of the labels of its endvertices; and
    \item[(\romannum{3})] $M$ is obtained by performing the $3$-sums corresponding to the edges of $\cT$ in any order.
\end{enumerate}
\end{theorem}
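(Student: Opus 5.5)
The plan is to induct on $|E(M)|$, using Seymour's decomposition theorem in its sharper form for $3$-connected matroids. Seymour~\cite{Seymour1980} proves more than Theorem~\ref{thm: Seymour decomp}: if $M$ is a $3$-connected regular matroid that is not graphic, not cographic, and not $R_{10}$, then $M$ has an exact $3$-separation $(X,Y)$ with $|X|,|Y|\geq 6$. Consequently $M=M_1\oplus_3 M_2$, where each $M_i$ is isomorphic to a proper minor of $M$. Moreover, the separation can be chosen so that each $M_i$ is $3$-connected, by taking $|X|$ minimal among such separations, or by absorbing series and parallel pairs into the side where they live.

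\emph{Base cases.} If $M$ is graphic or cographic, the one-vertex tree works.

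\emph{Excluding $R_{10}$ from the parts.} $R_{10}$ is a splitter for the class of regular matroids: every $3$-connected regular matroid with an $R_{10}$-minor is $R_{10}$. Since $M\neq R_{10}$, the matroid $M$ has no $R_{10}$-minor. Each $M_i$ is a minor of $M$, so neither part is $R_{10}$ and neither has an $R_{10}$-minor. Hence, by induction, each $M_i$ is graphic, cographic, or has a tree $\cT_i$ as in the theorem.

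\emph{Gluing.} We join $\cT_1$ and $\cT_2$ by an edge across the common triangle $T$. For this we need $T$ to lie inside a single vertex label of each $\cT_i$.

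\emph{Key observation: a triangle of a $3$-sum lies on one side.} Let $N=A\oplus_3 B$ across a triangle $T'$, and suppose a triangle $C$ of $N$ meets both sides. Then $C=C_1\triangle C_2$ with $C_1\cap T'=C_2\cap T'=\{t\}$. Because $T'$ is coindependent, hence contains no cocircuit, no circuit meets $T'$ in exactly one element unless it has at least two elements outside $T'$. So $|C_1-T'|\geq 2$ and $|C_2-T'|\geq 2$, giving $|C|\geq 4$, a contradiction. Thus every triangle of $N$ lies inside $E(A)-T'$ or $E(B)-T'$, and it is a triangle there.

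Applying this repeatedly along $\cT_i$ shows that $T$ is a triangle of exactly one vertex label $N_i$ of $\cT_i$. It is disjoint from all triangles used on the edges of $\cT_i$, since those triangles are deleted from $M_i$.

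We then form $\cT$ from $\cT_1\cup\cT_2$ by adding the edge $N_1N_2$. We must check that $T$ is coindependent in $N_1$ and $N_2$ and that $|E(N_i)|\geq 7$. I would get these from the $3$-connectivity of $N_i$, replacing labels by $3$-connected ones if needed. Labels of the $3$-connected pieces in Seymour's setup should be $3$-connected up to simplification; if not, $2$-sums would have to be absorbed.

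\emph{Independence of order.} Condition (\romannum{3}) says the $3$-sums may be performed in any order. This holds because the triangles labelling distinct edges are pairwise disjoint, and the non-adjacent labels are disjoint (condition (\romannum{3}) of a matroid-labeled tree). Under these hypotheses, generalized parallel connections along disjoint triangles are associative and commute. To prove this I would use the flat description of $P_T$, followed by deleting the triangles at the end.

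\emph{Main obstacle.} I expect the main difficulty to be ensuring that all vertex labels, including those produced when $T$ sits inside $N_i$, are graphic or cographic and that the $3$-sum conditions hold at every edge. The danger is that some label might only be graphic after a series or parallel reduction, or might be too small to satisfy $|E(N_i)|\geq 7$. I would first try to choose the $3$-separation with $|X|$ minimal, so that $M_1$ is "$3$-connected with no further $3$-separation respecting $T$". Failing that, I would allow labels to be non-$3$-connected but still graphic or cographic, and verify the $3$-sum conditions directly from the coindependence of $T$ in $M_i$.
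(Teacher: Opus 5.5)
The paper does not prove this statement; it imports it from Aprile and Fiorini~\cite[Theorem~6]{Aprile-Fiorini}, so your sketch has to stand on its own. The outline is sensible: Seymour's $3$-separation with $|X|,|Y|\geq 6$, the splitter property of $R_{10}$, induction on the two parts, gluing the trees, and order-independence via generalized parallel connection along disjoint triangles. The gluing step, however, has a genuine gap.

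Your key observation rests on a false implication. Coindependence of $T'$ in $A$ only says that $E(A)-T'$ spans $A$. It does not rule out a $2$-circuit $\{e,t\}$ with $t\in T'$, and such pairs really occur. Seymour's (4.3), which is Proposition~\ref{prop: possible parallel on the common triangle}, only makes each part $3$-connected up to parallel pairs $\{x,z\}$ with $z$ in the common triangle. Moreover, $x\in X$ is parallel in $M_1$ to an element of $T$ exactly when $x\in\cl_M(Y)$, and moving such an $x$ to the other side just recreates the parallel pair there. So your claim that the separation can be chosen with both $M_i$ $3$-connected is unsupported. The induction, which is stated for $3$-connected matroids, must instead be applied to $\si(M_i)$, with the removed elements reinserted into labels as elements parallel to triangle elements.

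Once labels are not simple, the observation fails. If $e$ is parallel to $t\in T'$ in $A$ and $\{t,f,g\}$ is a triangle of $B$ with $f,g\notin T'$, then $\{e,f,g\}=\{e,t\}\triangle\{t,f,g\}$ is a triangle of $A\oplus_3 B$ meeting both sides. Hence the gluing triangle $T$ need not lie in a single label of $\cT_i$, and the edge $N_1N_2$ cannot be added. Your argument is valid only when all labels are simple (then $|C_1|,|C_2|\geq 3$), and that cannot be assumed.

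A repair must handle these straddling triangles. By the count $|C_1|+|C_2|=5$, they always have the form above. The natural move is to transfer $e$ from $A$ to $B$ as an element parallel to $t$. This keeps both labels graphic or cographic, since a parallel extension of $M^*(G)$ is $M^*$ of a subdivision of $G$, and it puts $T$ inside $B$.

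But the edge $AB$ must remain a genuine $3$-sum, and $|E(A\backslash e)|\geq 7$ can fail, for instance when $A\backslash e\cong M(K_4)$. The paper hits exactly this obstruction in the proof of Proposition~\ref{prop: quasi-delta-star} and responds by passing to quasi-$3$-sums, which the present statement does not allow. Absorbing $A$ into $B$ instead amounts to a $\Delta$--$Y$ exchange on $B$, which need not preserve cographicness. So you need an additional idea controlling where elements parallel to triangle elements may sit.

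By contrast, the other checks you flag are routine once $T$ is known to lie in one label. These are coindependence of $T$ in $N_i$ (a cocircuit of $N_i$ inside $T$ would survive the sums along disjoint triangles and contradict coindependence in $M_i$), the bound $|E(N_i)|\geq 7$, and order-independence.
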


A tree $\cT$ satisfying the conditions of Theorem~\ref{thm: Aprile and Fiorini} is called a {\it $\triangle$-tree decomposition} of $M$, or simply a {\it $\triangle$-tree}. Unlike the canonical tree decomposition, a $\triangle$-tree decomposition need not be unique. Aprile and Fiorini proved a stronger form of their $\triangle$-tree decomposition theorem. In particular, they showed that one may assume that, whenever a triangle of a nonplanar cographic matroid $M^*(G)$ is used in a $3$-sum, that triangle corresponds to a vertex bond of $G$; see~\cite[Proposition 27]{Aprile-Fiorini} and also~\cite[Theorem 2.5]{BMS2026}. We do not need this additional assumption.

A matroid $N$ is {\it almost $3$-connected} if its simplification $\si(N)$ is $3$-connected. The following result follows easily from a result of Seymour~\cite[(4.3)]{Seymour1980}; see also~\cite[Lemma~30]{Aprile-Fiorini}.

\begin{proposition}\label{prop: almost 3-con}
    If $M$ is a $3$-connected matroid and $\cT$ is a tree decomposition of $M$, then every edge of $\cT$ corresponds to a $3$-sum and every vertex label of $\cT$ is almost $3$-connected.
\end{proposition}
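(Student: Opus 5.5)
The approach is a reduction from the tree decomposition $\cT$ of $M$ to the $3$-sum decomposition obtained by Seymour's theorem \cite[(4.3)]{Seymour1980}, carried out in three steps.

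First, every edge of $\cT$ is a $3$-sum. If some edge of $\cT$ corresponded to a $2$-sum, then deleting that edge splits $\cT$ into two subtrees. Performing the sums within each subtree produces matroids $N_1$ and $N_2$ with $M=N_1\oplus_2 N_2$. Each $N_i$ has at least two elements other than the basepoint, because every vertex label has at least three elements and the basepoint is neither a loop nor a coloop. This gives a $2$-separation of $M$, contradicting $3$-connectivity. A $1$-sum edge is excluded in the same way, since it gives a $1$-separation. So every edge of $\cT$ is a $3$-sum.

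Second, every vertex label $N$ of $\cT$ is almost $3$-connected. I would contract every edge of $\cT$ other than those at $N$. Then $M$ is written as $N$ with matroids $L_1,\dots,L_m$ attached by $3$-sums along pairwise disjoint triangles $T_1,\dots,T_m$ of $N$; disjointness holds by condition (iii) of a matroid-labeled tree. Now suppose $\si(N)$ is not $3$-connected. Take a $2$-separation $(X,Y)$ of $\si(N)$ and lift it to a separation of $N$, putting each parallel class on the side of its representative. Each triangle $T_j$ is a circuit, so after lifting it has at most one element on one side. I would then move that element across; a triangle element lies in the closure of the other two, so this does not increase $\lambda$. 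After this, each $T_j$ lies entirely in $X$ or entirely in $Y$.

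Next, I would form $X'=(X-\bigcup T_j)\cup\bigcup_{T_j\subseteq X}E(L_j)$, and $Y'$ analogously. In a $3$-sum, a triangle together with the other side behaves like a rank-preserving extension: by Proposition~\ref{prop: rank of 3-sum}, attaching $L_j$ raises the rank by exactly $r(L_j)-2$ on that side and by nothing across the separation. Hence $\lambda_M(X')=\lambda_N(X)\le 1$. It remains to check that $|X'|,|Y'|\ge 2$. Each side of the separation of $\si(N)$ has at least two parallel classes. Lifting and moving a triangle element could shrink a side, but a side that becomes a single $T_j$ gains all of $E(L_j)-T_j$, which has at least four elements. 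So $(X',Y')$ is a $2$-separation of $M$, a contradiction.

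The main obstacle is the second step, namely making sure that the rerouting of triangles and the rank bookkeeping keep both sides nondegenerate. If a side is reduced to a single parallel class, it is again rescued by the elements of an attached $L_j$. The cleaner fallback is to cite Seymour's (4.3) directly, as the paper suggests, since it states exactly that the parts of a $3$-sum decomposition of a $3$-connected matroid are $3$-connected up to parallel elements.
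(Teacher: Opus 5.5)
Your route differs from the paper's. The paper gives no argument beyond deferring to Seymour's (4.3) and to Aprile--Fiorini's Lemma~30, whereas you transport a $2$-separation of $\si(N)$ through the attached $3$-sums. Step~1 is fine.

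In Step~2 the identity $\lambda_M(X')=\lambda_N(X)$ is true, but it does not follow from Proposition~\ref{prop: rank of 3-sum}, which only gives $r(M)$. You need two further facts.
\begin{itemize}
\item The flat description of $P_{T_j}(N,L_j)$, which is available because a triangle of a binary matroid is a modular flat. It gives $\cl(X\cup E(L_j))=\cl_N(X)\cup E(L_j)$, of rank $r_N(X)+r(L_j)-2$, when $T_j\subseteq X$.
\item $T_j\subseteq\cl_{L_j}(E(L_j)-T_j)$, which is where the coindependence of $T_j$ is used.
\end{itemize}
Also, in the definition of $X'$ you mean $E(L_j)-T_j$, not $E(L_j)$.

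The genuine gap is the nondegeneracy of $(X',Y')$, exactly where you located the obstacle.
\begin{itemize}
\item After lifting, a side $X$ may consist of triangle elements, each the only element of its triangle in $X$, together with at most one further element $z$.
\item Your moves then leave $X$ empty or equal to $\{z\}$.
\item No triangle finishes in $X$; indeed nothing was ever moved into $X$, since any element moved in completes a triangle that then stays there. So no $L_j$ is attached to that side.
\end{itemize}
Your rescue therefore only covers the case where a whole triangle ends on that side. The claim that a side reduced to a single parallel class is rescued by an attached $L_j$ is false.

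This case needs its own argument. Every element removed from $X$ lies in $\cl(Y)$.
\begin{itemize}
\item If $z$ is absent or $z\in\cl(Y)$, then $r(Y)=r(N)$ and $\lambda_N(X)=r(X)\geq 2$, because $X$ contains two parallel classes. This is a contradiction.
\item Otherwise $z\notin\cl(E(N)-z)$, so $z$ is a coloop of $N$ lying in no $T_j$. Your bookkeeping then gives $\lambda_M(\{z\})=\lambda_N(\{z\})=0$, so $z$ is a coloop of the $3$-connected matroid $M$, again a contradiction.
\end{itemize}
Add this case analysis, together with the routine remarks that $N$ is loopless and that $\si(N)$ is $U_{2,3}$ or has at least four elements. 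Your direct proof is then complete; as written, it is not.
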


\begin{proposition}\label{prop: vertex label rank at least three}
    If $M$ is a $3$-connected regular matroid of rank at least three and $M$ is not isomorphic to $R_{10}$, then $M$ has a $\triangle$-tree in which every vertex label has rank at least three.
\end{proposition}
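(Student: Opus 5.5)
Start from an arbitrary $\triangle$-tree $\cT$ of $M$, which exists by Theorem~\ref{thm: Aprile and Fiorini}. Among all $\triangle$-trees of $M$, choose one with the fewest vertices. We then show that no vertex label has rank at most two, so every label has rank at least three.

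First we identify what a small label can be. Suppose some vertex label $N$ of $\cT$ has rank at most two. By Proposition~\ref{prop: almost 3-con}, $N$ is almost $3$-connected. Every edge of $\cT$ is a $3$-sum, and $N$ is binary with at least seven elements, since $3$-sums require $|E(N)|\geq 7$. It contains a coindependent triangle $T$. A binary matroid of rank at most two has simplification of rank at most two, so $\si(N)$ is a restriction of $U_{2,3}$. Since $T\subseteq E(N)$ is a triangle, $r(N)=2$ and $\si(N)\cong U_{2,3}$. So $N$ consists of three parallel classes meeting $T$ in one element each. Coindependence of $T$ forces $r(E(N)-T)=2$, so at least two of the parallel classes contain elements outside $T$.

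Next we remove such a label by merging. Let $N'$ be a neighbour of $N$ in $\cT$, with common triangle $T'$. If $T'=T$, the $3$-sum $N\oplus_3 N'$ simply replaces each element of $T$ by the elements of $N$ parallel to it outside $T$. Such elements exist at least for two of them; the third element of $T$ is deleted. In general we use Proposition~\ref{prop: rank of 3-sum}: $r(N\oplus_3 N')=r(N')+2-2=r(N')$. Hence $N\oplus_3 N'$ is obtained from $N'$ by adding parallel elements, and possibly deleting elements of $T'$. It is therefore graphic (respectively cographic) if $N'$ is, since parallel extension keeps graphicness and cographicness. The one point to check is cographicness: adding an element parallel to an element of a cographic matroid $M^*(G)$ amounts to subdividing an edge of $G$, so it is fine.

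Now contract the edge $NN'$ of $\cT$ to a single vertex labeled $N\oplus_3 N'$. The other triangles of $N$, used for its other tree edges, lie in parallel classes and so correspond to triangles in the merged label. We need these to remain triangles, coindependent, and with the size conditions, so the remaining $3$-sums are still defined. We also need the resulting tree to satisfy conditions (\romannum{1})--(\romannum{3}) of Theorem~\ref{thm: Aprile and Fiorini}, with the $3$-sums performable in any order. This gives a $\triangle$-tree with fewer vertices, contradicting minimality. If instead $\cT$ has a single vertex, then $M$ itself is its label, and $r(M)\geq 3$ gives the claim directly.

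The main obstacle is the bookkeeping in the merge step. A small label $N$ may share several triangles with different neighbours, and these triangles may overlap in parallel classes. We must verify that after merging, the triangles used by the other edges are still disjoint from each other within the new label. We must also verify they remain coindependent, using the $3$-connectivity of $M$ and Proposition~\ref{prop: almost 3-con}. The approach I would try first is to work within the generalized parallel connection: view the whole decomposition as $M$ obtained from $P_{T}$'s by deletion, and note that a rank-$2$ label only contributes elements parallel to elements of its neighbours' triangles. Its removal then just transfers those elements to $N'$ as parallel copies.
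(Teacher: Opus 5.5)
Your proposal is correct and is essentially the paper's proof. A rank-$2$ label has a triangle as its simplification, so merging it into a neighbour via the corresponding $3$-sum gives that neighbour with elements added in parallel to the common triangle and the triangle then deleted, which stays graphic or cographic; repeating this, or as you do taking a $\triangle$-tree with fewest vertices, removes every rank-$2$ label. The paper does not check the bookkeeping you flag as the main obstacle, and it is routine: for instance, $E(N')-T'$ spans the merged label, so the other triangles of $N$ remain coindependent triangles there.
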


\begin{proof}
    By Theorem~\ref{thm: Aprile and Fiorini}, $M$ has a $\triangle$-tree $\cT$. If $\cT$ has only one vertex, then its unique vertex label is $M$, which has rank at least three. We may therefore assume that $\cT$ has at least two vertices. Since every edge of $\cT$ corresponds to a $3$-sum, every vertex label contains a triangle used in a $3$-sum and hence has rank at least two.

    Suppose that $\cT$ has a vertex labeled by a rank-$2$ matroid $M_v$. By Proposition~\ref{prop: almost 3-con}, the matroid $M_v$ is almost $3$-connected, so $\si(M_v)$ is a triangle. Since $r(M)\geq 3$, the vertex $M_v$ has a neighbor $M_u$. The $3$-sum of a graphic or cographic matroid with a matroid whose simplification is a triangle is again graphic or cographic. Thus performing the $3$-sum corresponding to the edge joining $M_v$ and $M_u$ produces a new $\triangle$-tree of $M$ in which the number of vertices that are labeled by a rank-$2$ matroid is reduced by one.

    Repeating this operation gives a $\triangle$-tree of $M$ with no rank-$2$ vertex label. Hence every vertex label in the resulting $\triangle$-tree has rank at least three.
\end{proof}

\subsection{Graphic matroids and cographic matroids}

As Seymour's decomposition theorem shows, two fundamental classes of building blocks for regular matroids are graphic and cographic matroids. The following well-known theorem of Tutte~\cite{Tutte1959} gives an excluded-minor characterization of graphic matroids.

\begin{theorem}\label{thm: graphic regular matroids}
    A regular matroid $M$ is graphic if and only if $M$ has no minor isomorphic to $M^*(K_5)$ or $M^*(K_{3,3})$.
\end{theorem}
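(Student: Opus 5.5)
The statement to prove is Tutte's theorem: a regular matroid is graphic if and only if it has no $M^*(K_5)$- or $M^*(K_{3,3})$-minor. This is a classical result, and in the paper it would be quoted from~\cite{Tutte1959} rather than reproved. If I had to give a proof, I would derive it from Tutte's excluded-minor characterization of graphic matroids among all matroids. That characterization says a matroid is graphic if and only if it has no minor isomorphic to $U_{2,4}$, $F_7$, $F_7^*$, $M^*(K_5)$, or $M^*(K_{3,3})$.

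For necessity, I would note three things. The class of graphic matroids is minor-closed. Neither $M^*(K_5)$ nor $M^*(K_{3,3})$ is graphic, because $K_5$ and $K_{3,3}$ are nonplanar and, by Whitney's planarity criterion, $M(G)$ has a graphic dual only when $G$ is planar. Both matroids are regular, being cographic. Hence a graphic matroid, regular or not, has neither of them as a minor.

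For sufficiency, let $M$ be regular with no $M^*(K_5)$- or $M^*(K_{3,3})$-minor. Regularity is also minor-closed, and $U_{2,4}$, $F_7$ and $F_7^*$ are not regular: $U_{2,4}$ is not binary, and $F_7$ and $F_7^*$ are not representable over fields of characteristic other than $2$. So $M$ has none of the five excluded minors, and Tutte's general characterization gives that $M$ is graphic.

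If one had to prove the full characterization from scratch, the main obstacle would be the sufficiency direction. The standard route is Tutte's theory of bridges: show that a binary matroid with no $F_7$, $F_7^*$, $M^*(K_5)$ or $M^*(K_{3,3})$ minor has, for each cocircuit, a bipartite bridge graph, and then build a graph realizing $M$ by induction. Alternatively one can use Seymour's splitter-type arguments on $3$-connected minors. I would not reproduce any of this here. Since it is a standard cited theorem, the proof in the paper is simply the citation together with the reduction above.
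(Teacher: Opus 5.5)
Your proposal is correct and matches the paper, which gives no proof of this statement and simply cites Tutte~\cite{Tutte1959}. Your derivation of the regular-matroid form from the five-excluded-minor characterization is a valid unwinding of that citation: it uses only that graphicness and regularity are minor-closed, that $U_{2,4}$, $F_7$, $F_7^*$ are not regular, and Whitney's criterion that $M^*(K_5)$ and $M^*(K_{3,3})$ are not graphic.
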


It is straightforward to check that the classes of graphic and cographic matroids are both closed under direct sums and $2$-sums. Moreover, if $M_1$ and $M_2$ are graphic matroids, then $M_1\oplus_3 M_2$ is also graphic. In contrast, the $3$-sum of two cographic matroids need not be cographic. For example, the non-cographic matroid $R_{12}$ is isomorphic to a $3$-sum of $M^*(K_{3,3})$ and $M(K_5\backslash e)$; see~\cite[p.~490]{Oxl11}.

\subsubsection{Unavoidable flats of graphic matroids}

For graphs, induced subgraphs play an analogous role to flats in matroids. For graphic matroids, the following result is straightforward (see, for example,~\cite[Proposition~4.1.7]{Oxl11}).

\begin{proposition}
    Let $G$ be a $2$-connected loopless graph, and let $M=M(G)$. If $H$ is a $2$-connected induced subgraph of $G$ with at least three vertices, then $M$ has a $2$-connected flat $F$ such that $M|F=M(H)$.
\end{proposition}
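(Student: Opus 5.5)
Let $F=E(H)$, viewed as a subset of $E(G)=E(M)$. I will show that $F$ is a flat of $M$ whose restriction is $M(H)$ and which is $2$-connected.

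\textbf{Restriction.} Since $H$ is a subgraph of $G$, the circuits of $M|F$ are exactly the circuits of $M$ contained in $F$. These are the edge sets of cycles of $G$ that use only edges of $H$, that is, the cycles of $H$. Hence $M|F=M(H)$.

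\textbf{Closure.} This is the only step that needs care, and it is where induced-ness is used. Suppose $e\in E(G)-F$ lies in $\cl_M(F)$. Then there is a circuit $C$ of $M$ with $e\in C\subseteq F\cup\{e\}$. So $C$ is the edge set of a cycle of $G$ in which every edge other than $e$ is an edge of $H$.

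\begin{enumerate}
\item[(a)] If $e$ is a loop, then $C=\{e\}$. This is excluded because $G$ is loopless.
\item[(b)] Otherwise $C-\{e\}$ is a path of $H$ joining the two ends $u,v$ of $e$. Thus $u,v\in V(H)$.
\item[(c)] Since $H$ is an induced subgraph, every edge of $G$ with both ends in $V(H)$ lies in $E(H)$. In particular $e\in E(H)=F$, a contradiction.
\end{enumerate}

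Hence $\cl_M(F)=F$, so $F$ is a flat.

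\textbf{Connectivity.} Here I use the standard fact that if a graph $H$ is $2$-connected (hence loopless) and has at least three vertices, then $M(H)$ is a connected matroid. The reason is that any two edges of $H$ lie on a common cycle. By the restriction step, $M|F=M(H)$ is therefore $2$-connected, so $F$ is a $2$-connected flat.

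The hypothesis that $H$ has at least three vertices rules out the degenerate case of a bond graph on two vertices. In that case $M(H)$ could be a parallel class or a single coloop-free edge, which is excluded from the intended notion of a $2$-connected flat. The $2$-connectedness of $G$ itself is not needed for the argument.
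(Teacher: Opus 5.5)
Your proof is correct and follows the paper's route: the paper calls the result straightforward and cites Oxley's Proposition 4.1.7 for the connectivity of $M(H)$, and your closure argument, which uses looplessness of $G$ and inducedness of $H$, supplies exactly the routine detail the paper omits. Only your closing aside is off, since a two-vertex bond graph gives $M(B_n)\cong U_{1,n}$, which is connected, so the three-vertex hypothesis reflects the graph-theoretic convention for $2$-connectedness rather than any matroid degeneracy; this does not affect the proof.
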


This means that finding large-rank $2$-connected flats in a $2$-connected graphic matroid $M(G)$ is equivalent to finding large $2$-connected induced subgraphs in $G$. The following theorem of Allred, Ding, and Oporowski~\cite{SDO2020} is well suited to our purpose (see also~\cite[Theorem~1.4]{cc-minors}). A {\it bond graph} $B_n$ consists of two vertices joined by $n$ parallel edges.

\begin{theorem}\label{thm:unavoidable induced subgraph}
Let $k\geq 3$ be an integer. Then there is an integer $f(k)$ such that every simple $2$-connected graph with at least $f(k)$ vertices has an induced subgraph isomorphic to one of the following:
\begin{enumerate}
    \item[(\romannum{1})] $K_k$;
    \item[(\romannum{2})] a subdivision of $K_{2,k}$;
    \item[(\romannum{3})] a graph obtained from a subdivision of $K_{2,k}$ by adding an edge joining the two vertices of degree $k$; or
    \item[(\romannum{4})] a graph on $k$ vertices admitting a path decomposition $\cP$ such that every edge of $\cP$ corresponds to a $2$-sum and each vertex of $\cP$ is labeled by
    \begin{itemize}
        \item[(a)] a copy of $K_4$, with the two basepoints forming a matching whenever the vertex is internal; or
        \item[(b)] a copy of $K_3$ or $B_3$,
    \end{itemize}
    where neither end of $\cP$ is labeled by $B_3$, and no two consecutive vertices of $\cP$ are labeled by $B_3$.
\end{enumerate}
\end{theorem}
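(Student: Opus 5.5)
This theorem is quoted from Allred, Ding, and Oporowski~\cite{SDO2020}, so the natural course is to cite it. If I had to reprove it, I would use a Ramsey-type argument driven by the structure of the graph.

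Let $G$ be a simple $2$-connected graph with many vertices. First consider the maximum degree.

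\emph{High-degree case.} Suppose some vertex $v$ has huge degree. By Ramsey's Theorem applied to $N(v)$, we get either a large clique, giving (\romannum{1}), or a large stable set $S\subseteq N(v)$. Since $G-v$ is connected, we can link the vertices of $S$ inside $G-v$ by a tree.
\begin{itemize}
    \item If many of them attach to the tree through a common second vertex $w$ along internally disjoint paths, we get a subdivided $K_{2,k}$ between $v$ and $w$. Taking a minimal such structure and shortcutting chords makes it induced. This gives (\romannum{2}), or (\romannum{3}) if $vw$ is an edge.
    \item Otherwise the tree has a long path, or many leaves attached to a long path. This is then handled like the bounded-degree case.
\end{itemize}

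\emph{Bounded-degree case.} Suppose the maximum degree is bounded. Then the diameter is large. I would take two vertices $a,b$ at large distance and consider, in the spirit of Menger's theorem, two disjoint $a$--$b$ paths. Choose these paths to minimize the induced subgraph they span, for instance by minimizing total length and then the number of chords. The induced subgraph on their union is a long $2$-connected graph of bounded "width". Its $2$-sum (SPQR) decomposition is therefore path-like. By minimality, each bag reduces to a triangle, a $B_3$ (one rung between the two paths), or a $K_4$ (two crossing chords). The constraints in (\romannum{4}) would then be forced:
\begin{itemize}
    \item no $B_3$ at an end, and no two consecutive $B_3$'s;
    \item $K_4$ basepoints forming a matching.
\end{itemize}
These come from simplicity and from minimality, which lets us shortcut otherwise.

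The main obstacle is the clean-up in the bounded-degree case. Showing that a minimal induced structure joining far-apart vertices really has only $K_4$, $K_3$, $B_3$ bags, with the stated adjacency rules, requires a careful analysis of chords between the two paths. For that reason I would simply invoke~\cite{SDO2020} (see also~\cite[Theorem~1.4]{cc-minors}).
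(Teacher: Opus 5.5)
This matches the paper, which gives no proof of its own and simply quotes the result from \cite{SDO2020} (see also \cite[Theorem~1.4]{cc-minors}), so citing it is exactly right. Do not offer your sketch as a substitute, though: a graph with a vertex of huge degree can have diameter two (a long fan, for instance), so the ``handle it like the bounded-degree case'' step has no far-apart pair to work with, and both the fan-type ladders through $v$ and making the $K_{2,k}$-subdivision induced when $v$ has neighbours on the linking paths need genuinely separate arguments.
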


\begin{figure}[htb]
    \centering
    \resizebox{11cm}{!}{\input{figures/fig_clean_laddar}}%
    \caption{An example of a clean ladder.}
    \label{fig: clean ladder}
\end{figure}

We call the graphs described in (\romannum{4}) of Theorem~\ref{thm:unavoidable induced subgraph} {\it clean ladders}. Figure~\ref{fig: clean ladder} shows an example of a clean ladder. We shall use the following immediate consequence of Theorem~\ref{thm:unavoidable induced subgraph}.

\begin{corollary}\label{cor: graphic case}
    For every positive integer $k$, there is an integer $f_{\ref{cor: graphic case}}(k)$ such that every $2$-connected graphic matroid $M$ of rank at least $f_{\ref{cor: graphic case}}(k)$ has a $2$-connected graphic flat $F$ such that $M|F=M(H)$ for some graph $H$. Moreover, the simplification of $H$ is one of the following:
    \begin{enumerate}
        \item[(\romannum{1})] $K_{k+1}$;
        \item[(\romannum{2})] a subdivision of $K_{2,k+1}$;
        \item[(\romannum{3})] a graph obtained from a subdivision of $K_{2,k+1}$ by adding an edge joining the two vertices of degree $k+1$; or
        \item[(\romannum{4})] a $(k+1)$-vertex clean ladder.
    \end{enumerate}
\end{corollary}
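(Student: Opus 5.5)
Since $M$ is a $2$-connected graphic matroid, write $M=M(G)$. Here $G$ is a loopless $2$-connected graph, possibly with parallel edges; we may take $G$ loopless because $M$ is $2$-connected with rank at least one. Let $G'$ be the underlying simple graph of $G$. Then $G'$ is still $2$-connected, and $|V(G')|=|V(G)|=r(M)+1$ because $G$ is connected. I would set
\[
f_{\ref{cor: graphic case}}(k)=\max\{f(k+1),\,3\},
\]
where $f$ is the function from Theorem~\ref{thm:unavoidable induced subgraph} applied with parameter $k+1\geq 3$. It is harmless to treat small $k$ uniformly, since $k+1\geq 2$ and we may take $k+1\geq 3$ after enlarging. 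With this choice, $G'$ has at least $f(k+1)$ vertices.

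Theorem~\ref{thm:unavoidable induced subgraph} then gives an induced subgraph $H'$ of $G'$ of one of the four types with parameter $k+1$:
\begin{itemize}
\item[(i)] $K_{k+1}$;
\item[(ii)] a subdivision of $K_{2,k+1}$;
\item[(iii)] such a subdivision plus the edge joining the two vertices of degree $k+1$;
\item[(iv)] a clean ladder on $k+1$ vertices.
\end{itemize}
Each of these graphs is $2$-connected with at least three vertices. For (iv) this follows because it is built from $2$-connected pieces by $2$-sums along a path decomposition.

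Next I lift $H'$ back to $G$. Let $H$ be the subgraph of $G$ induced on $V(H')$, so $H$ keeps all parallel copies of the edges of $H'$. Then $H$ is a $2$-connected induced subgraph of the loopless $2$-connected graph $G$, with at least three vertices, and its simplification is $H'$. By the proposition preceding Theorem~\ref{thm:unavoidable induced subgraph}, $M$ has a $2$-connected flat $F$ with $M|F=M(H)$. Concretely, $F$ is the set of all edges of $G$ with both ends in $V(H')$.

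It remains to check that $r(F)$ is large, since the statement asks for the rank to be implicitly large. This is immediate: $r(F)=|V(H')|-1\geq k$, because every listed graph has at least $k+1$ vertices.

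The only mildly delicate points are bookkeeping. One must match the parameter $k+1$ in Theorem~\ref{thm:unavoidable induced subgraph} to the indexing in the corollary. One must also confirm that type (iv) is $2$-connected, and that passing between $G$ and its simplification preserves both inducedness and $2$-connectivity. There is no real obstacle.
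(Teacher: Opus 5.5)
Your argument is correct and is essentially the paper's route: the paper records this corollary as an immediate consequence of Theorem~\ref{thm:unavoidable induced subgraph}, and you apply that theorem with parameter $k+1$ to the underlying simple graph of $G$, restore the parallel edges on the induced subgraph, and use the fact that $2$-connected induced subgraphs with at least three vertices give $2$-connected flats. The one slip is $k=1$, where the theorem does not apply and ``enlarging'' the parameter to $3$ yields graphs from the $k=2$ list rather than the $k=1$ list; instead take $F=\cl(\{e\})$ for any element $e$, which gives $M|F\cong U_{1,m}=M(B_m)$, a connected restriction whose simplification is $K_2$.
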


\subsubsection{Unavoidable flats of cographic matroids}

For cographic matroids, cc-minors were introduced in~\cite{cc-minors} as a tool for analyzing unavoidable flats. A {\it cycle-contraction minor}, or {\it cc-minor}, of a graph $G$ is a minor obtained from $G$ by repeatedly contracting cycles. Similarly, a {\it circuit-contraction minor}, or {\it cc-minor}, of a matroid $M$ is a minor obtained from $M$ by repeatedly contracting circuits. Taking cc-minors is dual to restricting to flats.

\begin{proposition}{\cite[Proposition~1.6]{cc-minors}}\label{prop: cc-minor_flat}
    Let $M$ be a matroid and let $F\subseteq E(M)$. Then $F$ is a flat of $M$ if and only if $(M|F)^*$ is a cc-minor of $M^*$.
\end{proposition}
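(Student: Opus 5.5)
Since $(M|F)^*=(M^*)^*\ldots$ is stated in terms of duality, I would prove both directions by unwinding what contracting circuits means. The key identity is that for $X\subseteq E(M)$,
\[
(M|X)^* = M^*/(E(M)-X).
\]
So, writing $Y=E(M)-F$, the statement reduces to the following: $F$ is a flat of $M$ if and only if $M^*/Y$ can be obtained from $M^*$ by repeatedly contracting circuits. Contracting a circuit $C$ of $N$ means passing to $N/C$, and successive contractions of circuits $C_1,\dots,C_m$ (with $C_i$ a circuit of $N/(C_1\cup\cdots\cup C_{i-1})$) give $N/(C_1\cup\cdots\cup C_m)$. I would then characterize the sets $Y$ that arise as such unions.

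\textbf{Key claim.} A set $Y$ is a union of circuits of $M^*$ if and only if $E(M)-Y$ is a flat of $M$. This is the standard fact that complements of flats are exactly unions of cocircuits. Indeed, hyperplane complements are cocircuits, and every flat is an intersection of hyperplanes, so its complement is a union of cocircuits. Conversely, let $Y$ be a union of cocircuits. Suppose $e\notin Y$ lies in $\cl(E-Y)$. Then some circuit $C$ meets $Y$ only in the element $e$... but $e\notin Y$, so instead argue directly. If $e\in\cl(E-Y)-(E-Y)$, then $e\in Y$, so $e$ lies in a cocircuit $D\subseteq Y$. There is a circuit $C\subseteq (E-Y)\cup e$ containing $e$, and then $|C\cap D|=1$, which is impossible.

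\textbf{Iterated contractions.} Next I would show that the sets $Y$ obtained as $C_1\cup\cdots\cup C_m$, with each $C_i$ a circuit of $M^*/(C_1\cup\cdots\cup C_{i-1})$ (allowing $m=0$), are exactly the unions of circuits of $M^*$.

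First suppose $Y$ is a union of circuits of $M^*$. Order them as $D_1,\dots,D_m$. The set $D_i-(D_1\cup\cdots\cup D_{i-1})$ is a union of circuits of the contraction, because contracting preserves the property "union of circuits": if $D$ is a circuit of $N$, then $D-Z$ is a union of circuits of $N/Z$. Listing those circuits gives the contraction sequence.

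Conversely, suppose each $C_i$ is a circuit of $N_{i-1}=M^*/(C_1\cup\cdots\cup C_{i-1})$. Inductively, $Z_{i-1}=C_1\cup\cdots\cup C_{i-1}$ is a union of circuits of $M^*$, so $E-Z_{i-1}$ is a flat of $M$. A circuit of $N_{i-1}$ has the form $C_i=D-Z_{i-1}$ for some circuit $D$ of $M^*$. Hence $Z_i=Z_{i-1}\cup D$ is again a union of circuits.

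Combining the two steps, $(M|F)^*=M^*/(E-F)$ is a cc-minor of $M^*$ exactly when $E-F$ is a union of circuits of $M^*$, which by the key claim holds exactly when $F$ is a flat. One subtlety remains: a cc-minor determines its ground set $F$, so the contracted set must be exactly $E-F$. The main care point is the converse inclusion, since circuits of contractions are only of the form $D-Z$.
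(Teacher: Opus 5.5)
This paper does not prove the proposition; it quotes it from \cite{cc-minors}, so there is no in-paper argument to compare your route against. Your proof is correct and self-contained. It rests on four facts:
\begin{enumerate}
\item the identity $(M|F)^*=M^*/(E-F)$;
\item the characterization of flats as complements of unions of cocircuits, where your final orthogonality argument with $|C\cap D|=1$ is the right one;
\item the description of the circuits of $N/Z$ as the minimal nonempty sets $D-Z$ with $D\in\cC(N)$;
\item the fact that $D-Z$ is a union of circuits of $N/Z$ whenever $D$ is a circuit of $N$.
\end{enumerate}
You also correctly read the statement with labeled ground sets, so that the contracted set is forced to be $E-F$. And you allow zero contractions, which is needed for $F=E$.

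One step should be stated more carefully. When you contract a union $Y$ of circuits of $M^*$, you say that listing the circuits of $M^*/(D_1\cup\cdots\cup D_{i-1})$ that make up $D_i-(D_1\cup\cdots\cup D_{i-1})$ ``gives the contraction sequence.'' Taken literally this fails. After you contract one listed circuit $C_1'$, the next one becomes $C_2'-C_1'$, which is in general only a union of circuits of the new contraction, not a circuit. For example, in a graph, a $6$-cycle meeting a $4$-cycle in two opposite edges falls apart into two $2$-cycles once the $4$-cycle is contracted.

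The repair uses only the fact you already state. Argue by induction on $|Y-Z|$, where $Z\subseteq Y$ is the set contracted so far. Since $Y-Z$ is a union of circuits of $M^*/Z$, pick any one of them, contract it, and repeat until $Z=Y$.

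Also delete the false start at the beginning of your proof of the key claim (``Suppose $e\notin Y$ lies in $\cl(E-Y)$ \dots'').
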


By this proposition, finding unavoidable $2$-connected flats in a large cographic matroid $M^*(G)$ is equivalent to finding large $2$-connected cc-minors of $G$. An unavoidable-flats characterization for $2$-connected cographic matroids was proved in~\cite[Theorem~1.5]{cc-minors}. In this paper, we use the following theorem from~\cite{cc-minors}. A {\it fan graph} $F_n$ is obtained from an $n$-vertex path $v_1v_2\dots v_n$ by adding a new vertex $u$ adjacent to each $v_i$. For positive integers $t_1,t_2,\dots,t_n$, a {\it fan-type graph} $F_{t_1,t_2,\dots,t_n}$ is obtained from $F_n$ by replacing each edge $uv_i$ by $t_i$ parallel edges. Figure~\ref{fig: fans.} shows three examples of fan-type graphs. The bond graph $B_n$ is a fan-type graph, corresponding to a $1$-vertex path.

\begin{figure}[htb]
    \centering
    \resizebox{11cm}{!}{\input{figures/fig_fan}}%
    \caption{Fan-type graphs.}
    \label{fig: fans.}
\end{figure}

\begin{theorem}{\cite[Theorem~7.1]{cc-minors}}\label{thm: large_3_con(cc-minors)}
    For every integer $t\geq 3$, there is an integer $f_{\ref{thm: large_3_con(cc-minors)}}(t)$ such that if a simple $3$-connected graph $G$ has more than $f_{\ref{thm: large_3_con(cc-minors)}}(t)$ edges, then $G$ has a fan-type graph $F_{t_1,t_2,\dots,t_n}$ as a cc-minor such that $\sum_{i=1}^n t_i\geq t$.
\end{theorem}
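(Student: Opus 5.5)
The statement is a cited result (Theorem~7.1 of~\cite{cc-minors}). Here is how I would prove it. The central idea is that contracting a cycle $C$ of $G$ that avoids a chosen vertex $u$ merges $V(C)$ into one vertex. Any edges from $u$ to $V(C)$ then become parallel, and edges of $G$ with both ends in $V(C)$ become loops, which can themselves be contracted as cycles. So cc-minors can create the parallel classes $t_i$ of a fan-type graph. Let $\Delta$ be a large constant depending on $t$. I split into two cases according to whether $G$ has a vertex of degree at least $\Delta$.

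\textbf{High-degree case.} Suppose some vertex $u$ has $\deg(u)\geq t$. Since $G$ is $3$-connected, $G-u$ is $2$-connected and in particular $2$-edge-connected. Contracting a cycle of a $2$-edge-connected graph and then contracting the resulting loops leaves a $2$-edge-connected graph. So I repeatedly contract cycles of $G-u$, viewed as cycles of the current graph that avoid $u$, until $G-u$ has shrunk to a single vertex $v$. Contracting cycles never deletes an edge incident with $u$, since such an edge is never a loop. The result is therefore the bond graph $B_{\deg(u)}=F_{\deg(u)}$, a fan-type graph with $t_1=\deg(u)\geq t$.

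\textbf{Bounded-degree case.} Now suppose every vertex has degree less than $t$. Then $|E(G)|>f(t)$ forces $|V(G)|$ to be huge. A $3$-connected graph with many vertices and bounded degree has a long cycle $C$: by bounded degree it has large diameter, hence a long path, and by $3$-connectivity a long cycle. I would contract $C$ to a vertex $w$; every vertex of $C$ has degree at least $3$ in $G$. The aim is to make $w$ the apex $u$ of a fan, so I need the edges leaving $C$ to number at least $t$. If many vertices of $C$ have neighbours off $C$, this holds. Otherwise $C$ has many chords, and I would instead choose $C$ to be a long induced cycle, or replace it by a shorter cycle using a chord, to guarantee many edges leave $C$.

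Next, for each component $K$ of $G-V(C)$, I would contract cycles of $K$ so that it becomes a tree, and then use $3$-connectivity to control how the contracted pieces attach to $w$. Each piece meets $w$ and the rest of the graph in a structured way: a $2$-edge-connected block contracts to a point with parallel edges to $w$, and bridges of $K$ give the path edges. The aim is to organise these pieces into a path $v_1\dots v_n$ with $v_i$ joined to $w$ by $t_i$ parallel edges and $\sum t_i\geq t$. I would also contract any remaining cycles that avoid $w$ and are not on the intended path, possibly after first restricting to a suitable subgraph via further contractions.

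The main obstacle is this last step. When contracting $G-V(C)$ one must avoid destroying the path structure, and one must also avoid producing a graph whose apex-deleted part is a tree that is not a path. If a branching tree arises, I would try to pass to a further cc-minor by contracting cycles through $w$ and a branch, which merges branches. Failing that, I would pick $C$ more cleverly, for example extremally so that $G-V(C)$ is connected, in the spirit of nonseparating induced cycles in $3$-connected graphs, so that all contracted pieces lie on a single path.
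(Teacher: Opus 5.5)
The paper does not prove this statement; it imports it from \cite[Theorem~7.1]{cc-minors}, so your argument has to stand on its own.

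Your high-degree case is correct. $G-u$ is $2$-connected, contracting cycles (and the resulting loops) preserves $2$-edge-connectivity, and no edge at $u$ ever becomes a loop, so you reach $B_{\deg(u)}$ with $\deg(u)\ge t$.

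The bounded-degree case, however, is where all the difficulty lies, and there you give strategies rather than a proof. After contracting $C$ to $w$ and contracting the $2$-edge-connected pieces of $G-V(C)$, you have $w$ together with one tree per component of $G-V(C)$. A fan-type graph needs a single tree, which must be a path, joined to $w$ by at least $t$ edges. None of your fallbacks secures this:
\begin{itemize}
\item A long induced cycle need not exist. $C_3\times P_m$ is $3$-connected with maximum degree $4$. Each triangle layer is a clique separating the earlier layers from the later ones, so every induced cycle lies within two consecutive layers and has at most six vertices.
\item Tutte's non-separating induced cycles carry no length guarantee. In a $3$-connected planar graph they are exactly the facial cycles, so in a large triangulation of maximum degree $6$ each one sends at most $12$ edges out.
\item Connectedness of $G-V(C)$ gives one tree but not a path; take a cubic Halin graph with $C$ its leaf cycle.
\item Contracting a cycle through $w$ that uses part of the tree typically leaves several pieces attached only to $w$. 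It also turns every apex edge at the absorbed vertices into a loop, so the bound $\deg(w)\ge t$ you started from is not preserved.
\end{itemize}

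Part of your plan can be made rigorous. Absorbing an \emph{entire} pendant branch of such a tree, or an entire other component of $G-V(C)$, into $w$ is a legal sequence of cycle contractions. This is because $3$-edge-connectivity of $G$ forces the piece to send at least two edges to $w$, and both sides of each of its internal bridges to send at least one, so $w$ together with the piece is $2$-edge-connected. After pruning down to a path, $3$-edge-connectivity also makes every surviving path vertex adjacent to $w$.

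But each absorbed branch contributes only its attaching bridge as an apex edge. So what is really needed is a quantitative statement. For instance: some $A\subseteq V(G)$ with $G[A]$ connected and bridgeless has a single component of $G-A$ sending at least $h(t)$ edges to $A$. Here $h(t)$ must be large enough that the tree obtained from that component, by contracting its $2$-edge-connected pieces, contains a path on $t$ nodes, a node of degree at least $t$, or a node with at least $t$ edges to $A$. Showing that such an $A$ exists in every large $3$-connected graph of bounded degree is where the real work lies, and the proposal contains no argument for it. As it stands, the bounded-degree case is a gap.
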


For the purposes of this paper, we shall use the following consequence.

\begin{corollary}\label{cor: large cc-minor}
    For every integer $k\geq 2$, there is an integer $f_{\ref{cor: large cc-minor}}(k)$ such that if $M$ is a $3$-connected cographic matroid with $r(M)\geq f_{\ref{cor: large cc-minor}}(k)$, then $M$ has a flat $F$ such that $M|F= M(L)$, where $L$ is a clean ladder with at least $k+1$ vertices.
\end{corollary}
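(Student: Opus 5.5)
We derive Corollary~\ref{cor: large cc-minor} from Theorem~\ref{thm: large_3_con(cc-minors)} by duality. Throughout, write $M=M^*(G)$, and let $f_{\ref{cor: large cc-minor}}(k)$ be large in terms of $f_{\ref{thm: large_3_con(cc-minors)}}(t)$ for a suitable $t=t(k)$, say $t=3k+3$.

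\textbf{Step 1: reduce to a simple $3$-connected graph with many edges.} We may take $G$ to be a $3$-connected graph. Assume $r(M)\geq 4$, so that $|E(M)|\geq 4$. By Whitney's theorem we may then also take $G$ to be simple and $3$-connected, since $M(G)=M^*$ is $3$-connected with at least four elements. Since $|E(G)|\geq r(M)=r^*(M(G))$, choosing $f_{\ref{cor: large cc-minor}}(k)>f_{\ref{thm: large_3_con(cc-minors)}}(t)$ gives $|E(G)|>f_{\ref{thm: large_3_con(cc-minors)}}(t)$. Theorem~\ref{thm: large_3_con(cc-minors)} then yields a cc-minor $H=F_{t_1,\dots,t_n}$ of $G$ with $\sum_i t_i\geq t$.

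\textbf{Step 2: dualize.} Here $M(H)$ is a cc-minor of $M(G)=M^*$. Proposition~\ref{prop: cc-minor_flat} lets us pass between flats of $M$ and cc-minors of $M^*$, so we can produce a flat $F$ of $M$ with $(M|F)^*=M(H)$, that is, $M|F=M^*(H)$. Since $H$ is planar, $M^*(H)=M(H^*)$, where $H^*$ is a planar dual of $H$. Contracting circuits of a cc-minor gives a cc-minor again. So if $H$ itself is unsuitable, we may first pass to a smaller fan-type cc-minor, for instance by contracting some parallel classes $uv_i$ of size at least two (each is a cycle), or by trimming so that the $t_i$ behave well.

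\textbf{Step 3 (main step): identify $H^*$ as a clean ladder.} This is a direct computation. The dual of the fan $F_n$ is again a fan-like graph. Replacing an edge $uv_i$ by $t_i$ parallel edges dualizes to subdividing the corresponding dual edge into a path of length $t_i$. Consequently $F_{t_1,\dots,t_n}^*$ is a path $w_0w_1\cdots w_n$ (dual to the spokes) together with an apex vertex (the outer face), joined to the path by subdivided edges. We must check that this graph admits a path decomposition into $2$-sums of pieces isomorphic to $K_3$, $B_3$ and $K_4$, with the required conditions at the ends and on consecutive $B_3$'s, after possibly first adjusting the $t_i$ by contracting circuits in $H$. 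The parallel class of size $t_i$ should become a run of triangles along the ladder, and the single fan edges should become $K_4$ or $B_3$ junctions.

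\textbf{Expected difficulty and bookkeeping.} I expect the main difficulty to be the case analysis at the two ends of the fan and when some $t_i=1$ occurs consecutively. Consecutive $B_3$ labels are forbidden, and this will require either contracting further cycles or discarding an end of the fan. Finally, the vertex count of the ladder grows linearly in $\sum_i t_i+n$, so $t=3k+3$ gives at least $k+1$ vertices. This bound is what fixes the choice of $f_{\ref{cor: large cc-minor}}(k)$.
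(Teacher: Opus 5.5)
Your outline follows the paper's route: take a simple $3$-connected $G$ with $M=M^*(G)$, apply Theorem~\ref{thm: large_3_con(cc-minors)}, use Proposition~\ref{prop: cc-minor_flat} to get a flat $F$ with $M|F=M^*(H)=M(H^*)$, and identify $H^*$ as a clean ladder. The paper also calls that last identification straightforward. In your proposal, however, it is the only substantive step, you do not carry it out, and your sketch of it is wrong where it matters.

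Let $o$ be the outer face of $H=F_{t_1,\dots,t_n}$, let $f_i$ be the triangular face $uv_iv_{i+1}$ for $1\le i\le n-1$, and set $f_0=f_n=o$. The $t_i$ edges joining $u$ and $v_i$ dualize to a path of length $t_i$ from $f_{i-1}$ to $f_i$. So the duals of the spokes form a single cycle of length $\sum_i t_i$ through $o$. Each edge $v_iv_{i+1}$ dualizes to a single, unsubdivided chord $of_i$. Thus $H^*$ has exactly $\sum_i t_i$ vertices and a path decomposition whose labels are, in order: a circuit of size $t_1+1$, $B_3$, a circuit of size $t_2+2$, $B_3$, \dots, $B_3$, a circuit of size $t_n+1$. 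Each circuit is then split into a chain of triangles. No $K_4$ occurs, and for $n=1$ the graph $H^*$ is just a cycle.

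This also shows that you have located the difficulty in the wrong place. Interior values $t_i=1$, consecutive or not, are harmless, because the piece between two $B_3$'s is a circuit of size $t_i+2\ge 3$. The only failure is $t_1=1$ or $t_n=1$ with $n\ge 2$. Then $o$ and $f_1$ (or $f_{n-1}$) are joined by two parallel edges, so an end label is effectively $B_3$. Your suggested repairs do not handle this:
\begin{itemize}
\item a single edge $uv_1$ is not a cycle, so it cannot be contracted;
\item contracting an interior parallel class identifies $u$ with $v_i$ and makes $u$ a cut vertex;
\item contracting a $2$-cycle inside a class of size at least three creates loops;
\item cutting off an end by contracting $\{uv_1,v_1v_2\}$ together with the class of $uv_2$ can destroy almost all of $\sum_i t_i$ (for example, $F_{1,T,1}$ collapses to $B_2$).
\end{itemize}

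The missing idea is that this bad case never arises. $M$ is $3$-connected with at least four elements, hence simple, so its restriction $M|F=M^*(H)$ is simple. Therefore $H$ has no $2$-edge bond. When $t_1=1$, the set $\{uv_1,v_1v_2\}$ is such a bond, so $t_1\ge 2$, and likewise $t_n\ge 2$. With this, $H^*$ is a clean ladder on $\sum_i t_i$ vertices. So $t=k+1$ already suffices, and no further contraction or trimming is needed.
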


\begin{proof}
    We show that one may take
    \[
        f_{\ref{cor: large cc-minor}}(k)=\max\{3,f_{\ref{thm: large_3_con(cc-minors)}}(k+1)\}.
    \]
    Suppose that $M$ is a $3$-connected cographic matroid with
    $r(M)\geq f_{\ref{cor: large cc-minor}}(k)$. Since $r(M)\geq 3$, there is a $3$-connected graph $G$ such that $M\cong M^*(G)$. Moreover,
    \[
        |E(G)|=|E(M)|\geq r(M)\geq f_{\ref{thm: large_3_con(cc-minors)}}(k+1).
    \]
    By Theorem~\ref{thm: large_3_con(cc-minors)}, the graph $G$ has a cc-minor $H$ such that
    $H\cong F_{t_1,t_2,\dots,t_n}$, where $\sum_{i=1}^n t_i\geq k+1$. Thus $M(H)$ is a cc-minor of $M(G)$. By Proposition~\ref{prop: cc-minor_flat}, the matroid $M$ has a flat $F$ such that $M|F= M^*(H)$.
    
    It is straightforward to verify that the graph $H$ is planar, and its dual is a clean ladder with at least $k+1$ vertices. Hence $M^*(H)= M(H^*)= M(L)$ for some clean ladder $L$ with at least $k+1$ vertices. Therefore $M|F= M(L)$, as required.
\end{proof}

We conclude this section by combining Corollaries~\ref{cor: graphic case} and~\ref{cor: large cc-minor}.

\begin{corollary}\label{cor: graphic and cographic in one}
    For every positive integer $k$, there is an integer $f_{\ref{cor: graphic and cographic in one}}(k)$ such that every $3$-connected graphic or cographic matroid $M$ of rank at least $f_{\ref{cor: graphic and cographic in one}}(k)$ has a $2$-connected graphic flat $F$ such that $M|F\cong M(H)$ for some graph $H$. Moreover, $H$ is one of the following:
    \begin{enumerate}
        \item[(\romannum{1})] $K_{k+1}$;
        \item[(\romannum{2})] a subdivision of $K_{2,k+1}$;
        \item[(\romannum{3})] a graph obtained from a subdivision of $K_{2,k+1}$ by adding an edge joining the two vertices of degree $k+1$; or
        \item[(\romannum{4})] a clean ladder with at least $k+1$ vertices.
    \end{enumerate}
\end{corollary}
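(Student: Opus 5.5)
We prove Corollary~\ref{cor: graphic and cographic in one} by taking
\[
f_{\ref{cor: graphic and cographic in one}}(k)=\max\{3,\,f_{\ref{cor: graphic case}}(k),\,f_{\ref{cor: large cc-minor}}(\max\{k,2\})\}
\]
and splitting according to whether $M$ is graphic or cographic.

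\emph{Cographic case.} Suppose first that $M$ is cographic. Since $M$ is $3$-connected and $r(M)\geq f_{\ref{cor: large cc-minor}}(\max\{k,2\})$, Corollary~\ref{cor: large cc-minor} applies with $\max\{k,2\}\geq k$ in place of $k$. It gives a flat $F$ with $M|F=M(L)$, where $L$ is a clean ladder with at least $k+1$ vertices. So $H=L$ falls under outcome (\romannum{4}).

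It remains to check that $F$ is $2$-connected. A clean ladder is built by $2$-sums of copies of $K_4$, $K_3$ and $B_3$, each of which is $2$-connected, and $2$-sums of $2$-connected matroids are $2$-connected. Hence $M(L)$ is $2$-connected. Here we need $L$ to have at least three vertices (ruled out degenerate cases); this holds because $k+1\geq 3$ once we use $\max\{k,2\}$. If that argument seems too quick, one can instead read $2$-connectivity directly from the $2$-sum structure, or from the fact that the dual fan-type graph is $2$-connected and loopless.

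\emph{Graphic case.} Now suppose that $M$ is graphic. Since $M$ is $3$-connected with $r(M)\geq 3$, it is simple and $2$-connected. Corollary~\ref{cor: graphic case} then gives a $2$-connected flat $F$ with $M|F=M(H)$, where $\si(H)$ is one of the four listed graphs.

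The only point needing care is that the present statement asks for $H$ itself to be in the list, not merely $\si(H)$. This follows from simplicity: since $M$ is simple, $M|F$ is simple, so $H$ can be chosen without loops or parallel edges. Hence $H=\si(H)$, possibly after deleting isolated vertices, which does not change $M(H)$. Thus $H$ itself lies in the list.

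This simplicity step is the only real obstacle in the argument. Everything else is a direct application of the two earlier corollaries.
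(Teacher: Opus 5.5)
Your proof is correct and takes the paper's route: the paper gives no separate argument and obtains this corollary by combining Corollaries~\ref{cor: graphic case} and~\ref{cor: large cc-minor}, exactly as you do. Your two extra checks fill in details the paper leaves implicit: you use $\max\{k,2\}$ because Corollary~\ref{cor: large cc-minor} is stated only for $k\geq 2$, and you use the simplicity of a $3$-connected matroid of rank at least three to pass from $\si(H)$ to $H$ itself.
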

\section{Distance in matroids}\label{sec: distance in matroids}
A useful concept in the proof of Ramsey's theorem for connected graphs is the distance between two vertices; see, for example,~\cite[p.~301]{Diestel}. If two vertices of a simple connected graph $G$ are far apart, then a shortest path between them is a long induced path, one of the unavoidable induced subgraphs. In this section, we introduce an analogous notion of distance for matroids and show that two elements at a large distance in a regular matroid force the existence of a large-rank flat whose restriction is the cycle matroid of a clean ladder. In this sense, clean ladders play the role of $2$-connected analogues of induced paths.

In the introduction, we briefly described the distance between two distinct elements in a $2$-connected matroid. We now give the formal definition. Let $M$ be a matroid, and let $e$ and $f$ be distinct elements of $M$. The \emph{distance between $e$ and $f$ in $M$}, denoted by $\dist_M(e,f)$, is the size of a smallest circuit of $M$ containing both $e$ and $f$. If no circuit of $M$ contains both $e$ and $f$, then we set $\dist_M(e,f)=\infty$. We omit the subscript when the host matroid is clear from the context. The \emph{diameter} of $M$ is the maximum of $\dist(e,f)$ over all pairs of distinct elements $e$ and $f$ of $M$. If $M$ is $2$-connected and $|E(M)|\geq 2$, then every pair of distinct elements of $M$ is contained in a circuit, and hence the diameter of $M$ is finite.

We next consider $2$-connected flats containing two prescribed elements $e$ and $f$. Among such flats, the minimum possible rank is that of the closure of a smallest circuit containing both $e$ and $f$. Indeed, if $C$ is such a circuit, then $\cl_M(C)$ is a $2$-connected flat whose rank is $|C|-1$, that is, $\dist_M(e,f)-1$. Conversely, every $2$-connected flat containing both $e$ and $f$ contains a circuit through $e$ and $f$, and hence has rank at least $\dist_M(e,f)-1$. We will use the following result~\cite[Corollary~1.4]{Ge_Jagdeep_Oxley} that characterizes when the distance between two elements in a $2$-connected regular matroid is a maximum.

\begin{lemma}\label{lem: M is a ladder}
Let $M$ be a simple $2$-connected regular matroid, and let $e$ and $f$ be distinct elements of $M$. If every circuit of $M$ containing both $e$ and $f$ is spanning, then $M\cong M(L)$ for some clean ladder $L$.
\end{lemma}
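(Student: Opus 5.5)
The plan is to exploit the fact that the hypothesis is preserved under taking minors that keep $e$ and $f$, and then to argue by induction on $|E(M)|$.

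\textbf{Minor-closure.} Let $g\notin\{e,f\}$.
\begin{enumerate}
\item[(a)] Every circuit of $M\backslash g$ is a circuit of $M$. If $r(M\backslash g)=r(M)$, then each circuit of $M\backslash g$ through $e$ and $f$ is spanning in $M$, hence spanning in $M\backslash g$.
\item[(b)] Let $D$ be a circuit of $M/g$ containing $e$ and $f$. Either $D$ or $D\cup\{g\}$ is a circuit of $M$, and it is spanning in $M$ by hypothesis. In both cases $r_{M/g}(D)=r(M)-1=r(M/g)$, so $D$ is spanning in $M/g$.
\end{enumerate}
Thus the property ``every circuit through $e,f$ is spanning'' passes to $M\backslash g$ (when $g$ is not a coloop) and to $M/g$. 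It also passes to $\si(M/g)$, provided that in the simplification we keep $e$ and $f$ as the representatives of their parallel classes.

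\textbf{Graphicness.} First I would show that $M$ is graphic, using Theorem~\ref{thm: graphic regular matroids}. Suppose $M$ has an $N$-minor with $N\in\{M^*(K_5),M^*(K_{3,3})\}$. Since $M$ is $2$-connected, a rooted-minor argument should give a minor $N'$ of $M$ containing $e$ and $f$ that still contains $N$ up to a few extra elements; for example, take $N'$ to be a $2$-connected minor using $e$ and $f$ that has $N$ as a minor of small co-size. By minor-closure, $N'$ inherits the property. One then checks directly that $N'$ has a non-spanning circuit through $e$ and $f$. For $N$ itself this holds because $N$ has rank $6$ or $4$ and has many short bonds of the underlying graph: in $K_{3,3}$ and $K_5$, any two edges lie together in some bond of size well below $r+1$. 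I expect this rooted-minor step to be the main obstacle. If it proves too messy, the fallback is to run the whole induction below inside the class of regular matroids, and to show at each extension step that the added element creates a graphic matroid.

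\textbf{Graphic case by induction.} Write $M=M(G)$ with $G$ simple and $2$-connected. Now every cycle through $e$ and $f$ is Hamiltonian. Choose $g\notin\{e,f\}$ such that $G\backslash g$ or $\si(G/g)$ is $2$-connected; such a $g$ exists by the standard fact that every element of a $2$-connected matroid is deletable or contractible keeping $2$-connectivity. By minor-closure and induction, that minor is $M(L')$ for a clean ladder $L'$. It remains to classify how $g$ can be put back, either as an added edge or as an uncontracted vertex split, without creating a shorter cycle through $e$ and $f$.
\begin{enumerate}
\item[(i)] In $L'$, the edges $e$ and $f$ must lie in the two end labels of the path decomposition; otherwise some rung would give a shortcut.
\item[(ii)] An added chord must be a rung of a new $K_4$ or $K_3$ piece. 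A split vertex must create a new $B_3$ or $K_3$ piece.
\end{enumerate}
The alternation conditions in the definition of a clean ladder (no two consecutive $B_3$'s, no $B_3$ at an end, $K_4$ basepoints forming a matching) are exactly what prevent a non-Hamiltonian cycle through $e$ and $f$. Verifying this is a finite local case check on the two or three pieces adjacent to $g$.
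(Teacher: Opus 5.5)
The paper does not prove this lemma; it imports it as \cite[Corollary~1.4]{Ge_Jagdeep_Oxley}, so your argument has to stand on its own. Your minor-closure observation is correct. In fact the property passes to every minor $M/C\backslash D$ containing $e$ and $f$, with $C$ independent and $D$ coindependent. However, neither substantive step is carried out, and as sketched each has a concrete hole.

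\textbf{Graphicness.} Your bond computation in $K_5$ and $K_{3,3}$ concerns pairs of elements of $N$. The property is only inherited by minors that contain $e$ and $f$, and in general $e,f\notin E(N)$. What you actually need is the rooted-minor theorem: a $2$-connected minor $N'$ of $M$ with an $N$-minor and $E(N')=E(N)\cup\{e,f\}$. You would then have to check that every such $N'$ has a non-spanning circuit through $e$ and $f$. That check is not automatic, for two reasons:
\begin{itemize}
\item when $e$ or $f$ is a coextension element the rank goes up, so short circuits of $N$ need not give non-spanning circuits through $e$ and $f$;
\item the possible $N'$ include non-graphic, non-cographic matroids, for instance ones with an $R_{10}$-minor, since $R_{10}/x\cong M^*(K_{3,3})$.
\end{itemize}

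\textbf{Graphic induction.} Classifying how $g$ can be put back is essentially the whole content of the lemma, and it is only gestured at. Moreover, your rule (ii) is false. Take vertices $1,2,3,4,a,b$ and edges $e=1a$, $a2$, $13$, $14$, $23$, $24$, $3b$, $f=b4$. This graph is the clean ladder $K_3$--$K_4$--$K_3$ with $K_4$-basepoints $12$ and $34$, and every cycle through $e$ and $f$ is Hamiltonian. Contracting $g=13$ gives a simple graph which is the clean ladder $K_3$--$B_3$--$K_3$--$B_3$--$K_3$. So a vertex split can merge $B_3$--$K_3$--$B_3$ into a single $K_4$ label, contrary to your claim that a split only creates a new $B_3$ or $K_3$ piece.

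There are further problems with the induction:
\begin{itemize}
\item passing to $\si(G/g)$ discards the edges that form triangles with $g$, and these must be restored before splitting;
\item a chord may join vertices in far-apart labels, so the verification is not a local check on two or three pieces;
\item the induction hypothesis only gives $M(G\backslash g)\cong M(L')$, so $G\backslash g$ is merely $2$-isomorphic to $L'$, not equal to it.
\end{itemize}

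\textbf{A more tractable route.} Work with the canonical tree decomposition $\cT$.
\begin{itemize}
\item The rank count for $2$-sums, together with simplicity, forces $\cT$ to be a path with $e$ and $f$ in its end nodes.
\item The same count forces each node to have the property relative to its two ports.
\item Simplicity forces every cocircuit node to be an internal $U_{1,3}$.
\end{itemize}
Everything then reduces to one claim: a $3$-connected regular node with the property relative to two elements is $U_{2,3}$, or is $M(K_4)$ with those two elements forming a matching. Proving that claim is where your minor-closure observation genuinely helps.
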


The following result states that if a $2$-connected regular matroid has large diameter, then it has a large-rank $2$-connected graphic flat.

\begin{lemma}\label{lem: diameter implies graphic flat}
    Suppose that $M$ is a $2$-connected regular matroid with diameter $d$. Then $M$ has a $2$-connected graphic flat of rank $d-1$.
\end{lemma}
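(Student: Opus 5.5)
Choose distinct elements $e$ and $f$ of $M$ with $\dist_M(e,f)=d$. Let $C$ be a smallest circuit of $M$ containing both, so $|C|=d$, and set $F=\cl_M(C)$. As noted before Lemma~\ref{lem: M is a ladder}, $F$ is a flat of rank $|C|-1=d-1$. It is $2$-connected, because every element of $\cl_M(C)-C$ lies in a circuit with elements of $C$ and so is in the same component as the connected set $C$. Also $M|F$ is regular, being a restriction of a regular matroid. The only remaining task is to show that $M|F$ is graphic, and for this I plan to use Lemma~\ref{lem: M is a ladder}.

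The key step is to compare distances in $M|F$ with distances in $M$. Every circuit of $M|F$ is a circuit of $M$, so $\dist_{M|F}(e,f)\ge \dist_M(e,f)=d$. Every circuit of $M|F$ has size at most $r(M|F)+1=d$. Hence every circuit of $M|F$ containing $e$ and $f$ has size exactly $d=r(M|F)+1$, and is therefore spanning in $M|F$.

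Lemma~\ref{lem: M is a ladder} requires a simple matroid, so I pass to $N=\si(M|F)$. A small point is that $e$ and $f$ themselves cannot be parallel, unless $d=2$. The case $d=2$ is trivial: $C$ is a parallel pair and $M|F$ is a rank-$1$ connected matroid, which is graphic. Assuming $d\ge 3$, choose $N$ on a set containing $e$ and $f$ by keeping one representative from each parallel class, with $e$ and $f$ among the kept elements. Deleting parallel elements preserves $2$-connectedness (for $|E(N)|\ge 2$), regularity, and rank. Every circuit of $N$ through $e$ and $f$ is a circuit of $M|F$, hence has size $d$ and is spanning. Loops do not arise, since $M$ is $2$-connected with at least two elements. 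Lemma~\ref{lem: M is a ladder} then gives $N\cong M(L)$ for a clean ladder $L$.

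Finally, $M|F$ is obtained from $N$ by adding parallel elements, which corresponds to adding parallel edges to $L$. So $M|F$ is graphic, and $F$ is the required $2$-connected graphic flat of rank $d-1$. The only delicate step is checking that the spanning-circuit hypothesis survives simplification, and it does because we are only deleting elements.
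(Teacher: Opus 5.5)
Your proposal is correct and follows essentially the same route as the paper's proof: choose $e,f$ realizing the diameter, set $F=\cl_M(C)$ for a smallest circuit $C$ through them, note that every circuit of $M|F$ containing $e$ and $f$ is spanning, and apply Lemma~\ref{lem: M is a ladder}. Your explicit treatment of the simplification (keeping $e$ and $f$ as representatives) and of the trivial case $d=2$ fills in details that the paper leaves implicit.
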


\begin{proof}
    Let $e$ and $f$ be distinct elements of $M$ such that $\dist(e,f)=d$. Then there is a circuit $C$ of $M$ such that $\{e,f\}\subseteq C$ and $|C|=d$. Let $F=\cl_M(C)$. Clearly, $M|F$ is $2$-connected. Moreover, since every circuit of $M$ containing $\{e,f\}$ has at least $d$ elements, every circuit of $M|F$ containing $\{e,f\}$ is a spanning circuit. Thus, by Lemma~\ref{lem: M is a ladder}, the simplification of $M|F$ is the cycle matroid of a clean ladder. As $F$ is a flat of $M$ and $r(M|F)=|C|-1=d-1$, the result follows.
\end{proof}

In fact, if the canonical tree decomposition $\cT$ of a $2$-connected matroid $M$ has a long path, then $M$ must have large diameter. Recall that the {\it length} of a path is the number of edges in that path.

\begin{lemma}\label{lem: long paths implies large diameter}
    Let $M$ be a $2$-connected matroid, and let $\cT$ be its canonical tree decomposition. If a longest path of $\cT$ has length $l$, then the diameter of $M$ is at least $\lfloor \frac{l+5}{2}\rfloor$.
\end{lemma}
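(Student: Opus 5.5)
The plan is to take a longest path $M_0,M_1,\dots,M_l$ of $\cT$ and choose $e\in E(M_0)$ and $f\in E(M_l)$ that are not basepoints. We then lower-bound the size of every circuit of $M$ containing $e$ and $f$ by a weighted count along the path.

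\textbf{Setup.} Since the path is longest, $M_0$ and $M_l$ are leaves of $\cT$. (If $l=0$, then $M$ is a single vertex label and the bound $\lfloor 5/2\rfloor=2$ is trivial, so assume $l\ge 1$.) Each leaf label has at least three elements but only one basepoint, so $e$ and $f$ exist.

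\textbf{Step 1: decomposing a circuit along the path.} Let $C$ be a circuit of $M$ with $\{e,f\}\subseteq C$. Repeatedly applying the description of circuits of a $2$-sum (Section~2, item (ii)) along the edges of $\cT$, we show that $C$ determines circuits $C_v$ of certain vertex labels $M_v$, with the following properties.
\begin{enumerate}
\item[(a)] $C$ is the symmetric difference of the $C_v$.
\item[(b)] For every path vertex $M_i$, there is such a circuit $C_i$.
\item[(c)] $C_0$ contains $e$ and the basepoint $p_{0,1}$, and $C_l$ contains $f$ and $p_{l-1,l}$.
\item[(d)] For $0<i<l$, the circuit $C_i$ contains both path basepoints $p_{i-1,i}$ and $p_{i,i+1}$.
\end{enumerate}
The justification is that $e$ and $f$ lie on opposite sides of each $2$-sum along the path, so $C$ must use the basepoint of that $2$-sum. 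Since the basepoints cancel and are pairwise distinct, we obtain
\[
|C|\ \ge\ (|C_0|-1)+(|C_l|-1)+\sum_{i=1}^{l-1}\bigl(|C_i|-2\bigr).
\]
Labels hanging off the path contribute nonnegatively: each such $C_v$ has one basepoint toward the path, plus possibly further basepoints that are accounted for by the labels beyond them.

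\textbf{Step 2: weights by label type.} Give each path vertex $i$ the weight $w_i$ equal to its term in the inequality above.
\begin{enumerate}
\item[(i)] If $M_i$ is a circuit, then $C_i=E(M_i)$ has at least three elements. Hence $w_i\ge 2$ at an end and $w_i\ge 1$ internally.
\item[(ii)] If $M_i$ is $3$-connected with at least four elements, it is simple. Any circuit through two given elements then has size at least three, giving the same bounds as in (i).
\item[(iii)] If $M_i$ is a cocircuit, its circuits are parallel pairs. Then $w_i=1$ at an end and $w_i=0$ internally.
\end{enumerate}
Thus every path vertex has baseline weight $1$, with two corrections: an internal cocircuit loses $1$, and an end that is not a cocircuit gains $1$. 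This gives
\[
|C|\ \ge\ (l+1)+2-c,
\]
where $c$ is the number of cocircuit labels on the path.

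\textbf{Step 3: counting cocircuits.} By condition (iii) of the canonical decomposition, no two adjacent vertices are cocircuits. The cocircuit-labelled vertices therefore form an independent set in a path on $l+1$ vertices, so $c\le\lceil (l+1)/2\rceil$. Hence
\[
|C|\ \ge\ l+3-\Bigl\lceil \tfrac{l+1}{2}\Bigr\rceil\ =\ \Bigl\lfloor\tfrac{l+1}{2}\Bigr\rfloor+2\ =\ \Bigl\lfloor \tfrac{l+5}{2}\Bigr\rfloor .
\]
Since $C$ was an arbitrary circuit containing $e$ and $f$, this gives $\dist(e,f)\ge\lfloor (l+5)/2\rfloor$, and so the diameter of $M$ is at least this value.

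\textbf{Main obstacle.} The step needing most care is Step 1: rigorously showing, by induction over the edges of $\cT$, that a circuit of the iterated $2$-sum decomposes into label circuits that pass through every path basepoint, and that branch contributions are nonnegative. I would handle this by splitting $\cT$ at each path edge into two subtrees and applying the $2$-sum circuit description to that single $2$-sum, then recursing into each side. A minor point is checking that the $3$-element $3$-connected labels $U_{1,3}$ and $U_{2,3}$ are classified as cocircuits and circuits, respectively, so that case (ii) above applies only to labels with at least four elements.
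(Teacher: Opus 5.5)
Your proof is correct and follows essentially the same route as the paper. Both choose non-basepoint elements $e,f$ in the two end labels (leaves) of a longest path, observe that every circuit through $e$ and $f$ uses each path basepoint, bound each path label's contribution by whether it is a cocircuit (circuit size $2$) or not (size at least $3$), and use that no two adjacent labels are cocircuits. The only difference is bookkeeping: the paper first performs all off-path $2$-sums to obtain a path decomposition $N_0,\dots,N_l$ and applies the additive formula $\dist_M(e,f)=\sum_i d_i-2l$, which absorbs the branch contributions that you instead handle explicitly by showing they are nonnegative.
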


\begin{proof}
    Let $M_0,M_1,\dots,M_l$ be the vertices of a longest path $\cP$ of $\cT$. For each $i\in\{0,1,\dots,l-1\}$, let $x_i$ be the basepoint involved in the $2$-sum of $M_i$ and $M_{i+1}$. Performing all the $2$-sums corresponding to edges in $E(\cT)-E(\cP)$, we obtain a path decomposition $\cP'$ of $M$ with vertices $N_0,N_1,\dots,N_l$ where $x_i$ is the basepoint involved in the $2$-sum of $N_i$ and $N_{i+1}$ for each $i\in\{0,1,\dots,l-1\}$.

    Choose elements $e\in E(N_0)-\{x_0\}$ and $f\in E(N_l)-\{x_{l-1}\}$. Then
    \begin{align*}
        \dist_M(e,f)=\dist_{N_0}(e,x_0)+\dist_{N_1}(x_0,x_1)+\dots+\dist_{N_l}(x_{l-1},f)-2l.
    \end{align*}
    Let $(d_0,d_1,\dots,d_l)=(\dist_{N_0}(e,x_0),\dist_{N_1}(x_0,x_1),\dots,\dist_{N_l}(x_{l-1},f))$. It is straightforward to see that $d_i\geq 2$ and that equality is attained if and only if $M_i$ is labeled by a cocircuit. Since $\cT$ is the canonical tree decomposition of $M$ and there are no adjacent vertices labeled by cocircuits, it follows that there are at least $\lfloor \frac{l+1}{2}\rfloor$ vertices in $\{M_0,M_1,\dots,M_l\}$ that are not labeled by cocircuits. Hence
    \begin{align*}
        \dist_M(e,f)\geq 2(l+1)+\lfloor \frac{l+1}{2}\rfloor-2l=\lfloor \frac{l+5}{2}\rfloor.
    \end{align*}
    Therefore, the diameter of $M$ is at least $\lfloor \frac{l+5}{2}\rfloor$.
\end{proof}

We note that the bound in Lemma~\ref{lem: long paths implies large diameter} is best possible. Indeed, one can take copies of $U_{1,3}$ and $U_{2,3}$ and form their $2$-sums alternately. If the construction starts and ends with copies of $U_{1,3}$, then the bound in Lemma~\ref{lem: long paths implies large diameter} is attained, as illustrated in Figure~\ref{fig:extremal diameter}.

\begin{figure}[htb]
    \centering
    \resizebox{11cm}{!}{\input{figures/fig_extremal_diameter}}%
    \caption{A graph $G$ such that the diameter of $M(G)$ is exactly $\lfloor\frac{l+5}{2}\rfloor$.}
    \label{fig:extremal diameter}
\end{figure}

In the following subsection, we use Lemmas~\ref{lem: diameter implies graphic flat} and~\ref{lem: long paths implies large diameter} to reduce the proof of Theorem~\ref{thm:large_graphic_flat} to the $3$-connected case.

\subsection{Reduction to the $3$-connected case}\label{subsec: reduction to 3-connected case}

The following proposition is well known. A proof may be found in~\cite[Lemma~3.1(\romannum{1})]{oxsin}.

\begin{proposition}\label{prop: spanning circuit containing e}
Let $M$ be a loopless $2$-connected binary matroid, and let $e\in E(M)$. Then every circuit of $M$ containing $e$ is spanning if and only if $M$ is obtained from a circuit $C$ with $|C|\geq 2$ and $e\in C$ by replacing some elements of $C-\{e\}$ by nonempty parallel classes. In particular, $M$ is $2$-connected and graphic.
\end{proposition}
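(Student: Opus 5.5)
The proof splits into the two implications, plus a short check of the final "in particular" clause.

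\emph{Easy direction.} Suppose $M$ arises from a circuit $C$ with $e\in C$ by replacing elements of $C-\{e\}$ with parallel classes. Then $M$ is the cycle matroid of a cycle in which some edges other than $e$ are replaced by bundles of parallel edges. Any circuit containing $e$ is either a cycle of this graph through $e$, or a pair of parallel edges. A pair of parallel elements cannot contain $e$, since $e$ is not in a nontrivial parallel class (the exception $|C|=2$ is trivially spanning). So a circuit through $e$ must pick exactly one edge from each bundle, and therefore has size $|C|=r(M)+1$. Hence it is spanning. The same description shows $M$ is graphic, and it is $2$-connected because any two elements lie in a common cycle of the underlying graph.

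\emph{Hard direction.} Suppose every circuit through $e$ is spanning, and let $r=r(M)$.

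1. Simplify. Parallel classes not containing $e$ do not affect the hypothesis, because swapping an element for a parallel element preserves the circuit property and the size. Also $e$ is in no nontrivial parallel class unless $r=1$, since otherwise a $2$-circuit $\{e,e'\}$ would be spanning, forcing $r=1$, a degenerate case we handle directly. So it suffices to show that $\si(M)$ is a circuit containing $e$; $\si(M)$ is still $2$-connected.

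2. Use contraction. In $M/e$, each circuit of $M$ through $e$ becomes a circuit of size $r$ in a rank-$(r-1)$ matroid, i.e.\ a basis-sized circuit. I will argue that $M/e$ has no circuits of size less than $r$ other than parallel pairs. Take a circuit $D$ of $M/e$. Then either $D\cup e$ is a circuit of $M$, in which case $D\cup e$ is spanning and $|D|=r$, or $D$ is a circuit of $M$ itself.

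3. Handle the second case, which is the main obstacle: showing $M\backslash e$ has no circuit of size between $3$ and $r$. For this I use binarity. By $2$-connectivity, pick a circuit $C_e$ through $e$ and suppose $D$ is a circuit avoiding $e$. The symmetric difference $C_e\triangle D$ is a disjoint union of circuits, one of which, $C'$, contains $e$. Then $C'$ is spanning, so $|C'|=r+1$, while $C'\subseteq C_e\triangle D$. Choose $C_e$ so that it meets $D$ (possible by $2$-connectivity). The plan is to show that $|C_e\triangle D|<r+1+\ldots$ yields a contradiction unless $D$ is a parallel pair. Simplest first: take $C_e$ and $D$ sharing at least two elements, or apply the strong elimination axiom to $C_e$ and $D$ at an element $x\in C_e\cap D$ to get a circuit through $e$ inside $(C_e\cup D)-x$. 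That circuit must have size $r+1$, and counting ranks gives $r(C_e\cup D)\le r$ with too many elements to avoid a smaller circuit through $e$.

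The cleaner route I would actually pursue is induction on $|E|$. Take a circuit $D$ of $\si(M)$ avoiding $e$ that meets a spanning circuit $C_e$. Then $C_e\cup D$ has rank $r$, so $D$ partitions $C_e-D$ into arcs, and the circuit through $e$ formed by the arc containing $e$ together with $D-C_e$ has size less than $r+1$ unless $|D-C_e|\ge|C_e\cap D|$. Iterating this yields a shortcut, giving a contradiction. Once $\si(M)\backslash e$ has no circuits, it is independent of size $r$, so $\si(M)$ is a single circuit containing $e$. That completes the characterization.
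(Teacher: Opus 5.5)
Your easy direction and the reduction in Step 1 are fine, but the hard direction has a genuine gap: the decisive step is never carried out. Step 3 trails off into ``$|C_e\triangle D|<r+1+\ldots$'', and neither fallback closes it.

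\begin{itemize}
\item The strong-elimination route cannot work as described. It uses only strong elimination and rank counting, which hold in every matroid, whereas the statement is false without binarity. In $U_{2,4}$, every circuit through $e$ is a spanning $3$-circuit, yet $U_{2,4}$ is not a circuit with some elements replaced by parallel classes.
\item The ``arcs'' route treats $C_e$ and $D$ as cycles of a graph. This presupposes the graphicness you are trying to prove. Even in a graph, the arc of $C_e-D$ containing $e$ together with all of $D-C_e$ is in general not a cycle. The promised induction on $|E|$ is never set up.
\item Your Step 3 target, excluding circuits of size between $3$ and $r$ that avoid $e$, would leave spanning circuits avoiding $e$ untreated. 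Step 2 is never used.
\end{itemize}

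The missing idea is simply to choose $D$ well. Your symmetric-difference inequality is the right tool. If $D$ is a circuit avoiding $e$, binarity gives a circuit $C'$ with $e\in C'\subseteq C_e\triangle D$. Since $C'$ is spanning, $|C_e|=r+1=|C'|\le|C_e\triangle D|$, that is, $|D\cap C_e|\le|D-C_e|$. For an arbitrary $D$ this gives nothing.

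However, because $C_e$ is a spanning circuit, $C_e-\{e\}$ is a basis. For any $f\notin C_e$, take $D$ to be the fundamental circuit of $f$ with respect to $C_e-\{e\}$. Then $e\notin D$ and $D-C_e=\{f\}$. The inequality forces $|D\cap C_e|\le 1$, so $|D|\le 2$. Looplessness then gives $D=\{f,g\}$ with $g\in C_e-\{e\}$.

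Hence every element outside $C_e$ is parallel to an element of $C_e-\{e\}$, which is exactly the required structure. This argument needs no induction, no passage to $\si(M)$, and no separate analysis of circuits avoiding $e$.
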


We now use this proposition to prove a result that helps simplify the structure of a matroid when seeking $2$-connected flats.

\begin{lemma}\label{lem: cleaning 2-connected leaves}
    Let $\cT$ be the canonical tree decomposition of a $2$-connected matroid $M$, and let $N$ be a vertex of $\cT$. If $N$ has a $2$-connected flat $F$, then $M$ has a $2$-connected flat $F'$ such that $r(M|F')\geq r(N|F)$ and $F'$ contains every element of $E(M)\cap F$. Moreover, if $F$ is graphic, then $F'$ is graphic.
\end{lemma}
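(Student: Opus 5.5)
We would proceed by extending $F$ across every basepoint it contains, one $2$-sum at a time, using Proposition~\ref{prop: 2-sum of two flats}. Let $X$ be the set of basepoints of $N$ (the elements of $N$ shared with neighbouring vertex labels of $\cT$). Since $F\cap E(M)=F-X$, the task is to deal with the basepoints in $F\cap X$. If $F\cap X=\emptyset$, then $F\subseteq E(M)$; performing all $2$-sums of $\cT$ and noting that $F$ avoids every basepoint, we claim $F$ is already a flat of $M$. To see this, write $M$ as a $2$-sum $N\oplus_2 M'$ along each basepoint $p$ in turn. By Proposition~\ref{prop: 2-sum of two flats}-type reasoning (or directly from the circuit description of $2$-sums), a flat of $N$ not containing $p$ remains a flat of the $2$-sum. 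Here one uses that any circuit $C\triangle D$ with $C-\{p\}\subseteq F$ would force $p\in\cl_N(F)=F$. We take $F'=F$ in this case.

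In general, we order the edges of $\cT$ at $N$. For each basepoint $p\in F\cap X$, let $\cT_p$ be the component of $\cT-N$ attached via $p$, and let $M_p$ be the matroid obtained by performing all $2$-sums inside $\cT_p$. Then $M=N\oplus_2 M_{p_1}\oplus_2\cdots$. Each $M_p$ is $2$-connected, being a $2$-sum of $2$-connected matroids with $p\in E(M_p)$. We choose in $M_p$ a smallest circuit $C_p$ containing $p$ and set $F_p=\cl_{M_p}(C_p)$. This is a $2$-connected flat containing $p$ with $|F_p|\ge 2$. Since every circuit of $M_p|F_p$ through $p$ has size at least $|C_p|=r(F_p)+1$, every such circuit is spanning. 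Moreover, $M_p|F_p$ is binary, as $M$ is regular in our application. If we want the lemma for arbitrary $M$, we handle the binary hypothesis separately or simply choose $F_p$ to be a minimal circuit's closure and argue graphicness only when needed. Proposition~\ref{prop: spanning circuit containing e} then shows that $M_p|F_p$ is a circuit with some elements replaced by parallel classes, hence $2$-connected and graphic.

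Next, we apply Proposition~\ref{prop: 2-sum of two flats} successively: first to $N$ and $M_{p_1}$ with flats $F$ and $F_{p_1}$, then to the result and $M_{p_2}$, and so on. For basepoints $q\in X-F$, we argue as in the first paragraph that the flat persists when we $2$-sum on $M_q$. This produces a flat $F'=(F\cup\bigcup_p F_p)-X$ of $M$ that is $2$-connected, with $M|F'=(N|F)\oplus_2 (M_{p_1}|F_{p_1})\oplus_2\cdots$. The rank of a $2$-sum is the sum of the ranks minus one, and each $M_p|F_p$ has rank at least $1$. Hence $r(M|F')\ge r(N|F)$, and clearly $F'\supseteq F\cap E(M)$. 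If $N|F$ is graphic, then $F'$ is graphic, since graphic matroids are closed under $2$-sums and each $M_p|F_p$ is graphic.

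The main obstacle is the bookkeeping justifying that the successive $2$-sums respect flatness when some basepoints lie outside $F$. It must also be checked that the degenerate case $|F_p|=2$ (a parallel pair) is allowed; our convention permitting $U_{1,2}$ in $2$-sums covers this case.
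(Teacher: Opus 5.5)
Your proposal is correct and follows essentially the same route as the paper: for each basepoint $p\in F$, take a smallest circuit through $p$ in the matroid of the corresponding branch of $\cT$ and pass to its closure. Proposition~\ref{prop: spanning circuit containing e} shows this closure is a circuit with parallel classes, and Proposition~\ref{prop: 2-sum of two flats} glues these pieces to $F$.

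You also make explicit two points that the paper's proof passes over silently. First, you show that a flat avoiding a basepoint remains a flat after the $2$-sum at that basepoint. Second, you note that the graphicness of each $F_p$, via Proposition~\ref{prop: spanning circuit containing e}, requires $M$ to be binary.
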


\begin{proof}
    Let $A_1,A_2,\dots,A_k$ be the $2$-connected matroids corresponding to the components of $\cT-N$. It is clear that $M$ can be obtained by taking $2$-sums of $N$ with $A_1,A_2,\dots,A_k$, in any order. Moreover, $E(A_1),E(A_2),\dots,E(A_k)$ are pairwise disjoint, and each meets $E(N)$ in exactly one element.

    For each $i\in\{1,2,\dots,k\}$, let $a_i$ be the unique element of $E(A_i)\cap E(N)$, and let $C_i$ be a smallest circuit of $A_i$ containing $a_i$. Put $F_i=\cl_{A_i}(C_i)$. By Proposition~\ref{prop: spanning circuit containing e}, the set $F_i$ is a $2$-connected graphic flat of $A_i$ containing $a_i$. Let
    \[
        F'=\left(\bigcup_{i:\ a_i\in F}F_i\right)\cup \left(F-\{a_1,a_2,\dots,a_k\}\right).
    \]
    Then $F'$ contains every element of $E(M)\cap F$. By Proposition~\ref{prop: 2-sum of two flats}, the set $F'$ is a flat of $M$, and $M|F'$ can be obtained by taking $2$-sums, in any order, of $N|F$ and matroids $A_i|F_i$, for all $i$ such that $a_i\in F$. Therefore $r(M|F')\geq r(N|F)$. Since $N|F$ and each $A_i|F_i$ with $a_i\in F$ are $2$-connected, so is $M|F'$. Moreover, since each $A_i|F_i$ is graphic, it follows that $M|F'$ is graphic whenever $N|F$ is graphic.
\end{proof}

The main result of this subsection is the following theorem. It shows that, for a large-rank $2$-connected regular matroid $M$ with canonical tree decomposition $\cT$, either $M$ has a large-rank $2$-connected graphic flat, or some vertex of $\cT$ is labeled by a large-rank $3$-connected matroid.

\begin{theorem}\label{thm: high-rank 3-connected vertex or high-rank graphic flat}
    For every pair of positive integers $k$ and $\rho$ with $3\leq k\leq \rho$, there is an integer
    $f_{\ref{thm: high-rank 3-connected vertex or high-rank graphic flat}}(k,\rho)$ such that every
    $2$-connected regular matroid $M$ with
    \[
        r(M)\geq f_{\ref{thm: high-rank 3-connected vertex or high-rank graphic flat}}(k,\rho)
    \]
    satisfies at least one of the following:
    \begin{enumerate}
        \item[(\romannum{1})] $M$ has a $2$-connected graphic flat $F$ with $r(F)\geq k$, or
        \item[(\romannum{2})] the canonical tree decomposition $\cT$ of $M$ has a vertex labeled by a
        $3$-connected matroid $N$ with $r(N)\geq \rho$.
    \end{enumerate}
\end{theorem}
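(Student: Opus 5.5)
We argue by contradiction: assume $M$ satisfies neither (i) nor (ii), and show that then $r(M)$ is bounded by a function of $k$ and $\rho$. Let $\cT$ be the canonical tree decomposition of $M$. We bound, in order, the rank of each vertex label, the length of paths in $\cT$, and the degrees of vertices of $\cT$. Since a tree of bounded diameter and bounded maximum degree has boundedly many vertices, and
\[
r(M)\le \sum_{N\in V(\cT)} r(N)
\]
for a $2$-sum decomposition, this gives the bound.

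\emph{Step 1: vertex labels have bounded rank.} A $3$-connected label has rank less than $\rho$, since (ii) fails. A circuit label $N$ is itself a $2$-connected graphic flat of $N$. Applying Lemma~\ref{lem: cleaning 2-connected leaves} to $F=E(N)$ gives a $2$-connected graphic flat of $M$ of rank at least $r(N)$, so $r(N)<k$ because (i) fails. A cocircuit label $U_{1,n}$ has rank $1$. Hence every vertex label has rank less than $\rho$.

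\emph{Step 2: paths in $\cT$ are short.} Since (i) fails, Lemma~\ref{lem: diameter implies graphic flat} shows that the diameter $d$ of $M$ satisfies $d-1<k$. Lemma~\ref{lem: long paths implies large diameter} then bounds the length of a longest path of $\cT$ by roughly $2k$.

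\emph{Step 3: degrees in $\cT$ are bounded.} This is the main obstacle. A high-degree vertex $N$ of $\cT$ may have many basepoints, for example when $N=U_{1,n}$ is a cocircuit, and then the rank of $N$ does not control its degree. The plan is to build a large $2$-connected graphic flat from many neighbours of $N$.

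\emph{Case (a): $N$ is not a cocircuit.} Then $N$ is simple up to the parallel structure allowed in $\cT$. Its number of elements should be bounded in terms of its rank: it is regular of rank less than $\rho$, so it has at most $\binom{\rho+1}{2}$ elements. This bounds the degree of $N$.

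\emph{Case (b): $N\cong U_{1,n}$ is a cocircuit.} Then $E(N)$ is a parallel class, hence a $2$-connected rank-$1$ flat of $N$. Apply the construction in Lemma~\ref{lem: cleaning 2-connected leaves} with $F=E(N)$: for each neighbour branch $A_i$, take the closure $F_i$ of a smallest circuit through the basepoint $a_i$. Each $F_i$ has rank at least $1$. By canonicity the neighbours of $N$ are not cocircuits, so each $F_i$ has rank at least $2$ and contributes at least $1$ to the rank after the $2$-sum. The resulting flat $F'$ is $2$-connected and graphic with rank about $n-1$, where $n=\deg N$. Since (i) fails, $n$ is less than about $k+1$. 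Thus every vertex of $\cT$ has degree bounded by roughly $\max\{\binom{\rho+1}{2},k+1\}$.

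The care needed is in checking that the $2$-sum of $U_{1,n}$ with the $A_i|F_i$ really gains rank. Each $A_i|F_i$ is a circuit of size at least $3$ with parallel classes, so the $2$-sum is a subdivided bond, which is graphic and has rank growing with $n$. Combining Steps 1--3, $\cT$ has diameter $O(k)$ and maximum degree $D=O(\rho^2)$. It therefore has at most $D^{O(k)}$ vertices, each of rank less than $\rho$. We can thus take
\[
f_{\ref{thm: high-rank 3-connected vertex or high-rank graphic flat}}(k,\rho)=\rho\, D^{2k+2}+1 .
\]
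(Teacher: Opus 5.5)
Your proof is correct and follows essentially the same argument as the paper, which also bounds the label ranks using circuit labels and Lemma~\ref{lem: cleaning 2-connected leaves}, rules out long paths via Lemmas~\ref{lem: long paths implies large diameter} and~\ref{lem: diameter implies graphic flat}, and rules out high-degree vertices either with the cocircuit-plus-smallest-circuits construction or with a density bound. The differences are cosmetic: you go directly from bounded degree and bounded path length to a bounded number of tree vertices instead of citing Lemma~\ref{lem: long path or high degree vertex}, and you use Heller's bound $\binom{r+1}{2}$ for simple regular matroids where the paper uses the binary bound $2^r-1$ (an improvement the paper itself suggests).
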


Before giving the proof, we recall the following well-known lemmas.

\begin{lemma}\label{lem: long path or high degree vertex}
    For all positive integers $d$ and $l$, there is an integer $f_{\ref{lem: long path or high degree vertex}}(d,l)$ such that every tree with at least $f_{\ref{lem: long path or high degree vertex}}(d,l)$ vertices has a vertex of degree at least $d$, or a path of length at least $l$.
\end{lemma}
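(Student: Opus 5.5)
The plan is to use the standard "bounded degree and bounded diameter force bounded size" argument. I will show that one may take
\[
f_{\ref{lem: long path or high degree vertex}}(d,l)=1+\sum_{i=0}^{l-1}\max\{d-1,1\}^{i},
\]
or any larger bound. I will prove the contrapositive. Suppose a tree $T$ has maximum degree at most $d-1$ and no path of length at least $l$. Then every path in $T$ has length at most $l-1$. It suffices to show that $|V(T)|<f_{\ref{lem: long path or high degree vertex}}(d,l)$.

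First dispose of the degenerate cases. If $d\leq 2$, then every vertex has degree at most $1$, so $T$ has at most two vertices. If $l=1$, then $T$ has no edge, so $|V(T)|=1$. In each case the count is below the bound, which handles the small values (one may simply enlarge the bound to absorb them).

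For the main case, assume $d\geq 3$ and $l\geq 2$, and fix any vertex $v$ of $T$. Since $T$ is a tree, every vertex $u$ is joined to $v$ by a unique path. That path has length at most $l-1$, so every vertex lies at distance at most $l-1$ from $v$. Now count by layers. There is one vertex at distance $0$. There are at most $d-1$ vertices at distance $1$, because $\deg(v)\leq d-1$. Each vertex at distance $i\geq 1$ has one neighbour closer to $v$, so it has at most $d-2$ children in the next layer. Hence the number of vertices at distance $i$ is at most $(d-1)^{i}$. Summing over $i$ from $0$ to $l-1$ gives
\[
|V(T)|\leq \sum_{i=0}^{l-1}(d-1)^{i}<f_{\ref{lem: long path or high degree vertex}}(d,l).
\]
This is the required contradiction, so any tree with at least $f_{\ref{lem: long path or high degree vertex}}(d,l)$ vertices has a vertex of degree at least $d$ or a path of length at least $l$.

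There is no real obstacle here. The only care needed is the bookkeeping of the degenerate values of $d$ and $l$, and being consistent about "length" meaning the number of edges. To avoid fuss, I would simply state the bound as $1+\sum_{i=0}^{l-1}d^{i}$, which dominates all cases at once.
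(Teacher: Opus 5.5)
Your proof is correct: the paper gives no proof of this lemma and simply recalls it as well known, and your contrapositive layer count is the standard argument behind it. (A tree of maximum degree at most $d-1$ with no path of length $l$ has every vertex within distance $l-1$ of a fixed root, hence at most $\sum_{i=0}^{l-1}(d-1)^i$ vertices.) One small bookkeeping slip: when $d=2$ and $l=1$, your $d\le 2$ case alone allows two vertices, which is not below $f(2,1)=2$, but your separate $l=1$ case covers that tree, so the conclusion stands.
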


\begin{lemma}\label{lem: binary projective geometry}
    For every positive integer $r$, a rank-$r$ simple binary matroid has at most $2^r-1$ elements.
\end{lemma}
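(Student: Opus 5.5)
The plan is to count vectors in a binary representation directly. Let $M$ be a simple binary matroid of rank $r$. Since $M$ is binary, it is represented over $\GF(2)$ by the columns of some matrix $A$. By row reduction we may discard redundant rows, so we will assume that $A$ has exactly $r$ rows and rank $r$. Then each element $e\in E(M)$ corresponds to a column vector $v_e\in \GF(2)^r$, and a set of elements is independent in $M$ exactly when the corresponding columns are linearly independent.

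Next we translate simplicity into a condition on these columns, in two steps.
\begin{enumerate}
\item[(a)] Since $M$ has no loops, no $v_e$ is the zero vector.
\item[(b)] Since $M$ has no parallel pairs, for distinct $e,f$ the set $\{v_e,v_f\}$ is linearly independent. Over $\GF(2)$ the only nonzero scalar is $1$, so two nonzero vectors are dependent exactly when they are equal. Hence $v_e\neq v_f$.
\end{enumerate}

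Thus $e\mapsto v_e$ is an injection from $E(M)$ into $\GF(2)^r-\{0\}$, a set of size $2^r-1$. Therefore $|E(M)|\leq 2^r-1$.

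There is no real obstacle. The only point needing care is that a binary matroid of rank $r$ can be represented in $\GF(2)^r$ itself, which the row-reduction step handles. The bound is attained by $\PG(r-1,2)$, which is the case of equality.
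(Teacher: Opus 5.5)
Your proposal is correct: after row-reducing to an $r$-rowed representation, simplicity forces the columns to be distinct nonzero vectors of $\GF(2)^r$, which gives the bound. The paper offers no proof of this lemma and simply recalls it as well known; your counting argument is the standard justification behind it.
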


\begin{proof}[Proof of Theorem~\ref{thm: high-rank 3-connected vertex or high-rank graphic flat}] Let
\[
    v=f_{\ref{lem: long path or high degree vertex}}(2^{\rho-1},2k-3).
\]
We shall show that if
\[
    r(M)\geq \rho v-2v+1,
\]
then at least one of (\romannum{1}) and (\romannum{2}) holds. Suppose that both (\romannum{1}) and (\romannum{2}) fail. We first prove the following.

\begin{sublemma}\label{sublem: every vertex rank less than rho}
    Each vertex of $\cT$ is labeled by a matroid of rank less than $\rho$.
\end{sublemma}

Suppose that some vertex $V$ of $\cT$ is labeled by a matroid with $r(V)\geq \rho$. Since (\romannum{2}) fails, $V$ is labeled by a circuit or a cocircuit. As $E(V)$ is a $2$-connected graphic flat of $V$, Lemma~\ref{lem: cleaning 2-connected leaves} implies that $M$ has a $2$-connected graphic flat of rank at least $\rho\geq k$, a contradiction. Thus~\ref{sublem: every vertex rank less than rho} holds.\\

Next we prove the following.

\begin{sublemma}\label{sublem: T has at least v vertices}
    The tree $\cT$ has at least $v$ vertices.
\end{sublemma}

Recall that
\[
    r(M)=\sum_{N\in V(\cT)}r(N)-|E(\cT)|.
\]
By~\ref{sublem: every vertex rank less than rho},
\[
    r(M)\leq (\rho-1)|V(\cT)|-|E(\cT)|
    =\rho |V(\cT)|-2|V(\cT)|+1.
\]
Suppose that $|V(\cT)|<v$. Then, as $\rho\geq 3$,
\[
    r(M)<\rho v-2v+1,
\]
a contradiction. Therefore $\cT$ has at least $v$ vertices, that is,~\ref{sublem: T has at least v vertices} holds.\\

Recall that
\[
    v=f_{\ref{lem: long path or high degree vertex}}(2^{\rho-1},2k
    -3).
\]
By Lemma~\ref{lem: long path or high degree vertex}, the tree $\cT$ has a vertex of degree at least $2^{\rho-1}$ or a path of length at least $2k-3$. If $\cT$ has a path of length at least $2k-3$, then Lemma~\ref{lem: long paths implies large diameter} implies that the diameter of $M$ is at least $k+1$. Hence, by Lemma~\ref{lem: diameter implies graphic flat}, $M$ has a $2$-connected graphic flat of rank at least $k$, contradicting the fact that (\romannum{1}) fails.

We may now assume that $\cT$ has a vertex $P$ of degree at least $2^{\rho-1}$. Since each edge incident with $P$ corresponds to a distinct element of $E(P)$, we have
\[
    |E(P)|\geq 2^{\rho-1}.
\]

Next we prove the following.

\begin{sublemma}\label{sublem: P is 3-connected}
    The vertex $P$ is labeled by a simple $3$-connected regular matroid.
\end{sublemma}

Suppose first that $P$ is labeled by a circuit or a cocircuit. Since $|E(P)|\geq 2^{\rho-1}$ and $\rho\geq 3$, the vertex $P$ is labeled by a cocircuit, or by a circuit with at least four elements. If $P$ is a circuit, then, since $\rho\geq 3$,
\[
    r(P)=|E(P)|-1\geq 2^{\rho-1}-1\geq\rho,
\]
contradicting~\ref{sublem: every vertex rank less than rho}. Thus $P$ is labeled by a cocircuit.

Let $D_1,D_2,\dots,D_n$ be the $2$-connected matroids corresponding to the components of $\cT-P$. Then $M$ can be obtained by taking $2$-sums of $P$ with $D_1,D_2,\dots,D_n$, in any order. For each $i\in\{1,2,\dots,n\}$, let $d_i$ be the unique element of $E(D_i)\cap E(P)$, and let $O_i$ be a smallest circuit of $D_i$ containing $d_i$. Put
\[
    Q_i=\cl_{D_i}(O_i).
\]
By Proposition~\ref{prop: spanning circuit containing e}, the set $Q_i$ is a $2$-connected graphic flat of $D_i$ containing $d_i$. Let
\[
    Q=\left(\bigcup_{1\leq i\leq n}Q_i\right)
        \cup \left(E(P)-\{d_1,d_2,\dots,d_n\}\right).
\]
Then $Q$ is a flat of $M$, and $M|Q$ can be obtained by taking $2$-sums, in any order, of $P$ with each $D_i|Q_i$, for $i\in\{1,2,\dots,n\}$. Since each $D_i|Q_i$ and $P$ are $2$-connected and graphic, so is $M|Q$. Moreover, since $P$ is not adjacent in $\cT$ to a vertex labeled by a cocircuit, each circuit $O_i$ has at least three elements. Hence each $D_i|Q_i$ has rank at least two. As $n$ is the degree of $P$ in $\cT$ and $n\geq 2^{\rho-1}$, we have $r(M|Q)\geq 2^{\rho-1}+1>k$. Thus $M$ has a $2$-connected graphic flat of rank at least $k$, a contradiction. Hence~\ref{sublem: P is 3-connected} holds.\\

As $P$ is a simple $3$-connected regular matroid with at least $2^{\rho-1}$ elements, Lemma~\ref{lem: binary projective geometry} implies that $r(P)\geq \rho$. Therefore (\romannum{2}) holds, a contradiction. This completes the proof of Theorem~\ref{thm: high-rank 3-connected vertex or high-rank graphic flat}.
\end{proof}

We note that the integer $f_{\ref{thm: high-rank 3-connected vertex or high-rank graphic flat}}(k,\rho)$ used in the previous proof can easily be improved. One way to do so is to replace Lemma~\ref{lem: binary projective geometry} with a sharper density bound for regular matroids. Since we only need the existence of such an integer $f_{\ref{thm: high-rank 3-connected vertex or high-rank graphic flat}}(k,\rho)$, we use Lemma~\ref{lem: binary projective geometry}, which may be more familiar to most readers.

As proved by Theorem~\ref{thm: high-rank 3-connected vertex or high-rank graphic flat}, if a large-rank $2$-connected matroid $M$ has no large-rank $2$-connected graphic flat, then the canonical tree decomposition $\cT$ of $M$ must have a vertex labeled by a large-rank $3$-connected matroid $N$. We note that a flat $F$ of $N$ need not be a flat of $M$, since some elements of $F$ may be involved in the $2$-sums corresponding to the edges of $\cT$ incident with the vertex labeled by $N$. However, if $F$ is a $2$-connected graphic flat of $N$, then Lemma~\ref{lem: cleaning 2-connected leaves} guarantees a $2$-connected graphic flat $F'$ of $M$ such that $r(M|F')\geq r(N|F)$. Therefore, it remains to find a large-rank $2$-connected graphic flat in a $3$-connected regular matroid. In the next section, we establish several tools for this purpose.

\section{The enlarged-wheel replacement theorem}\label{sec: downgrade 3-sums}

In this section, we introduce tools for simplifying the structure on one side of a $3$-sum. The main result of this section is Theorem~\ref{thm: a graph that keeps a triangle}. Before presenting it, we prove several useful lemmas.

Let $e$ and $f$ be distinct elements in a matroid $M$. We say $e$ is {\it freer} than $f$ if $f$ is contained in the closure of every circuit containing $e$. If neither $e$ nor $f$ is freer than the other, then $e$ and $f$ are {\it incomparable}. We omit the elementary proof of the first two of the following results.

\begin{proposition}\label{prop:freer}
    Let $e$ and $f$ be distinct elements in a matroid $M$ such that $e$ is freer than $f$. If $N$ is a minor of $M$ such that $\{e,f\}\subseteq E(N)$, then $e$ is freer than $f$ in $N$.
\end{proposition}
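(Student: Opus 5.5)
The plan is to check the minor operations one at a time. Every minor $N$ of $M$ with $\{e,f\}\subseteq E(N)$ can be written as $N=M\backslash X/Y$ with $X\cap Y=\emptyset$ and $e,f\notin X\cup Y$. By induction on $|X\cup Y|$, it suffices to show that the relation ``$e$ is freer than $f$'' is preserved when we delete a single element $g\notin\{e,f\}$ and when we contract such an element. For this, recall the definition: $e$ is freer than $f$ in $M$ if $f\in\cl_M(C)$ for every circuit $C$ of $M$ with $e\in C$.

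\emph{Deletion.} The circuits of $M\backslash g$ are exactly the circuits of $M$ that avoid $g$. Let $C$ be a circuit of $M\backslash g$ containing $e$. Then $C$ is a circuit of $M$ containing $e$, so $f\in\cl_M(C)$. Since $f\neq g$, we get $f\in\cl_M(C)-\{g\}=\cl_{M\backslash g}(C)$.

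\emph{Contraction.} If $g$ is a loop of $M$, then $M/g=M\backslash g$ and the deletion case applies. So assume $g$ is not a loop. Let $C'$ be a circuit of $M/g$ containing $e$. Then either $C'$ or $C'\cup\{g\}$ is a circuit $C$ of $M$, and in either case $e\in C$. By hypothesis $f\in\cl_M(C)\subseteq\cl_M(C'\cup\{g\})$. Now $\cl_{M/g}(C')=\cl_M(C'\cup\{g\})-\{g\}$, and $f\neq g$, so $f\in\cl_{M/g}(C')$.

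Combining the two cases with the induction finishes the proof. The only point that needs care is the contraction step: one has to use the standard description of the circuits of $M/g$ as the minimal nonempty sets $C-\{g\}$ with $C$ a circuit of $M$, together with the closure formula in a contraction. Neither requires any new idea.
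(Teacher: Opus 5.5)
Your proof is correct. The paper omits this proof as elementary, and your one-element-at-a-time check (using $\cl_{M\backslash g}(C)=\cl_M(C)-\{g\}$ for deletion, and for contraction the fact that one of $C'$ or $C'\cup\{g\}$ is a circuit of $M$ together with $\cl_{M/g}(C')=\cl_M(C'\cup\{g\})-\{g\}$) is exactly the routine verification being alluded to.
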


\begin{proposition}\label{prop: K4 incomparable}
    If $e$ and $f$ are distinct elements in $M(K_4)$, then $e$ and $f$ are incomparable.
\end{proposition}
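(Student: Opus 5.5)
The plan is to prove the contrapositive form directly from the definition: to show that $e$ is not freer than $f$, it suffices to exhibit a single circuit $C$ of $M(K_4)$ with $e\in C$ and $f\notin \cl(C)$. Since the hypothesis is symmetric in $e$ and $f$, the same construction with the roles of $e$ and $f$ exchanged then shows that $f$ is not freer than $e$. Together these give incomparability.

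To find such a circuit, I will use the triangles of $K_4$. Label the vertices of $K_4$ by $u,v,w,x$, and write $e=uv$ (every edge has this form up to relabeling). The key observation is that each triangle of $K_4$ is a rank-$2$ flat of $M(K_4)$. Indeed, a triangle has rank $2$, and in $M(K_4)$ the closure of a triangle on vertex set $S$ with $|S|=3$ consists exactly of the edges with both ends in $S$. That is just the triangle itself. Equivalently, $M(K_4)$ is simple and each of its rank-$2$ flats has exactly three elements.

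The argument then splits into two cases according to how $f$ meets $e$. If $f$ shares a vertex with $e$, say $f=vw$, take $C$ to be the triangle on $\{u,v,x\}$. It contains $e$, and $\cl(C)=C$ does not contain $f$, because $w\notin\{u,v,x\}$. If $f$ is disjoint from $e$, so that $f=wx$, take $C$ to be the triangle on $\{u,v,w\}$. It contains $e$, and $\cl(C)=C$ does not contain $f$, because $x\notin\{u,v,w\}$. In either case $e$ is not freer than $f$. The relation between $e$ and $f$ (adjacent or forming a matching) is symmetric, so the same choice with the roles swapped shows that $f$ is not freer than $e$.

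There is no genuine obstacle here. The only point needing a word of justification is that triangles of $K_4$ are flats, which follows from the fact that the closure of an edge set in a graphic matroid adds only edges whose ends lie in a common component of the spanned subgraph.
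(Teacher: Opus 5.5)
Your argument is correct: the paper omits this proof as elementary, and choosing a triangle through $e$ whose vertex set misses an end of $f$ is exactly the routine verification it leaves to the reader. One harmless slip is your aside that every rank-$2$ flat of $M(K_4)$ has exactly three elements, which is false because a pair of opposite edges such as $\{uv,wx\}$ is a two-element flat; you mean ``at most three,'' though your direct closure argument does not need this remark.
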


Oxley~\cite[Corollary~(3.7)]{Oxley_nonbinary} proved the following result.

\begin{lemma}\label{lem: K4 use a triangle}
    Let $M$ be a $3$-connected binary matroid having rank and corank at least three and suppose that $\{a,b,c\}\subseteq E(M)$. Then $M$ has a minor $N$ isomorphic to $M(K_4)$ such that $\{a,b,c\}\subseteq E(N)$.
\end{lemma}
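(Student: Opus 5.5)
The plan is to argue by induction on $|E(M)|$, reducing $M$ one element at a time while keeping $a$, $b$ and $c$, until a wheel is reached. Since $M$ is binary, $3$-connected, and has rank and corank at least three, it has an $M(K_4)$-minor: by the Wheels-and-Whirls Theorem, $M$ is either a wheel $M(\mathcal{W}_n)$ with $n\geq 3$ or it has an element whose deletion or contraction stays $3$-connected. So the base of the induction is the case in which $M$ is itself a wheel.

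For the wheel case, I would argue directly. Each element of $M(\mathcal{W}_n)$ is a spoke or a rim element. Given three of them, I would contract spokes and delete rim elements away from $\{a,b,c\}$, choosing these operations so that the result is a smaller wheel. This shrinks $M(\mathcal{W}_n)$ to $M(\mathcal{W}_3)\cong M(K_4)$ while keeping $a,b,c$. Concretely, contracting a spoke $s$ and deleting an adjacent rim element $r$, with $\{s,r\}\cap\{a,b,c\}=\emptyset$, gives $M(\mathcal{W}_{n-1})$. When $n\geq 4$ there are at least $8$ elements, and a short case check on the cyclic positions of $a,b,c$ shows that such a pair $\{s,r\}$ always exists.

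For the inductive step, assume $M$ is not a wheel. I want an element $e\notin\{a,b,c\}$ such that $M\backslash e$ or $M/e$ is $3$-connected with rank and corank at least three. Then induction applies to that minor, which is binary, and we are done. The rank and corank conditions fail only when the minor has rank or corank $2$. A binary $3$-connected matroid of rank or corank $2$ has at most three elements, so this happens only near $M(K_4)$ and can be handled separately. The main obstacle is that every element $e$ for which some single-element removal is $3$-connected might lie in $\{a,b,c\}$. To handle this, I would use Bixby's Lemma: for each $e$, either $\si(M/e)$ or $\mathrm{co}(M\backslash e)$ is $3$-connected. Take $e\notin\{a,b,c\}$ and say $\si(M/e)$ is $3$-connected. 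Each parallel class of $M/e$ arises from a triangle of $M$ through $e$. From each class I keep a representative, choosing $a$, $b$ or $c$ whenever the class contains one. The only conflict is a class containing two of them, say $a$ and $b$, which means $\{e,a,b\}$ is a triangle of $M$. This is the delicate case.

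In that case I would instead exploit the triangle $\{e,a,b\}$. By the dual version of Tutte's Triangle Lemma, some element of the triangle has a $3$-connected deletion. Alternatively, I would replace $e$ by a different element that avoids $\{a,b,c\}$. Such an element exists, because when the count of elements with a $3$-connected deletion or contraction is small the structure forces $M$ to be a wheel; this follows from the Oxley--Wu bounds on non-essential elements in $3$-connected matroids that are not wheels or whirls. If all else fails, there is a fallback. Run the argument on $M$ for the pair $\{a,b\}$ together with $c$, find an $M(K_4)$-minor $N$ containing $\{a,c\}$, and then use that $\{e,a,b\}$ is a triangle. Contracting and deleting elements other than $a,b,e$ preserves a circuit that lies inside $\{e,a,b\}$ and contains $a$ and $b$. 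From this one can exchange $e$ for $b$ inside the minor via the binary circuit structure, since in $M(K_4)$ any element can be swapped with a parallel copy. I expect this triangle-collision case to be the main technical step. The dual collision, a triad $\{e,a,b\}$ arising in $\mathrm{co}(M\backslash e)$, is handled symmetrically by duality, because the class of binary matroids and $M(K_4)$ are both self-dual.
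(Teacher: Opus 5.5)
The paper gives no proof of this lemma; it quotes it from Oxley's Corollary (3.7). Your Bixby-Lemma induction is therefore an independent route, and its overall shape is reasonable. There is, however, a genuine gap at exactly the collision step you flag, because none of your three remedies works.

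\emph{A counterexample to the first two remedies.} Take $M=M(K_3\times K_2)$, the cycle matroid of the triangular prism. Deleting a triangle edge creates a degree-$2$ vertex, and contracting one creates a parallel pair. So the only elements with a $3$-connected deletion or contraction are the three rungs; contracting a rung gives $M(\mathcal{W}_4)$. This has two consequences.
\begin{itemize}
\item No element of either triangle has a $3$-connected deletion. That contradicts what you attribute to Tutte's Triangle Lemma, which only yields a triad meeting the triangle.
\item If $\{a,b,c\}$ is the set of rungs, then no element outside $\{a,b,c\}$ is non-essential, even though $M$ is not a wheel. Oxley and Wu guarantee only two non-essential elements, and they may all lie in $\{a,b,c\}$.
\end{itemize}

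\emph{The fallback.} It is circular: it needs an $M(K_4)$-minor through another triple such as $\{a,c,e\}$, which is the same problem with no reason for the collision to disappear. The exchange step is also misjustified, since $e$ and $b$ are not parallel. Suppose $M'$ is a minor with $M'\backslash b\cong M(K_4)$ and $\{a,b,e\}$ is a triangle of $M'$. Then either $M'\cong F_7$, or $\{a,e,f\}$ is a triangle of $M'\backslash b$ for some $f$ and $b$ is parallel to $f$ in $M'$. When $f=c$, no $M(K_4)$-minor of $M'$ contains both $b$ and $c$.

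\emph{The missing idea is a count.} Since $M$ is simple, cosimple and binary, each pair from $\{a,b,c\}$ lies in at most one triangle and at most one triad. Hence at most six elements outside $\{a,b,c\}$ can cause a collision. If $|E(M)|\ge 10$, then $r(M),r^*(M)\ge 4$, and you can choose $e$ outside $\{a,b,c\}$ and outside these six elements. Bixby's Lemma then gives a $3$-connected $\si(M/e)$ or $\mathrm{co}(M\backslash e)$ in which $a,b,c$ survive as distinct representatives. Its rank (respectively corank) is at least three, so it has at least four elements and induction applies. The matroids with at most nine elements must still be handled directly or by a finer argument, and your proposal does neither.

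\emph{The base case is also wrong as stated.} Contracting a spoke and deleting an adjacent rim element gives a fan with a parallel pair, not $M(\mathcal{W}_{n-1})$. You must contract a rim element and delete a spoke at one of its ends. With that correction, your pigeonhole argument on the cyclic sequence of $2n\ge 8$ alternating spokes and rim elements does find such a pair avoiding $\{a,b,c\}$.
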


The following result is used frequently in our arguments.

\begin{lemma}\label{lem: circuit contains only one in its closure}
Let $M$ be a $2$-connected binary matroid and let $\{a,b,c\}$ be a triangle of $M$. If $\{a,b,c\}$ is contained in a $3$-connected minor of $M$ with at least four elements, then, for each $x\in\{a,b,c\}$, there is a circuit $C$ of $M$ such that $x\in C$ and $\cl(C)\cap \{a,b,c\}=\{x\}$.
\end{lemma}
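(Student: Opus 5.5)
The key observation is that, in a binary matroid, the closure of a circuit meets the triangle $T=\{a,b,c\}$ in either one element or all three. This reduces the lemma to a statement about the freer relation, which passes to minors (Proposition~\ref{prop:freer}) and fails in $M(K_4)$ (Proposition~\ref{prop: K4 incomparable}). Lemma~\ref{lem: K4 use a triangle} then produces an $M(K_4)$-minor containing $T$.

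\emph{Step 1: reduction to the freer relation.} Fix $x\in T$ and let $C$ be any circuit of $M$ containing $x$. Suppose $\cl(C)$ contains some $y\in T-\{x\}$. Then $\cl(C)$ contains the two elements $x,y$ of the triangle $T$, so it also contains the third element, since that element lies in $\cl(\{x,y\})$. Hence
\[
\cl(C)\cap T\in\bigl\{\{x\},T\bigr\}.
\]
Consequently, if the lemma fails for $x$, then every circuit containing $x$ has $y$ in its closure, for each $y\in T-\{x\}$. In other words, $x$ is freer than $y$ in $M$. Since $M$ is $2$-connected, circuits containing $x$ exist, so this is a genuine constraint rather than a vacuous one. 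Choose any $y\in T-\{x\}$.

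\emph{Step 2: an $M(K_4)$-minor containing $T$.} Let $N$ be the given $3$-connected minor of $M$ with $T\subseteq E(N)$ and $|E(N)|\geq 4$. I would check that $N$ has rank and corank at least three.
\begin{enumerate}
\item[(\romannum{1})] Since $N$ is $3$-connected with at least four elements, it is simple and cosimple, and its rank and corank are at least two.
\item[(\romannum{2})] A simple binary matroid of rank $2$ has at most $3$ elements by Lemma~\ref{lem: binary projective geometry}. So $r(N)\geq 3$.
\item[(\romannum{3})] Applying the same bound to the simple binary matroid $N^*$ gives $r(N^*)\geq 3$.
\end{enumerate}
Lemma~\ref{lem: K4 use a triangle}, applied to $N$ and $\{a,b,c\}$, then gives a minor $K$ of $N$ with $K\cong M(K_4)$ and $T\subseteq E(K)$. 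Since $K$ is a minor of $N$, it is also a minor of $M$.

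\emph{Step 3: the contradiction.} By Step 1 and Proposition~\ref{prop:freer}, $x$ is freer than $y$ in $K$. This contradicts Proposition~\ref{prop: K4 incomparable}, which says any two distinct elements of $M(K_4)$ are incomparable. So for each $x\in T$ there is a circuit $C\ni x$ with $\cl(C)\cap T=\{x\}$.

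The main point needing care is Step 2, namely verifying the rank and corank hypotheses of Lemma~\ref{lem: K4 use a triangle} from the bare assumption $|E(N)|\geq 4$. The binary density bound handles this, as shown above.
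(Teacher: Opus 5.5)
Your proof is correct and follows the paper's argument. The paper likewise takes an $M(K_4)$-minor containing $\{a,b,c\}$ from Lemma~\ref{lem: K4 use a triangle}, transfers incomparability back to $M$ via Proposition~\ref{prop:freer}, and uses the triangle to pass from $b\notin\cl(C)$ to $c\notin\cl(C)$; it phrases this directly rather than by contradiction, and it asserts in one sentence the rank/corank bound that you verify explicitly with the binary density bound.
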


\begin{proof}
By symmetry, we may assume that $x=a$. Because a $3$-connected binary matroid with at least four elements has rank and corank at least three, Lemma~\ref{lem: K4 use a triangle} implies that $M$ has a $M(K_4)$-minor $N$ such that $\{a,b,c\}\subseteq E(N)$. By Proposition~\ref{prop: K4 incomparable}, the elements $a,b,c$ are mutually incomparable in $N$. Hence they are also mutually incomparable in $M$ by Proposition~\ref{prop:freer}. Therefore, there is a circuit $C$ of $M$ with $a\in C$ and $b\notin\cl(C)$. Since $\{a,b,c\}$ is a triangle, we have $b\in\cl(\{a,c\})$, so $c\notin\cl(C)$. Hence $\cl(C)\cap\{a,b,c\}=\{a\}$.
\end{proof}

For $n\geq 2$, a {\it wheel} is a graph that is obtained from an $n$-cycle (the {\it rim}) by adding a vertex $v$ and joining $v$ to every vertex of the rim by a single edge (a {\it spoke}). An {\it enlarged wheel} is a graph obtained from a wheel graph by adding parallel edges to spoke edges, and subdividing rim edges. The {\it rim} of an enlarged wheel is the potentially subdivided rim of the initial wheel. Figure~\ref{fig:broken wheels} shows some examples of enlarged wheels.

\begin{figure}[htb]
    \centering
    \resizebox{11cm}{!}{\input{figures/fig_broken_wheels}}%
    \caption{Examples of enlarged wheels.}
    \label{fig:broken wheels}
\end{figure}

The following result gives a sufficient condition under which a regular matroid is graphic.

\begin{lemma}\label{lem: a regular matroid having circuit hyperplane is graphic}
    If $M$ is a regular matroid with a circuit-hyperplane $C$, then $M$ is graphic.
\end{lemma}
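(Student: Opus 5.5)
Since $M$ is regular, it has no minor isomorphic to $M^*(K_5)$ or $M^*(K_{3,3})$ precisely when it is graphic (Theorem~\ref{thm: graphic regular matroids}). So the plan is to show that the circuit-hyperplane $C$ rules out these minors. The cleanest route is through relaxation and duality. Since $C$ is a circuit-hyperplane of $M$, the set $E(M)-C$ is a circuit-hyperplane of $M^*$. It therefore suffices to show that a regular matroid with a circuit-hyperplane is cographic, or, more directly, to find a graph realizing $M$.

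\textbf{First step: a representation adapted to $C$.} Let $r=r(M)$ and pick $e\in C$. Then $B=C-\{e\}$ is a basis, and for each $y\notin C$ the fundamental circuit $C(y,B)$ is well defined. Because $C$ is a hyperplane and $y\notin C$, the flat $\cl(C)$ does not contain $y$. We take the totally unimodular standard representation $[I_r\mid A]$ with respect to $B$. The column of $e$ is the all-ones vector, since $C$ is the fundamental circuit of $e$.

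\textbf{Second step: the graph.} Rather than continuing with the matrix, the more promising approach is to build the graph directly. Consider the single-element extension viewpoint: the matroid $M\backslash e$ has $B$ as a basis, and $B$ is now a cocircuit complement situation. More usefully, contracting $e$ gives $M/e$, in which $C-\{e\}$ is a circuit-hyperplane-like set. The approach I would actually commit to uses Seymour's decomposition (Theorem~\ref{thm: Seymour decomp}) together with connectivity.

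If $M$ is not $3$-connected, decompose it by $2$-sums. The circuit-hyperplane $C$ restricts to a circuit-hyperplane, or to the whole ground set, on each part after replacing the basepoint appropriately. Graphicness is preserved under $2$-sums, so this reduces the problem to the $3$-connected case. In the $3$-connected case, if $M\cong R_{10}$, we check directly that $R_{10}$ has no circuit-hyperplane: its circuits have sizes $4$ and $6$, while $r=5$. Otherwise we would argue via the excluded minors. Suppose $N\in\{M^*(K_5),M^*(K_{3,3})\}$ is a minor of $M=M'/X\backslash Y$. A circuit-hyperplane survives only in weakened form under minors, so the naive argument fails.

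\textbf{Expected obstacle and fallback.} I expect this minor argument to be the main obstacle. The fallback is a direct construction that sidesteps it. Relaxing $C$ in $M$ yields $M'$, which is binary only if trivial. However, $M\backslash e$ has $C-\{e\}$ as a cocircuit-free independent hyperplane, and $M$ is the unique binary extension determined by it. Concretely, take the graph: a path $P$ on the edges of $B=C-\{e\}$, with $e$ joining its ends so that $C$ becomes a Hamilton cycle. Each $y\notin C$ has fundamental circuit $C(y,B)$, and total unimodularity together with the hyperplane condition should force each $C(y,B)\cap B$ to be a contiguous interval of a suitable ordering of $B$. Then $y$ is realized as a chord of the cycle. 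Finding that ordering, that is, proving the consecutive-ones property for $A$, is the crux. The hyperplane condition says that no element outside $C$ lies in $\cl(C)$, which is automatic. The real content must come from total unimodularity, via the known characterization of matrices with all-ones column in regular matroids as network matrices.
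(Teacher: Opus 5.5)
There is a genuine gap: the $3$-connected case, which carries all of the difficulty, is never proved. Your reduction through $2$-sums (one side is itself a circuit, the other inherits a circuit-hyperplane) and your exclusion of $R_{10}$ agree with the paper. You would still need the direct-sum case and an explicit induction on $|E(M)|$ to make ``reduces to the $3$-connected case'' rigorous. After that, however, you abandon the excluded-minor argument and offer nothing for a $3$-connected $M$ that is cographic or is a $3$-sum. The paper handles these two situations separately.

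First, if $M=M^*(H)$, then $M/C\cong U_{1,n}$, so $M(H\backslash C)=(M/C)^*\cong U_{n-1,n}$. Thus $H\backslash C$ is a cycle together with an isolated vertex, and $C$ is a vertex bond. Hence $H$ is an enlarged wheel, so $H$ is planar and $M$ is graphic.

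Otherwise the $\triangle$-tree of Theorem~\ref{thm: Aprile and Fiorini} gives $M=M_5\oplus_3 M_6$ across a triangle $\{x,y,z\}$. Since $|C|=r(M)=r(M_5)+r(M_6)-2$, one has $C=(C_5\cup C_6)-\{x\}$ for circuits $C_i$ of $M_i$. The unbalanced split $|C_5|=r(M_5)+1$ produces a series pair of $M$. In the balanced split one shows that $\{y,z\}$ meets neither $\cl_{M_5}(C_5)$ nor $\cl_{M_6}(C_6)$, so each $C_i$ is a circuit-hyperplane of $M_i$. Induction then makes both sides graphic, and a $3$-sum of graphic matroids is graphic. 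This transfer of the circuit-hyperplane across a $3$-sum, together with the cographic argument, is what your proposal is missing.

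Your fallback construction cannot fill the gap, because it starts from a false rank count. For a circuit-hyperplane, $|C|-1=r(C)=r(M)-1$, so $|C|=r(M)$ and $C-\{e\}$ is \emph{not} a basis. Elements $y\notin C$ therefore have no fundamental circuits with respect to it.

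Your model, in which $C$ is a Hamilton cycle and each $y\notin C$ is a chord, describes a spanning circuit. A chord would lie in $\cl(C)$, contradicting that $C$ is a flat. So the hyperplane condition is not ``automatic''; it is exactly what excludes chords. The correct picture, as in Lemma~\ref{lem: enlarged wheels}, is that $C$ misses exactly one vertex $v$ and every other edge joins $v$ to $C$.

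Moreover, the spanning-circuit version of your crux is false. A $6$-element circuit $C$ of $R_{10}$ is spanning. The standard representation $[I_5\mid A]$ of $R_{10}$ with respect to the basis $C-\{e\}$ is totally unimodular, and the column of $e$ can be signed to be all ones. Yet $A$ is not a network matrix, since $R_{10}$ is not graphic. So the characterization you invoke does not hold, and the consecutive-ones property you need is assumed rather than proved.
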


\begin{proof}
    Suppose that $M$ is not graphic. We first show that

    \begin{sublemma}\label{sublem: if cographic then graphic}
        $M$ is not cographic.
    \end{sublemma}

    Suppose that $M=M^*(H)$ for some graph $H$. Since $C$ is a circuit-hyperplane of $M$, we have $M/C\cong U_{1,n}$ for some positive integer $n$. Hence $M(H\backslash C)=(M/C)^*\cong U_{n-1,n}$. Therefore, $H\backslash C$ consists of an $n$-cycle together with an isolated vertex. It follows that $C$ is a vertex bond of $H$ and $H$ is an enlarged wheel. In particular, $H$ is planar and hence $M$, which is $M^*(H)$, is graphic, a contradiction. Therefore~\ref{sublem: if cographic then graphic} holds.\\

    Now we proceed by induction on $|E(M)|$. By Theorem~\ref{thm: graphic regular matroids}, the statement holds for all regular matroids with at most eight elements. We may now assume that $|E(M)|\geq 9$ and that the statement holds for all regular matroids with at most $|E(M)|-1$ elements.

    If $M$ is not $2$-connected, then there are regular matroids $M_1$ and $M_2$ such that $M=M_1\oplus M_2$. Because $C$ is a circuit, we may assume that $C\subseteq E(M_1)$. Therefore, $r(M_2)\leq 1$. Because $C$ is a hyperplane, $\cl(C)\cap E(M_2)=\emptyset$. Thus $r(M_2)=1$. Because $C$ is a circuit-hyperplane, $E(M_1)=C$ and hence $M_1$ is graphic. Clearly $M_2$ is also graphic, so $M$ is graphic, a contradiction.

    We may now assume that $M$ is $2$-connected. If $M$ is not $3$-connected, then there are regular matroids $M_3$ and $M_4$ such that $M=M_3\oplus_2 M_4$ at the basepoint $p$. Since $C$ is a circuit-hyperplane,
    \[|C|=r(M)=r(M_3)+r(M_4)-1.\]
    Therefore, we may assume that $C$ is of the form $(C_3\cup C_4)-\{p\}$ where $C_3$ is a spanning circuit of $M_3$, and $C_4$ is a circuit of size $r(M_4)$ in $M_4$. Moreover, as $C$ is a hyperplane of $M$, we know that $E(M_3)=C_3$ and $C_4$ is a circuit-hyperplane of $M_4$. By the inductive hypothesis, $M_4$ is graphic. Since $M_3$ is clearly graphic, it follows that $M$ is graphic, a contradiction.

    We may now assume that $M$ is $3$-connected. By~\cite[p.~656]{Oxl11}, every circuit of the rank-$5$ matroid $R_{10}$ has four or six elements. Thus $R_{10}$ has no circuit-hyperplanes and $M$ is not $R_{10}$. By Theorem~\ref{thm: Aprile and Fiorini}, $M$ has a $\triangle$-tree decomposition $\cT$. If $\cT$ has only one vertex, then $M$ is either graphic or cographic. By~\ref{sublem: if cographic then graphic}, this is a contradiction. Therefore, we may assume that $|V(T)|\geq 2$. In particular, there are two regular matroids $M_5$ and $M_6$ (not necessarily graphic or cographic) such that $M=M_5\oplus_3 M_6$ on a common triangle $\{x,y,z\}$. Since $C$ is a circuit-hyperplane,
    \[|C|=r(M)=r(M_5)+r(M_6)-2.\]
    As $M$ is simple and $\min\{|E(M_5)|,|E(M_6)|\}\geq 7$, it follows that $\min\{r(M_5),r(M_6)\}\geq 3$. Therefore, $C$ is of the form $(C_5\cup C_6)-\{t\}$ where $C_i$ is a circuit of $M_i$ for each $i\in\{5,6\}$, and $t$ is an element of $\{x,y,z\}$. Without loss of generality, we assume that $t=x$. By symmetry, there are two possibilities:
    \begin{enumerate}
        \item[(\romannum{1})] $|C_5|=r(M_5)+1$ and $|C_6|=r(M_6)-1$, or
        \item[(\romannum{2})] $|C_5|=r(M_5)$ and $|C_6|=r(M_6)$.
    \end{enumerate}

    Suppose (\romannum{1}) holds. As $C_5$ is a spanning circuit of $M_5$ and $C$ is a circuit-hyperplane of $M$, it is straightforward to deduce that $E(M_5)=C_5\cup\{y,z\}$. Since $|E(M_5)|\geq 7$, it follows that $|C_5|\geq 5$ and hence $M_5$ has a series pair $\{e,f\}$ such that $\{e,f\}\cap \{x,y,z\}=\emptyset$. Moreover, $\{e,f\}$ is a series pair of $M$, contradicting the fact that $M$ is $3$-connected. Therefore (\romannum{1}) fails.

    We may now assume that (\romannum{2}) holds. Because $C$ is a circuit-hyperplane of $M$, we know that $\cl_{M_i}(C_i)-\{y,z\}=C_i$ for each $i\in\{5,6\}$. Suppose that $y\in\cl_{M_5}(C_5)$. It follows immediately that $\{y,z\}\subseteq \cl_{M_5}(C_5)$. Then $y$ is in the closure of $C_5-\{x\}$ and hence $M_5$ has a circuit $D_5$ such that $y\in D_5\subseteq (C_5-\{x\})\cup \{y\}$. Moreover, $|D_5|<|C_5|$ since $M$ is binary. If $y\in\cl_{M_6}(C_6)$, then $M_6$ has a circuit $D_6$ such that $y\in D_6\subseteq (C_6-\{x\})\cup\{y\}$. However, $(D_5\cup D_6)-\{y\}$ is a dependent set of $M$ that is properly contained in $C$, a contradiction. Thus, $y\notin \cl_{M_6}(C_6)$. Because $\{x,y,z\}$ is a triangle and $x\in C_6$, it is easy to deduce that $\{y,z\}\cap \cl_{M_6}(C_6)=\emptyset$ and hence $C_6$ is a circuit-hyperplane of $M_6$. Moreover, as $\{x,y,z\}$ is a coindependent triangle of $M_6$, there is an element $w\in E(M_6)-(C_6\cup\{y,z\})$. However, as $\{y,z\}\subseteq \cl_{M_5}(C_5)$ and $w\in\cl_{M_6}(C_6\cup\{y,z\})$, it follows that $w\in \cl_{M}(C)$, contradicting the fact that $C$ is a circuit-hyperplane. Therefore $\{y,z\}\cap \cl_{M_5}(C_5)=\emptyset$ and, by symmetry, $\{y,z\}\cap \cl_{M_6}(C_6)=\emptyset$. Thus, $C_5$ and $C_6$ are circuit-hyperplanes of $M_5$ and $M_6$, respectively. By the inductive hypothesis, both $M_5$ and $M_6$ are graphic. Hence so is their $3$-sum $M$. This contradiction completes the proof.    
\end{proof}

\begin{lemma}\label{lem: enlarged wheels}
    Every loopless regular matroid $M$ that has a circuit-hyperplane $C$ is the cycle matroid of an enlarged wheel $W$. Moreover, $C$ is the rim of $W$.
\end{lemma}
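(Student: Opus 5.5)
By Lemma~\ref{lem: a regular matroid having circuit hyperplane is graphic}, $M$ is graphic. We choose a graph $G$ with $M=M(G)$; since $M$ is loopless, $G$ is loopless. We may take $G$ connected (identifying vertices across components does not change the cycle matroid), and we discard isolated vertices. The aim is to show that $G$ is an enlarged wheel with rim $C$. The argument mirrors the cographic case treated inside the proof of Lemma~\ref{lem: a regular matroid having circuit hyperplane is graphic}, but now done on the graph directly.

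First, since $C$ is a circuit, $C$ is the edge set of a cycle of $G$ through some vertex set $U$. Since $C$ is a circuit-hyperplane, $r(C)=r(M)-1=|U|-1$. Hence $|V(G)|=|U|+1$, so there is exactly one vertex $v$ of $G$ not on the cycle $C$.

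Second, we locate the remaining edges. Every edge $e\notin C$ satisfies $e\notin\cl(C)$, because $C$ is a flat. An edge with both ends in $U$ would lie in $\cl(C)$, since $C$ spans $U$; so no such edge exists. There are also no loops. Thus every edge outside $C$ joins $v$ to a vertex of $U$, and $v$ has at least one neighbour because $G$ is connected and $r(M)=|C|$ requires that $v$ is not isolated. So $G$ consists of the cycle $C$ together with a vertex $v$ joined to some vertices of the cycle, possibly by several parallel edges.

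Third, we identify this graph as an enlarged wheel. Let $S\subseteq U$ be the set of neighbours of $v$. If $|S|\geq 2$, we take the wheel whose rim is a cycle on $S$ (in the cyclic order inherited from $C$) and whose hub is $v$. Subdividing the rim edges so that they become the paths of $C$ between consecutive vertices of $S$, and adding parallel spokes, gives exactly $G$, and the rim is $C$. The definition of wheels allows $n\geq 2$, which covers $|S|=2$.

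The main obstacle is the degenerate case $|S|=1$, where $v$ has a single neighbour $u$. Then $G$ is the cycle $C$ with a bundle of parallel edges at the cut-vertex $u$. I would handle this case by re-choosing the graph representation $G$, using Whitney twisting or vertex identification at the cut vertex, so that $v$ gets at least two neighbours. Since $M$ is then a direct sum of a circuit and $U_{1,n}$, a representation by a subdivided wheel with $n=2$ should be achievable. This gives a $2$-vertex rim with parallel spokes and subdivided rim edges, with all of $C$ forming the rim. Alternatively, I would check whether the intended convention tolerates this case. This is the step I expect to need care about the exact definition of an enlarged wheel.
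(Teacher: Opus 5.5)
Your main case is the same as the paper's proof. You get graphicness from Lemma~\ref{lem: a regular matroid having circuit hyperplane is graphic}, show there is exactly one vertex $v$ off the cycle $C$, and show every other edge joins $v$ to $C$ because $C$ is a flat. The paper stops there and asserts that the graph is an enlarged wheel. You were right to flag the case where $v$ has a single neighbour $u$, since the paper's proof does not address it.

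However, the repair you propose in the last paragraph cannot work. When $S=\{u\}$, all edges at $v$ join $v$ to $u$, so $M\cong M(C)\oplus U_{1,n}$, where $n\geq 1$ is the number of those edges. This matroid is disconnected. Whitney twisting and vertex identification change the graph but not the matroid. An enlarged wheel, however, is a loopless $2$-connected graph with at least three vertices, so its cycle matroid is connected. Hence no choice of representation gives $v$ two neighbours.

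In fact the statement as written is false in this case. For example, $U_{2,3}\oplus U_{1,1}$ (a triangle with a pendant edge) is loopless and regular, and its triangle is a circuit-hyperplane. Yet it is not the cycle matroid of any enlarged wheel. The paper's definition of a wheel requires $n\geq 2$, so no reading of the convention rescues this case.

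The correct fix is to add a hypothesis, for instance that $M$ is connected. This forces $|S|\geq 2$, and your third step then completes the proof. The extra hypothesis costs nothing in the paper. The lemma is used only in Theorem~\ref{thm: a graph that keeps a triangle}, for $M|F$ with $F=\cl(A\cup\{b,c\})$. There $b,c\notin A$ form a triangle with the rim element $a$, so $b$ and $c$ join $v$ to the two distinct ends of $a$, giving $|S|\geq 2$.
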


\begin{proof}
    By Lemma~\ref{lem: a regular matroid having circuit hyperplane is graphic}, the matroid $M$ is graphic. Thus $M=M(W)$ for some connected graph $W$. Let $C$ be a circuit-hyperplane of $M$. Then $C$ is a cycle of $W$ such that $|V(W)-V(C)|=1$. Let $v$ be the vertex in $V(W)-V(C)$. As $C$ is a hyperplane of $M(W)$ and $M$ is loopless, every edge of $W$ not in $C$ joins $v$ to a vertex of the cycle $C$. Thus, $W$ is an enlarged wheel and $C$ is the rim of $W$.
\end{proof}

We now have all the ingredients needed to prove the enlarged-wheel replacement theorem, which is the main result of this section.

\begin{theorem}\label{thm: a graph that keeps a triangle}
    Let $M$ be a $3$-connected regular matroid having at least four elements, and let $\{a,b,c\}$ be a triangle of $M$. Then there is a flat $F$ of $M$ such that
    \begin{enumerate}
        \item[(\romannum{1})] $M|F$ is the cycle matroid of an enlarged wheel $W$, and
        \item[(\romannum{2})] in $W$, the edge $a$ is a rim edge, the edges $b$ and $c$ are spokes, and $a$, $b$, and $c$ form a triangle.
    \end{enumerate}
\end{theorem}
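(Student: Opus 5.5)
The plan is to find a circuit $C$ of $M$ with $a\in C$ and $\cl(C)\cap\{a,b,c\}=\{a\}$, chosen so that adding $b$ yields a flat containing $C$ as a circuit-hyperplane. We then apply Lemma~\ref{lem: enlarged wheels} to that flat.

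First, since $M$ is $3$-connected with at least four elements, Lemma~\ref{lem: circuit contains only one in its closure} (applied with $M$ itself as the $3$-connected minor) gives a circuit $C$ of $M$ with $a\in C$ and $\cl(C)\cap\{a,b,c\}=\{a\}$. Among all such circuits, take $C$ with $|C|$ minimum. Put $F=\cl(C\cup\{b\})$. Because $b\notin\cl(C)$, we have $r(F)=|C|$, and $C$ is a hyperplane of $M|F$ precisely when $\cl(C)=C$. It is also a circuit of $M|F$, since the restriction to a flat keeps $C$ as a circuit. Moreover $c\in\cl(\{a,b\})\subseteq F$, so $\{a,b,c\}\subseteq F$, and $M|F$ is regular and loopless.

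The key step, and the main obstacle, is showing that $\cl(C)=C$, that is, $C$ is itself a flat. Suppose $e\in\cl(C)-C$. Then there is a circuit $D\subseteq C\cup\{e\}$ containing $e$. Since $M$ is binary, $C\triangle D$ is a disjoint union of circuits, and $\cl(D)$ and $\cl(C\triangle D)$ are contained in $\cl(C)$, hence miss $b$ and $c$. One of $D$ or a circuit inside $C\triangle D$ contains $a$, and it is strictly smaller than $C$, since $D\ne C$ and $|C\triangle D|<|C|$ when $|D\cap C|\geq |D|-1\ge 1$. This contradicts minimality, provided the smaller circuit still has closure meeting $\{a,b,c\}$ only in $a$, which holds because its closure lies in $\cl(C)$. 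The potential trap is when $D=\{e,a\}$, i.e.\ $e$ is parallel to $a$. But $M$ is $3$-connected with at least four elements, so it is simple, which rules this out. In general a circuit through $a$ smaller than $C$ with closure inside $\cl(C)$ is always found, so we obtain $\cl(C)=C$.

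Therefore $C$ is a circuit-hyperplane of the loopless regular matroid $M|F$. By Lemma~\ref{lem: enlarged wheels}, $M|F=M(W)$ for an enlarged wheel $W$ whose rim is $C$. Thus $a$ is a rim edge and $b,c\notin C$ are spokes. Finally, $\{a,b,c\}$ is a circuit of $M(W)$, so it is a triangle of $W$, consisting of the rim edge $a$ and the two spokes to its endpoints. This gives (i) and (ii).
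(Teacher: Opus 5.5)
Your overall strategy matches the paper's proof. You take a minimum-sized circuit $C$ through $a$ with $\cl(C)\cap\{a,b,c\}=\{a\}$, show that $C$ is a flat, observe that $C$ is then a circuit-hyperplane of $M|\cl(C\cup\{b\})$, and finish with Lemma~\ref{lem: enlarged wheels}. The end-game is fine.

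The justification of the key step $\cl(C)=C$ is not correct as written, for three reasons.
\begin{itemize}
\item You take an arbitrary circuit $D$ with $e\in D\subseteq C\cup\{e\}$. In the case $a\in D$, you conclude $|D|<|C|$ ``since $D\neq C$''. That does not follow: $D$ could be $(C-\{x\})\cup\{e\}$, which has exactly $|C|$ elements.
\item Your condition for $|C\triangle D|<|C|$ is too weak. Since $|C\cap D|=|D|-1$, we have $|C\triangle D|=|C|-|D|+2$. This is less than $|C|$ only when $|D|\geq 3$, not merely when $|D|-1\geq 1$.
\item Your ``potential trap'' $D=\{e,a\}$ is misidentified. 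Such a $D$ would itself be a circuit through $a$ with closure inside $\cl(C)$ and fewer elements than $C$, so it contradicts minimality directly. The configurations that actually break your size claims are those in which $e$ is parallel to some $x\in C-\{a\}$.
\end{itemize}

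The repair is short, and simplicity of $M$ is exactly what is needed. One option is to follow the paper. Since $C$ is a circuit, $e\in\cl(C-\{a\})$, so you may choose $D\subseteq (C-\{a\})\cup\{e\}$. Then $a\notin D$, so some circuit $A'$ satisfies $a\in A'\subseteq C\triangle D$. Moreover $|A'|\leq |C|-|D|+2\leq |C|-1$, because $|D|\geq 3$. The other option keeps your dichotomy and uses the identity $|D|+|C\triangle D|=|C|+2$. Simplicity gives $|D|\geq 3$. It also gives $|C\triangle D|\geq 3$, because $C\triangle D$ contains $e$ and is therefore a nonempty disjoint union of circuits. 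Hence both $D$ and every circuit inside $C\triangle D$ have fewer than $|C|$ elements. With either fix, your argument is complete and coincides with the paper's proof.
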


\begin{proof}
    By Lemma~\ref{lem: circuit contains only one in its closure}, there is a circuit $A$ of $M$ such that $a\in A$ and $\cl(A)\cap\{a,b,c\}=\{a\}$. Choose such a circuit $A$ with minimum size. We first show that

    \begin{sublemma}\label{sublem: A is a flat}
        $\cl(A)=A$.
    \end{sublemma}
    
    Suppose that there is an element $e\in \cl(A)-A$. Since $A$ is a circuit, the element $e$ is also in $\cl(A-\{a\})$. Hence there is a circuit $C$ such that $e\in C\subseteq (A-\{a\})\cup\{e\}$. Since $M$ is binary, the symmetric difference $A\triangle C$ is a disjoint union of circuits. Since $a\in A\triangle C$, there is a circuit $A'$ such that $a\in A'\subseteq A\triangle C$. As $M$ is $3$-connected, $|C|\geq 3$. Thus $|A'|\leq |A\triangle C|\leq |A|-1$. Moreover, since $\cl(A')\subseteq \cl(A)$, we have $\cl(A')\cap\{a,b,c\}=\{a\}$. Therefore $A'$ is a circuit such that $a\in A'$ and $\cl(A')\cap\{a,b,c\}=\{a\}$, with $|A'|<|A|$, a contradiction. Thus~\ref{sublem: A is a flat} holds.\\

    Let $F=\cl(A\cup\{b,c\})$. It remains to show that $F$ satisfies (\romannum{1}) and (\romannum{2}). Since $\{a,b,c\}$ is a triangle and $a\in A$, we have $r(F)=r(A)+1$. Hence, by~\ref{sublem: A is a flat}, the set $A$ is a circuit-hyperplane of $M|F$. Therefore, (\romannum{1}) follows immediately from Lemma~\ref{lem: enlarged wheels}. Let $W$ be an enlarged wheel such that $M|F=M(W)$. By Lemma~\ref{lem: enlarged wheels}, the set $A$ is the rim of $W$. Thus the edge $a$ is a rim edge, and the edges $b$ and $c$ are spokes. Moreover, since $\{a,b,c\}$ is a triangle of $M$, it is also a triangle of $M|F$. Hence (\romannum{2}) holds.
\end{proof}

    Suppose that a $3$-connected matroid $M$ is the $3$-sum $M_1\oplus_3 M_2$ across a common triangle $T$. Theorem~\ref{thm: a graph that keeps a triangle} allows us to reduce one side of this $3$-sum to a graphic flat while retaining the common triangle $T$, so that we may apply a quasi-$3$-sum across this triangle. We shall see in the next section that, in some circumstances, the structure of an enlarged wheel allows us to preserve $2$-connectivity.
\section{Graphic flats in $3$-connected regular matroids}\label{sec: 3-con cases}

In this section, we prove the following.

\begin{theorem}\label{thm: large 3-connected (main)}
    For every positive integer $k$, there is an integer $f_{\ref{thm: large 3-connected (main)}}(k)$ such that every $3$-connected regular matroid $M$ with $r(M)\geq f_{\ref{thm: large 3-connected (main)}}(k)$ has a $2$-connected graphic flat $F$ with $r(F)\geq k$.
\end{theorem}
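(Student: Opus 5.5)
We plan to work with a $\triangle$-tree. Let $M$ be a $3$-connected regular matroid of large rank. Since $r(M)\geq 6$, $M\neq R_{10}$. By Proposition~\ref{prop: vertex label rank at least three}, $M$ has a $\triangle$-tree $\cT$ in which every vertex label is graphic or cographic, almost $3$-connected, and of rank at least three.

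Recall that $r(M)=\sum_{N\in V(\cT)} r(N)-2|E(\cT)|$, by Proposition~\ref{prop: rank of 3-sum}. So either some vertex label has rank at least $\rho$, or $\cT$ has many vertices. In the latter case, Lemma~\ref{lem: long path or high degree vertex} gives a vertex of large degree or a long path. The three resulting cases are treated as follows.

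\emph{Large vertex label.} Suppose a vertex label $N$ has rank at least $\rho$. Apply Corollary~\ref{cor: graphic and cographic in one} to $\si(N)$ to get a $2$-connected graphic flat $F_N$ of $N$ of large rank. Then transport $F_N$ to $M$. For each component of $\cT-N$, attached along a triangle $T$, let $M_T$ be the $3$-connected regular matroid obtained by performing all the $3$-sums on that side. Apply Theorem~\ref{thm: a graph that keeps a triangle} to $M_T$ and $T$ to obtain an enlarged-wheel flat $W_T$ of $M_T$ with the prescribed roles of the three elements of $T$.
\begin{enumerate}
\item[(a)] If $F_N\cap T=\emptyset$, use the empty flat on that side.
\item[(b)] If $F_N\cap T=\{t\}$, use the closure of a circuit through $t$ meeting $T$ only in $t$ (Lemma~\ref{lem: circuit contains only one in its closure}). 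We need a minimal such circuit, so that the closure is a circuit plus parallel elements, which is $2$-connected and graphic.
\item[(c)] If $T\subseteq F_N$, use $W_T$.
\end{enumerate}
Lemma~\ref{lem: flats in 3-sum} shows that the union minus the triangles is a flat whose restriction is obtained from $N|F_N$ by direct sums, $2$-sums, and quasi-$3$-sums with graphic pieces. By Proposition~\ref{prop: graph of quasi-3-sum} this restriction is graphic.

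The delicate point in this case is $2$-connectivity. A quasi-$3$-sum with an enlarged wheel deletes the triangle $T$. This is harmless when $F_N$ minus $T$ still connects appropriately, and the wheel's rim and spokes reattach the three vertices. I would check that the spokes $b,c$ together with the rim through $a$ keep the union $2$-connected, using that the wheel minus $T$ is connected and meets all three triangle vertices.

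\emph{High-degree vertex.} Suppose a vertex $P$ has degree at least $d$. Each incident edge uses a triangle of $P$, and $\si(P)$ has rank less than $\rho$, so by Lemma~\ref{lem: binary projective geometry} many triangles share an element, or overlap heavily inside a small-rank flat. Take a small $2$-connected flat of $P$ containing many of these triangles (for example the whole of $E(P)$ if it is graphic, or a suitable flat otherwise). Then attach an enlarged wheel on each branch via Theorem~\ref{thm: a graph that keeps a triangle}. Each wheel contributes at least one to the rank, since its rank is at least $2$ beyond the triangle. So the total rank grows with $d$, and graphicness is preserved as before. Alternatively, attach only single-element circuits on most branches, as in the proof of Theorem~\ref{thm: high-rank 3-connected vertex or high-rank graphic flat}.

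\emph{Long path.} Suppose $\cT$ has a long path. I would aim to show that $M$ has large diameter, which gives the result by Lemma~\ref{lem: diameter implies graphic flat}. Take $e$ in the first label and $f$ in the last. Any circuit containing both must traverse every triangle of the path, using at least one element of each successive label beyond the triangles. Each label has rank at least three, so each adds at least one new element. This should give distance at least linear in the path length.

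\textbf{Main obstacle.} This is the hardest step, as the paper warns. Consecutive triangles in a label may share elements, or a triangle may be ``skipped'' using a single element. So I would first refine the path, by merging labels whose two triangles intersect or are close, until each internal label has disjoint triangles with no short connection between them. Failing that, I would fall back on the large-label or high-degree cases.
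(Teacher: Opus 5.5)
Your skeleton is the paper's: a large vertex label versus a large $\triangle$-tree, with the latter giving a long path or a high-degree vertex. However, the two cases that carry the proof are left open, and the fallbacks you propose do not work.

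\emph{Long path.} Merging labels whose distinguished triangles are parallel or intersecting can collapse an arbitrarily long path into one label of large rank. That label is a $3$-sum of many pieces and is in general neither graphic nor cographic. So the large-label case, which rests on Corollary~\ref{cor: graphic and cographic in one}, does not apply to it; falling back on it would require the very theorem you are proving. What is missing is a direct treatment of long runs of such labels.
\begin{enumerate}
\item[(a)] A long run of internal vertices with parallel distinguished triangles gives a chain $t_1,\dots,t_l$ of mutually parallel elements. Gluing minimum circuits that meet each $T_i$ only in $t_i$ produces a $2$-sum of circuits of rank at least $l+1$ (Lemma~\ref{lem:many parallel triangles}).
\item[(b)] A long run of \emph{nonseparating} edges is one where merging two labels with intersecting triangles still leaves intersecting triangles. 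It is handled by adding an element $s$ parallel to the chain, taking a $2$-connected hyperplane $H$ avoiding $s$ (Theorem~\ref{thm: connected hyperplane}), and showing that the end elements are at distance at least $l+1$ in $M|H$ (Lemma~\ref{lem: many intersecting}).
\end{enumerate}
Only once both kinds of runs are bounded can you pair consecutive labels so that every internal label has skew distinguished triangles, and then apply Proposition~\ref{prop: distance across 3-sums}.

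\emph{High degree.} Triangles merely sharing a point of $\si(P)$ are not enough. You need, by counting the boundedly many rank-$2$ flats (Proposition~\ref{prop: max number of rank-2 flats}), $k-1$ distinguished triangles in a single rank-$2$ flat. Then one circuit of $P$ through $t_1$ has closure meeting their union exactly in the mutually parallel elements $t_1,\dots,t_{k-1}$, and you can $2$-sum circuits onto it (Lemma~\ref{lem: a star with many parallel triangles}). Your wheel-based variant does not say which graphic flat of a cographic $P$ to use.

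\emph{Large label.} The $2$-connectivity check you propose is false, and this is where the real work lies.
\begin{enumerate}
\item[(a)] $W_T\setminus T$ need not meet all three vertices of $T$. If $\cl(A\cup\{b,c\})=A\cup\{b,c\}$, the wheel is a theta graph and its hub is isolated once $b,c$ are deleted. When $M_T$ is simple, the two ends of $a$ have degree one in $W_T\setminus T$.
\item[(b)] The distinguished triangles inside $F_N$ can contain every edge of $F_N$ at some vertex. For example, take a degree-two vertex of the subdivided $K_{2,k+1}$ in type (iii) whose two edges form a triangle with the added edge. Depending on how the wheel is oriented, that vertex ends with degree one, and $M|F$ has a coloop.
\item[(c)] $M_T$ need not be $3$-connected: it may have elements parallel to elements of $T$ (Proposition~\ref{prop: possible parallel on the common triangle}). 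So Theorem~\ref{thm: a graph that keeps a triangle} does not apply directly; the paper first moves such elements into the center (Proposition~\ref{prop: quasi-delta-star}).
\end{enumerate}
The missing idea is to stop trying to keep all of $F_N$ $2$-connected. Instead, track a triangle-free $2$-connected subgraph $H_R$ of rank at least $k$, which the explicit list supplies (a cycle with at least $k+1$ edges, or the subdivided $K_{2,k+1}$). Orient each wheel relative to $R$:
\begin{enumerate}
\item[(a)] if $R\cap T=\{a\}$, make $a$ the rim edge;
\item[(b)] if $R\cap T=\{a,b\}$, make the third element $c$ the rim edge, so the path $a,b$ is replaced by the rim path $A-\{c\}$;
\item[(c)] if $R\cap T=\emptyset$, use $T$ itself on that side.
\end{enumerate}
The resulting graph contains a $2$-connected subgraph $H'$, obtained from $H_R$ by subdividing edges and rerouting two-edge paths. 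Take $\cl_{M|F}(E(H'))$ as the flat, rather than asserting that $M|F$ is $2$-connected.
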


We prove Theorem~\ref{thm: large 3-connected (main)} in two steps using the structure of a $\triangle$-tree decomposition of $M$. In the next two subsections, we treat separately the case when the $\triangle$-tree has many vertices, and the case when it has a vertex labeled by a large-rank matroid. The next lemma ensures that one of these cases occurs.

\begin{lemma}\label{lem: large delta-tree or large vertex}
    For every pair $v$ and $\rho$ of positive integers, there is an integer
    $f_{\ref{lem: large delta-tree or large vertex}}(v,\rho)$ such that every
    $3$-connected regular matroid $M$ with
    \[
        r(M)\geq f_{\ref{lem: large delta-tree or large vertex}}(v,\rho)
    \]
    has a $\triangle$-tree decomposition $\cT$ in which every vertex label has rank at least three, and
    \begin{enumerate}
        \item[(\romannum{1})] $\cT$ has at least $v$ vertices; or
        \item[(\romannum{2})] some vertex label of $\cT$ has rank at least $\rho$.
    \end{enumerate}
\end{lemma}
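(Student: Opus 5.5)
We will argue by a rank count, in the same way as~\ref{sublem: T has at least v vertices} in the proof of Theorem~\ref{thm: high-rank 3-connected vertex or high-rank graphic flat}. We set
\[
    f_{\ref{lem: large delta-tree or large vertex}}(v,\rho)=\max\{6,\ \rho v\},
\]
and take $M$ to be a $3$-connected regular matroid with $r(M)\geq f_{\ref{lem: large delta-tree or large vertex}}(v,\rho)$.

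First, because $r(M)\geq 6>5$, the matroid $M$ is not isomorphic to $R_{10}$, and $r(M)\geq 3$. So Proposition~\ref{prop: vertex label rank at least three} applies and gives a $\triangle$-tree $\cT$ of $M$ in which every vertex label has rank at least three.

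Next we need the rank of $M$ in terms of the labels of $\cT$. By Theorem~\ref{thm: Aprile and Fiorini}(\romannum{3}), $M$ is obtained by performing the $3$-sums along the edges of $\cT$ one at a time, say in an order that keeps a subtree. Each intermediate matroid is a $3$-sum of two binary matroids, so Proposition~\ref{prop: rank of 3-sum} applies at every step, and induction on $|E(\cT)|$ gives
\[
    r(M)=\sum_{N\in V(\cT)} r(N)-2|E(\cT)|\leq \sum_{N\in V(\cT)} r(N).
\]

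Now suppose that both (\romannum{1}) and (\romannum{2}) fail. Then $|V(\cT)|\leq v-1$ and every label has rank at most $\rho-1$. The displayed inequality then gives
\[
    r(M)\leq (v-1)(\rho-1)<\rho v\leq r(M),
\]
which is a contradiction. So $\cT$ satisfies (\romannum{1}) or (\romannum{2}), as required.

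The only point needing care is applying Proposition~\ref{prop: rank of 3-sum} at each intermediate step. Its hypotheses hold because each edge of $\cT$ is a genuine $3$-sum of the current partial sum with a vertex label, which is what Theorem~\ref{thm: Aprile and Fiorini}(\romannum{3}) guarantees. For the final bound we only need the upper inequality, so the exact correction term does not matter.
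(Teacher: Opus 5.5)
Your proof is correct and follows the paper's argument. You obtain a $\triangle$-tree whose labels all have rank at least three, via Proposition~\ref{prop: vertex label rank at least three}, and then contradict the rank formula $r(M)=\sum_{N} r(N)-2|E(\cT)|$. The only difference is that you discard the $-2|E(\cT)|$ term. This gives the cruder threshold $\max\{6,\rho v\}$ instead of the paper's $\max\{6,\rho v-3v-\rho+6\}$, but it avoids the paper's reduction to $\rho\geq 4$, and since only the existence of some bound matters, the change is harmless.
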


\begin{proof}
    We show that one may take
    \[
        f_{\ref{lem: large delta-tree or large vertex}}(v,\rho)
        =\max\{6,\rho v-3v-\rho+6\}.
    \]
    Let $M$ be a $3$-connected regular matroid with
    \[
        r(M)\geq f_{\ref{lem: large delta-tree or large vertex}}(v,\rho).
    \]
    Since $r(M)\geq 6$, the matroid $M\not\cong R_{10}$. Hence, by Proposition~\ref{prop: vertex label rank at least three}, $M$ has a $\triangle$-tree $\cT$ in which every vertex is labeled by a matroid of rank at least three.

    Suppose that both (\romannum{1}) and (\romannum{2}) fail. Since every vertex label has rank at least three, we may assume that $\rho\geq 4$. By Proposition~\ref{prop: rank of 3-sum},
    \begin{align*}
        r(M)
        &=\sum_{N\in V(\cT)}r(N)-2|E(\cT)|\\
        &\leq (\rho-1)|V(\cT)|-2(|V(\cT)|-1)\\
        &= (\rho-3)|V(\cT)|+2\\
        &\leq (\rho-3)(v-1)+2\\
        &= \rho v-3v-\rho+5.
    \end{align*}
    This contradiction to the assumption that $r(M)\geq \rho v-3v-\rho+6$ proves the lemma.
\end{proof}

\subsection{A large $\triangle$-tree}
Recall that the distance between two distinct elements is the size of a smallest circuit containing both of them. We now extend this notion to sets of elements. Let $A$ and $B$ be disjoint nonempty subsets of $E(M)$. The {\it distance} $\dist_M(A,B)$ between $A$ and $B$ is the minimum size of a circuit containing some element of $A$ and some element of $B$. We omit the subscript when the host matroid is clear from the context, and we omit braces around one-element sets. The next result describes how distance behaves under $3$-sums.

\begin{proposition}\label{prop: distance across 3-sums}
    Suppose that $M$ is the $3$-sum of two binary matroids $M_1$ and $M_2$ across a common triangle $T$. If $e\in E(M_1)-T$ and $f\in E(M_2)-T$, then
    \[
        \dist_M(e,f)\geq \dist_{M_1}(e,T)+\dist_{M_2}(f,T)-2.
    \]
\end{proposition}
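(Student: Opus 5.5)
Take a smallest circuit $C$ of $M$ containing both $e$ and $f$, so $|C|=\dist_M(e,f)$. The plan is to cut $C$ along the common triangle $T$ into two pieces, one circuit in each $M_i$, each meeting $T$. Then we compare sizes.

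The first step is to use the description of circuits of a $3$-sum recalled in Section~2. Since $C$ meets both $E(M_1)-T$ and $E(M_2)-T$, it is not a circuit of $M_1\backslash T$ or of $M_2\backslash T$. Hence $C=C_1\triangle C_2$, where $C_i\in\cC(M_i)$ and $C_1\cap T=C_2\cap T=\{t\}$ for a single element $t\in T$. Because the ground sets meet only in $T$, we have $C_1-\{t\}=C\cap E(M_1)$ and $C_2-\{t\}=C\cap E(M_2)$. These two sets are disjoint, so
\[
|C|=|C_1|+|C_2|-2.
\]
Also $e\in C_1$ and $f\in C_2$, since $e\notin E(M_2)$ and $f\notin E(M_1)$.

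The second step is the comparison. The set $C_1$ is a circuit of $M_1$ containing $e$ and the element $t$ of $T$, so $|C_1|\geq \dist_{M_1}(e,T)$. Likewise $|C_2|\geq \dist_{M_2}(f,T)$. Substituting these into the identity gives the claimed inequality. If no circuit contains both $e$ and $f$, the left side is $\infty$ and there is nothing to prove.

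The main point needing care is the first step, namely that every circuit of the $3$-sum meeting both sides really has the form $C_1\triangle C_2$ with $|C_1\cap T|=|C_2\cap T|=1$. The definition lists the circuits as the members of $\cC(M_1\backslash T)\cup\cC(M_2\backslash T)$ together with the minimal sets of that form, so this follows directly, as cited from~\cite[(2.7)]{Seymour1980} and~\cite[Lemma~9.3.3]{Oxl11}. If one only had that $C$ is some symmetric difference of circuits agreeing on $T$, one would first choose $C_1,C_2$ realizing $C$ and then use minimality of $C$ to force the common intersection to be a single element. The $T$-free circuits are excluded because they lie entirely on one side.
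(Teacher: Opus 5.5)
Your proposal is correct and follows essentially the same argument as the paper: take a smallest circuit $C$ through $e$ and $f$, use the definition of the $3$-sum to write $C=C_1\triangle C_2$ with $C_1\cap C_2=\{t\}$ for some $t\in T$, and then substitute $|C_1|\geq\dist_{M_1}(e,T)$ and $|C_2|\geq\dist_{M_2}(f,T)$ into $|C|=|C_1|+|C_2|-2$. Your extra bookkeeping, namely checking that $e\in C_1$, that $f\in C_2$, and that $C_1-\{t\}$ and $C_2-\{t\}$ are disjoint, makes explicit what the paper leaves implicit.
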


\begin{proof}
    If $M$ has no circuit containing $\{e,f\}$, then $\dist_M(e,f)=\infty$, and the result holds. Thus, we may take a minimum-sized circuit $C$ of $M$ containing $\{e,f\}$. Since $C$ meets both $E(M_1)-T$ and $E(M_2)-T$, there are circuits $C_1$ and $C_2$ of $M_1$ and $M_2$, respectively, such that $C_1\cap T=C_2\cap T$ and $C=C_1\triangle C_2$. Moreover, by the definition of a $3$-sum, $|C_1\cap C_2|=1$. Hence
    \[\dist_M(e,f)=|C|=|C_1|+|C_2|-2\geq \dist_{M_1}(e,T)+\dist_{M_2}(f,T)-2.\qedhere\]
\end{proof}

Let $M$ be a regular matroid, and let $T_1$ and $T_2$ be disjoint triangles of $M$. We say that $T_1$ and $T_2$ are {\it parallel} if $r(T_1\cup T_2)=2$, that $T_1$ and $T_2$ are {\it intersecting} if $r(T_1\cup T_2)=3$, and that $T_1$ and $T_2$ are {\it skew} if $r(T_1\cup T_2)=4$. Figure~\ref{fig:relations of triangles} shows geometric representations of the three possibilities. It is clear that $\dist_M(T_1,T_2)=2$ if $T_1$ and $T_2$ are parallel or intersecting, and that $\dist_M(T_1,T_2)\geq 3$ if $T_1$ and $T_2$ are skew.

\begin{figure}[htb]
    \centering
    \resizebox{11cm}{!}{\input{figures/fig_relations_of_trianlges}}%
    \caption{Geometric representations of parallel triangles, intersecting triangles, and skew triangles.}
    \label{fig:relations of triangles}
\end{figure}

Let $M$ be a $3$-connected regular matroid, and let $\cP$ be a path decomposition of $M$ that is not necessarily a $\triangle$-tree. Clearly, every edge of $\cP$ corresponds to a $3$-sum of its two incident vertex labels. Let $P_0,P_1,\ldots,P_l$ be the vertex labels of $\cP$, and, for each $i\in\{1,2,\ldots,l\}$, let $T_i$ be the common triangle $E(P_{i-1})\cap E(P_i)$. Then, for each $i\in\{1,2,\ldots,l-1\}$, the matroid $P_i$ has two {\it distinguished} triangles $T_i$ and $T_{i+1}$ corresponding to the two edges of $\cP$ incident with $P_i$. We call $P_1,P_2,\dots, P_{l-1}$ the {\it internal vertices} of $\cP$; an edge of $\cP$ is {\it internal} if it joins two internal vertices. 

Suppose that $U$ and $V$ are adjacent internal vertices of a path decomposition $\cP$ of a $3$-connected regular matroid. Let the distinguished triangles of $U$ be $A$ and $B$, and let the distinguished triangles of $V$ be $B$ and $C$. Assume that $A$ and $B$ are intersecting in $U$, and that $B$ and $C$ are intersecting in $V$. We say that the edge of $\cP$ joining $U$ and $V$ is {\it separating} if $A$ and $C$ are skew in $U\oplus_3V$; otherwise, $A$ and $C$ are intersecting in $U\oplus_3V$, and the edge joining $U$ and $V$ is {\it nonseparating}. Figure~\ref{fig: separating/nonseparating} shows examples of separating and nonseparating edges.

\begin{figure}[htb]
    \centering
    \resizebox{11cm}{!}{\input{figures/fig_bundle}}%
    \caption{(a) A separating edge and (b) a nonseparating edge.}
    \label{fig: separating/nonseparating}
\end{figure}

We next prove several useful results concerning the internal vertices of path decompositions of $3$-connected regular matroids.

\begin{lemma}\label{lem:many parallel triangles}
    Let $k$ be a positive integer. Suppose that $M$ is a $3$-connected regular matroid and that $\cP$ is a path decomposition of $M$ with at least $k$ vertices such that
    \begin{enumerate}
        \item[(\romannum{1})] each vertex of $\cP$ is labeled by a matroid of rank at least three, and
        \item[(\romannum{2})] for each internal vertex of $\cP$, the distinguished triangles are parallel.
    \end{enumerate}
    Then $M$ has a $2$-connected graphic flat of rank at least $k$.
\end{lemma}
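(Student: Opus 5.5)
The plan is to build, for every vertex $P_i$ of $\cP=P_0P_1\cdots P_l$ (where $l+1\geq k$), a graphic flat $F_i$ of $P_i$ that contains its distinguished triangle(s). We then glue these flats together with iterated quasi-$3$-sums via Lemma~\ref{lem: flats in 3-sum}(iii), and count rank.

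\emph{Local structure.} Since $M$ is $3$-connected, Proposition~\ref{prop: almost 3-con} shows that every $P_i$ is almost $3$-connected. Let $i$ be an internal index. By hypothesis (ii), $r(T_i\cup T_{i+1})=2$. A simple binary rank-$2$ matroid has at most three points, so each element of $T_{i+1}$ is parallel in $P_i$ to exactly one element of $T_i$, and $T_{i+1}\subseteq \cl_{P_i}(T_i)$.

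\emph{Constructing $F_i$.} Apply Theorem~\ref{thm: a graph that keeps a triangle} to $\si(P_i)$, which is $3$-connected, regular, of rank at least three, and hence has at least four elements. Use the triangle given by the image of $T_i$, with $T_1$ for $i=0$ and $T_l$ for $i=l$. This yields a flat whose restriction is an enlarged wheel $W_i$ with rim $A_i$, with $a$ on the rim and $b,c$ spokes. Let $F_i$ be the preimage of this flat in $P_i$, so $F_i$ is a flat of $P_i$.

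Parallel elements are simply parallel edges, so $P_i|F_i=M(W_i')$, where $W_i'$ is $W_i$ with extra parallel edges. In particular $T_i\cup T_{i+1}\subseteq F_i$, and in $W_i'$ both distinguished triangles sit on the same three vertices $u,v,w$, where $v$ is the hub and $uw$ is the rim edge $a$. Since $\si(P_i)$ is $3$-connected with at least four elements, $|A_i|\geq 3$. Hence $r(F_i)=|A_i|\geq 3$, and the rim has at least one vertex other than $u$ and $w$.

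\emph{Gluing and rank.} Consecutive labels share only the triangle $T_{i+1}$, so $F_i\cap F_{i+1}=T_{i+1}$. Apply Lemma~\ref{lem: flats in 3-sum}(iii) repeatedly along the path; every $3$-sum is a quasi-$3$-sum. This shows that
\[
F=\Bigl(\bigcup_{i=0}^{l}F_i\Bigr)-\bigcup_{j=1}^{l}T_j
\]
is a flat of $M$, and $M|F$ is the iterated quasi-$3$-sum of the matroids $M(W_i')$. By Proposition~\ref{prop: graph of quasi-3-sum}, applied by identifying each $T_j$ with the corresponding triangle of $W_{j-1}'$ and of $W_j'$, $M|F$ is the cycle matroid of the graph $G$ obtained by gluing the $W_i'$ along these triangles and deleting the triangle edges. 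Hence $M|F$ is graphic.

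Each gluing deletes a triangle, which lowers rank by $2$, as in the proof of Proposition~\ref{prop: rank of 3-sum}. Therefore
\[
r(F)=\sum_{i=0}^{l}r(F_i)-2l\geq 3(l+1)-2l=l+3\geq k.
\]

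\emph{$2$-connectivity (expected main obstacle).} We must show that $G$ is $2$-connected, because deleting the triangle edges might create cut vertices. The plan is to show that each piece $W_i'-(T_i\cup T_{i+1})$ is connected and still links $u$, $v$, $w$ robustly:
\begin{itemize}
\item the rim minus $a$ is a $u$–$w$ path through at least one further rim vertex;
\item every internal rim vertex has a spoke to $v$, and that spoke is not deleted.
\end{itemize}
Thus each piece is $2$-connected after adding back a triangle on $\{u,v,w\}$. Any deleted edge of one piece can be replaced by a path through an adjacent piece. Gluing $2$-connected graphs along three common vertices, where each side stays connected after the triangle edges are removed, should then give a $2$-connected graph. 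I would verify this by exhibiting, for any two edges of $G$, a cycle containing both, built by routing through the pieces along the path $\cP$. The delicate point is the parallel copies of the spokes at $u$ and $w$. Those belonging to triangles are deleted, but any remaining ones are harmless, since they are parallel to edges that can be rerouted.
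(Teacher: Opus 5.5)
\textbf{There is a genuine gap at the $2$-connectivity step.} The flat $F$ you build need not be $2$-connected at all, so no cycle-routing argument can establish it.

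Your second bullet is false. An enlarged wheel may have subdivided rim edges, and Theorem~\ref{thm: a graph that keeps a triangle} can return a theta graph whose hub is adjacent only to the ends of $a$. For example, let $\si(P_i)$ be the cycle matroid of the triangular prism with triangles $xyz$ and $x'y'z'$, and take $a=xz$. Then the rim is $xzz'x'$ and the hub $y$ has no other neighbour. After deleting $T_i\cup T_{i+1}$, this piece is a single $x$--$z$ path with $y$ isolated, so a deleted spoke has nothing to be rerouted along.

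Since all distinguished triangles are parallel, every piece is glued onto the same three vertices $x,y,z$ of $G$. Take a chain of such prisms, with the triangle doubled at internal vertices. This gives a $3$-connected $M$ that satisfies all the hypotheses, and your construction fails on it in two ways.
\begin{enumerate}
\item[(a)] If one piece takes its rim element in a non-corresponding position (you never fix this choice), it contributes an $x$--$y$ path while the others contribute $x$--$z$ paths. That path then consists of bridges of $G$.
\item[(b)] Even with consistent choices, replace one internal label by $M(K_4)$ with the triangle doubled. That piece becomes a star on $x,y,z$, so its edge at $y$ is a coloop of $M|F$.
\end{enumerate}
Your rank count has a related flaw. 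Proposition~\ref{prop: rank of 3-sum} relies on coindependence of the triangle. In $P_i|F_i$, however, all edges at such a hub lie in the deleted triangles, so these quasi-$3$-sums can lower the rank by three rather than two.

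\textbf{The repair is to discard the hubs and spokes, which is exactly the paper's proof.} Fix $t_1\in T_1$, and let $t_{i+1}$ be the element of $T_{i+1}$ that is parallel to $t_i$ in $P_i$. Use $t_i$ as the rim element in $\si(P_i)$ (and $t_1$ in $\si(P_0)$). Then keep only $F_i=\cl_{P_i}(A_i)$, the rim circuit together with its parallel elements. This set contains $t_{i+1}$ and meets $T_i$ and $T_{i+1}$ only in $t_i$ and $t_{i+1}$. Hence Lemma~\ref{lem: flats in 3-sum}(ii) glues the sets $F_i$ by $2$-sums. The result is a $2$-connected graphic flat, a subdivision of $K_{2,l+1}$ up to parallel edges, of rank at least $2(l+1)-l=l+2\geq k$.

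The paper takes $F_j=\cl_{P_j}(C_j)$, where $C_j$ is a minimum circuit through $t_j$ whose closure meets $T_j$ only in $t_j$. These are precisely your rims. Working in $\si(P_j)$, as you do, also rules out the degenerate choice $C_j=\{t_j,t_{j+1}\}$. Whole triangles, quasi-$3$-sums and the enlarged-wheel theorem are not needed for this lemma. They are needed only in Theorem~\ref{thm: high-rank vertex in Delta-tree}, where a full triangle of the center label must be retained.
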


\begin{proof}
    Let $P_0,P_1,\dots,P_l$ be the vertices of $\cP$ in order, and let $T_i=E(P_{i-1})\cap E(P_i)$ for each $i\in\{1,2,\dots,l\}$. Then $l\geq k-1$. Choose $t_1$ arbitrarily in $T_1$. For each $i\in\{2,3,\dots,l\}$, let $t_i$ be the element of $T_i$ that is parallel to $t_{i-1}$ in $P_{i-1}$ (see Figure~\ref{fig: parallel triangles}).

    Since each vertex of $\cP$ has rank at least three, Proposition~\ref{prop: almost 3-con} implies that the simplification of each $P_i$ is a $3$-connected matroid with at least four elements. Hence, by Lemma~\ref{lem: circuit contains only one in its closure}, for each $i\in\{0,1,\dots,l\}$ and each triangle $\{a,b,c\}$ of $P_i$, there is a circuit $C$ of $P_i$ such that $a\in C$ and $\cl_{P_i}(C)\cap\{a,b,c\}=\{a\}$.
    
    For each $j\in\{1,2,\dots,l\}$, let $C_j$ be a minimum-sized circuit of $P_j$ such that $t_j\in C_j$ and $\cl_{P_j}(C_j)\cap T_j=\{t_j\}$. Let $C_0$ be a minimum-sized circuit of $P_0$ such that $t_1\in C_0$ and $\cl_{P_0}(C_0)\cap T_1=\{t_1\}$. For each $j\in\{0,1,\dots,l\}$, let $F_j=\cl_{P_j}(C_j)$. Clearly, $P_j|F_j$ is $2$-connected for each $j\in\{0,1,\dots,l\}$. By Proposition~\ref{prop: spanning circuit containing e}, $P_j|F_j$ is also graphic. Therefore, by Lemma~\ref{lem: flats in 3-sum},
    \[
        (F_0\cup F_1\cup\dots\cup F_l)-\{t_1,t_2,\dots,t_l\}
    \]
    is a flat $F$ of $M$ such that
    \[
        M|F=P_0|F_0\oplus_2 P_1|F_1\oplus_2\dots\oplus_2 P_l|F_l.
    \]
    Thus $M|F$ is both $2$-connected and graphic. Moreover, since $M$ is $3$-connected, at most one of $C_0,C_1,\dots,C_l$ has size two. It follows that $r(P_j|F_j)\geq 2$ for all but at most one $j\in\{0,1,\dots,l\}$. Therefore,
    \[
        r(M|F)=\sum_{j=0}^l r(P_j|F_j)-l\geq l+1\geq k,
    \]
    and the proof is complete.
\end{proof}

\begin{figure}[htb]
    \centering
    \resizebox{11cm}{!}{\input{figures/fig_many_parallel_traingles}}%
    \caption{A path decomposition with all internal vertices having parallel distinguished triangles.}
    \label{fig: parallel triangles}
\end{figure}

\begin{lemma}\label{lem: many intersecting}
    Let $k$ be a positive integer. Suppose that $M$ is a $3$-connected regular matroid with at least four elements, and that $\cP$ is a path decomposition of $M$ with at least $k+1$ vertices such that
    \begin{enumerate}
        \item[(\romannum{1})] every vertex of $\cP$ is labeled by a matroid of rank at least three, and
        \item[(\romannum{2})] every internal edge of $\cP$ is nonseparating.
    \end{enumerate}
    Then $M$ has a $2$-connected graphic flat of rank at least $k$.
\end{lemma}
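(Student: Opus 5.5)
The plan is to build the flat piece by piece along $\cP$, taking a small graphic flat in each vertex label that contains the relevant distinguished triangles, and then glue these pieces by quasi-$3$-sums using Lemma~\ref{lem: flats in 3-sum}(\romannum{3}) and Proposition~\ref{prop: graph of quasi-3-sum}. Let $P_0,P_1,\dots,P_l$ be the vertex labels, with $l\geq k$, and let $T_i=E(P_{i-1})\cap E(P_i)$. If $l\leq 2$ there are no internal edges, and the statement follows from Lemma~\ref{lem:many parallel triangles}-type arguments for small $k$ (or from the general case below). So the main case is $l\geq 3$. There, every internal vertex is incident with an internal edge, so by the definition of nonseparating, the distinguished triangles $T_i,T_{i+1}$ of each internal vertex $P_i$ are intersecting.

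\emph{Internal pieces.} For internal $P_i$, put $F_i=\cl_{P_i}(T_i\cup T_{i+1})$, a rank-$3$ flat. A simple rank-$3$ regular matroid has at most six elements, and $M(K_4)$ has no two disjoint triangles. Hence $\si(P_i|F_i)\cong M(K_4)$, and $T_i$ and $T_{i+1}$ share exactly one parallel class of $P_i|F_i$. So $P_i|F_i=M(G_i)$, where $G_i$ is $K_4$ with some edges replaced by parallel classes. In $G_i$ the triangles $T_i$ and $T_{i+1}$ are two triangles of $K_4$ using parallel copies of a common edge.

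\emph{End pieces.} For $P_0$ and $P_l$, Proposition~\ref{prop: almost 3-con} shows that $\si(P_0)$ and $\si(P_l)$ are $3$-connected with at least four elements. I apply Theorem~\ref{thm: a graph that keeps a triangle}, after passing to the simplification and adding back parallel elements to obtain flats of $P_0$ and $P_l$. This gives flats $F_0\supseteq T_1$ and $F_l\supseteq T_l$ whose restrictions are cycle matroids of enlarged wheels, each with the distinguished triangle as a rim–spoke–spoke triangle.

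\emph{Gluing and rank.} Each $F_i$ is a flat containing the relevant triangles and $F_i\cap F_{i+1}=T_{i+1}$. Applying Lemma~\ref{lem: flats in 3-sum} iteratively (in the generalized parallel connection), $F=(F_0\cup\dots\cup F_l)-(T_1\cup\dots\cup T_l)$ is a flat of $M$ with $M|F=M(G_0\oplus_3'G_1\oplus_3'\cdots\oplus_3'G_l)$, which is graphic by Proposition~\ref{prop: graph of quasi-3-sum}. For the rank, I count rank in $P_T$ before deleting the triangles: each internal piece adds $3-2=1$ and each end wheel adds at least $1$. Deleting $T$ does not drop the rank, because the wheel rims and the spare edges of each $K_4$ block span the triangles. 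This gives $r(F)\geq l\geq k$.

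\emph{Main obstacle: $2$-connectivity of $M|F$.} After the triangle edges are deleted, an internal $K_4$-block keeps only the edges of $F_i$ outside $T_i\cup T_{i+1}$. These include the fourth vertex's edges and any further parallel copies, but the third vertex of $T_i$ is a possible cut vertex.

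This is where I expect the nonseparating hypothesis to be used. In $P_{i}\oplus_3 P_{i+1}$ the triangles $T_i$ and $T_{i+2}$ are intersecting, so the shared parallel classes line up along the chain. I would argue that the glued graph is a chain of $K_4$'s sharing edges, which is a ladder-like structure closed up at both ends by the enlarged wheels, whose rim paths and spokes reattach every triangle vertex.

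If some block degenerates, so that a vertex has degree one after the deletions, I would first try enlarging $F_i$ to $\cl_{P_i}$ of $T_i\cup T_{i+1}$ together with a short circuit through the lost vertex, chosen via Lemma~\ref{lem: circuit contains only one in its closure}. Failing that, I would contract the analysis to every other vertex, using Proposition~\ref{prop: distance across 3-sums} to bound where the bad configurations can sit. Checking this case analysis is the step I expect to be hardest.
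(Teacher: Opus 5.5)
\textbf{There is a genuine gap.} The gluing construction fails, and the failure goes beyond the $2$-connectivity step you flagged.

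The flats $F_i=\cl_{P_i}(T_i\cup T_{i+1})$ only contain five forced points: the class $t_i\parallel t_{i+1}$ and the four other triangle elements. The sixth point (the edge $w_iw_{i+1}$ in your picture) and any extra parallel copies are optional. So $\si(P_i|F_i)$ may be $M(K_4\backslash e)$, and your claim that the ``spare edges'' span the triangles is false.

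Here is a concrete example.
\begin{itemize}
\item For $1\le i\le l-1$, let $G_i$ be $K_6$ on $\{u,v,w_i,w_{i+1},z_i,z_i'\}$ with $w_iw_{i+1}$ deleted and $uv$ doubled. Let $T_i$ and $T_{i+1}$ be the triangles on $\{u,v,w_i\}$ and $\{u,v,w_{i+1}\}$, using different copies of $uv$.
\item Let $P_0$ and $P_l$ be copies of $M(K_5)$ containing $T_1$ and $T_l$, respectively.
\end{itemize}
Then $M$ is the cycle matroid of a simple $3$-connected graph, every label has rank at least three, and every internal edge is nonseparating. Yet $F_i=T_i\cup T_{i+1}$ for every internal $i$. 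So the internal blocks contribute nothing after the triangles are deleted, and $F=(F_0-T_1)\cup(F_l-T_l)$ has rank at most $8$ however large $l$ is.

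Your fallback of enlarging $F_i$ by a short circuit happens to work here, because $P_i$ is graphic. In general, though, $P_i$ is an arbitrary regular matroid (a merger of many $\triangle$-tree nodes). Nothing you cite makes such an enlarged flat graphic, or forces it to link $T_i-\{t_i\}$ to $T_{i+1}-\{t_{i+1}\}$. Theorem~\ref{thm: a graph that keeps a triangle} handles only one triangle.

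\textbf{The missing idea} is to avoid building the flat by hand.
\begin{itemize}
\item If necessary, add an element $s$ parallel to $t_l$, and hence to the whole chain $t_1\parallel\cdots\parallel t_l$, to get a $3$-connected $M'$.
\item Theorem~\ref{thm: connected hyperplane} gives a $2$-connected hyperplane $H$ of $M'$ avoiding $s$. It is also a $2$-connected hyperplane of $M$.
\item Because $H$ misses this parallel class, a circuit of $M|H$ through $e\in E(P_0)-T_1$ and $f\in E(P_l)-T_l$ must cross each $T_i$ through $T_i^-=T_i-\{t_i\}$.
\item Since $T_i$ and $T_{i+1}$ are intersecting rather than parallel, $\dist_{P_i}(T_i^-,T_{i+1}^-)\geq 3$. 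Summing gives $\dist_{M|H}(e,f)\geq 4+3(l-1)-2l=l+1\geq k+1$.
\item Lemma~\ref{lem: diameter implies graphic flat} then yields a $2$-connected graphic flat of $M|H$, hence of $M$, of rank at least $k$.
\end{itemize}
That lemma rests on the clean-ladder characterization, Lemma~\ref{lem: M is a ladder}, so graphicness comes for free. That is exactly what a piece-by-piece construction through non-graphic labels cannot supply.
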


Before the proof, we recall a theorem of Bixby and Cunningham~\cite[Theorem~3]{Bixby-Cunningham}.

\begin{theorem}\label{thm: connected hyperplane}
    For every element $e$ of a $3$-connected binary matroid $M$ with at least four elements, there are at least two $2$-connected hyperplanes of $M$ that avoid $e$.
\end{theorem}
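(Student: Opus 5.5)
Since this result is quoted from Bixby and Cunningham, I only sketch how I would prove it. I would work with the dual description. A hyperplane $H$ of $M$ avoiding $e$ is exactly $E(M)-D$ for a cocircuit $D$ containing $e$. The hyperplane is $2$-connected precisely when $M\backslash D$ is connected. So the task is to find two distinct cocircuits $D_1,D_2$ through $e$ such that $M\backslash D_i$ is connected for each $i$; call such cocircuits \emph{nonseparating}.

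\textbf{Step 1: one nonseparating cocircuit.} Among all cocircuits $D$ with $e\in D$, together with a choice of component $K$ of $M\backslash D$, I would choose the pair with $|K|$ maximum. Suppose $M\backslash D$ has a second component $L$.
\begin{enumerate}
\item[(a)] Since $M$ is $3$-connected and $L$ is separated from $K$ in $M\backslash D$, the set $L$ is not separated from $D$ by a small separation. Concretely, $M|(L\cup D)$ contracted onto $D$ should be connected enough that some cocircuit $D^*$ of $M|(L\cup D)$, or of a suitable minor, meets $D$ in a proper subset.
\item[(b)] Using binarity, I would take symmetric differences of $D$ with cocircuits supported in $D\cup L$. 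Symmetric differences of cocircuits in a binary matroid are disjoint unions of cocircuits. This should produce a cocircuit $D'$ of $M$ with $e\in D'\subseteq D\cup L$ and $D'\cap L\neq\emptyset$.
\item[(c)] I would then check that $K$ together with $D-D'$ lies in a single component of $M\backslash D'$, with $D-D'$ nonempty. That component strictly contains $K$, contradicting maximality.
\end{enumerate}
The $3$-connectivity is used to guarantee $D-D'\neq\emptyset$, and that $D-D'$ attaches to $K$ rather than forming a new piece. This exchange step is where I expect the real work.

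\textbf{Step 2: a second one.} Let $D_1$ be the cocircuit from Step 1. Since $M$ is simple and cosimple with at least four elements, $|D_1|\geq 3$. Pick $f\in D_1-\{e\}$. By the incomparability argument used in Lemma~\ref{lem: circuit contains only one in its closure}, dualized, there is a cocircuit through $e$ avoiding $f$.
\begin{enumerate}
\item[(a)] Rerun the maximization of Step 1 over pairs $(D,K)$ with $e\in D$, $f\notin D$, and $f\in K$.
\item[(b)] The exchange in Step 1(b) only adds elements of $L$ and removes elements of $D$. Hence it preserves $f\notin D'$ and keeps $f$ in the enlarged component.
\end{enumerate}
The resulting nonseparating cocircuit $D_2$ avoids $f$, so $D_2\neq D_1$. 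This gives the two required hyperplanes $E(M)-D_1$ and $E(M)-D_2$.

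\textbf{Main obstacle.} The key difficulty is Step 1(b)–(c): producing the exchanged cocircuit $D'$ and verifying that it genuinely enlarges the component $K$. My first attempt would be a minimality choice: pick $D'\subseteq D\cup L$ through $e$ minimizing $|D'\cap D|$. Then a $2$-separation $(K\cup(D-D'),\,\cdot)$ would have to arise whenever the enlargement fails, and $3$-connectivity forbids that separation.
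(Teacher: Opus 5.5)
\textbf{Gap in Step~1.} The paper only cites this result from Bixby and Cunningham, so your sketch has to stand on its own, and Step~1 carries the entire content of the theorem. Your steps (a)--(c) never use the maximality of $K$. They therefore amount to claiming that the exchange succeeds for an \emph{arbitrary} second component $L$, and that claim is false.

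Here is a counterexample. Let $G$ be obtained from the cycle $C=v_1v_2\cdots v_6v_1$ by adding three things: a vertex $k$ adjacent to $v_1,v_2,v_4,v_5,v_6$, a vertex $\ell$ adjacent to $v_2,v_3,v_4$, and the chord $v_3v_6$. This graph is simple and $3$-connected, so $M=M^*(G)$ is a $3$-connected binary matroid. Take $e=v_1v_2$ and $D=E(C)$. The components of $M\backslash D$ are the bridges of $C$:
\begin{itemize}
\item $K=\{kv_1,kv_2,kv_4,kv_5,kv_6\}$,
\item $L=\{\ell v_2,\ell v_3,\ell v_4\}$,
\item $\{v_3v_6\}$.
\end{itemize}
The cocircuits $D'$ with $e\in D'\subseteq D\cup L$ and $D'\cap L\neq\emptyset$ are exactly the three cycles through $e$ that detour through $\ell$. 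Each of them still passes through every attachment of $K$. Hence $K$ is itself a component of $M\backslash D'$ and never absorbs $D-D'$. So (c) fails for this $L$ even though $M$ is $3$-connected, and the $2$-separation you anticipate in your last paragraph does not exist. Rerouting through the chord instead, via the cycle $v_1v_2v_3v_6v_1$, does enlarge $K$.

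\textbf{What is missing.} You need a rule for choosing the component to reroute through. In the graphic picture, you must pick $L\neq K$ with two attachments $u,v$ such that the $u$--$v$ arc of $D$ avoiding $e$ has an attachment of $K$ in its interior. Rerouting through $L$ then merges that arc with $K$. Such an $L$ exists by $3$-connectivity: otherwise the two ends of some $K$-segment (possibly the segment containing $e$) would form a $2$-cut.

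In a non-graphic binary matroid there are no attachment vertices. So both the selection criterion and the proof that the component of $M\backslash D'$ containing $K$ also contains $D-D'$ must be formulated with the theory of bridges of a cocircuit in a binary matroid (segments, overlapping bridges). That is the substance of the theorem you are quoting, and your Step~1 leaves it at the level of \emph{should}.

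\textbf{Remaining steps.} Given a correct Step~1, your Step~2 is fine: an exchange inside $D\cup L$ with $L\neq K$ never puts $f\in K$ into the new cocircuit. Two small remarks:
\begin{itemize}
\item You do not need incomparability to get a cocircuit through $e$ avoiding $f$. Simplicity gives $e\notin\cl(\{f\})$, which says exactly that such a cocircuit exists.
\item $D-D'\neq\emptyset$ is automatic, because distinct cocircuits are incomparable; $3$-connectivity is not what secures it.
\end{itemize}
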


\begin{proof}[Proof of Lemma~\ref{lem: many intersecting}.]
    Let $P_0,P_1,\dots,P_l$ be the vertices of $\cP$ in order. Then $l\geq k$. For each $i\in\{1,2,\dots,l\}$, let
    \[
        T_i=E(P_{i-1})\cap E(P_i).
    \]
    Since every internal edge of $\cP$ is nonseparating, the triangles $T_i$ and $T_{i+1}$ are intersecting in $P_i$ for each $i\in\{1,2,\dots,l-1\}$. Hence there are elements $t_1,t_2,\dots,t_l$, with $t_i\in T_i$ for each $i$, such that $t_j$ and $t_{j+1}$ are parallel in $P_j$ for each $j\in\{1,2,\dots,l-1\}$; see Figure~\ref{fig: interrsecting triangles}.

    As $\cP$ is a path decomposition, $M$ can be constructed from $P_0,P_1,\dots,P_l$ by applying the $3$-sums corresponding to $T_1,T_2,\dots,T_l$, in this order. When the last $3$-sum is performed in the construction of $M$, before the basepoint $t_{l}$ is deleted, there may or may not be an element $s$ in parallel to $t_{l}$. If there is, let $M'=M$. If there is not, let $M'$ be obtained by adding such an element $s$. Then $M'$ is certainly $3$-connected. Thus, by Theorem~\ref{thm: connected hyperplane}, $M'$ has a $2$-connected hyperplane $H$ that avoids $s$. Then $H$ is also a $2$-connected hyperplane of $M$.

    By Lemma~\ref{lem: flats in 3-sum}, the hyperplane $H$ has the form
    \[
        (H_0\cup H_1\cup\dots\cup H_l)-(T_1\cup T_2\cup\dots\cup T_l),
    \]
    where $H_i$ is a hyperplane of $P_i$ for each $i\in\{0,1,\dots,l\}$. Choose $e\in H_0-T_1$ and $f\in H_l-T_l$. For each $i\in\{1,2,\dots,l\}$, let
    \[
        T_i^-=T_i-\{t_i\}.
    \]
    Since $H$ avoids the point $s$,
    \begin{align*}
        \dist_{M|H}(e,f)
        &\ge \dist_{P_0}(e,T_1^-)+\dist_{P_l}(f,T_l^-)
        +\sum_{i=1}^{l-1}\dist_{P_i}(T_i^-,T_{i+1}^-)-2l\\
        &\ge 4+3(l-1)-2l\\
        &= l+1\\
        &\ge k+1.
    \end{align*}
    Thus $M|H$ has diameter at least $k+1$. By Lemma~\ref{lem: diameter implies graphic flat}, $M|H$ has a $2$-connected graphic flat of rank at least $k$. Since every flat of $M|H$ is also a flat of $M$, the result follows.
\end{proof}

\begin{figure}[htb]
    \centering
    \resizebox{11cm}{!}{\input{figures/fig_many_intersecting_triangles}}%
    \caption{A path decomposition in which every internal edge is nonseparating.}
    \label{fig: interrsecting triangles}
\end{figure}

The next result proves the existence of a large-rank $2$-connected graphic flat when a $\triangle$-tree of a $3$-connected regular matroid contains a long path.

\begin{theorem}\label{thm: when Delta-tree has a long path}
    For every positive integer $k$, there is an integer $f_{\ref{thm: when Delta-tree has a long path}}(k)$ such that if a $3$-connected regular matroid $M$ has a $\triangle$-tree $\cT$ in which
    \begin{enumerate}
        \item[(\romannum{1})] each vertex of $\cT$ is labeled by a matroid of rank at least three, and
        \item[(\romannum{2})] $\cT$ contains a path of length $f_{\ref{thm: when Delta-tree has a long path}}(k)$,
    \end{enumerate}
    then $M$ has a $2$-connected graphic flat $F$ with $r(F)\ge k$.
\end{theorem}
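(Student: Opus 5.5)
Take a long path $P_0,P_1,\dots,P_l$ in $\cT$ and perform all $3$-sums for edges of $\cT$ off the path. Each off-path component merges into the unique path vertex it attaches to. This gives a path decomposition $\cP$ of $M$ with vertex labels $P'_0,\dots,P'_l$. The labels are regular, almost $3$-connected by Proposition~\ref{prop: almost 3-con}, and of rank at least three. The distinguished triangles of $P'_i$ are the same triangles $T_i,T_{i+1}$ as before.

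For each internal vertex, the pair $T_i,T_{i+1}$ is parallel, intersecting, or skew in $P'_i$. Colour internal vertices by this trichotomy. For consecutive intersecting vertices, further colour the edge between them as separating or nonseparating. I will choose $f_{\ref{thm: when Delta-tree has a long path}}(k)$ large enough (something like $3k^2$) that a Ramsey/pigeonhole argument along the path yields one of three configurations. The first is many vertices whose distinguished triangles are skew, or at least pairwise at distance $\ge 3$. The second is a long run of consecutive vertices with parallel distinguished triangles. The third is a long run of intersecting vertices with all edges nonseparating.

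In the second and third cases, contract the path decomposition to the run: merge everything before the run into one end label and everything after into the other. Then Lemma~\ref{lem:many parallel triangles} or Lemma~\ref{lem: many intersecting} applies directly. Ranks stay at least three, and being parallel, intersecting or nonseparating is preserved for the internal vertices of the run.

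The remaining case is where many \emph{separation events} occur. These are skew internal vertices, separating edges, and parallel vertices adjacent to intersecting ones. Here I would mimic Lemma~\ref{lem: long paths implies large diameter}. Merge the path into blocks so that each internal block label $Q_j$ has its two distinguished triangles skew, i.e. $\dist_{Q_j}(T,T')\ge 3$. A separating edge produces exactly this after merging its two endpoints. Choose $e$ in the first label and $f$ in the last, off the triangles. Iterating Proposition~\ref{prop: distance across 3-sums}, in the form generalized to several $3$-sums as in the proof of Lemma~\ref{lem: many intersecting}, gives $\dist_M(e,f)\ge 2+\sum_j \dist_{Q_j}(T_j,T_{j+1})-2m+\dots$, and each skew block contributes at least one net unit. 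Thus the diameter of $M$ exceeds $k$, and Lemma~\ref{lem: diameter implies graphic flat} gives the flat.

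The main obstacle is this last step. Proposition~\ref{prop: distance across 3-sums} bounds the distance from $e$ to the \emph{whole} triangle, and a minimum circuit can switch triangle elements between sums. So the additive bound needs care: I need $\dist(T_j,T_{j+1})\ge 3$ in each block, not just the trivial 2, and I must check that gluing via $3$-sums loses only 2 per sum. If skewness alone is insufficient, I would fall back on the hyperplane trick of Lemma~\ref{lem: many intersecting}: pass to a $2$-connected hyperplane (Theorem~\ref{thm: connected hyperplane}) to kill one element per triangle and force distance growth. Mixed parallel/intersecting runs should also be absorbed by regrouping blocks.
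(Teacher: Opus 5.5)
Your overall plan is the paper's: absorb the off-path branches, sort internal vertices into parallel, intersecting and skew, handle long parallel runs with Lemma~\ref{lem:many parallel triangles} and long nonseparating runs with Lemma~\ref{lem: many intersecting}, and otherwise build skew blocks and bound $\dist_M(e,f)$. The case analysis, however, breaks on parallel internal vertices.

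You count ``parallel vertices adjacent to intersecting ones'' as separation events, and they are not. If $U$ has parallel distinguished triangles $A,B$ and $V$ has distinguished triangles $B,C$, then $A$ and $C$ stand in $U\oplus_3 V$ in exactly the same relation as $B$ and $C$ in $V$. So a stretch that alternates intersecting and parallel vertices, with no skew vertex, can merge into a single label whose distinguished triangles are still intersecting. Your step ``merge the path into blocks so that each $Q_j$ has skew distinguished triangles'' then cannot be carried out, and there is no distance growth. Your other two cases do not catch this stretch either. It has no long run of parallel vertices. It also has no run of \emph{consecutive} intersecting vertices, and separating/nonseparating is only defined for an edge whose two ends are both intersecting. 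Your remark that mixed runs ``should be absorbed by regrouping blocks'' is exactly the missing argument.

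The fix, which is what the paper does, is to treat parallel vertices as transparent and remove them first.
\begin{enumerate}
\item[(1)] Merge each parallel internal vertex into a neighbour. This does not change the type (parallel, intersecting or skew) of any remaining vertex.
\item[(2)] If some position absorbs at least $k-2$ parallel vertices, you get a subpath with at least $k$ vertices whose internal vertices are all parallel, and Lemma~\ref{lem:many parallel triangles} applies. Otherwise the reduced path is still long.
\item[(3)] Only now classify the edges of the reduced path as separating or nonseparating, and merge across the nonseparating ones. Each merged group has at most $k-2$ vertices, by Lemma~\ref{lem: many intersecting}.
\item[(4)] Pair up adjacent internal vertices. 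A merged pair is skew whenever its edge is separating or one of the two is skew, since the spans of $A$ and $C$ can only meet inside the span of the common triangle.
\end{enumerate}
These two nested absorptions make the counting cubic in $k$; the paper requires a path of length $2k^3-10k^2+17k-9$. A bound of order $3k^2$ is therefore not enough for this argument.

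Your worry about the final estimate is unfounded, though. A minimum circuit through $e$ and $f$ splits as $C_0\triangle C_1\triangle\cdots\triangle C_m$, with each $C_j$ meeting each adjacent common triangle in exactly one element. Iterating Proposition~\ref{prop: distance across 3-sums} then gives $\dist_M(e,f)\ge 4+3(m-1)-2m=m+1$ when all internal blocks are skew, so no hyperplane argument is needed.
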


\begin{proof}
    First we show that one may take $f_{\ref{thm: when Delta-tree has a long path}}(1)=f_{\ref{thm: when Delta-tree has a long path}}(2)=1$. It is clear that $r(M)\geq 3$, so the closure of a minimum-sized circuit of $M$ is a circuit. As this circuit has at least three elements, it is a $2$-connected graphic flat of rank at least two.
    
    We may now assume that $k\geq 3$. We show that one may take
    \[
        f_{\ref{thm: when Delta-tree has a long path}}(k)=2k^3-10k^2+17k-9.
    \]
    Suppose that $M$ has a $\triangle$-tree $\cT$ containing a path of length $f_{\ref{thm: when Delta-tree has a long path}}(k)$, and suppose that $M$ has no $2$-connected graphic flat of rank at least $k$. Let $\cP$ be a longest path of $\cT$, with vertices labeled by $M_0,M_1,\ldots,M_l$. Then
    \[
        l\ge 2k^3-10k^2+17k-9.
    \]
    
    We first perform all $3$-sums in $\cT$ corresponding to edges not belonging to $\cP$, as illustrated in Figure~\ref{fig:long path in delta tree}. The resulting path $\cP'$ has the same edge set as $\cP$, and we label its vertices by $P_0,P_1,\ldots,P_l$. Then $\cP'$ is a path decomposition of $M$, since $M$ can be obtained from $P_0,P_1,\ldots,P_l$ by taking the $3$-sums corresponding to the edges of $\cP'$. Note that $\cP'$ need not be a $\triangle$-tree, since its vertices may be labeled by matroids that are neither graphic nor cographic.
    
    \begin{figure}[htb]
        \centering
        \resizebox{11cm}{!}{\input{figures/fig_long_path}}%
        \caption{Absorbing the branches of $\cT$ into a long path $\cP'$.}
        \label{fig:long path in delta tree}
    \end{figure}
    
    Let $I$ be an internal vertex of $\cP'$ in which the distinguished triangles are parallel. Perform the $3$-sum corresponding to an edge incident with $I$. This operation decreases the number of internal vertices whose distinguished triangles are parallel. Hence we may repeat this operation until the resulting path decomposition has no such internal vertex; see Figure~\ref{fig: the first step}. Let $\cQ$ be the resulting path decomposition of $M$. We show the following.

    \begin{figure}[htb]
        \centering
        \resizebox{11cm}{!}{\input{figures/fig_the_first_step}}%
        \caption{Performing $3$-sums involving an internal vertex in which the distinguished triangles are parallel.}
        \label{fig: the first step}
    \end{figure}

    \begin{sublemma}\label{sublem: Q has many vertices}
    The path decomposition $\cQ$ has at least $2k^2-6k+6$ vertices.
    \end{sublemma}
    
    Suppose that $\cQ$ has at most $2k^2-6k+5$ vertices. As illustrated in Figure~\ref{fig: the first step}, the internal vertices of $\cP'$ in which the distinguished triangles are parallel must be squeezed into at most $2k^2-6k+4$ positions.
    
    Suppose that at least $k-2$ such internal vertices are squeezed into a single position. Then $\cP'$ contains a subpath $\cI$ with at least $k$ vertices such that, for every internal vertex of $\cI$, the distinguished triangles are parallel. After performing all the $3$-sums corresponding to edges in $E(\cP')-E(\cI)$, we obtain a path decomposition satisfying the hypotheses of Lemma~\ref{lem:many parallel triangles}. Hence $M$ has a $2$-connected graphic flat of rank at least $k$, a contradiction. Therefore, there are at most $k-3$ internal vertices squeezed in each position, and $P'$ has at most
    \[
        (2k^2-6k+5)+(2k^2-6k+4)(k-3)=2k^3-10k^2+16k-7
    \]
    vertices. But, for $k\ge 3$,
    \[
        2k^3-10k^2+16k-7<2k^3-10k^2+17k-9,
    \]
    contradicting the choice of $\cP'$. Therefore~\ref{sublem: Q has many vertices} holds.\\

    We then perform all $3$-sums in $\cQ$ corresponding to nonseparating edges, in an arbitrary order. Performing one such $3$-sum does not affect the status of the other internal edges. After all such $3$-sums are performed, we obtain a path decomposition $\cR$ of $M$; see Figure~\ref{fig: the second step}. We next prove the following.

    \begin{sublemma}\label{sublem: R has many vertices}
        The path decomposition $\cR$ has at least $2k$ vertices.
    \end{sublemma}
    
    \begin{figure}[htb]
        \centering
        \resizebox{11cm}{!}{\input{figures/fig_second_step}}%
        \caption{Performing all $3$-sums in $\cQ$ corresponding to nonseparating edges.}
        \label{fig: the second step}
    \end{figure}
    
    Suppose that $\cR$ has at most $2k-1$ vertices. As Figure~\ref{fig: the second step} shows, each internal vertex of $\cR$ corresponds to a set of internal vertices in $\cQ$. Since $M$ has no $2$-connected graphic flat of rank at least $k$, Lemma~\ref{lem: many intersecting} implies that each such set contains at most $k-2$ internal vertices of $\cQ$. Hence $\cQ$ has at most
    \[
        2+(2k-3)(k-2)=2k^2-7k+8
    \]
    vertices. But, for $k\ge 3$,
    \[
        2k^2-7k+8<2k^2-6k+6,
    \]
    contradicting~\ref{sublem: Q has many vertices}. Therefore~\ref{sublem: R has many vertices} holds.\\

    If $\cR$ has more than $2k$ vertices, then we perform some $3$-sums to reduce the number of vertices to exactly $2k$, obtaining a new path decomposition $\cR'$ with vertices $R_0,R_1,\dots,R_{2k-1}$ in this order. This reduction creates neither a nonseparating edge nor an internal vertex in which the distinguished triangles are parallel. Let $U$ and $V$ be two adjacent internal vertices of $\cR'$. Since $R'$ has no nonseparating edge, either the edge $UV$ is separating, or, in at least one of $U$ and $V$, the two distinguished triangles are skew. In either case, performing the $3$-sum corresponding to $UV$ produces a new internal vertex $U\oplus_3 V$ in which the two distinguished triangles are skew; see Figure~\ref{fig: pair-up}.

    \begin{figure}[htb]
        \centering
        \resizebox{11cm}{!}{\input{figures/fig_pair-up}}%
        \caption{Pairing adjacent internal vertices of $\cR'$.}
        \label{fig: pair-up}
    \end{figure}

    We then perform the $3$-sum between $R_i$ and $R_{i+1}$ for each $i\in\{1,3,5,\dots,2k-3\}$. Let $\cS$ be the resulting path decomposition of $M$, with vertices $S_0,S_1,\dots,S_k$. By construction, in every internal vertex of $\cS$, the two distinguished triangles are skew.
    
    For each $i\in\{1,2,\dots,k\}$, let $T_i=E(S_{i-1})\cap E(S_i)$. Choose $e\in E(S_0)-T_1$ and $f\in E(S_k)-T_k$. By Proposition~\ref{prop: distance across 3-sums},
    \begin{align*}
        \dist_M(e,f)
        &\ge \dist_{S_0}(e,T_1)+\dist_{S_k}(f,T_k)
            +\sum_{i=1}^{k-1}\dist_{S_i}(T_i,T_{i+1})-2k \\
        &\ge 4+3(k-1)-2k \\
        &= k+1.
    \end{align*}
    Thus the diameter of $M$ is at least $k+1$. By Lemma~\ref{lem: diameter implies graphic flat}, the matroid $M$ has a $2$-connected graphic flat of rank at least $k$, a contradiction. This completes the proof of Theorem~\ref{thm: when Delta-tree has a long path}.    
\end{proof}

A {\it star} is a graph isomorphic to $K_{1,t}$ for some $t\geq 0$. If $t=0$, then the unique vertex is called the {\it center}. If $t=1$, then either vertex may be designated as the {\it center}. If $t\geq 2$, then the {\it center} is the unique vertex of degree $t$. The next result concerns decompositions whose underlying tree is a star.

\begin{lemma}\label{lem: a star with many parallel triangles}
    Let $k$ be a positive integer. Let $M$ be a $3$-connected regular matroid, and let $\cS$ be a tree decomposition of $M$ such that $\cS\cong K_{1,k-1}$. Let $C$ be the center of $\cS$. Suppose that each vertex of $\cS$ is labeled by a matroid of rank at least three, and that the triangles of $C$ involved in $3$-sums are pairwise parallel. Then $M$ has a $2$-connected graphic flat of rank at least $k$.
\end{lemma}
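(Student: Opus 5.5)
We follow the proof of Lemma~\ref{lem:many parallel triangles}: we build a flat of $M$ as an iterated $2$-sum of small graphic pieces, one piece from each vertex of $\cS$, glued along one chosen element of each triangle. Let $L_1,L_2,\dots,L_{k-1}$ be the leaves of $\cS$, and let $T_i=E(C)\cap E(L_i)$.

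\textbf{Choosing the elements $t_i$.} Since the $T_i$ are pairwise parallel in $C$, the set $T_1\cup\dots\cup T_{k-1}$ has rank two in $C$. Hence it splits into three parallel classes, each meeting every $T_i$ in exactly one element. Fix one class and let $t_i$ be its element in $T_i$. By Proposition~\ref{prop: almost 3-con} and the rank hypothesis, the simplification of every vertex label is $3$-connected with at least four elements. Lemma~\ref{lem: circuit contains only one in its closure} therefore applies to every triangle of every label.

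\textbf{Choosing the flats.} Let $C_0$ be a minimum-sized circuit of $C$ with $t_1\in C_0$ and $\cl_C(C_0)\cap T_1=\{t_1\}$, and put $F_C=\cl_C(C_0)$. All $t_i$ are parallel to $t_1$, so each lies in $F_C$. No other element of $T_i$ lies in $F_C$, because each such element is parallel to an element of $T_1-\{t_1\}$. Hence $F_C\cap T_i=\{t_i\}$ for every $i$. For each leaf, let $D_i$ be a minimum-sized circuit of $L_i$ with $t_i\in D_i$ and $\cl_{L_i}(D_i)\cap T_i=\{t_i\}$, and put $F_i=\cl_{L_i}(D_i)$.

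\textbf{Each piece is $2$-connected and graphic.} Each of $C|F_C$ and $L_i|F_i$ is $2$-connected. We check that every circuit through the base element is spanning, so that Proposition~\ref{prop: spanning circuit containing e} makes each piece graphic. Suppose a circuit through $t_1$ in $F_C$ were smaller than $C_0$. Its closure lies in $F_C$, so it still meets $T_1$ only in $t_1$, contradicting the minimality of $C_0$. The same argument applies to each $F_i$.

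\textbf{Gluing.} Apply Lemma~\ref{lem: flats in 3-sum}(ii) one leaf at a time; the differences it requires are nonempty since each circuit has another element. This shows that
\[
F=(F_C\cup F_1\cup\dots\cup F_{k-1})-\{t_1,\dots,t_{k-1}\}
\]
is a flat of $M$ and that $M|F$ is the $2$-sum of $C|F_C$ with each $L_i|F_i$ at the basepoint $t_i$. Here the $t_i$ are distinct parallel elements of $C|F_C$, which is harmless. Consequently $M|F$ is $2$-connected and graphic.

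\textbf{Rank count (main obstacle).} We have
\[
r(M|F)=r(F_C)+\sum_i r(F_i)-(k-1),
\]
so it suffices that at most one of the $k$ circuits $C_0,D_1,\dots,D_{k-1}$ has size two. A size-two circuit means $t_i$ (or $t_1$) has a parallel element outside the triangles in that label. Since all the $t_i$ are parallel in $C$, two such size-two circuits would yield two parallel elements of $M$, contradicting that $M$ is $3$-connected. Hence all but at most one piece has rank at least two, and
\[
r(M|F)\geq 2k-1-(k-1)=k.
\]
The delicate points are this parallel-class bookkeeping and confirming that the simultaneous $2$-sums at parallel basepoints behave as in Lemma~\ref{lem:many parallel triangles}.
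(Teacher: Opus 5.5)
Your construction is the paper's own, and everything through the gluing step is fine, but the rank count fails. The paper's proof contains the identical claim (``at most one of the circuits $C_0,C_1,\dots,C_{k-1}$ has size two''), so this gap is inherited rather than introduced by you.

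\textbf{Why the count fails.} When $k\ge 3$, the pair $\{t_1,t_2\}$ is itself a circuit of $C$. Its closure, the parallel class of $t_1$, meets $T_1$ only in $t_1$. Hence the minimum-sized $C_0$ always has size two and $r(F_C)=1$. Both elements of that circuit are triangle elements deleted in $M$, so no parallel pair of $M$ results. Your sentence ``a size-two circuit means $t_1$ has a parallel element outside the triangles'' is therefore false for $C_0$. You would need $|D_i|\ge 3$ for every leaf, and that can fail. A leaf may contain an element $x\notin T_1$ parallel to $t_1$ (Proposition~\ref{prop: possible parallel on the common triangle} allows precisely this). This forces $D_1=\{t_1,x\}$ and gives only $r(M|F)=1+1+2(k-2)-(k-1)=k-1$.

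\textbf{A concrete instance.} Let $C$ be $M(K_4)$ on $\{u,v,w,z\}$ with each edge of the triangle $uvw$ replaced by $k-1$ parallel edges; these form $T_1,\dots,T_{k-1}$. Let $L_1$ be $M(K_4)$ on $\{u,v,w,z_1\}$ plus an edge $x$ parallel to the $uv$-edge $t_1$ of $T_1$. Let $L_i=M(K_5)$ on $\{u,v,w,p_i,q_i\}$ for $i\ge 2$. Then $M$ is the cycle matroid of a simple $3$-connected graph, and all hypotheses hold. Yet with the $uv$-class your recipe returns $F=\{x\}\cup\{up_i,vp_i: 2\le i\le k-1\}$ (up to swapping $p_i,q_i$), which has rank $k-1$. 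For $k\ge 4$, build $L_2$ and $L_3$ like $L_1$ but with the extra edge parallel to $vw$ and to $uw$, respectively; then every choice of class fails. The proof of Lemma~\ref{lem:many parallel triangles} that you are imitating has the same trap: there every internal $C_j$ is forced to be a parallel pair such as $\{t_j,t_{j+1}\}$.

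\textbf{The repair.} Choose the leaf circuits in a simplification.
Let $L_i'$ be a simplification of $L_i$ with $T_i\subseteq E(L_i')$. It is simple and, by Proposition~\ref{prop: almost 3-con}, $3$-connected. So Lemma~\ref{lem: circuit contains only one in its closure} gives a minimum-sized circuit $D_i$ of $L_i'$ with $t_i\in D_i$ and $\cl_{L_i'}(D_i)\cap T_i=\{t_i\}$, and now $|D_i|\ge 3$.
Your own spanning-circuit argument, run inside the simple matroid $L_i'$, shows $\cl_{L_i'}(D_i)=D_i$.
Hence $F_i=\cl_{L_i}(D_i)$ is the circuit $D_i$ with some elements replaced by parallel classes. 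It is $2$-connected and graphic, meets $T_i$ only in $t_i$, and has rank $|D_i|-1\ge 2$.
Keeping $F_C$ as before, the gluing is unchanged, and $r(M|F)=r(F_C)+\sum_{i}(|D_i|-2)\ge 1+(k-1)=k$. The count no longer needs the $3$-connectivity of $M$.
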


\begin{proof}
    Let $N_1,N_2,\dots,N_{k-1}$ be the vertices of $\cS$ other than $C$, and let
    $T_i=E(N_i)\cap E(C)$ for each $i\in\{1,2,\dots,k-1\}$. Choose an element
    $t_1\in T_1$. For each $i\in\{2,3,\dots,k-1\}$, let $t_i$ be the element of
    $T_i$ that is parallel to $t_1$ in $C$.

    By Lemma~\ref{lem: circuit contains only one in its closure}, for each
    $i\in\{1,2,\dots,k-1\}$, there is a minimum-sized circuit $C_i$ of $N_i$ such that $t_i\in C_i$ and
    \[
        \cl_{N_i}(C_i)\cap T_i=\{t_i\}.
    \]
    Furthermore, there is a minimum-sized circuit $C_0$ of $C$ such that
    $t_1\in C_0$ and
    \[
        \cl_C(C_0)\cap (T_1\cup T_2\cup\dots\cup T_{k-1})
        =\{t_1,t_2,\dots,t_{k-1}\}.
    \]

    For each $i\in\{1,2,\dots,k-1\}$, let $F_i=\cl_{N_i}(C_i)$, and let
    $F_0=\cl_C(C_0)$. By Proposition~\ref{prop: spanning circuit containing e}, the
    restrictions $C|F_0,N_1|F_1,\dots,N_{k-1}|F_{k-1}$ are $2$-connected graphic
    flats of $C,N_1,\dots,N_{k-1}$, respectively. Moreover,
    \[
        (F_0\cup F_1\cup\dots\cup F_{k-1})-\{t_1,t_2,\dots,t_{k-1}\}
    \]
    is a flat $F$ of $M$, and, by Lemma~\ref{lem: flats in 3-sum}, the restriction
    $M|F$ can be obtained by repeatedly taking $2$-sums of $C|F_0$ with
    $N_1|F_1,N_2|F_2,\dots,N_{k-1}|F_{k-1}$.

    Since $M$ is $3$-connected, at most one of the circuits
    $C_0,C_1,\dots,C_{k-1}$ has size two. Therefore
    \[
        r(M|F)=r(C|F_0)+\sum_{i=1}^{k-1}r(N_i|F_i)-(k-1)\geq k.
    \]
    Thus $F$ is a $2$-connected graphic flat of $M$ with rank at least $k$.
\end{proof}

From now on, a decomposition tree whose underlying tree is a star will be called a {\it star decomposition}. We conclude this subsection with the following result.

\begin{theorem}\label{thm: large graphic flat or a high-rank vertex}
    For every pair of positive integers $k$ and $\rho$, there is an integer
    $f_{\ref{thm: large graphic flat or a high-rank vertex}}(k,\rho)$ such that every $3$-connected regular matroid $M$ with $r(M)\geq f_{\ref{thm: large graphic flat or a high-rank vertex}}(k,\rho)$ has
    \begin{enumerate}
        \item[(\romannum{1})] a $2$-connected graphic flat of rank at least $k$; or
        \item[(\romannum{2})] a $\triangle$-tree decomposition $\cT$ in which every vertex label has rank at least three and some vertex label has rank at least $\rho$.
    \end{enumerate}
\end{theorem}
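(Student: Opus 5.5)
We will combine Lemma~\ref{lem: large delta-tree or large vertex}, Lemma~\ref{lem: long path or high degree vertex}, Theorem~\ref{thm: when Delta-tree has a long path} and Lemma~\ref{lem: a star with many parallel triangles}. Since every vertex label has rank at least three, a large vertex count with bounded label rank forces either a long path or a high-degree vertex. A long path is handled by Theorem~\ref{thm: when Delta-tree has a long path}. A high-degree vertex whose label has small rank must contain many pairwise parallel triangles, by a pigeonhole argument based on Lemma~\ref{lem: binary projective geometry}. We then contract to a star and apply Lemma~\ref{lem: a star with many parallel triangles}.

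Concretely, set $l=f_{\ref{thm: when Delta-tree has a long path}}(k)$ and $d=(k-1)\cdot 2^{2\rho}$, and let $v=f_{\ref{lem: long path or high degree vertex}}(d,l)$. Take $f_{\ref{thm: large graphic flat or a high-rank vertex}}(k,\rho)=f_{\ref{lem: large delta-tree or large vertex}}(v,\rho)$. By Lemma~\ref{lem: large delta-tree or large vertex}, $M$ has a $\triangle$-tree $\cT$ with all labels of rank at least three. Either some label has rank at least $\rho$, giving (ii), or $\cT$ has at least $v$ vertices. In the latter case we may assume every label has rank less than $\rho$. If $\cT$ has a path of length $l$, Theorem~\ref{thm: when Delta-tree has a long path} gives (i).

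Otherwise some vertex $C$ has degree at least $d$, and $r(C)<\rho$. Each edge at $C$ uses a triangle $T_e$ of $C$, and distinct edges use disjoint triangles by condition~(iii) of a matroid-labeled tree. The parallel class of $T_e$ is determined by the rank-$2$ flat $\cl_C(T_e)$ of $\si(C)$. Since $\si(C)$ is a simple binary matroid of rank less than $\rho$, it has fewer than $2^{\rho}$ points by Lemma~\ref{lem: binary projective geometry}, and hence fewer than $2^{2\rho}$ lines. By pigeonhole, at least $k-1$ of these triangles span the same line, so they are pairwise parallel in $C$.

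Let $N_1,\dots,N_{k-1}$ be the corresponding neighbours of $C$. For each $i$, perform all $3$-sums within the component of $\cT-C$ containing $N_i$, producing a matroid $N_i'$ that still contains $T_i$. Absorb every other component of $\cT-C$ into $C$ by performing its $3$-sums, producing $C'$. This gives a star decomposition $\cS\cong K_{1,k-1}$ of $M$.

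The main obstacle is checking that the hypotheses of Lemma~\ref{lem: a star with many parallel triangles} survive this absorption. First, the triangles $T_1,\dots,T_{k-1}$ remain pairwise parallel in $C'$: parallelism is a rank condition, and $3$-summing along disjoint triangles at $C$ preserves ranks of subsets of $E(C)$, since $C$ is a restriction of the generalized parallel connection. Second, all labels have rank at least three: the rank of a $3$-sum is at least the rank of each part, by Proposition~\ref{prop: rank of 3-sum}. Third, each edge is still a genuine $3$-sum; this follows because the $3$-sums may be performed in any order (Theorem~\ref{thm: Aprile and Fiorini}(iii)), and the cardinality and coindependence conditions only become easier as parts grow. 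Once these points are verified, Lemma~\ref{lem: a star with many parallel triangles} yields a $2$-connected graphic flat of rank at least $k$, which is (i).
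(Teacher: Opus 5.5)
Your proposal is correct and follows essentially the same route as the paper. You use Lemma~\ref{lem: large delta-tree or large vertex} to get many vertices, then Lemma~\ref{lem: long path or high degree vertex} to split into a long path (handled by Theorem~\ref{thm: when Delta-tree has a long path}) or a high-degree vertex of rank less than $\rho$, a pigeonhole count over lines to find $k-1$ pairwise parallel distinguished triangles, and finally Lemma~\ref{lem: a star with many parallel triangles}. The differences are cosmetic: you use the cruder bound $2^{2\rho}$ on the number of lines instead of Proposition~\ref{prop: max number of rank-2 flats}, and you explicitly absorb the unused branches into the center so that the star is a genuine $K_{1,k-1}$, checking that parallelism, rank at least three, and the $3$-sum conditions survive — a step the paper leaves implicit.
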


Before proving this theorem, we recall an elementary property of binary matroids; see, for example, \cite[Proposition~6.1.4]{Oxl11}.

\begin{proposition}\label{prop: max number of rank-2 flats}
    For every positive integer $r$, a rank-$r$ binary matroid has at most $\frac{(2^r-1)(2^{r-1}-1)}{3}$ rank-$2$ flats.
\end{proposition}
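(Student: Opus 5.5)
The plan is to reduce to counting lines of a binary projective geometry. Fix a $\GF(2)$-representation of the rank-$r$ binary matroid $M$ by vectors in $V=\GF(2)^r$; since $r(M)=r$, we may assume these vectors span $V$. To each rank-$2$ flat $X$ of $M$, assign the subspace $\phi(X)=\langle X\rangle$ of $V$ spanned by the vectors representing the elements of $X$. Loops are represented by the zero vector and nonloops by nonzero vectors, so $\phi(X)$ is a $2$-dimensional subspace of $V$ because $r(X)=2$.

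The key step is to check that $\phi$ is injective. A flat $X$ consists exactly of the elements whose vectors lie in $\langle X\rangle$: by definition of closure in the represented matroid, $e\in\cl(X)$ if and only if the vector of $e$ lies in the span of the vectors of $X$. Since $X$ is a flat, $X=\cl(X)=\{e\in E(M): e\text{ is represented by a vector in }\phi(X)\}$. Hence $X$ is determined by $\phi(X)$, and distinct rank-$2$ flats give distinct $2$-dimensional subspaces. Loops cause no difficulty, because every flat contains all loops.

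It then remains to count the $2$-dimensional subspaces of $\GF(2)^r$. Such a subspace is determined by an ordered basis $(u,w)$ with $u\neq 0$ and $w\notin\{0,u\}$, which gives $(2^r-1)(2^r-2)$ ordered pairs. Each $2$-dimensional subspace has $(2^2-1)(2^2-2)=6$ ordered bases. So the number of such subspaces is
\[
\frac{(2^r-1)(2^r-2)}{6}=\frac{(2^r-1)(2^{r-1}-1)}{3},
\]
which is the number of lines of $\PG(r-1,2)$. Combined with the injectivity of $\phi$, this gives the bound.

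I do not expect a real obstacle. The only point needing care is the injectivity argument in the presence of loops and parallel elements, and it goes through because closure in a represented matroid is exactly membership in the linear span. The case $r=1$ is consistent: the formula gives $0$, and a rank-$1$ matroid has no rank-$2$ flats.
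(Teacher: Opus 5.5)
Your argument is correct and is essentially the standard one the paper relies on; the paper gives no proof and simply cites Oxley, Proposition~6.1.4. In your proof, sending each rank-$2$ flat to the span of its vectors injects the rank-$2$ flats into the lines of $\PG(r-1,2)$, of which there are exactly $\frac{(2^r-1)(2^{r-1}-1)}{3}$.
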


\begin{proof}[Proof of Theorem~\ref{thm: large graphic flat or a high-rank vertex}]
    The result holds trivially for $k=1$ or $2$, so we may assume that $k\geq 3$. Let
    \[
        d=(k-2)\frac{(2^{\rho-1}-1)(2^{\rho-2}-1)}{3}+1
    \]
    and let
    \[
        v=f_{\ref{lem: long path or high degree vertex}}
        \left(d,f_{\ref{thm: when Delta-tree has a long path}}(k)\right).
    \]
    We show that one may take
    \[
        f_{\ref{thm: large graphic flat or a high-rank vertex}}(k,\rho)
        =
        f_{\ref{lem: large delta-tree or large vertex}}(v,\rho).
    \]
    Let $M$ be a $3$-connected regular matroid with
    \[
        r(M)\geq f_{\ref{thm: large graphic flat or a high-rank vertex}}(k,\rho).
    \]
    Suppose that both (\romannum{1}) and (\romannum{2}) fail. By the choice of $f_{\ref{thm: large graphic flat or a high-rank vertex}}(k,\rho)$, Lemma~\ref{lem: large delta-tree or large vertex} implies that $M$ has a $\triangle$-tree $\cT$ such that every vertex of $\cT$ is labeled by a matroid of rank at least three, and $\cT$ has at least $v$ vertices.

    If $\cT$ has a path of length at least $f_{\ref{thm: when Delta-tree has a long path}}(k)$, then Theorem~\ref{thm: when Delta-tree has a long path} implies that $M$ has a $2$-connected graphic flat of rank at least $k$, contradicting the assumption that (\romannum{1}) fails. Therefore, $\cT$ has a vertex $J$ of degree at least $d$.

    Perform all $3$-sums corresponding to edges of $\cT$ that are not incident
    with $J$. This gives a star decomposition of $M$ with center $J$. Since
    (\romannum{2}) fails, we have $r(J)<\rho$. By Proposition~\ref{prop: max number of rank-2 flats} and the choice of $d$, the matroid $J$ has a collection of at least $k-1$ pairwise parallel triangles that are involved in $3$-sums. Lemma~\ref{lem: a star with many parallel triangles}
    implies that $M$ has a $2$-connected graphic flat of rank at least $k$,
    contradicting the assumption that (\romannum{1}) fails. This completes the proof of Theorem~\ref{thm: large graphic flat or a high-rank vertex}.
\end{proof}

\subsection{A large-rank vertex in a $\triangle$-tree}

We now turn to $\triangle$-trees with a large-rank vertex. First we recall the following result.

\begin{proposition}{\cite[(4.3)]{Seymour1980}}\label{prop: possible parallel on the common triangle}
    Suppose that $M$ is the $3$-sum of $M_1$ and $M_2$, and that $M$ is $3$-connected. If $(Y_1,Y_2)$ is a $2$-separation of $M_1$, then some $Y_i$ is a parallel pair $\{x,z\}$ of $M_1$, where $x\in E(M_1)-E(M_2)$ and $z\in E(M_1)\cap E(M_2)$.
\end{proposition}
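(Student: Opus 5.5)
Let $T=E(M_1)\cap E(M_2)$ be the common triangle, and let $(Y_1,Y_2)$ be a $2$-separation of $M_1$, so that $|Y_1|,|Y_2|\geq 2$ and $r_{M_1}(Y_1)+r_{M_1}(Y_2)\leq r(M_1)+1$. The plan is to push this $2$-separation of $M_1$ into a separation of $M$ of order at most $1$, and then use the $3$-connectivity of $M$ to force the small side to be a parallel pair of the required form.

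\emph{Step 1: place $T$ in the closure of one side.} Since $|T|=3$, one side meets $T$ in at least two elements; by symmetry let it be $Y_1$. Because $T$ is a triangle, $T\subseteq \cl_{M_1}(Y_1)$. Then $|Y_2\cap T|\leq 1$. Put
\[
X=Y_2-T\subseteq E(M)\cap E(M_1)\quad\text{and}\quad Z=E(M)-X=(Y_1-T)\cup(E(M_2)-T).
\]

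\emph{Step 2: compute the connectivity of $X$ in $M$.} Work in the generalized parallel connection $P=P_T(M_1,M_2)$, of which $M=P\backslash T$. Subsets of $E(M_1)$ have the same rank in $P$ as in $M_1$, so $r_M(X)=r_{M_1}(X)\leq r_{M_1}(Y_2)$. Since $T$ is coindependent in $M_2$, the set $E(M_2)-T$ spans $M_2$, and hence its closure contains $T$. Thus $\cl_P(Z)\supseteq (Y_1-T)\cup E(M_2)$. By the modularity of the triangle $T$ (as in the rank formula behind Proposition~\ref{prop: rank of 3-sum}),
\[
r_M(Z)=r_{M_1}((Y_1-T)\cup T)+r(M_2)-2\leq r_{M_1}(Y_1)+r(M_2)-2,
\]
where the inequality uses $T\subseteq\cl_{M_1}(Y_1)$. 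Since $r(M)=r(M_1)+r(M_2)-2$, we get
\[
r_M(X)+r_M(Z)-r(M)\leq r_{M_1}(Y_1)+r_{M_1}(Y_2)-r(M_1)\leq 1.
\]
This rank bookkeeping is the step that needs the most care, and I would verify it via the flat description of $P_T$.

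\emph{Step 3: use $3$-connectivity.} The set $Z$ is large, since $|E(M_2)-T|\geq 4$. If $|X|\geq 2$, then $(X,Z)$ would be a $2$-separation of $M$, which is impossible. Hence $|X|\leq 1$. Because $|Y_2|\geq 2$ and $|Y_2\cap T|\leq 1$, we obtain $Y_2=\{x,z\}$ with $x\in E(M_1)-E(M_2)$ and $z\in T$.

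It remains to show that $\{x,z\}$ is a parallel pair. Neither element is a loop: $x$ is not a loop because $M$ is $3$-connected with many elements, and $z$ lies on a triangle. Suppose $r_{M_1}(Y_2)=2$. Then $r_{M_1}(Y_1)\leq r(M_1)-1$. But $Y_1\cup\{z\}$ spans at most $\cl_{M_1}(Y_1)$, since $z\in T\subseteq\cl_{M_1}(Y_1)$. Hence $x\notin\cl_{M_1}(E(M_1)-\{x\})$, so $x$ is a coloop of $M_1$ and therefore also a coloop of $M$. This contradicts the $3$-connectivity of $M$. Thus $r_{M_1}(\{x,z\})=1$, and $\{x,z\}$ is a parallel pair of $M_1$, as required.
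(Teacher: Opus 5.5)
Your proof is correct. The paper gives no argument of its own for this statement; it only quotes Seymour's (4.3). So there is no internal proof to compare with, and your route is necessarily different: you prove it directly. The key moves all work:
\begin{enumerate}
\item work inside $P=P_T(M_1,M_2)$, the framework the paper already uses for quasi-$3$-sums;
\item put $T$ in $\cl_{M_1}(Y_1)$;
\item push the $2$-separation of $M_1$ to the partition $(X,Z)$ of $M$ with $X=Y_2-T$ and show its connectivity is at most $1$;
\item use $|E(M_2)-T|\geq 4$ and the $3$-connectivity of $M$ to get $Y_2=\{x,z\}$;
\item rule out $r_{M_1}(Y_2)=2$ by producing a coloop of $M$.
\end{enumerate}
Citing Seymour keeps the paper short; your argument makes the result self-contained using only tools the paper already sets up.

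The step you flagged as delicate is easier than you suggest. You only need an upper bound on $r_M(Z)$, and submodularity in $P$ gives it. Put $A=Y_1\cup T$ and $B=E(M_2)$. Then $Z\subseteq A\cup B$ and $A\cap B=T$, so
\[
r_M(Z)\leq r_P(A\cup B)\leq r_{M_1}(A)+r(M_2)-r(T)=r_{M_1}(Y_1)+r(M_2)-2 .
\]
This uses only $P|E(M_i)=M_i$ and $T\subseteq\cl_{M_1}(Y_1)$. Combined with the paper's formula $r(M)=r(M_1)+r(M_2)-2$, it gives $r_M(X)+r_M(Z)-r(M)\leq 1$. You do not need modularity, the exact rank formula for flats of $P$, or the fact that $E(M_2)-T$ spans $M_2$.

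The transfer of the coloop $x$ from $M_1$ to $M$ should be justified in one line. Since $E(M_1)-\{x\}$ is a flat of $M_1$ and $x\notin T$, the flat description of $P_T(M_1,M_2)$ shows that $(E(M_1)-\{x\})\cup E(M_2)=E(P)-\{x\}$ is a flat of $P$. Hence $x$ is a coloop of $P$, and therefore of $M=P\backslash T$.
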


A {\it quasi-$\triangle$-star} of a $3$-connected matroid $M$ is a matroid-labeled star $\cS$ satisfying all of the following:
\begin{enumerate}
    \item[(\romannum{1})] every edge of $\cS$ corresponds to a quasi-$3$-sum of its two incident vertex labels;
    \item[(\romannum{2})] the center of $\cS$ is labeled by an almost $3$-connected matroid that is graphic or cographic and has rank at least three;
    \item[(\romannum{3})] every non-center vertex of $\cS$ is labeled by a $3$-connected matroid of rank at least three; and
    \item[(\romannum{4})] $M$ can be obtained from $\cS$ by performing all quasi-$3$-sums corresponding to edges of $\cS$.
\end{enumerate}

\begin{proposition}\label{prop: quasi-delta-star}
    Let $\cT$ be a $\triangle$-tree decomposition of a $3$-connected regular matroid $M$ such that every vertex of $\cT$ is labeled by a matroid of rank at least three. Let $J$ be a vertex label of $\cT$. Then $M$ has a quasi-$\triangle$-star decomposition whose center label is a matroid $J'$ for which $\si(J')=\si(J)$.
\end{proposition}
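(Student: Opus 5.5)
Remove $J$ from $\cT$. Each component $\cT_1,\dots,\cT_m$ of $\cT-J$ is joined to $J$ by a single edge, corresponding to a triangle $T_i\subseteq E(J)$. Let $N_i$ be the matroid obtained by performing all the $3$-sums corresponding to the edges of $\cT_i$, keeping $T_i$ as elements. Then $M$ is obtained from $J$ by $3$-summing $N_1,\dots,N_m$ along $T_1,\dots,T_m$ in any order. This gives a star decomposition $\cS_0$ of $M$ with center $J$ and leaves $N_i$. Conditions (\romannum{1}) and (\romannum{4}) of a quasi-$\triangle$-star then hold, because every $3$-sum is a quasi-$3$-sum. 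The center $J$ is graphic or cographic and has rank at least three. By Proposition~\ref{prop: almost 3-con}, applied to the tree decomposition $\cS_0$ of the $3$-connected matroid $M$, every label of $\cS_0$ is almost $3$-connected. Each $N_i$ has rank at least three, since it contains a vertex label of rank at least three. So condition (\romannum{2}) already holds. The only failure is condition (\romannum{3}): a leaf $N_i$ need only be almost $3$-connected, not $3$-connected.

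To fix this, I will use Proposition~\ref{prop: possible parallel on the common triangle}. Since $N_i$ is a $3$-sum side of the $3$-connected matroid $M$, every $2$-separation of $N_i$ has one side a parallel pair $\{x,z\}$ with $x\in E(N_i)-T_i$ and $z\in T_i$. So $N_i$ is $3$-connected apart from at most three such pairs, one per element of $T_i$. There cannot be two elements of $E(N_i)-T_i$ parallel to the same $z$, as they would be parallel in $M$. There cannot be a parallel pair inside $E(N_i)-T_i$ either, since $M$ is simple. For each such pair, I move $x$ from $N_i$ to the center. That is, I delete $x$ from $N_i$ and add $x$ to $J$ as an element parallel to $z$. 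Call the results $N_i'=N_i\backslash X_i$ and $J'$, where $X_i$ is the set of elements removed from $N_i$. Since $J'$ is obtained from $J$ only by adding parallel elements, $\si(J')=\si(J)$. Also $J'$ is still graphic or cographic, both classes being closed under adding parallel elements, and it has rank at least three. Each $N_i'$ is then simple and has no $2$-separation, so it is $3$-connected, and its rank is unchanged.

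It remains to check that $M$ is still obtained by performing the quasi-$3$-sums of $J'$ with the matroids $N_i'$ across the triangles $T_i$; this is the main obstacle. Here the decomposition may cease to be a genuine $3$-sum, for instance because $T_i$ may no longer be coindependent in $N_i'$ or $N_i'$ may become small. This is exactly why quasi-$3$-sums, defined through generalized parallel connection, are used. I will argue with flats. In $P_{T_i}(J,N_i)$, the element $x$ is parallel to $z\in T_i$, and parallel classes through the common triangle can be moved from one side to the other without changing the generalized parallel connection. Concretely, I will verify that $P_{T_i}(J',N_i')=P_{T_i}(J,N_i)$. A set $Y$ is a flat of the left-hand side if and only if $Y\cap E(J')$ and $Y\cap E(N_i')$ are flats. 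Membership of $x$ in a flat on either side is forced by membership of $z$, and $z$ lies in both sides, so the two flat conditions are equivalent. Finally, the generalized parallel connections along different triangles commute. So deleting $\bigcup_i T_i$ after performing them all gives $M$, which yields condition (\romannum{4}).
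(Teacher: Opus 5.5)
Your proposal is correct and follows essentially the same route as the paper: collapse $\cT$ to a star centred at $J$, use Proposition~\ref{prop: possible parallel on the common triangle} to see that each leaf $N_i$ fails to be $3$-connected only through elements parallel to elements of $T_i$, and move those elements into the center as parallel elements. Your flat-by-flat check that the generalized parallel connection is unchanged spells out a step the paper only asserts. One small fix is needed there: as written, the two sides of $P_{T_i}(J',N_i')=P_{T_i}(J,N_i)$ have different ground sets, because $J'$ also contains elements moved from the other leaves. State the equality with $J$ augmented only by $X_i$, or do the moves one element at a time.
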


\begin{proof}
    Perform all $3$-sums corresponding to edges of $\cT$ that are not incident with $J$. This gives a star decomposition $\cS$ of $M$ whose center is labeled by $J$. Since $\cT$ is a $\triangle$-tree, the matroid $J$ is graphic or cographic.

    Let $N_1,N_2,\dots,N_n$ be the labels of the non-center vertices of $\cS$. For each $i\in\{1,2,\dots,n\}$, let $T_i$ be the common triangle of $N_i$ and $J$. By Proposition~\ref{prop: possible parallel on the common triangle}, each $N_i$ is almost $3$-connected, and all parallel elements of $N_i$ are parallel to elements of $T_i$.

    Suppose that $e\in E(N_i)-T_i$ and $e$ is parallel in $N_i$ to an element $t\in T_i$. We delete $e$ from $N_i$ and add $e$ to $J$ as an element parallel to $t$. Repeating this operation for every such element and every $i\in\{1,2,\dots,n\}$ gives a matroid-labeled star $\cS'$. Let $J'$ be the label of the center of $\cS'$, and let $N_1',N_2',\dots,N_n'$ be the labels of its non-center vertices.

    By construction, each $N_i'$ is $3$-connected, while $J'$ is almost $3$-connected. Moreover, $\si(J')=\si(J)$, and $J'$ is still graphic or cographic. The ordinary $3$-sum between $J'$ and some $N_i'$ may no longer be defined, since deleting parallel elements from $N_i$ may reduce the size of its ground set. However, the triangle $T_i$ remains a triangle of $N_i'$, and hence the quasi-$3$-sum of $J'$ and $N_i'$ across $T_i$ is well defined.

    Finally, moving such parallel elements from the non-center labels to the center label does not change the matroid obtained after performing the corresponding sums. Thus $M$ is obtained from $\cS'$ by performing all quasi-$3$-sums corresponding to the edges of $\cS'$. Hence $\cS'$ is a quasi-$\triangle$-star decomposition of $M$ with center label $J'$ such that $\si(J')=\si(J)$.
\end{proof}

\begin{theorem}\label{thm: high-rank vertex in Delta-tree}
    For every positive integer $k$, there is an integer $f_{\ref{thm: high-rank vertex in Delta-tree}}(k)$ such that if $M$ is a $3$-connected regular matroid with a $\triangle$-tree decomposition $\cT$ in which every vertex label has rank at least three and some vertex label has rank at least $f_{\ref{thm: high-rank vertex in Delta-tree}}(k)$, then $M$ has a $2$-connected graphic flat of rank at least $k$.
\end{theorem}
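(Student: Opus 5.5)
We would start from Proposition~\ref{prop: quasi-delta-star}: let $J$ be a vertex label of $\cT$ with $r(J)\geq f_{\ref{thm: high-rank vertex in Delta-tree}}(k)$, and pass to a quasi-$\triangle$-star $\cS'$ of $M$ with center $J'$, where $\si(J')=\si(J)$. The leaves $N_1',\dots,N_n'$ are $3$-connected of rank at least three and meet $J'$ in triangles $T_1,\dots,T_n$. The plan is to find a large $2$-connected graphic flat $F_0$ inside the center and then extend it across each quasi-$3$-sum, using Lemma~\ref{lem: flats in 3-sum} to keep the whole thing a flat of $M$.

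\textbf{Finding a flat in the center.} Since $\si(J')$ is $3$-connected, graphic or cographic, and has large rank, Corollary~\ref{cor: graphic and cographic in one} gives a $2$-connected graphic flat $F_0^s$ of $\si(J')$ of rank at least some $k'$. Taking the preimage in $J'$ gives a flat $F_0$ of $J'$ with $J'|F_0$ still $2$-connected and graphic, since adding parallel elements preserves both properties. We would take $k'=k$, or slightly larger if some rank is lost later.

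\textbf{Extending across each leaf.} Fix $i$ and look at how $F_0$ meets $T_i$.
\begin{itemize}
\item[(a)] If $F_0\cap T_i=\emptyset$, take the empty flat of $N_i'$.
\item[(b)] If $F_0\cap T_i=\{t\}$, use Lemma~\ref{lem: circuit contains only one in its closure} in $N_i'$ to get a circuit $C$ with $\cl(C)\cap T_i=\{t\}$, taken of minimum size. As in Lemma~\ref{lem: a star with many parallel triangles}, $F_i=\cl(C)$ is $2$-connected and graphic by Proposition~\ref{prop: spanning circuit containing e}, so this case is a $2$-sum.
\item[(c)] If $T_i\subseteq F_0$ (a flat containing two elements of a triangle contains the third), apply Theorem~\ref{thm: a graph that keeps a triangle} to $N_i'$ and $T_i$. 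This gives a flat $F_i\supseteq T_i$ with $N_i'|F_i=M(W_i)$ for an enlarged wheel $W_i$ in which $T_i$ is a triangle.
\end{itemize}
Then $F=(F_0\cup\bigcup F_i)-\bigcup T_i$ is a flat of $M$ by Lemma~\ref{lem: flats in 3-sum}, applied one edge of the star at a time. In each case the union with one leaf is a flat of the partially summed matroid, and the leaves are pairwise disjoint apart from their triangles.

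\textbf{Graphicity.} In case (c) the restriction is graphic by Proposition~\ref{prop: graph of quasi-3-sum}, provided the triangle $T_i$ is realized as a triangle in the graph $H_0$ with $M(H_0)=J'|F_0$. This holds because a triangle of a graphic matroid is a $3$-cycle in any graph representing it. Case (b) is a $2$-sum and so stays graphic.

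\textbf{Main obstacle: $2$-connectivity and rank in case (c).} Deleting $T_i$ may disconnect things. In $H_0\oplus_3' W_i$, the vertices of $T_i$ are joined on the wheel side by the rim path through the hub's side, and by spokes through the hub. So $W_i\setminus T_i$ is connected and meets all three triangle vertices; the rim path minus $a$ is a path from one end of $a$ to the other, passing through the spokes' rim endpoints.

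On the $H_0$ side, though, $H_0\setminus T_i$ might fail to be $2$-connected. We would argue that the union is still $2$-connected: the wheel side supplies a connected subgraph attached at all three vertices, which acts like a replacement of the triangle by a $2$-connected piece. Formally, we would check that every edge lies on a common cycle with a fixed edge, rerouting through $W_i$ whenever a cycle of $H_0$ used an edge of $T_i$.

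If $H_0\setminus T_i$ has a component attached at only one vertex of $T_i$, this fails. To avoid it we would prefer to choose $F_0$ so that $T_i\not\subseteq F_0$, or handle it by noting that $W_i\setminus T_i$ together with $H_0\setminus T_i$ is $2$-connected whenever $H_0$ is $2$-connected, since the wheel side is connected and touches all three vertices. We would also check that the rank does not drop: it is at least $r(F_0)$, because $T_i$ stays in the closure via the wheel side.
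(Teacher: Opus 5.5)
Your setup follows the paper's proof: the quasi-$\triangle$-star from Proposition~\ref{prop: quasi-delta-star}, a flat $F_0$ of $J'$ from Corollary~\ref{cor: graphic and cographic in one}, the three cases for $F_0\cap T_i$, Lemma~\ref{lem: flats in 3-sum} for flatness, and Proposition~\ref{prop: graph of quasi-3-sum} for graphicity. The gap is in the last step, where the real work lies. You try to show that $M|F$ itself is $2$-connected, and both claims you use for this are false.

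First, $W_i\setminus T_i$ need not be connected. Since $N_i'$ is simple, every spoke at the hub other than $b_i,c_i$ ends at an interior rim vertex, and there may be none. For example, let $N_i'$ be the cycle matroid of the triangular prism with triangles $huv$ and $h'u'v'$, and let $a=uv$. Then $A=\{uv,vv',v'u',u'u\}$ and $F_i=A\cup\{hu,hv\}$. The hub is isolated on the wheel side, and $b_i,c_i\notin\cl(F_i-T_i)$, so your rank justification fails too.

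Second, even when there are extra spokes, $(H_0\setminus T_i)\cup(W_i\setminus T_i)$ need not be $2$-connected. Suppose $H$ is as in (iii), $T_i=\{sw,wt,st\}$ with $w$ of degree two in $H_0$, and the wheel has $sw$ as its rim edge, so its hub is identified with $t$. Then $w$ keeps only the last rim edge, since $N_i'$ has no spoke parallel to $wt$, and $M|F$ has a coloop. The situation you worried about, a component of $H_0\setminus T_i$ meeting $T_i$ in a single vertex, cannot occur when $H_0$ is $2$-connected; the danger lies elsewhere. You also cannot arrange $T_i\not\subseteq F_0$, because the corollary gives no control over which triangles lie in $F_0$.

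So, as written, you prove neither that $M|F$ is $2$-connected nor that some $2$-connected part of it has rank at least $k$. With an unlucky choice of rim edge, a two-edge segment $u\,h\,v$ of a long cycle of $H_0$ running through $T_i$ is simply cut.

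The missing idea is to stop trying to make $M|F$ $2$-connected. Instead, track a triangle-free skeleton and orient each wheel to suit it. Fix $R\subseteq F_0$ with $H_R$ $2$-connected, $r(R)\ge k$, and $J'|R$ triangle-free. For instance, take a cycle with at least $k+1$ edges in cases (i) and (iv), or a triangle-free subdivision of $K_{2,k}$ or $K_{2,k+1}$ in cases (ii) and (iii).

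Theorem~\ref{thm: a graph that keeps a triangle} lets you prescribe which element of $T_i$ is the rim edge:
\begin{itemize}
\item if $R\cap T_i=\{a_i\}$, make $a_i$ the rim edge;
\item if $R\cap T_i=\{a_i,b_i\}$, make the third element the rim edge, so that $a_i$ and $b_i$ are spokes;
\item if $R\cap T_i=\emptyset$, take $F_i=T_i$.
\end{itemize}
Then each edge of $H_R$ lying in $T_i$, or each two-edge path of $H_R$ through the hub, is replaced by the remaining rim path. That path has the same ends and at least two edges. Hence the tracked subgraph $H'$ stays $2$-connected with $r(M(H'))\ge k$, and $\cl_{M|F}(E(H'))$ is a $2$-connected graphic flat of $M|F$, hence of $M$. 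This is how the paper concludes.
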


\begin{proof}
    The statement holds trivially for $k=1$ and $k=2$, so we may assume that $k\geq 3$. We show that one may take
    \[
        f_{\ref{thm: high-rank vertex in Delta-tree}}(k)
        =
        f_{\ref{cor: graphic and cographic in one}}(k).
    \]
    Let $J$ be a vertex label of $\cT$ such that
    \[
        r(J)\geq f_{\ref{thm: high-rank vertex in Delta-tree}}(k),
    \]
    and suppose that $M$ has no $2$-connected graphic flat of rank at least $k$. By Proposition~\ref{prop: quasi-delta-star}, $M$ has a quasi-$\triangle$-star decomposition $\cS$ whose center is labeled by a matroid $J'$ such that $\si(J')=\si(J)$. Let $N_1,N_2,\dots,N_n$ be the labels of the non-center vertices of $\cS$, and let $T_i$ be the common triangle of $J'$ and $N_i$.

    By the choice of $f_{\ref{thm: high-rank vertex in Delta-tree}}(k)$, Corollary~\ref{cor: graphic and cographic in one} implies that $J'$ has a flat $F_0$ such that $J'|F_0=M(H)$ for some graph $H$. Moreover, the simplification of $H$ is one of the following:
    \begin{enumerate}
        \item[(\romannum{1})] $K_{k+1}$;
        \item[(\romannum{2})] a subdivision of $K_{2,k+1}$;
        \item[(\romannum{3})] a graph obtained from a subdivision of $K_{2,k+1}$ by adding an edge joining the two vertices of degree $k+1$; or
        \item[(\romannum{4})] a clean ladder with at least $k+1$ vertices.
    \end{enumerate}

    We identify the elements of $F_0$ with the edges of $H$. Choose a set $R\subseteq F_0$, and let $H_R$ be the subgraph of $H$ with edge set $R$ and no isolated vertices, as follows:
    \begin{enumerate}
        \item[(\romannum{1})] if the simplification of $H$ is $K_{k+1}$, choose $R$ so that $H_R$ is a cycle with $k+1$ edges;
        \item[(\romannum{2})] if the simplification of $H$ is a subdivision of $K_{2,k+1}$, choose $R$ so that $H_R$ is a subdivision of $K_{2,k+1}$;
        \item[(\romannum{3})] if the simplification of $H$ is obtained from a subdivision of $K_{2,k+1}$ by adding an edge joining the two vertices of degree $k+1$, choose $R$ so that $H_R$ is the underlying subdivision of $K_{2,k+1}$; and
        \item[(\romannum{4})] if the simplification of $H$ is a clean ladder with at least $k+1$ vertices, choose $R$ so that $H_R$ is a cycle with at least $k+1$ edges.
    \end{enumerate}
    In each case, $H_R$ is $2$-connected, $r(M(H_R))\geq k$, and $J'|R$ has no triangle.

    For each $i\in\{1,2,\dots,n\}$, we now choose a flat $F_i$ of $N_i$. Since $F_0$ is a flat of $J'$ and $T_i$ is a triangle, the set $F_0\cap T_i$ has size $0$, $1$, or $3$. If $F_0\cap T_i=\emptyset$, let $F_i=\emptyset$. Suppose next that $F_0\cap T_i=\{t_i\}$ for some $t_i\in T_i$. Let $F_i$ be a minimum-sized circuit of $N_i$ such that
    \[
        t_i\in F_i
        \quad\text{and}\quad
        \cl_{N_i}(F_i)\cap T_i=\{t_i\}.
    \]
    Since $N_i$ is $3$-connected and $r(N_i)\geq 3$, such a circuit exists by Lemma~\ref{lem: circuit contains only one in its closure}. By the minimality of $F_i$, the circuit $F_i$ is a flat of $N_i$. Moreover, $|F_i|\geq 3$.

    It remains to consider the case in which
    \[
        F_0\cap T_i=T_i=\{a_i,b_i,c_i\}.
    \]
    Since $J'|R$ has no triangle, the set $R\cap T_i$ has size at most two. If $R\cap T_i=\emptyset$, let $F_i=T_i$. If $|R\cap T_i|=1$, we may assume that $R\cap T_i=\{a_i\}$. Then, by Theorem~\ref{thm: a graph that keeps a triangle}, the matroid $N_i$ has a flat $F_i$ such that $N_i|F_i=M(W_i)$ for some enlarged wheel $W_i$ in which $a_i$ is a rim edge and $b_i,c_i$ are spokes. If $|R\cap T_i|=2$, we may assume that $R\cap T_i=\{a_i,b_i\}$. Then, again by Theorem~\ref{thm: a graph that keeps a triangle}, the matroid $N_i$ has a flat $F_i$ such that $N_i|F_i=M(W_i)$ for some enlarged wheel $W_i$ in which $c_i$ is a rim edge and $a_i,b_i$ are spokes. We note that, as $N_i$ is a $3$-connected matroid of rank at least three, the rim of $W_i$ has at least three edges.

    Let $F$ be the set
    \[
        (F_0\cup F_1\cup F_2\cup\dots\cup F_n)-(T_1\cup T_2\cup\dots\cup T_n).
    \]
    By the choices of the sets $F_i$, the set $F$ is a flat of $M$. Moreover, by Lemma~\ref{lem: flats in 3-sum}, the restriction $M|F$ is obtained from $J'|F_0$ and the matroids $N_i|F_i$ by applying the corresponding $1$-sums, $2$-sums, and quasi-$3$-sums. Since $J'|F_0$ and all the matroids $N_i|F_i$ are graphic, it follows that $M|F$ is graphic.

    It remains to find a $2$-connected flat of rank at least $k$ in $M|F$. We argue graphically, tracking the subgraph $H_R$ as the sums with the matroids $N_i|F_i$ are performed.

    A $1$-sum with $N_i|F_i$ does not affect the part of the graph corresponding to $R$. Now suppose that the sum with $N_i|F_i$ is a $2$-sum with basepoint $t_i$. Then $N_i|F_i$ is a circuit containing $t_i$. If $t_i\notin R$, this sum does not affect $H_R$. If $t_i\in R$, then the $2$-sum replaces the edge $t_i$ of $H_R$ by a path, namely the path corresponding to $F_i-\{t_i\}$. Thus a $2$-sum with such a circuit only subdivides an edge of $H_R$, and hence preserves $2$-connectivity.

    It remains to consider the quasi-$3$-sums. By Proposition~\ref{prop: graph of quasi-3-sum}, the quasi-$3$-sum of two graphic matroids across a common triangle is the cycle matroid of the corresponding graph quasi-$3$-sum. Thus, when $F_0\cap T_i=T_i$, we may view the operation as gluing the graph $H$ to the enlarged wheel $W_i$ along the triangle $T_i$ and then deleting the three edges of $T_i$.

    If $R\cap T_i=\emptyset$, then the quasi-$3$-sum does not change the subgraph that we are tracking. Suppose next that $|R\cap T_i|=1$. We may assume that $R\cap T_i=\{a_i\}$. By the choice of $F_i$, the edge $a_i$ is a rim edge of $W_i$, while $b_i$ and $c_i$ are spokes. In the graph obtained after the quasi-$3$-sum, we focus on the subgraph obtained from $H_R$ by deleting $a_i$ and adding the remaining rim path of $W_i$. Thus the edge $a_i$ of $H_R$ is replaced by a path with the same ends.

    Finally, suppose that $|R\cap T_i|=2$. We may assume that $R\cap T_i=\{a_i,b_i\}$. By the choice of $F_i$, the edge $c_i$ is a rim edge of $W_i$, while $a_i$ and $b_i$ are spokes. In the graph obtained after the quasi-$3$-sum, we pass to the subgraph obtained from $H_R$ by deleting the two-edge path using $a_i$ and $b_i$ and adding the remaining rim path of $W_i$. Thus the tracked subgraph is obtained by replacing that two-edge path by a path with the same ends. Since $N_i$ is $3$-connected of rank at least three, the wheel $W_i$ is simple and its rim has at least three edges, so this replacement path has at least two edges. For our choice of $R$, the original two-edge path is either a segment of a cycle with at least $k+1$ edges, or a branch joining the two degree-$(k+1)$ vertices in the subdivision of $K_{2,k+1}$. Replacing the two-edge path using $\{a_i,b_i\}$ by another path with the same ends and at least two edges preserves $2$-connectivity and does not decrease the number of vertices.

    It follows that, after all sums are performed, there is a graph $G$ with $M|F=M(G)$ such that $G$ contains a $2$-connected subgraph $H'$ obtained from $H_R$ by subdividing edges and replacing some two-edge paths by paths with the same ends. In particular,
    \[
        r(M(H'))\geq r(M(H_R))\geq k.
    \]
    Let $F'=\cl_{M|F}(E(H'))$. Since $H'$ is $2$-connected and $M|F$ is graphic, the set $F'$ is a $2$-connected graphic flat of $M|F$. Moreover, $r(M|F')=r(M(H'))\geq k$. Since $F$ is a flat of $M$, the flat $F'$ of $M|F$ is also a flat of $M$. Thus $M$ has a $2$-connected graphic flat of rank at least $k$, a contradiction. This completes the proof.
\end{proof}

We now have all the ingredients needed to prove the main result of this section.

\begin{proof}[Proof of Theorem~\ref{thm: large 3-connected (main)}.]
    We show that one may take
    \[
        f_{\ref{thm: large 3-connected (main)}}(k)
        =
        f_{\ref{thm: large graphic flat or a high-rank vertex}}
        \bigl(k,f_{\ref{thm: high-rank vertex in Delta-tree}}(k)\bigr).
    \]
    Suppose that $M$ has no $2$-connected graphic flat of rank at least $k$. By Theorem~\ref{thm: large graphic flat or a high-rank vertex}, the matroid $M$ has a $\triangle$-tree decomposition $\cT$ in which every vertex label has rank at least three and some vertex label has rank at least $f_{\ref{thm: high-rank vertex in Delta-tree}}(k)$. By Theorem~\ref{thm: high-rank vertex in Delta-tree}, the matroid $M$ has a $2$-connected graphic flat of rank at least $k$, a contradiction.
\end{proof}

\section{Proofs of the main theorems}

In this section, we prove the main results of the paper.

\begin{proof}[Proof of Theorem~\ref{thm:large_graphic_flat}.]
    First suppose that $k\leq 2$. In this case, take
    \[
        f_{\ref{thm:large_graphic_flat}}(k)=2.
    \]
    Let $M$ be a $2$-connected regular matroid with $r(M)\geq 2$. Since $M$ is connected and has rank at least two, it has a circuit of size at least three. Hence $M$ has two elements $e$ and $f$ such that $\dist(e,f)\geq 3$. Thus the diameter of $M$ is at least three. By Lemma~\ref{lem: diameter implies graphic flat}, the matroid $M$ has a $2$-connected graphic flat of rank at least two.

    We may now assume that $k\geq 3$. We show that one may take
    \[
        f_{\ref{thm:large_graphic_flat}}(k)
        =
        f_{\ref{thm: high-rank 3-connected vertex or high-rank graphic flat}}
        \bigl(k,f_{\ref{thm: large 3-connected (main)}}(k)\bigr).
    \]
    Let $M$ be a $2$-connected regular matroid such that $r(M)\geq f_{\ref{thm:large_graphic_flat}}(k)$. Suppose that $M$ has no $2$-connected graphic flat of rank at least $k$. By Theorem~\ref{thm: high-rank 3-connected vertex or high-rank graphic flat}, the canonical tree decomposition $\cT$ of $M$ has a vertex labeled by a $3$-connected matroid $N$ such that $r(N)\geq f_{\ref{thm: large 3-connected (main)}}(k)$. By Theorem~\ref{thm: large 3-connected (main)}, the matroid $N$ has a $2$-connected graphic flat $F$ of rank at least $k$. By Lemma~\ref{lem: cleaning 2-connected leaves}, the matroid $M$ has a $2$-connected graphic flat $F'$ such that
    \[
        r(M|F')\geq r(N|F)\geq k.
    \]
    This contradiction completes the proof.
\end{proof}

We conclude by obtaining Theorem~\ref{thm: explicit characterization} as a consequence of Theorem~\ref{thm:large_graphic_flat}.

\begin{proof}[Proof of Theorem~\ref{thm: explicit characterization}.]
    We show that one may take
    \[
        f_{\ref{thm: explicit characterization}}(k)
        =
        f_{\ref{thm:large_graphic_flat}}
        (f_{\ref{cor: graphic case}}(k)).
    \]
    Let $M$ be a simple $2$-connected regular matroid such that $r(M)\geq f_{\ref{thm: explicit characterization}}(k)$. By Theorem~\ref{thm:large_graphic_flat}, the matroid $M$ has a $2$-connected graphic flat $F$ such that $r(M|F)\geq f_{\ref{cor: graphic case}}(k)$. Since $M$ is simple, every restriction of $M$ is also simple. By Corollary~\ref{cor: graphic case}, the matroid $M|F$ has a $2$-connected graphic flat $F'$ such that $M|F'=M(H)$ for some graph $H$ as described in Theorem~\ref{thm: explicit characterization}.
\end{proof}

\section*{Acknowledgments}
The author thanks James Oxley for suggesting the topic of this paper and for his guidance throughout the project. His comments substantially improved the exposition and the arguments.

\input{reference}
\end{document}

%% file: figures/fig_clean_laddar.tex
\begin{tikzpicture}[
    scale=0.55,
    every node/.style={circle, fill=black, inner sep=1.5pt},
    edge/.style={line width=1pt}
]

\foreach \x in {0,2.2,3.4,5.5,6.8,7.9,9.8,10.9,12.0,12.9,15.9}
    \node (t\x) at (\x,2) {};

\foreach \x in {0,0.6,2.0,2.7,5.5,6.8,7.9,9.0,10.0,10.9,11.7,12.6,13.5,15.1,15.9}
    \node (b\x) at (\x,0) {};

\draw[edge] (0,0) -- (15.9,0) -- (15.9,2) -- (0,2) -- cycle;

\draw[edge] (0.6,0) -- (2.2,2);
\draw[edge] (2.0,0) -- (2.2,2);
\draw[edge] (2.2,2) -- (2.7,0);
\draw[edge] (3.4,2) -- (2.7,0);

\draw[edge] (5.5,2) -- (6.8,0);
\draw[edge] (6.8,2) -- (5.5,0);

\draw[edge] (7.9,0) -- (7.9,2);

\draw[edge] (9.8,2) -- (9.0,0);
\draw[edge] (9.8,2) -- (10.0,0);
\draw[edge] (10.9,2) -- (10.0,0);
\draw[edge] (10.9,2) -- (10.9,0);

\draw[edge] (12.0,2) -- (11.7,0);
\draw[edge] (12.0,2) -- (12.6,0);
\draw[edge] (12.9,2) -- (12.6,0);
\draw[edge] (12.9,2) -- (13.5,0);

\draw[edge] (15.1,0) -- (15.9,2);

\end{tikzpicture}

%% file: figures/fig_fan.tex
\tikzset{every picture/.style={line width=0.75pt}} 

\begin{tikzpicture}[x=0.75pt,y=0.75pt,yscale=-1,xscale=1]

\draw    (133.22,50.81) -- (64.38,137.22) ;
\draw  [fill={rgb, 255:red, 0; green, 0; blue, 0 }  ,fill opacity=1 ] (130.03,50.81) .. controls (130.03,49.04) and (131.46,47.61) .. (133.22,47.61) .. controls (134.98,47.61) and (136.41,49.04) .. (136.41,50.81) .. controls (136.41,52.57) and (134.98,54) .. (133.22,54) .. controls (131.46,54) and (130.03,52.57) .. (130.03,50.81) -- cycle ;
\draw  [fill={rgb, 255:red, 0; green, 0; blue, 0 }  ,fill opacity=1 ] (61.19,137.22) .. controls (61.19,135.46) and (62.62,134.03) .. (64.38,134.03) .. controls (66.14,134.03) and (67.57,135.46) .. (67.57,137.22) .. controls (67.57,138.99) and (66.14,140.42) .. (64.38,140.42) .. controls (62.62,140.42) and (61.19,138.99) .. (61.19,137.22) -- cycle ;
\draw  [fill={rgb, 255:red, 0; green, 0; blue, 0 }  ,fill opacity=1 ] (88.73,137.22) .. controls (88.73,135.46) and (90.16,134.03) .. (91.92,134.03) .. controls (93.68,134.03) and (95.11,135.46) .. (95.11,137.22) .. controls (95.11,138.99) and (93.68,140.42) .. (91.92,140.42) .. controls (90.16,140.42) and (88.73,138.99) .. (88.73,137.22) -- cycle ;
\draw  [fill={rgb, 255:red, 0; green, 0; blue, 0 }  ,fill opacity=1 ] (116.27,137.22) .. controls (116.27,135.46) and (117.7,134.03) .. (119.46,134.03) .. controls (121.22,134.03) and (122.65,135.46) .. (122.65,137.22) .. controls (122.65,138.99) and (121.22,140.42) .. (119.46,140.42) .. controls (117.7,140.42) and (116.27,138.99) .. (116.27,137.22) -- cycle ;
\draw  [fill={rgb, 255:red, 0; green, 0; blue, 0 }  ,fill opacity=1 ] (143.81,137.22) .. controls (143.81,135.46) and (145.24,134.03) .. (147,134.03) .. controls (148.76,134.03) and (150.19,135.46) .. (150.19,137.22) .. controls (150.19,138.99) and (148.76,140.42) .. (147,140.42) .. controls (145.24,140.42) and (143.81,138.99) .. (143.81,137.22) -- cycle ;
\draw  [fill={rgb, 255:red, 0; green, 0; blue, 0 }  ,fill opacity=1 ] (171.35,137.22) .. controls (171.35,135.46) and (172.78,134.03) .. (174.54,134.03) .. controls (176.3,134.03) and (177.73,135.46) .. (177.73,137.22) .. controls (177.73,138.99) and (176.3,140.42) .. (174.54,140.42) .. controls (172.78,140.42) and (171.35,138.99) .. (171.35,137.22) -- cycle ;
\draw  [fill={rgb, 255:red, 0; green, 0; blue, 0 }  ,fill opacity=1 ] (198.87,137.22) .. controls (198.87,135.46) and (200.3,134.03) .. (202.06,134.03) .. controls (203.83,134.03) and (205.26,135.46) .. (205.26,137.22) .. controls (205.26,138.99) and (203.83,140.42) .. (202.06,140.42) .. controls (200.3,140.42) and (198.87,138.99) .. (198.87,137.22) -- cycle ;

\draw    (133.22,50.81) -- (91.92,137.22) ;
\draw    (133.22,50.81) -- (119.46,137.22) ;
\draw    (133.22,50.81) -- (147,137.22) ;
\draw    (133.22,50.81) -- (174.54,137.22) ;
\draw    (133.22,50.81) -- (202.06,137.22) ;
\draw    (64.38,137.22) -- (91.92,137.22) ;
\draw    (91.92,137.22) -- (119.46,137.22) ;
\draw    (147,137.22) -- (174.54,137.22) ;

\draw    (175.54,137.22) -- (203.06,137.22) ;

\draw    (318.63,50.85) -- (249.78,137.26) ;
\draw  [fill={rgb, 255:red, 0; green, 0; blue, 0 }  ,fill opacity=1 ] (315.43,50.85) .. controls (315.43,49.08) and (316.86,47.65) .. (318.63,47.65) .. controls (320.39,47.65) and (321.82,49.08) .. (321.82,50.85) .. controls (321.82,52.61) and (320.39,54.04) .. (318.63,54.04) .. controls (316.86,54.04) and (315.43,52.61) .. (315.43,50.85) -- cycle ;
\draw  [fill={rgb, 255:red, 0; green, 0; blue, 0 }  ,fill opacity=1 ] (246.59,137.26) .. controls (246.59,135.5) and (248.02,134.07) .. (249.78,134.07) .. controls (251.55,134.07) and (252.97,135.5) .. (252.97,137.26) .. controls (252.97,139.03) and (251.55,140.46) .. (249.78,140.46) .. controls (248.02,140.46) and (246.59,139.03) .. (246.59,137.26) -- cycle ;
\draw  [fill={rgb, 255:red, 0; green, 0; blue, 0 }  ,fill opacity=1 ] (274.13,137.26) .. controls (274.13,135.5) and (275.56,134.07) .. (277.32,134.07) .. controls (279.09,134.07) and (280.51,135.5) .. (280.51,137.26) .. controls (280.51,139.03) and (279.09,140.46) .. (277.32,140.46) .. controls (275.56,140.46) and (274.13,139.03) .. (274.13,137.26) -- cycle ;
\draw  [fill={rgb, 255:red, 0; green, 0; blue, 0 }  ,fill opacity=1 ] (301.67,137.26) .. controls (301.67,135.5) and (303.1,134.07) .. (304.86,134.07) .. controls (306.63,134.07) and (308.05,135.5) .. (308.05,137.26) .. controls (308.05,139.03) and (306.63,140.46) .. (304.86,140.46) .. controls (303.1,140.46) and (301.67,139.03) .. (301.67,137.26) -- cycle ;
\draw  [fill={rgb, 255:red, 0; green, 0; blue, 0 }  ,fill opacity=1 ] (329.21,137.26) .. controls (329.21,135.5) and (330.64,134.07) .. (332.4,134.07) .. controls (334.17,134.07) and (335.59,135.5) .. (335.59,137.26) .. controls (335.59,139.03) and (334.17,140.46) .. (332.4,140.46) .. controls (330.64,140.46) and (329.21,139.03) .. (329.21,137.26) -- cycle ;
\draw  [fill={rgb, 255:red, 0; green, 0; blue, 0 }  ,fill opacity=1 ] (356.75,137.26) .. controls (356.75,135.5) and (358.18,134.07) .. (359.94,134.07) .. controls (361.71,134.07) and (363.13,135.5) .. (363.13,137.26) .. controls (363.13,139.03) and (361.71,140.46) .. (359.94,140.46) .. controls (358.18,140.46) and (356.75,139.03) .. (356.75,137.26) -- cycle ;
\draw  [fill={rgb, 255:red, 0; green, 0; blue, 0 }  ,fill opacity=1 ] (384.28,137.26) .. controls (384.28,135.5) and (385.71,134.07) .. (387.47,134.07) .. controls (389.23,134.07) and (390.66,135.5) .. (390.66,137.26) .. controls (390.66,139.03) and (389.23,140.46) .. (387.47,140.46) .. controls (385.71,140.46) and (384.28,139.03) .. (384.28,137.26) -- cycle ;

\draw    (318.63,50.85) -- (277.32,137.26) ;
\draw    (318.63,50.85) -- (304.86,137.26) ;
\draw    (318.63,50.85) -- (332.4,137.26) ;
\draw    (318.63,50.85) -- (359.94,137.26) ;
\draw    (318.63,50.85) -- (387.47,137.26) ;
\draw    (249.78,137.26) -- (277.32,137.26) ;
\draw    (277.32,137.26) -- (304.86,137.26) ;
\draw    (332.4,137.26) -- (359.94,137.26) ;
\draw    (359.94,137.26) -- (387.47,137.26) ;
\draw    (249.78,137.26) .. controls (257.68,91.77) and (288.44,63.21) .. (318.63,50.85) ;
\draw    (387.47,137.26) .. controls (377.79,91.77) and (352.89,65.4) .. (318.63,50.85) ;
\draw    (277.32,137.26) .. controls (284.3,98.85) and (294.55,90.53) .. (318.63,50.85) ;
\draw    (304.86,137.26) .. controls (301.14,126.42) and (300.41,94.19) .. (318.63,50.85) ;
\draw    (359.94,137.26) .. controls (354.6,93.46) and (344.35,94.19) .. (318.63,50.85) ;
\draw    (332.4,137.26) .. controls (338.49,109.57) and (334.1,97.12) .. (318.63,50.85) ;
\draw  [fill={rgb, 255:red, 0; green, 0; blue, 0 }  ,fill opacity=1 ] (457.96,140.76) .. controls (457.96,139) and (459.38,137.57) .. (461.15,137.57) .. controls (462.91,137.57) and (464.34,139) .. (464.34,140.76) .. controls (464.34,142.53) and (462.91,143.96) .. (461.15,143.96) .. controls (459.38,143.96) and (457.96,142.53) .. (457.96,140.76) -- cycle ;
\draw  [fill={rgb, 255:red, 0; green, 0; blue, 0 }  ,fill opacity=1 ] (458.5,54.76) .. controls (458.5,53) and (459.92,51.57) .. (461.69,51.57) .. controls (463.45,51.57) and (464.88,53) .. (464.88,54.76) .. controls (464.88,56.53) and (463.45,57.96) .. (461.69,57.96) .. controls (459.92,57.96) and (458.5,56.53) .. (458.5,54.76) -- cycle ;
\draw    (461.69,54.76) .. controls (411.2,73.14) and (412.2,123.14) .. (461.15,140.76) ;
\draw    (461.69,54.76) .. controls (439.2,82.14) and (441.2,118.14) .. (461.15,140.76) ;
\draw    (461.15,140.76) .. controls (511.63,122.39) and (510.63,72.39) .. (461.69,54.76) ;
\draw    (461.15,140.76) .. controls (483.63,113.39) and (481.63,77.39) .. (461.69,54.76) ;

\draw (117.57,133.59) node [anchor=north west][inner sep=0.75pt]    {$\cdots $};
\draw (128.63,34.72) node [anchor=north west][inner sep=0.75pt]    {$u$};
\draw (55.32,147.97) node [anchor=north west][inner sep=0.75pt]    {$v_{1}$};
\draw (83.59,147.97) node [anchor=north west][inner sep=0.75pt]    {$v_{2}$};
\draw (108.93,147.97) node [anchor=north west][inner sep=0.75pt]    {$v_{3}$};
\draw (132.53,147.97) node [anchor=north west][inner sep=0.75pt]    {$v_{n-2}$};
\draw (163.73,147.97) node [anchor=north west][inner sep=0.75pt]    {$v_{n-1}$};
\draw (116.63,169.94) node [anchor=north west][inner sep=0.75pt]    {$F_{1,1,\dots,1}$};
\draw (199.87,147.97) node [anchor=north west][inner sep=0.75pt]    {$v_{n}$};
\draw (453,93.94) node [anchor=north west][inner sep=0.75pt]    {$\dotsc $};
\draw (453,168.94) node [anchor=north west][inner sep=0.75pt]    {$B_{n}$};
\draw (456.38,35.73) node [anchor=north west][inner sep=0.75pt]    {$u$};
\draw (456.38,150.24) node [anchor=north west][inner sep=0.75pt]    {$v$};
\draw (302.97,133.63) node [anchor=north west][inner sep=0.75pt]    {$\cdots $};
\draw (240.73,148.74) node [anchor=north west][inner sep=0.75pt]    {$v_{1}$};
\draw (269,148.74) node [anchor=north west][inner sep=0.75pt]    {$v_{2}$};
\draw (294.34,148.74) node [anchor=north west][inner sep=0.75pt]    {$v_{3}$};
\draw (317.94,148.74) node [anchor=north west][inner sep=0.75pt]    {$v_{n-2}$};
\draw (349.14,148.74) node [anchor=north west][inner sep=0.75pt]    {$v_{n-1}$};
\draw (384.27,148.74) node [anchor=north west][inner sep=0.75pt]    {$v_{n}$};
\draw (313.3,36.23) node [anchor=north west][inner sep=0.75pt]    {$u$};
\draw (299.3,168.44) node [anchor=north west][inner sep=0.75pt]    {$F_{2,2,\dotsc ,2}$};

\end{tikzpicture}

%% file: figures/fig_extremal_diameter.tex
\begin{tikzpicture}[
    scale=0.95,
    every node/.style={circle, fill=black, inner sep=1.45pt},
    every path/.style={line width=0.5pt},
    mydash/.style={dash pattern=on 3pt off 3pt}
]

\begin{scope}[shift={(0,0)}]
    \coordinate (O) at (0,0);
    \coordinate (a) at (90:0.72);
    \coordinate (b) at (270:0.72);

    \fill[gray!25] (O) circle (1.15);

    \draw (a) -- node[left=10pt, fill=none, inner sep=1pt] {$e$} (b);
    \draw (a) .. controls (-0.45,0.35) and (-0.45,-0.35) .. (b);
    \draw[mydash] (a) .. controls (0.45,0.35) and (0.45,-0.35) .. (b);

    \node at (a) {};
    \node at (b) {};
\end{scope}

\draw (1.15,0) -- (1.65,0);

\begin{scope}[shift={(2.8,0)}]
    \coordinate (O) at (0,0);
    \coordinate (a) at (90:0.82);
    \coordinate (b) at (210:0.82);
    \coordinate (c) at (330:0.82);

    \fill[gray!25] (O) circle (1.15);

    \draw[mydash] (a) -- (b);
    \draw[mydash] (a) -- (c);
    \draw (b) -- (c);

    \node at (a) {};
    \node at (b) {};
    \node at (c) {};
\end{scope}

\draw (3.95,0) -- (4.45,0);

\begin{scope}[shift={(5.6,0)}]
    \coordinate (O) at (0,0);
    \coordinate (a) at (90:0.72);
    \coordinate (b) at (270:0.72);

    \fill[gray!25] (O) circle (1.15);

    \draw (a) -- (b);
    \draw[mydash] (a) .. controls (-0.45,0.35) and (-0.45,-0.35) .. (b);
    \draw[mydash] (a) .. controls (0.45,0.35) and (0.45,-0.35) .. (b);

    \node at (a) {};
    \node at (b) {};
\end{scope}

\draw (6.75,0) -- (7.25,0);

\begin{scope}[shift={(8.4,0)}]
    \coordinate (O) at (0,0);
    \coordinate (a) at (90:0.82);
    \coordinate (b) at (210:0.82);
    \coordinate (c) at (330:0.82);

    \fill[gray!25] (O) circle (1.15);

    \draw[mydash] (a) -- (b);
    \draw[mydash] (a) -- (c);
    \draw (b) -- (c);

    \node at (a) {};
    \node at (b) {};
    \node at (c) {};
\end{scope}

\draw (9.55,0) -- (10.05,0);

\begin{scope}[shift={(11.2,0)}]
    \coordinate (O) at (0,0);
    \coordinate (a) at (90:0.72);
    \coordinate (b) at (270:0.72);

    \fill[gray!25] (O) circle (1.15);

    \draw (a) -- node[right=8pt, fill=none, inner sep=1pt] {$f$} (b);
    \draw[mydash] (a) .. controls (-0.45,0.35) and (-0.45,-0.35) .. (b);
    \draw (a) .. controls (0.45,0.35) and (0.45,-0.35) .. (b);

    \node at (a) {};
    \node at (b) {};
\end{scope}

\end{tikzpicture}

%% file: figures/fig_broken_wheels.tex
\begin{tikzpicture}[
    scale=0.85,
    every node/.style={circle, fill=black, inner sep=2.2pt},
    every path/.style={line width=0.5pt}
]


\begin{scope}[shift={(0,0)}]
    \coordinate (O) at (0,0);
    \coordinate (t) at (90:1.35);
    \coordinate (l) at (180:1.35);
    \coordinate (r) at (0:1.35);
    \coordinate (b) at (270:1.35);
    \coordinate (c) at (0,0);

    \draw (b) arc[start angle=270,end angle=0,radius=1.35];
    \draw[dashed] (r) arc[start angle=0,end angle=-90,radius=1.35];
    \draw (b) arc[start angle=270,end angle=90,radius=1.35];
    \draw (t) -- (c);
    \draw (c) -- (r);

    \node at (t) {};
    \node at (l) {};
    \node at (r) {};
    \node at (b) {};
    \node at (c) {};
\end{scope}

\begin{scope}[shift={(3.7,0)}]
    \coordinate (O) at (0,0);
    \coordinate (t) at (90:1.35);
    \coordinate (l) at (180:1.35);
    \coordinate (r) at (0:1.35);
    \coordinate (b) at (270:1.35);
    \coordinate (c) at (0,0);
    \coordinate (ur) at (45:1.35);
    \coordinate (dl) at (225:1.35);

    \draw (b) arc[start angle=270,end angle=0,radius=1.35];
    \draw[dashed] (r) arc[start angle=0,end angle=-90,radius=1.35];
    \draw (b) arc[start angle=270,end angle=90,radius=1.35];

    \draw (t) -- (c);
    \draw (ur) -- (dl);

    \draw (t) .. controls (-0.35,0.75) and (-0.35,0.35) .. (c);
    \draw (t) .. controls (0.35,0.75) and (0.35,0.35) .. (c);

    \node at (t) {};
    \node at (l) {};
    \node at (r) {};
    \node at (b) {};
    \node at (c) {};
    \node at (ur) {};
    \node at (dl) {};
\end{scope}

\begin{scope}[shift={(7.4,0)}]
    \coordinate (O) at (0,0);
    \coordinate (t) at (90:1.35);
    \coordinate (l) at (180:1.35);
    \coordinate (r) at (0:1.35);
    \coordinate (b) at (270:1.35);
    \coordinate (c) at (0,0);
    \coordinate (ul) at (135:1.35);
    \coordinate (ur) at (45:1.35);
    \coordinate (dl) at (225:1.35);

    \draw (b) arc[start angle=270,end angle=0,radius=1.35];
    \draw[dashed] (r) arc[start angle=0,end angle=-90,radius=1.35];
    \draw (b) arc[start angle=270,end angle=90,radius=1.35];

    \draw (t) -- (b);
    \draw (l) -- (r);
    \draw (c) -- (ul);
    \draw (ur) -- (dl);

    \node at (t) {};
    \node at (l) {};
    \node at (r) {};
    \node at (b) {};
    \node at (c) {};
    \node at (ul) {};
    \node at (ur) {};
    \node at (dl) {};
\end{scope}

\end{tikzpicture}

%% file: figures/fig_relations_of_trianlges.tex
\tikzset{every picture/.style={line width=0.75pt}} 

\begin{tikzpicture}[x=0.75pt,y=0.75pt,yscale=-1,xscale=1]

\draw  [color={rgb, 255:red, 255; green, 255; blue, 255 }  ,draw opacity=1 ][fill={rgb, 255:red, 155; green, 155; blue, 155 }  ,fill opacity=0.5 ] (483.27,105.66) .. controls (483.27,68.47) and (513.42,38.32) .. (550.61,38.32) .. controls (587.8,38.32) and (617.95,68.47) .. (617.95,105.66) .. controls (617.95,142.85) and (587.8,173) .. (550.61,173) .. controls (513.42,173) and (483.27,142.85) .. (483.27,105.66) -- cycle ;
\draw  [dash pattern={on 4.5pt off 4.5pt}]  (531.5,65.36) -- (591.15,105.68) ;
\draw  [dash pattern={on 4.5pt off 4.5pt}]  (531.5,65.36) -- (543.15,145.96) ;
\draw  [dash pattern={on 4.5pt off 4.5pt}]  (510.07,124.25) -- (543.15,145.96) ;
\draw  [dash pattern={on 4.5pt off 4.5pt}]  (510.07,124.25) -- (591.15,105.68) ;
\draw    (591.15,105.68) -- (543.15,145.96) ;
\draw  [color={rgb, 255:red, 255; green, 255; blue, 255 }  ,draw opacity=1 ][fill={rgb, 255:red, 255; green, 255; blue, 255 }  ,fill opacity=1 ] (545.07,148.25) .. controls (543.8,149.32) and (541.92,149.15) .. (540.85,147.88) .. controls (539.79,146.62) and (539.96,144.73) .. (541.22,143.67) .. controls (542.49,142.61) and (544.38,142.77) .. (545.44,144.04) .. controls (546.5,145.3) and (546.34,147.19) .. (545.07,148.25) -- cycle ;
\draw  [color={rgb, 255:red, 255; green, 255; blue, 255 }  ,draw opacity=1 ][fill={rgb, 255:red, 255; green, 255; blue, 255 }  ,fill opacity=1 ] (569.07,128.11) .. controls (567.81,129.18) and (565.92,129.01) .. (564.86,127.74) .. controls (563.79,126.48) and (563.96,124.59) .. (565.22,123.53) .. controls (566.49,122.47) and (568.38,122.63) .. (569.44,123.9) .. controls (570.5,125.16) and (570.34,127.05) .. (569.07,128.11) -- cycle ;
\draw  [color={rgb, 255:red, 255; green, 255; blue, 255 }  ,draw opacity=1 ][fill={rgb, 255:red, 255; green, 255; blue, 255 }  ,fill opacity=1 ] (593.07,107.97) .. controls (591.81,109.03) and (589.92,108.87) .. (588.86,107.6) .. controls (587.8,106.34) and (587.96,104.45) .. (589.23,103.39) .. controls (590.49,102.32) and (592.38,102.49) .. (593.44,103.76) .. controls (594.51,105.02) and (594.34,106.91) .. (593.07,107.97) -- cycle ;

\draw  [fill={rgb, 255:red, 0; green, 0; blue, 0 }  ,fill opacity=1 ] (512.88,125.27) .. controls (512.32,126.83) and (510.6,127.63) .. (509.05,127.06) .. controls (507.49,126.5) and (506.69,124.78) .. (507.26,123.23) .. controls (507.82,121.67) and (509.54,120.87) .. (511.1,121.44) .. controls (512.65,122) and (513.45,123.72) .. (512.88,125.27) -- cycle ;
\draw  [fill={rgb, 255:red, 0; green, 0; blue, 0 }  ,fill opacity=1 ] (523.6,95.83) .. controls (523.04,97.38) and (521.32,98.18) .. (519.76,97.62) .. controls (518.21,97.05) and (517.41,95.34) .. (517.98,93.78) .. controls (518.54,92.23) and (520.26,91.43) .. (521.81,91.99) .. controls (523.36,92.56) and (524.17,94.28) .. (523.6,95.83) -- cycle ;
\draw  [fill={rgb, 255:red, 0; green, 0; blue, 0 }  ,fill opacity=1 ] (534.32,66.39) .. controls (533.75,67.94) and (532.03,68.74) .. (530.48,68.18) .. controls (528.93,67.61) and (528.13,65.89) .. (528.69,64.34) .. controls (529.26,62.79) and (530.97,61.99) .. (532.53,62.55) .. controls (534.08,63.12) and (534.88,64.83) .. (534.32,66.39) -- cycle ;

\draw    (510.07,124.25) -- (531.5,65.36) ;

\draw  [color={rgb, 255:red, 255; green, 255; blue, 255 }  ,draw opacity=1 ][fill={rgb, 255:red, 155; green, 155; blue, 155 }  ,fill opacity=0.5 ] (267.87,105.66) .. controls (267.87,68.47) and (298.02,38.32) .. (335.21,38.32) .. controls (372.4,38.32) and (402.55,68.47) .. (402.55,105.66) .. controls (402.55,142.85) and (372.4,173) .. (335.21,173) .. controls (298.02,173) and (267.87,142.85) .. (267.87,105.66) -- cycle ;
\draw    (338.57,78.53) -- (369.9,132.8) ;
\draw    (331.85,78.53) -- (300.52,132.8) ;

\draw  [fill={rgb, 255:red, 0; green, 0; blue, 0 }  ,fill opacity=1 ] (329.26,77.35) .. controls (330.09,75.92) and (331.92,75.43) .. (333.35,76.25) .. controls (334.78,77.08) and (335.27,78.91) .. (334.44,80.34) .. controls (333.62,81.77) and (331.79,82.26) .. (330.35,81.44) .. controls (328.92,80.61) and (328.43,78.78) .. (329.26,77.35) -- cycle ;
\draw  [fill={rgb, 255:red, 0; green, 0; blue, 0 }  ,fill opacity=1 ] (313.59,104.49) .. controls (314.42,103.05) and (316.25,102.56) .. (317.68,103.39) .. controls (319.11,104.22) and (319.6,106.05) .. (318.78,107.48) .. controls (317.95,108.91) and (316.12,109.4) .. (314.69,108.57) .. controls (313.26,107.75) and (312.77,105.92) .. (313.59,104.49) -- cycle ;
\draw  [fill={rgb, 255:red, 0; green, 0; blue, 0 }  ,fill opacity=1 ] (297.93,131.62) .. controls (298.75,130.19) and (300.58,129.7) .. (302.01,130.52) .. controls (303.45,131.35) and (303.94,133.18) .. (303.11,134.61) .. controls (302.28,136.04) and (300.45,136.53) .. (299.02,135.71) .. controls (297.59,134.88) and (297.1,133.05) .. (297.93,131.62) -- cycle ;

\draw  [color={rgb, 255:red, 255; green, 255; blue, 255 }  ,draw opacity=1 ][fill={rgb, 255:red, 255; green, 255; blue, 255 }  ,fill opacity=1 ] (372.49,130.98) .. controls (373.32,132.41) and (372.83,134.24) .. (371.39,135.07) .. controls (369.96,135.9) and (368.13,135.4) .. (367.31,133.97) .. controls (366.48,132.54) and (366.97,130.71) .. (368.4,129.89) .. controls (369.83,129.06) and (371.66,129.55) .. (372.49,130.98) -- cycle ;
\draw  [color={rgb, 255:red, 255; green, 255; blue, 255 }  ,draw opacity=1 ][fill={rgb, 255:red, 255; green, 255; blue, 255 }  ,fill opacity=1 ] (356.82,103.85) .. controls (357.65,105.28) and (357.16,107.11) .. (355.73,107.93) .. controls (354.3,108.76) and (352.47,108.27) .. (351.64,106.84) .. controls (350.81,105.41) and (351.3,103.58) .. (352.74,102.75) .. controls (354.17,101.92) and (356,102.41) .. (356.82,103.85) -- cycle ;
\draw  [color={rgb, 255:red, 255; green, 255; blue, 255 }  ,draw opacity=1 ][fill={rgb, 255:red, 255; green, 255; blue, 255 }  ,fill opacity=1 ] (341.16,76.71) .. controls (341.98,78.14) and (341.49,79.97) .. (340.06,80.8) .. controls (338.63,81.63) and (336.8,81.13) .. (335.97,79.7) .. controls (335.15,78.27) and (335.64,76.44) .. (337.07,75.62) .. controls (338.5,74.79) and (340.33,75.28) .. (341.16,76.71) -- cycle ;

\draw  [color={rgb, 255:red, 255; green, 255; blue, 255 }  ,draw opacity=1 ][fill={rgb, 255:red, 155; green, 155; blue, 155 }  ,fill opacity=0.5 ] (52.47,105.66) .. controls (52.47,68.47) and (82.62,38.32) .. (119.81,38.32) .. controls (157,38.32) and (187.15,68.47) .. (187.15,105.66) .. controls (187.15,142.85) and (157,173) .. (119.81,173) .. controls (82.62,173) and (52.47,142.85) .. (52.47,105.66) -- cycle ;
\draw  [fill={rgb, 255:red, 0; green, 0; blue, 0 }  ,fill opacity=1 ] (113.46,74.33) .. controls (113.46,72.68) and (114.8,71.34) .. (116.45,71.34) .. controls (118.11,71.34) and (119.45,72.68) .. (119.45,74.33) .. controls (119.45,75.98) and (118.11,77.32) .. (116.45,77.32) .. controls (114.8,77.32) and (113.46,75.98) .. (113.46,74.33) -- cycle ;
\draw  [fill={rgb, 255:red, 0; green, 0; blue, 0 }  ,fill opacity=1 ] (113.46,105.66) .. controls (113.46,104.01) and (114.8,102.67) .. (116.45,102.67) .. controls (118.11,102.67) and (119.45,104.01) .. (119.45,105.66) .. controls (119.45,107.31) and (118.11,108.65) .. (116.45,108.65) .. controls (114.8,108.65) and (113.46,107.31) .. (113.46,105.66) -- cycle ;
\draw  [fill={rgb, 255:red, 0; green, 0; blue, 0 }  ,fill opacity=1 ] (113.46,136.99) .. controls (113.46,135.34) and (114.8,134) .. (116.45,134) .. controls (118.11,134) and (119.45,135.34) .. (119.45,136.99) .. controls (119.45,138.65) and (118.11,139.99) .. (116.45,139.99) .. controls (114.8,139.99) and (113.46,138.65) .. (113.46,136.99) -- cycle ;

\draw    (119.45,74.33) -- (119.45,136.99) ;
\draw  [color={rgb, 255:red, 255; green, 255; blue, 255 }  ,draw opacity=1 ][fill={rgb, 255:red, 255; green, 255; blue, 255 }  ,fill opacity=1 ] (120.18,74.33) .. controls (120.18,72.68) and (121.52,71.34) .. (123.17,71.34) .. controls (124.82,71.34) and (126.16,72.68) .. (126.16,74.33) .. controls (126.16,75.98) and (124.82,77.32) .. (123.17,77.32) .. controls (121.52,77.32) and (120.18,75.98) .. (120.18,74.33) -- cycle ;
\draw  [color={rgb, 255:red, 255; green, 255; blue, 255 }  ,draw opacity=1 ][fill={rgb, 255:red, 255; green, 255; blue, 255 }  ,fill opacity=1 ] (120.18,105.66) .. controls (120.18,104.01) and (121.52,102.67) .. (123.17,102.67) .. controls (124.82,102.67) and (126.16,104.01) .. (126.16,105.66) .. controls (126.16,107.31) and (124.82,108.65) .. (123.17,108.65) .. controls (121.52,108.65) and (120.18,107.31) .. (120.18,105.66) -- cycle ;
\draw  [color={rgb, 255:red, 255; green, 255; blue, 255 }  ,draw opacity=1 ][fill={rgb, 255:red, 255; green, 255; blue, 255 }  ,fill opacity=1 ] (120.18,136.99) .. controls (120.18,135.34) and (121.52,134) .. (123.17,134) .. controls (124.82,134) and (126.16,135.34) .. (126.16,136.99) .. controls (126.16,138.65) and (124.82,139.99) .. (123.17,139.99) .. controls (121.52,139.99) and (120.18,138.65) .. (120.18,136.99) -- cycle ;

\end{tikzpicture}

%% file: figures/fig_bundle.tex
\tikzset{every picture/.style={line width=0.75pt}} 

\begin{tikzpicture}[x=0.75pt,y=0.75pt,yscale=-1,xscale=1]

\draw    (477.44,104.51) -- (451.81,130.1) ;
\draw  [color={rgb, 255:red, 255; green, 255; blue, 255 }  ,draw opacity=1 ][fill={rgb, 255:red, 155; green, 155; blue, 155 }  ,fill opacity=0.5 ] (407.94,104.49) .. controls (407.94,80.53) and (427.36,61.11) .. (451.32,61.11) .. controls (475.28,61.11) and (494.71,80.53) .. (494.71,104.49) .. controls (494.71,128.46) and (475.28,147.88) .. (451.32,147.88) .. controls (427.36,147.88) and (407.94,128.46) .. (407.94,104.49) -- cycle ;
\draw  [dash pattern={on 4.5pt off 4.5pt}]  (439.01,78.53) -- (477.44,104.51) ;
\draw  [dash pattern={on 4.5pt off 4.5pt}]  (425.2,116.47) -- (446.51,130.46) ;
\draw  [dash pattern={on 4.5pt off 4.5pt}]  (425.2,116.47) -- (477.44,104.51) ;
\draw  [dash pattern={on 4.5pt off 4.5pt}]  (439.01,78.53) -- (451.81,130.1) ;
\draw  [fill={rgb, 255:red, 0; green, 0; blue, 0 }  ,fill opacity=1 ] (427.02,117.13) .. controls (426.65,118.13) and (425.54,118.65) .. (424.54,118.28) .. controls (423.54,117.92) and (423.03,116.81) .. (423.39,115.81) .. controls (423.76,114.81) and (424.86,114.29) .. (425.86,114.66) .. controls (426.86,115.02) and (427.38,116.13) .. (427.02,117.13) -- cycle ;
\draw  [fill={rgb, 255:red, 0; green, 0; blue, 0 }  ,fill opacity=1 ] (433.92,98.16) .. controls (433.56,99.16) and (432.45,99.68) .. (431.45,99.31) .. controls (430.45,98.95) and (429.93,97.84) .. (430.3,96.84) .. controls (430.66,95.84) and (431.77,95.32) .. (432.77,95.69) .. controls (433.77,96.05) and (434.28,97.16) .. (433.92,98.16) -- cycle ;
\draw  [fill={rgb, 255:red, 0; green, 0; blue, 0 }  ,fill opacity=1 ] (440.82,79.19) .. controls (440.46,80.19) and (439.35,80.71) .. (438.35,80.34) .. controls (437.35,79.98) and (436.84,78.87) .. (437.2,77.87) .. controls (437.57,76.87) and (438.67,76.35) .. (439.67,76.72) .. controls (440.67,77.08) and (441.19,78.19) .. (440.82,79.19) -- cycle ;

\draw    (425.2,116.47) -- (439.01,78.53) ;

\draw  [color={rgb, 255:red, 0; green, 0; blue, 0 }  ,draw opacity=1 ][fill={rgb, 255:red, 255; green, 255; blue, 255 }  ,fill opacity=1 ] (453.48,131.07) .. controls (454.01,130.15) and (453.69,128.97) .. (452.77,128.43) .. controls (451.85,127.9) and (450.67,128.22) .. (450.14,129.14) .. controls (449.6,130.06) and (449.92,131.24) .. (450.84,131.77) .. controls (451.77,132.31) and (452.94,131.99) .. (453.48,131.07) -- cycle ;
\draw  [color={rgb, 255:red, 0; green, 0; blue, 0 }  ,draw opacity=1 ][fill={rgb, 255:red, 255; green, 255; blue, 255 }  ,fill opacity=1 ] (479.11,105.47) .. controls (479.64,104.55) and (479.33,103.37) .. (478.41,102.84) .. controls (477.48,102.3) and (476.31,102.62) .. (475.77,103.54) .. controls (475.24,104.46) and (475.56,105.64) .. (476.48,106.18) .. controls (477.4,106.71) and (478.58,106.39) .. (479.11,105.47) -- cycle ;
\draw  [color={rgb, 255:red, 0; green, 0; blue, 0 }  ,draw opacity=1 ][fill={rgb, 255:red, 255; green, 255; blue, 255 }  ,fill opacity=1 ] (466.29,118.27) .. controls (466.83,117.35) and (466.51,116.17) .. (465.59,115.63) .. controls (464.67,115.1) and (463.49,115.42) .. (462.95,116.34) .. controls (462.42,117.26) and (462.74,118.44) .. (463.66,118.97) .. controls (464.58,119.51) and (465.76,119.19) .. (466.29,118.27) -- cycle ;

\draw  [color={rgb, 255:red, 255; green, 255; blue, 255 }  ,draw opacity=1 ][fill={rgb, 255:red, 155; green, 155; blue, 155 }  ,fill opacity=0.5 ] (129.65,104.49) .. controls (129.65,80.53) and (149.07,61.11) .. (173.03,61.11) .. controls (197,61.11) and (216.42,80.53) .. (216.42,104.49) .. controls (216.42,128.46) and (197,147.88) .. (173.03,147.88) .. controls (149.07,147.88) and (129.65,128.46) .. (129.65,104.49) -- cycle ;
\draw    (175.2,87.01) -- (195.38,121.98) ;
\draw    (170.87,87.01) -- (150.68,121.98) ;

\draw  [fill={rgb, 255:red, 0; green, 0; blue, 0 }  ,fill opacity=1 ] (169.2,86.25) .. controls (169.73,85.33) and (170.91,85.01) .. (171.84,85.55) .. controls (172.76,86.08) and (173.07,87.26) .. (172.54,88.18) .. controls (172.01,89.1) and (170.83,89.42) .. (169.91,88.89) .. controls (168.98,88.35) and (168.67,87.18) .. (169.2,86.25) -- cycle ;
\draw  [fill={rgb, 255:red, 0; green, 0; blue, 0 }  ,fill opacity=1 ] (159.11,103.74) .. controls (159.64,102.81) and (160.82,102.5) .. (161.74,103.03) .. controls (162.66,103.56) and (162.98,104.74) .. (162.45,105.66) .. controls (161.91,106.59) and (160.74,106.9) .. (159.81,106.37) .. controls (158.89,105.84) and (158.57,104.66) .. (159.11,103.74) -- cycle ;
\draw  [fill={rgb, 255:red, 0; green, 0; blue, 0 }  ,fill opacity=1 ] (149.01,121.22) .. controls (149.55,120.3) and (150.73,119.98) .. (151.65,120.51) .. controls (152.57,121.05) and (152.89,122.22) .. (152.35,123.15) .. controls (151.82,124.07) and (150.64,124.39) .. (149.72,123.85) .. controls (148.8,123.32) and (148.48,122.14) .. (149.01,121.22) -- cycle ;

\draw  [color={rgb, 255:red, 255; green, 255; blue, 255 }  ,draw opacity=1 ][fill={rgb, 255:red, 255; green, 255; blue, 255 }  ,fill opacity=1 ] (197.05,120.81) .. controls (197.59,121.73) and (197.27,122.91) .. (196.35,123.44) .. controls (195.43,123.97) and (194.25,123.66) .. (193.71,122.74) .. controls (193.18,121.81) and (193.5,120.63) .. (194.42,120.1) .. controls (195.34,119.57) and (196.52,119.88) .. (197.05,120.81) -- cycle ;
\draw  [color={rgb, 255:red, 255; green, 255; blue, 255 }  ,draw opacity=1 ][fill={rgb, 255:red, 255; green, 255; blue, 255 }  ,fill opacity=1 ] (186.96,103.32) .. controls (187.49,104.25) and (187.18,105.43) .. (186.25,105.96) .. controls (185.33,106.49) and (184.15,106.17) .. (183.62,105.25) .. controls (183.09,104.33) and (183.4,103.15) .. (184.33,102.62) .. controls (185.25,102.09) and (186.43,102.4) .. (186.96,103.32) -- cycle ;
\draw  [color={rgb, 255:red, 255; green, 255; blue, 255 }  ,draw opacity=1 ][fill={rgb, 255:red, 255; green, 255; blue, 255 }  ,fill opacity=1 ] (176.87,85.84) .. controls (177.4,86.76) and (177.08,87.94) .. (176.16,88.47) .. controls (175.24,89.01) and (174.06,88.69) .. (173.53,87.77) .. controls (172.99,86.85) and (173.31,85.67) .. (174.23,85.14) .. controls (175.16,84.6) and (176.33,84.92) .. (176.87,85.84) -- cycle ;

\draw  [color={rgb, 255:red, 255; green, 255; blue, 255 }  ,draw opacity=1 ][fill={rgb, 255:red, 155; green, 155; blue, 155 }  ,fill opacity=0.5 ] (236.54,104.49) .. controls (236.54,128.46) and (255.96,147.88) .. (279.92,147.88) .. controls (303.88,147.88) and (323.31,128.46) .. (323.31,104.49) .. controls (323.31,80.53) and (303.88,61.11) .. (279.92,61.11) .. controls (255.96,61.11) and (236.54,80.53) .. (236.54,104.49) -- cycle ;
\draw    (282.09,121.98) -- (302.27,87.01) ;
\draw    (277.76,121.98) -- (257.57,87.01) ;

\draw  [color={rgb, 255:red, 255; green, 255; blue, 255 }  ,draw opacity=1 ][fill={rgb, 255:red, 255; green, 255; blue, 255 }  ,fill opacity=1 ] (276.09,122.74) .. controls (276.62,123.66) and (277.8,123.97) .. (278.72,123.44) .. controls (279.65,122.91) and (279.96,121.73) .. (279.43,120.81) .. controls (278.9,119.88) and (277.72,119.57) .. (276.8,120.1) .. controls (275.87,120.63) and (275.56,121.81) .. (276.09,122.74) -- cycle ;
\draw  [color={rgb, 255:red, 255; green, 255; blue, 255 }  ,draw opacity=1 ][fill={rgb, 255:red, 255; green, 255; blue, 255 }  ,fill opacity=1 ] (266,105.25) .. controls (266.53,106.17) and (267.71,106.49) .. (268.63,105.96) .. controls (269.55,105.43) and (269.87,104.25) .. (269.34,103.32) .. controls (268.8,102.4) and (267.62,102.09) .. (266.7,102.62) .. controls (265.78,103.15) and (265.46,104.33) .. (266,105.25) -- cycle ;
\draw  [color={rgb, 255:red, 255; green, 255; blue, 255 }  ,draw opacity=1 ][fill={rgb, 255:red, 255; green, 255; blue, 255 }  ,fill opacity=1 ] (255.9,87.77) .. controls (256.43,88.69) and (257.61,89.01) .. (258.54,88.47) .. controls (259.46,87.94) and (259.77,86.76) .. (259.24,85.84) .. controls (258.71,84.92) and (257.53,84.6) .. (256.61,85.14) .. controls (255.69,85.67) and (255.37,86.85) .. (255.9,87.77) -- cycle ;
\draw  [color={rgb, 255:red, 0; green, 0; blue, 0 }  ,draw opacity=1 ][fill={rgb, 255:red, 255; green, 255; blue, 255 }  ,fill opacity=1 ] (303.94,87.98) .. controls (304.48,87.05) and (304.16,85.87) .. (303.24,85.34) .. controls (302.32,84.81) and (301.14,85.12) .. (300.6,86.05) .. controls (300.07,86.97) and (300.39,88.15) .. (301.31,88.68) .. controls (302.23,89.21) and (303.41,88.9) .. (303.94,87.98) -- cycle ;
\draw  [color={rgb, 255:red, 0; green, 0; blue, 0 }  ,draw opacity=1 ][fill={rgb, 255:red, 255; green, 255; blue, 255 }  ,fill opacity=1 ] (293.85,105.66) .. controls (294.38,104.74) and (294.07,103.56) .. (293.14,103.03) .. controls (292.22,102.5) and (291.04,102.81) .. (290.51,103.74) .. controls (289.98,104.66) and (290.29,105.84) .. (291.22,106.37) .. controls (292.14,106.9) and (293.32,106.59) .. (293.85,105.66) -- cycle ;
\draw  [color={rgb, 255:red, 0; green, 0; blue, 0 }  ,draw opacity=1 ][fill={rgb, 255:red, 255; green, 255; blue, 255 }  ,fill opacity=1 ] (283.76,123.15) .. controls (284.29,122.22) and (283.97,121.05) .. (283.05,120.51) .. controls (282.13,119.98) and (280.95,120.3) .. (280.42,121.22) .. controls (279.88,122.14) and (280.2,123.32) .. (281.12,123.85) .. controls (282.04,124.39) and (283.22,124.07) .. (283.76,123.15) -- cycle ;

\draw    (216.42,104.49) -- (236.54,104.49) ;

\draw  [dash pattern={on 4.5pt off 4.5pt}] (114.22,74.55) .. controls (114.22,63.53) and (123.16,54.59) .. (134.19,54.59) -- (317.26,54.59) .. controls (328.29,54.59) and (337.22,63.53) .. (337.22,74.55) -- (337.22,134.44) .. controls (337.22,145.46) and (328.29,154.4) .. (317.26,154.4) -- (134.19,154.4) .. controls (123.16,154.4) and (114.22,145.46) .. (114.22,134.44) -- cycle ;

\draw  [color={rgb, 255:red, 255; green, 255; blue, 255 }  ,draw opacity=1 ][fill={rgb, 255:red, 155; green, 155; blue, 155 }  ,fill opacity=0.5 ] (129.65,214.49) .. controls (129.65,190.53) and (149.07,171.11) .. (173.03,171.11) .. controls (197,171.11) and (216.42,190.53) .. (216.42,214.49) .. controls (216.42,238.46) and (197,257.88) .. (173.03,257.88) .. controls (149.07,257.88) and (129.65,238.46) .. (129.65,214.49) -- cycle ;
\draw    (175.2,197.01) -- (195.38,231.98) ;
\draw    (170.87,197.01) -- (150.68,231.98) ;

\draw  [fill={rgb, 255:red, 0; green, 0; blue, 0 }  ,fill opacity=1 ] (169.2,196.25) .. controls (169.73,195.33) and (170.91,195.01) .. (171.84,195.55) .. controls (172.76,196.08) and (173.07,197.26) .. (172.54,198.18) .. controls (172.01,199.1) and (170.83,199.42) .. (169.91,198.89) .. controls (168.98,198.35) and (168.67,197.18) .. (169.2,196.25) -- cycle ;
\draw  [fill={rgb, 255:red, 0; green, 0; blue, 0 }  ,fill opacity=1 ] (159.11,213.74) .. controls (159.64,212.81) and (160.82,212.5) .. (161.74,213.03) .. controls (162.66,213.56) and (162.98,214.74) .. (162.45,215.66) .. controls (161.91,216.59) and (160.74,216.9) .. (159.81,216.37) .. controls (158.89,215.84) and (158.57,214.66) .. (159.11,213.74) -- cycle ;
\draw  [fill={rgb, 255:red, 0; green, 0; blue, 0 }  ,fill opacity=1 ] (149.01,231.22) .. controls (149.55,230.3) and (150.73,229.98) .. (151.65,230.51) .. controls (152.57,231.05) and (152.89,232.22) .. (152.35,233.15) .. controls (151.82,234.07) and (150.64,234.39) .. (149.72,233.85) .. controls (148.8,233.32) and (148.48,232.14) .. (149.01,231.22) -- cycle ;

\draw  [color={rgb, 255:red, 255; green, 255; blue, 255 }  ,draw opacity=1 ][fill={rgb, 255:red, 255; green, 255; blue, 255 }  ,fill opacity=1 ] (197.05,230.81) .. controls (197.59,231.73) and (197.27,232.91) .. (196.35,233.44) .. controls (195.43,233.97) and (194.25,233.66) .. (193.71,232.74) .. controls (193.18,231.81) and (193.5,230.63) .. (194.42,230.1) .. controls (195.34,229.57) and (196.52,229.88) .. (197.05,230.81) -- cycle ;
\draw  [color={rgb, 255:red, 255; green, 255; blue, 255 }  ,draw opacity=1 ][fill={rgb, 255:red, 255; green, 255; blue, 255 }  ,fill opacity=1 ] (186.96,213.32) .. controls (187.49,214.25) and (187.18,215.43) .. (186.25,215.96) .. controls (185.33,216.49) and (184.15,216.17) .. (183.62,215.25) .. controls (183.09,214.33) and (183.4,213.15) .. (184.33,212.62) .. controls (185.25,212.09) and (186.43,212.4) .. (186.96,213.32) -- cycle ;
\draw  [color={rgb, 255:red, 255; green, 255; blue, 255 }  ,draw opacity=1 ][fill={rgb, 255:red, 255; green, 255; blue, 255 }  ,fill opacity=1 ] (176.87,195.84) .. controls (177.4,196.76) and (177.08,197.94) .. (176.16,198.47) .. controls (175.24,199.01) and (174.06,198.69) .. (173.53,197.77) .. controls (172.99,196.85) and (173.31,195.67) .. (174.23,195.14) .. controls (175.16,194.6) and (176.33,194.92) .. (176.87,195.84) -- cycle ;

\draw  [color={rgb, 255:red, 255; green, 255; blue, 255 }  ,draw opacity=1 ][fill={rgb, 255:red, 155; green, 155; blue, 155 }  ,fill opacity=0.5 ] (236.54,214.49) .. controls (236.54,190.53) and (255.96,171.11) .. (279.92,171.11) .. controls (303.88,171.11) and (323.31,190.53) .. (323.31,214.49) .. controls (323.31,238.46) and (303.88,257.88) .. (279.92,257.88) .. controls (255.96,257.88) and (236.54,238.46) .. (236.54,214.49) -- cycle ;
\draw    (282.09,197.01) -- (302.27,231.98) ;
\draw    (277.76,197.01) -- (257.57,231.98) ;

\draw  [color={rgb, 255:red, 255; green, 255; blue, 255 }  ,draw opacity=1 ][fill={rgb, 255:red, 255; green, 255; blue, 255 }  ,fill opacity=1 ] (276.09,196.25) .. controls (276.62,195.33) and (277.8,195.01) .. (278.72,195.55) .. controls (279.65,196.08) and (279.96,197.26) .. (279.43,198.18) .. controls (278.9,199.1) and (277.72,199.42) .. (276.8,198.89) .. controls (275.87,198.35) and (275.56,197.18) .. (276.09,196.25) -- cycle ;
\draw  [color={rgb, 255:red, 255; green, 255; blue, 255 }  ,draw opacity=1 ][fill={rgb, 255:red, 255; green, 255; blue, 255 }  ,fill opacity=1 ] (266,213.74) .. controls (266.53,212.81) and (267.71,212.5) .. (268.63,213.03) .. controls (269.55,213.56) and (269.87,214.74) .. (269.34,215.66) .. controls (268.8,216.59) and (267.62,216.9) .. (266.7,216.37) .. controls (265.78,215.84) and (265.46,214.66) .. (266,213.74) -- cycle ;
\draw  [color={rgb, 255:red, 255; green, 255; blue, 255 }  ,draw opacity=1 ][fill={rgb, 255:red, 255; green, 255; blue, 255 }  ,fill opacity=1 ] (255.9,231.22) .. controls (256.43,230.3) and (257.61,229.98) .. (258.54,230.51) .. controls (259.46,231.05) and (259.77,232.22) .. (259.24,233.15) .. controls (258.71,234.07) and (257.53,234.39) .. (256.61,233.85) .. controls (255.69,233.32) and (255.37,232.14) .. (255.9,231.22) -- cycle ;
\draw  [color={rgb, 255:red, 0; green, 0; blue, 0 }  ,draw opacity=1 ][fill={rgb, 255:red, 255; green, 255; blue, 255 }  ,fill opacity=1 ] (303.94,231.01) .. controls (304.48,231.94) and (304.16,233.11) .. (303.24,233.65) .. controls (302.32,234.18) and (301.14,233.86) .. (300.6,232.94) .. controls (300.07,232.02) and (300.39,230.84) .. (301.31,230.31) .. controls (302.23,229.77) and (303.41,230.09) .. (303.94,231.01) -- cycle ;
\draw  [color={rgb, 255:red, 0; green, 0; blue, 0 }  ,draw opacity=1 ][fill={rgb, 255:red, 255; green, 255; blue, 255 }  ,fill opacity=1 ] (293.85,213.32) .. controls (294.38,214.25) and (294.07,215.43) .. (293.14,215.96) .. controls (292.22,216.49) and (291.04,216.17) .. (290.51,215.25) .. controls (289.98,214.33) and (290.29,213.15) .. (291.22,212.62) .. controls (292.14,212.09) and (293.32,212.4) .. (293.85,213.32) -- cycle ;
\draw  [color={rgb, 255:red, 0; green, 0; blue, 0 }  ,draw opacity=1 ][fill={rgb, 255:red, 255; green, 255; blue, 255 }  ,fill opacity=1 ] (283.76,195.84) .. controls (284.29,196.76) and (283.97,197.94) .. (283.05,198.47) .. controls (282.13,199.01) and (280.95,198.69) .. (280.42,197.77) .. controls (279.88,196.85) and (280.2,195.67) .. (281.12,195.14) .. controls (282.04,194.6) and (283.22,194.92) .. (283.76,195.84) -- cycle ;

\draw    (216.42,214.49) -- (236.54,214.49) ;
\draw  [dash pattern={on 4.5pt off 4.5pt}] (114.22,184.55) .. controls (114.22,173.53) and (123.16,164.59) .. (134.19,164.59) -- (317.26,164.59) .. controls (328.29,164.59) and (337.22,173.53) .. (337.22,184.55) -- (337.22,244.44) .. controls (337.22,255.46) and (328.29,264.4) .. (317.26,264.4) -- (134.19,264.4) .. controls (123.16,264.4) and (114.22,255.46) .. (114.22,244.44) -- cycle ;
\draw  [color={rgb, 255:red, 255; green, 255; blue, 255 }  ,draw opacity=1 ][fill={rgb, 255:red, 155; green, 155; blue, 155 }  ,fill opacity=0.5 ] (407.94,214.49) .. controls (407.94,190.53) and (427.36,171.11) .. (451.32,171.11) .. controls (475.28,171.11) and (494.71,190.53) .. (494.71,214.49) .. controls (494.71,238.46) and (475.28,257.88) .. (451.32,257.88) .. controls (427.36,257.88) and (407.94,238.46) .. (407.94,214.49) -- cycle ;
\draw    (453.49,197.01) -- (473.67,231.98) ;
\draw    (449.16,197.01) -- (428.97,231.98) ;

\draw  [color={rgb, 255:red, 0; green, 0; blue, 0 }  ,draw opacity=1 ][fill={rgb, 255:red, 0; green, 0; blue, 0 }  ,fill opacity=1 ] (447.49,196.25) .. controls (448.02,195.33) and (449.2,195.01) .. (450.12,195.55) .. controls (451.05,196.08) and (451.36,197.26) .. (450.83,198.18) .. controls (450.3,199.1) and (449.12,199.42) .. (448.2,198.89) .. controls (447.27,198.35) and (446.96,197.18) .. (447.49,196.25) -- cycle ;
\draw  [color={rgb, 255:red, 0; green, 0; blue, 0 }  ,draw opacity=1 ][fill={rgb, 255:red, 0; green, 0; blue, 0 }  ,fill opacity=1 ] (437.4,213.74) .. controls (437.93,212.81) and (439.11,212.5) .. (440.03,213.03) .. controls (440.95,213.56) and (441.27,214.74) .. (440.74,215.66) .. controls (440.2,216.59) and (439.02,216.9) .. (438.1,216.37) .. controls (437.18,215.84) and (436.86,214.66) .. (437.4,213.74) -- cycle ;
\draw  [color={rgb, 255:red, 0; green, 0; blue, 0 }  ,draw opacity=1 ][fill={rgb, 255:red, 0; green, 0; blue, 0 }  ,fill opacity=1 ] (427.3,231.22) .. controls (427.83,230.3) and (429.01,229.98) .. (429.94,230.51) .. controls (430.86,231.05) and (431.17,232.22) .. (430.64,233.15) .. controls (430.11,234.07) and (428.93,234.39) .. (428.01,233.85) .. controls (427.09,233.32) and (426.77,232.14) .. (427.3,231.22) -- cycle ;
\draw  [color={rgb, 255:red, 0; green, 0; blue, 0 }  ,draw opacity=1 ][fill={rgb, 255:red, 255; green, 255; blue, 255 }  ,fill opacity=1 ] (475.34,231.01) .. controls (475.88,231.94) and (475.56,233.11) .. (474.64,233.65) .. controls (473.72,234.18) and (472.54,233.86) .. (472,232.94) .. controls (471.47,232.02) and (471.79,230.84) .. (472.71,230.31) .. controls (473.63,229.77) and (474.81,230.09) .. (475.34,231.01) -- cycle ;
\draw  [color={rgb, 255:red, 0; green, 0; blue, 0 }  ,draw opacity=1 ][fill={rgb, 255:red, 255; green, 255; blue, 255 }  ,fill opacity=1 ] (465.25,213.32) .. controls (465.78,214.25) and (465.47,215.43) .. (464.54,215.96) .. controls (463.62,216.49) and (462.44,216.17) .. (461.91,215.25) .. controls (461.38,214.33) and (461.69,213.15) .. (462.62,212.62) .. controls (463.54,212.09) and (464.72,212.4) .. (465.25,213.32) -- cycle ;
\draw  [color={rgb, 255:red, 0; green, 0; blue, 0 }  ,draw opacity=1 ][fill={rgb, 255:red, 255; green, 255; blue, 255 }  ,fill opacity=1 ] (455.16,195.84) .. controls (455.69,196.76) and (455.37,197.94) .. (454.45,198.47) .. controls (453.53,199.01) and (452.35,198.69) .. (451.82,197.77) .. controls (451.28,196.85) and (451.6,195.67) .. (452.52,195.14) .. controls (453.44,194.6) and (454.62,194.92) .. (455.16,195.84) -- cycle ;

\draw (71.33,97.39) node [anchor=north west][inner sep=0.75pt]    {$( a)$};
\draw (71.33,207.39) node [anchor=north west][inner sep=0.75pt]    {$( b)$};

\end{tikzpicture}

%% file: figures/fig_many_parallel_traingles.tex
\tikzset{every picture/.style={line width=0.75pt}} 

\begin{tikzpicture}[x=0.75pt,y=0.75pt,yscale=-1,xscale=1]

\draw  [color={rgb, 255:red, 255; green, 255; blue, 255 }  ,draw opacity=1 ][fill={rgb, 255:red, 155; green, 155; blue, 155 }  ,fill opacity=0.5 ] (403.55,135.34) .. controls (403.55,100.16) and (432.08,71.63) .. (467.27,71.63) .. controls (502.46,71.63) and (530.98,100.16) .. (530.98,135.34) .. controls (530.98,170.53) and (502.46,199.06) .. (467.27,199.06) .. controls (432.08,199.06) and (403.55,170.53) .. (403.55,135.34) -- cycle ;
\draw  [color={rgb, 255:red, 0; green, 0; blue, 0 }  ,draw opacity=1 ][fill={rgb, 255:red, 0; green, 0; blue, 0 }  ,fill opacity=1 ] (461.26,102.87) .. controls (461.26,101.3) and (462.53,100.03) .. (464.09,100.03) .. controls (465.66,100.03) and (466.92,101.3) .. (466.92,102.87) .. controls (466.92,104.43) and (465.66,105.7) .. (464.09,105.7) .. controls (462.53,105.7) and (461.26,104.43) .. (461.26,102.87) -- cycle ;
\draw  [color={rgb, 255:red, 0; green, 0; blue, 0 }  ,draw opacity=1 ][fill={rgb, 255:red, 0; green, 0; blue, 0 }  ,fill opacity=1 ] (461.26,132.51) .. controls (461.26,130.95) and (462.53,129.68) .. (464.09,129.68) .. controls (465.66,129.68) and (466.92,130.95) .. (466.92,132.51) .. controls (466.92,134.08) and (465.66,135.34) .. (464.09,135.34) .. controls (462.53,135.34) and (461.26,134.08) .. (461.26,132.51) -- cycle ;
\draw  [color={rgb, 255:red, 0; green, 0; blue, 0 }  ,draw opacity=1 ][fill={rgb, 255:red, 0; green, 0; blue, 0 }  ,fill opacity=1 ] (461.26,162.16) .. controls (461.26,160.6) and (462.53,159.33) .. (464.09,159.33) .. controls (465.66,159.33) and (466.92,160.6) .. (466.92,162.16) .. controls (466.92,163.72) and (465.66,164.99) .. (464.09,164.99) .. controls (462.53,164.99) and (461.26,163.72) .. (461.26,162.16) -- cycle ;

\draw [color={rgb, 255:red, 0; green, 0; blue, 0 }  ,draw opacity=1 ][fill={rgb, 255:red, 0; green, 0; blue, 0 }  ,fill opacity=1 ]   (466.92,102.87) -- (466.92,162.16) ;
\draw  [color={rgb, 255:red, 0; green, 0; blue, 0 }  ,draw opacity=1 ][fill={rgb, 255:red, 0; green, 0; blue, 0 }  ,fill opacity=1 ] (467.61,102.87) .. controls (467.61,101.3) and (468.88,100.03) .. (470.44,100.03) .. controls (472.01,100.03) and (473.28,101.3) .. (473.28,102.87) .. controls (473.28,104.43) and (472.01,105.7) .. (470.44,105.7) .. controls (468.88,105.7) and (467.61,104.43) .. (467.61,102.87) -- cycle ;
\draw  [color={rgb, 255:red, 0; green, 0; blue, 0 }  ,draw opacity=1 ][fill={rgb, 255:red, 0; green, 0; blue, 0 }  ,fill opacity=1 ] (467.61,132.51) .. controls (467.61,130.95) and (468.88,129.68) .. (470.44,129.68) .. controls (472.01,129.68) and (473.28,130.95) .. (473.28,132.51) .. controls (473.28,134.08) and (472.01,135.34) .. (470.44,135.34) .. controls (468.88,135.34) and (467.61,134.08) .. (467.61,132.51) -- cycle ;
\draw  [color={rgb, 255:red, 0; green, 0; blue, 0 }  ,draw opacity=1 ][fill={rgb, 255:red, 0; green, 0; blue, 0 }  ,fill opacity=1 ] (467.61,162.16) .. controls (467.61,160.6) and (468.88,159.33) .. (470.44,159.33) .. controls (472.01,159.33) and (473.28,160.6) .. (473.28,162.16) .. controls (473.28,163.72) and (472.01,164.99) .. (470.44,164.99) .. controls (468.88,164.99) and (467.61,163.72) .. (467.61,162.16) -- cycle ;

\draw  [color={rgb, 255:red, 255; green, 255; blue, 255 }  ,draw opacity=1 ][fill={rgb, 255:red, 155; green, 155; blue, 155 }  ,fill opacity=0.5 ] (12.16,135.35) .. controls (12.16,100.16) and (40.68,71.63) .. (75.87,71.63) .. controls (111.06,71.63) and (139.59,100.16) .. (139.59,135.35) .. controls (139.59,170.54) and (111.06,199.06) .. (75.87,199.06) .. controls (40.68,199.06) and (12.16,170.54) .. (12.16,135.35) -- cycle ;
\draw  [color={rgb, 255:red, 0; green, 0; blue, 0 }  ,draw opacity=1 ][fill={rgb, 255:red, 0; green, 0; blue, 0 }  ,fill opacity=1 ] (73.04,98.13) .. controls (73.04,96.57) and (74.31,95.3) .. (75.87,95.3) .. controls (77.44,95.3) and (78.71,96.57) .. (78.71,98.13) .. controls (78.71,99.69) and (77.44,100.96) .. (75.87,100.96) .. controls (74.31,100.96) and (73.04,99.69) .. (73.04,98.13) -- cycle ;
\draw  [color={rgb, 255:red, 0; green, 0; blue, 0 }  ,draw opacity=1 ][fill={rgb, 255:red, 0; green, 0; blue, 0 }  ,fill opacity=1 ] (73.04,127.78) .. controls (73.04,126.21) and (74.31,124.95) .. (75.87,124.95) .. controls (77.44,124.95) and (78.71,126.21) .. (78.71,127.78) .. controls (78.71,129.34) and (77.44,130.61) .. (75.87,130.61) .. controls (74.31,130.61) and (73.04,129.34) .. (73.04,127.78) -- cycle ;
\draw  [color={rgb, 255:red, 0; green, 0; blue, 0 }  ,draw opacity=1 ][fill={rgb, 255:red, 0; green, 0; blue, 0 }  ,fill opacity=1 ] (73.04,157.43) .. controls (73.04,155.86) and (74.31,154.59) .. (75.87,154.59) .. controls (77.44,154.59) and (78.71,155.86) .. (78.71,157.43) .. controls (78.71,158.99) and (77.44,160.26) .. (75.87,160.26) .. controls (74.31,160.26) and (73.04,158.99) .. (73.04,157.43) -- cycle ;

\draw [color={rgb, 255:red, 0; green, 0; blue, 0 }  ,draw opacity=1 ][fill={rgb, 255:red, 0; green, 0; blue, 0 }  ,fill opacity=1 ]   (75.87,98.13) -- (75.87,157.43) ;

\draw  [color={rgb, 255:red, 255; green, 255; blue, 255 }  ,draw opacity=1 ][fill={rgb, 255:red, 155; green, 155; blue, 155 }  ,fill opacity=0.5 ] (176.1,135.34) .. controls (176.1,100.16) and (204.63,71.63) .. (239.82,71.63) .. controls (275.01,71.63) and (303.53,100.16) .. (303.53,135.34) .. controls (303.53,170.53) and (275.01,199.06) .. (239.82,199.06) .. controls (204.63,199.06) and (176.1,170.53) .. (176.1,135.34) -- cycle ;
\draw  [color={rgb, 255:red, 0; green, 0; blue, 0 }  ,draw opacity=1 ][fill={rgb, 255:red, 0; green, 0; blue, 0 }  ,fill opacity=1 ] (233.81,102.87) .. controls (233.81,101.3) and (235.08,100.03) .. (236.64,100.03) .. controls (238.2,100.03) and (239.47,101.3) .. (239.47,102.87) .. controls (239.47,104.43) and (238.2,105.7) .. (236.64,105.7) .. controls (235.08,105.7) and (233.81,104.43) .. (233.81,102.87) -- cycle ;
\draw  [color={rgb, 255:red, 0; green, 0; blue, 0 }  ,draw opacity=1 ][fill={rgb, 255:red, 0; green, 0; blue, 0 }  ,fill opacity=1 ] (233.81,132.51) .. controls (233.81,130.95) and (235.08,129.68) .. (236.64,129.68) .. controls (238.2,129.68) and (239.47,130.95) .. (239.47,132.51) .. controls (239.47,134.08) and (238.2,135.34) .. (236.64,135.34) .. controls (235.08,135.34) and (233.81,134.08) .. (233.81,132.51) -- cycle ;
\draw  [color={rgb, 255:red, 0; green, 0; blue, 0 }  ,draw opacity=1 ][fill={rgb, 255:red, 0; green, 0; blue, 0 }  ,fill opacity=1 ] (233.81,162.16) .. controls (233.81,160.6) and (235.08,159.33) .. (236.64,159.33) .. controls (238.2,159.33) and (239.47,160.6) .. (239.47,162.16) .. controls (239.47,163.72) and (238.2,164.99) .. (236.64,164.99) .. controls (235.08,164.99) and (233.81,163.72) .. (233.81,162.16) -- cycle ;

\draw [color={rgb, 255:red, 0; green, 0; blue, 0 }  ,draw opacity=1 ][fill={rgb, 255:red, 0; green, 0; blue, 0 }  ,fill opacity=1 ]   (239.47,102.87) -- (239.47,162.16) ;
\draw  [color={rgb, 255:red, 0; green, 0; blue, 0 }  ,draw opacity=1 ][fill={rgb, 255:red, 0; green, 0; blue, 0 }  ,fill opacity=1 ] (240.16,102.87) .. controls (240.16,101.3) and (241.43,100.03) .. (242.99,100.03) .. controls (244.56,100.03) and (245.82,101.3) .. (245.82,102.87) .. controls (245.82,104.43) and (244.56,105.7) .. (242.99,105.7) .. controls (241.43,105.7) and (240.16,104.43) .. (240.16,102.87) -- cycle ;
\draw  [color={rgb, 255:red, 0; green, 0; blue, 0 }  ,draw opacity=1 ][fill={rgb, 255:red, 0; green, 0; blue, 0 }  ,fill opacity=1 ] (240.16,132.51) .. controls (240.16,130.95) and (241.43,129.68) .. (242.99,129.68) .. controls (244.56,129.68) and (245.82,130.95) .. (245.82,132.51) .. controls (245.82,134.08) and (244.56,135.34) .. (242.99,135.34) .. controls (241.43,135.34) and (240.16,134.08) .. (240.16,132.51) -- cycle ;
\draw  [color={rgb, 255:red, 0; green, 0; blue, 0 }  ,draw opacity=1 ][fill={rgb, 255:red, 0; green, 0; blue, 0 }  ,fill opacity=1 ] (240.16,162.16) .. controls (240.16,160.6) and (241.43,159.33) .. (242.99,159.33) .. controls (244.56,159.33) and (245.82,160.6) .. (245.82,162.16) .. controls (245.82,163.72) and (244.56,164.99) .. (242.99,164.99) .. controls (241.43,164.99) and (240.16,163.72) .. (240.16,162.16) -- cycle ;

\draw  [color={rgb, 255:red, 255; green, 255; blue, 255 }  ,draw opacity=1 ][fill={rgb, 255:red, 155; green, 155; blue, 155 }  ,fill opacity=0.5 ] (567.51,135.35) .. controls (567.51,100.16) and (596.04,71.63) .. (631.23,71.63) .. controls (666.42,71.63) and (694.95,100.16) .. (694.95,135.35) .. controls (694.95,170.54) and (666.42,199.06) .. (631.23,199.06) .. controls (596.04,199.06) and (567.51,170.54) .. (567.51,135.35) -- cycle ;
\draw  [color={rgb, 255:red, 0; green, 0; blue, 0 }  ,draw opacity=1 ][fill={rgb, 255:red, 0; green, 0; blue, 0 }  ,fill opacity=1 ] (628.4,98.13) .. controls (628.4,96.57) and (629.67,95.3) .. (631.23,95.3) .. controls (632.79,95.3) and (634.06,96.57) .. (634.06,98.13) .. controls (634.06,99.69) and (632.79,100.96) .. (631.23,100.96) .. controls (629.67,100.96) and (628.4,99.69) .. (628.4,98.13) -- cycle ;
\draw  [color={rgb, 255:red, 0; green, 0; blue, 0 }  ,draw opacity=1 ][fill={rgb, 255:red, 0; green, 0; blue, 0 }  ,fill opacity=1 ] (628.4,127.78) .. controls (628.4,126.21) and (629.67,124.95) .. (631.23,124.95) .. controls (632.79,124.95) and (634.06,126.21) .. (634.06,127.78) .. controls (634.06,129.34) and (632.79,130.61) .. (631.23,130.61) .. controls (629.67,130.61) and (628.4,129.34) .. (628.4,127.78) -- cycle ;
\draw  [color={rgb, 255:red, 0; green, 0; blue, 0 }  ,draw opacity=1 ][fill={rgb, 255:red, 0; green, 0; blue, 0 }  ,fill opacity=1 ] (628.4,157.43) .. controls (628.4,155.86) and (629.67,154.59) .. (631.23,154.59) .. controls (632.79,154.59) and (634.06,155.86) .. (634.06,157.43) .. controls (634.06,158.99) and (632.79,160.26) .. (631.23,160.26) .. controls (629.67,160.26) and (628.4,158.99) .. (628.4,157.43) -- cycle ;

\draw [color={rgb, 255:red, 0; green, 0; blue, 0 }  ,draw opacity=1 ][fill={rgb, 255:red, 0; green, 0; blue, 0 }  ,fill opacity=1 ]   (631.23,98.13) -- (631.23,157.43) ;

\draw    (139.59,135.35) -- (176.1,135.34) ;
\draw    (530.98,135.34) -- (567.49,135.34) ;
\draw    (378.33,135.34) -- (403.55,135.34) ;
\draw    (303.53,135.34) -- (328.75,135.34) ;

\draw (57.37,91.38) node [anchor=north west][inner sep=0.75pt]    {$t_{1}$};
\draw (66.37,161.16) node [anchor=north west][inner sep=0.75pt]    {$T_{1}$};
\draw (66.37,210.36) node [anchor=north west][inner sep=0.75pt]    {$P_{0}$};
\draw (230.32,210.37) node [anchor=north west][inner sep=0.75pt]    {$P_{1}$};
\draw (243.88,165.18) node [anchor=north west][inner sep=0.75pt]    {$T_{2}$};
\draw (216.75,165.18) node [anchor=north west][inner sep=0.75pt]    {$T_{1}$};
\draw (249.14,94.22) node [anchor=north west][inner sep=0.75pt]    {$t_{2}$};
\draw (216.49,94.22) node [anchor=north west][inner sep=0.75pt]    {$t_{1}$};
\draw (471.33,165.18) node [anchor=north west][inner sep=0.75pt]    {$T_{l}$};
\draw (444.2,165.18) node [anchor=north west][inner sep=0.75pt]    {$T_{l-1}$};
\draw (621.73,210.36) node [anchor=north west][inner sep=0.75pt]    {$P_{l}$};
\draw (621.73,161.16) node [anchor=north west][inner sep=0.75pt]    {$T_{l}$};
\draw (612.72,91.38) node [anchor=north west][inner sep=0.75pt]    {$t_{l}$};
\draw (437.28,95.55) node [anchor=north west][inner sep=0.75pt]    {$t_{l-1}$};
\draw (477.26,95.55) node [anchor=north west][inner sep=0.75pt]    {$t_{l}$};
\draw (457.77,210.37) node [anchor=north west][inner sep=0.75pt]    {$P_{l-1}$};
\draw (341.04,143.2) node [anchor=north west][inner sep=0.75pt]    {$\dotsc $};

\end{tikzpicture}

%% file: figures/fig_many_intersecting_triangles.tex
\tikzset{every picture/.style={line width=0.75pt}} 

\begin{tikzpicture}[x=0.75pt,y=0.75pt,yscale=-1,xscale=1]

\draw  [color={rgb, 255:red, 255; green, 255; blue, 255 }  ,draw opacity=1 ][fill={rgb, 255:red, 155; green, 155; blue, 155 }  ,fill opacity=0.5 ] (133.65,122.01) .. controls (133.65,98.05) and (153.08,78.63) .. (177.04,78.63) .. controls (201,78.63) and (220.43,98.05) .. (220.43,122.01) .. controls (220.43,145.97) and (201,165.4) .. (177.04,165.4) .. controls (153.08,165.4) and (133.65,145.97) .. (133.65,122.01) -- cycle ;
\draw [color={rgb, 255:red, 0; green, 0; blue, 0 }  ,draw opacity=1 ][fill={rgb, 255:red, 0; green, 0; blue, 0 }  ,fill opacity=1 ]   (179.2,104.53) -- (199.39,139.49) ;
\draw [color={rgb, 255:red, 0; green, 0; blue, 0 }  ,draw opacity=1 ][fill={rgb, 255:red, 0; green, 0; blue, 0 }  ,fill opacity=1 ]   (174.88,104.53) -- (154.69,139.49) ;

\draw  [color={rgb, 255:red, 0; green, 0; blue, 0 }  ,draw opacity=1 ][fill={rgb, 255:red, 0; green, 0; blue, 0 }  ,fill opacity=1 ] (173.21,103.77) .. controls (173.74,102.85) and (174.92,102.53) .. (175.84,103.06) .. controls (176.76,103.6) and (177.08,104.78) .. (176.55,105.7) .. controls (176.01,106.62) and (174.83,106.94) .. (173.91,106.4) .. controls (172.99,105.87) and (172.67,104.69) .. (173.21,103.77) -- cycle ;
\draw  [color={rgb, 255:red, 0; green, 0; blue, 0 }  ,draw opacity=1 ][fill={rgb, 255:red, 0; green, 0; blue, 0 }  ,fill opacity=1 ] (163.11,121.25) .. controls (163.65,120.33) and (164.82,120.01) .. (165.75,120.55) .. controls (166.67,121.08) and (166.99,122.26) .. (166.45,123.18) .. controls (165.92,124.1) and (164.74,124.42) .. (163.82,123.89) .. controls (162.9,123.35) and (162.58,122.18) .. (163.11,121.25) -- cycle ;
\draw  [color={rgb, 255:red, 0; green, 0; blue, 0 }  ,draw opacity=1 ][fill={rgb, 255:red, 0; green, 0; blue, 0 }  ,fill opacity=1 ] (153.02,138.74) .. controls (153.55,137.81) and (154.73,137.5) .. (155.65,138.03) .. controls (156.58,138.56) and (156.89,139.74) .. (156.36,140.66) .. controls (155.83,141.59) and (154.65,141.9) .. (153.72,141.37) .. controls (152.8,140.84) and (152.49,139.66) .. (153.02,138.74) -- cycle ;

\draw  [color={rgb, 255:red, 0; green, 0; blue, 0 }  ,draw opacity=1 ][fill={rgb, 255:red, 0; green, 0; blue, 0 }  ,fill opacity=1 ] (201.06,138.32) .. controls (201.59,139.25) and (201.28,140.43) .. (200.35,140.96) .. controls (199.43,141.49) and (198.25,141.17) .. (197.72,140.25) .. controls (197.19,139.33) and (197.5,138.15) .. (198.43,137.62) .. controls (199.35,137.09) and (200.53,137.4) .. (201.06,138.32) -- cycle ;
\draw  [color={rgb, 255:red, 0; green, 0; blue, 0 }  ,draw opacity=1 ][fill={rgb, 255:red, 0; green, 0; blue, 0 }  ,fill opacity=1 ] (190.97,120.84) .. controls (191.5,121.76) and (191.18,122.94) .. (190.26,123.47) .. controls (189.34,124.01) and (188.16,123.69) .. (187.63,122.77) .. controls (187.09,121.85) and (187.41,120.67) .. (188.33,120.14) .. controls (189.25,119.6) and (190.43,119.92) .. (190.97,120.84) -- cycle ;
\draw  [color={rgb, 255:red, 0; green, 0; blue, 0 }  ,draw opacity=1 ][fill={rgb, 255:red, 0; green, 0; blue, 0 }  ,fill opacity=1 ] (180.87,103.36) .. controls (181.4,104.28) and (181.09,105.46) .. (180.17,105.99) .. controls (179.24,106.52) and (178.07,106.21) .. (177.53,105.29) .. controls (177,104.36) and (177.32,103.18) .. (178.24,102.65) .. controls (179.16,102.12) and (180.34,102.44) .. (180.87,103.36) -- cycle ;

\draw  [color={rgb, 255:red, 255; green, 255; blue, 255 }  ,draw opacity=1 ][fill={rgb, 255:red, 155; green, 155; blue, 155 }  ,fill opacity=0.5 ] (116.72,122.01) .. controls (116.72,98.53) and (97.69,79.49) .. (74.21,79.49) .. controls (50.72,79.49) and (31.69,98.53) .. (31.69,122.01) .. controls (31.69,145.49) and (50.72,164.53) .. (74.21,164.53) .. controls (97.69,164.53) and (116.72,145.49) .. (116.72,122.01) -- cycle ;
\draw [fill={rgb, 255:red, 0; green, 0; blue, 0 }  ,fill opacity=1 ]   (74.21,141.79) -- (74.21,102.23) ;
\draw  [fill={rgb, 255:red, 0; green, 0; blue, 0 }  ,fill opacity=1 ] (76.1,141.8) .. controls (76.1,142.84) and (75.25,143.68) .. (74.21,143.68) .. controls (73.16,143.68) and (72.32,142.84) .. (72.32,141.8) .. controls (72.32,140.75) and (73.16,139.91) .. (74.21,139.91) .. controls (75.25,139.91) and (76.1,140.75) .. (76.1,141.8) -- cycle ;
\draw  [fill={rgb, 255:red, 0; green, 0; blue, 0 }  ,fill opacity=1 ] (76.1,122.01) .. controls (76.1,123.05) and (75.25,123.9) .. (74.21,123.9) .. controls (73.16,123.9) and (72.32,123.05) .. (72.32,122.01) .. controls (72.32,120.97) and (73.16,120.12) .. (74.21,120.12) .. controls (75.25,120.12) and (76.1,120.97) .. (76.1,122.01) -- cycle ;
\draw  [fill={rgb, 255:red, 0; green, 0; blue, 0 }  ,fill opacity=1 ] (76.1,102.23) .. controls (76.1,103.27) and (75.25,104.12) .. (74.21,104.12) .. controls (73.16,104.12) and (72.32,103.27) .. (72.32,102.23) .. controls (72.32,101.18) and (73.16,100.34) .. (74.21,100.34) .. controls (75.25,100.34) and (76.1,101.18) .. (76.1,102.23) -- cycle ;

\draw  [color={rgb, 255:red, 255; green, 255; blue, 255 }  ,draw opacity=1 ][fill={rgb, 255:red, 155; green, 155; blue, 155 }  ,fill opacity=0.5 ] (237.36,122.01) .. controls (237.36,98.05) and (256.78,78.63) .. (280.74,78.63) .. controls (304.7,78.63) and (324.13,98.05) .. (324.13,122.01) .. controls (324.13,145.97) and (304.7,165.4) .. (280.74,165.4) .. controls (256.78,165.4) and (237.36,145.97) .. (237.36,122.01) -- cycle ;
\draw [color={rgb, 255:red, 0; green, 0; blue, 0 }  ,draw opacity=1 ][fill={rgb, 255:red, 0; green, 0; blue, 0 }  ,fill opacity=1 ]   (282.9,104.53) -- (303.09,139.49) ;
\draw [color={rgb, 255:red, 0; green, 0; blue, 0 }  ,draw opacity=1 ][fill={rgb, 255:red, 0; green, 0; blue, 0 }  ,fill opacity=1 ]   (278.58,104.53) -- (258.39,139.49) ;

\draw  [color={rgb, 255:red, 0; green, 0; blue, 0 }  ,draw opacity=1 ][fill={rgb, 255:red, 0; green, 0; blue, 0 }  ,fill opacity=1 ] (276.91,103.77) .. controls (277.44,102.85) and (278.62,102.53) .. (279.54,103.06) .. controls (280.46,103.6) and (280.78,104.78) .. (280.25,105.7) .. controls (279.72,106.62) and (278.54,106.94) .. (277.61,106.4) .. controls (276.69,105.87) and (276.38,104.69) .. (276.91,103.77) -- cycle ;
\draw  [color={rgb, 255:red, 0; green, 0; blue, 0 }  ,draw opacity=1 ][fill={rgb, 255:red, 0; green, 0; blue, 0 }  ,fill opacity=1 ] (266.81,121.25) .. controls (267.35,120.33) and (268.53,120.01) .. (269.45,120.55) .. controls (270.37,121.08) and (270.69,122.26) .. (270.15,123.18) .. controls (269.62,124.1) and (268.44,124.42) .. (267.52,123.89) .. controls (266.6,123.35) and (266.28,122.18) .. (266.81,121.25) -- cycle ;
\draw  [color={rgb, 255:red, 0; green, 0; blue, 0 }  ,draw opacity=1 ][fill={rgb, 255:red, 0; green, 0; blue, 0 }  ,fill opacity=1 ] (256.72,138.74) .. controls (257.25,137.81) and (258.43,137.5) .. (259.35,138.03) .. controls (260.28,138.56) and (260.59,139.74) .. (260.06,140.66) .. controls (259.53,141.59) and (258.35,141.9) .. (257.43,141.37) .. controls (256.5,140.84) and (256.19,139.66) .. (256.72,138.74) -- cycle ;

\draw  [color={rgb, 255:red, 0; green, 0; blue, 0 }  ,draw opacity=1 ][fill={rgb, 255:red, 0; green, 0; blue, 0 }  ,fill opacity=1 ] (304.76,138.32) .. controls (305.29,139.25) and (304.98,140.43) .. (304.06,140.96) .. controls (303.13,141.49) and (301.95,141.17) .. (301.42,140.25) .. controls (300.89,139.33) and (301.21,138.15) .. (302.13,137.62) .. controls (303.05,137.09) and (304.23,137.4) .. (304.76,138.32) -- cycle ;
\draw  [color={rgb, 255:red, 0; green, 0; blue, 0 }  ,draw opacity=1 ][fill={rgb, 255:red, 0; green, 0; blue, 0 }  ,fill opacity=1 ] (294.67,120.84) .. controls (295.2,121.76) and (294.88,122.94) .. (293.96,123.47) .. controls (293.04,124.01) and (291.86,123.69) .. (291.33,122.77) .. controls (290.8,121.85) and (291.11,120.67) .. (292.03,120.14) .. controls (292.96,119.6) and (294.14,119.92) .. (294.67,120.84) -- cycle ;
\draw  [color={rgb, 255:red, 0; green, 0; blue, 0 }  ,draw opacity=1 ][fill={rgb, 255:red, 0; green, 0; blue, 0 }  ,fill opacity=1 ] (284.57,103.36) .. controls (285.11,104.28) and (284.79,105.46) .. (283.87,105.99) .. controls (282.95,106.52) and (281.77,106.21) .. (281.23,105.29) .. controls (280.7,104.36) and (281.02,103.18) .. (281.94,102.65) .. controls (282.86,102.12) and (284.04,102.44) .. (284.57,103.36) -- cycle ;

\draw  [color={rgb, 255:red, 255; green, 255; blue, 255 }  ,draw opacity=1 ][fill={rgb, 255:red, 155; green, 155; blue, 155 }  ,fill opacity=0.5 ] (384.99,122.01) .. controls (384.99,98.05) and (404.41,78.63) .. (428.37,78.63) .. controls (452.33,78.63) and (471.76,98.05) .. (471.76,122.01) .. controls (471.76,145.97) and (452.33,165.4) .. (428.37,165.4) .. controls (404.41,165.4) and (384.99,145.97) .. (384.99,122.01) -- cycle ;
\draw [color={rgb, 255:red, 0; green, 0; blue, 0 }  ,draw opacity=1 ][fill={rgb, 255:red, 0; green, 0; blue, 0 }  ,fill opacity=1 ]   (430.54,104.53) -- (450.72,139.49) ;
\draw [color={rgb, 255:red, 0; green, 0; blue, 0 }  ,draw opacity=1 ][fill={rgb, 255:red, 0; green, 0; blue, 0 }  ,fill opacity=1 ]   (426.21,104.53) -- (406.02,139.49) ;

\draw  [color={rgb, 255:red, 0; green, 0; blue, 0 }  ,draw opacity=1 ][fill={rgb, 255:red, 0; green, 0; blue, 0 }  ,fill opacity=1 ] (424.54,103.77) .. controls (425.07,102.85) and (426.25,102.53) .. (427.17,103.06) .. controls (428.1,103.6) and (428.41,104.78) .. (427.88,105.7) .. controls (427.35,106.62) and (426.17,106.94) .. (425.25,106.4) .. controls (424.32,105.87) and (424.01,104.69) .. (424.54,103.77) -- cycle ;
\draw  [color={rgb, 255:red, 0; green, 0; blue, 0 }  ,draw opacity=1 ][fill={rgb, 255:red, 0; green, 0; blue, 0 }  ,fill opacity=1 ] (414.45,121.25) .. controls (414.98,120.33) and (416.16,120.01) .. (417.08,120.55) .. controls (418,121.08) and (418.32,122.26) .. (417.79,123.18) .. controls (417.25,124.1) and (416.07,124.42) .. (415.15,123.89) .. controls (414.23,123.35) and (413.91,122.18) .. (414.45,121.25) -- cycle ;
\draw  [color={rgb, 255:red, 0; green, 0; blue, 0 }  ,draw opacity=1 ][fill={rgb, 255:red, 0; green, 0; blue, 0 }  ,fill opacity=1 ] (404.35,138.74) .. controls (404.88,137.81) and (406.06,137.5) .. (406.99,138.03) .. controls (407.91,138.56) and (408.22,139.74) .. (407.69,140.66) .. controls (407.16,141.59) and (405.98,141.9) .. (405.06,141.37) .. controls (404.14,140.84) and (403.82,139.66) .. (404.35,138.74) -- cycle ;

\draw  [color={rgb, 255:red, 0; green, 0; blue, 0 }  ,draw opacity=1 ][fill={rgb, 255:red, 0; green, 0; blue, 0 }  ,fill opacity=1 ] (452.39,138.32) .. controls (452.93,139.25) and (452.61,140.43) .. (451.69,140.96) .. controls (450.76,141.49) and (449.59,141.17) .. (449.05,140.25) .. controls (448.52,139.33) and (448.84,138.15) .. (449.76,137.62) .. controls (450.68,137.09) and (451.86,137.4) .. (452.39,138.32) -- cycle ;
\draw  [color={rgb, 255:red, 0; green, 0; blue, 0 }  ,draw opacity=1 ][fill={rgb, 255:red, 0; green, 0; blue, 0 }  ,fill opacity=1 ] (442.3,120.84) .. controls (442.83,121.76) and (442.52,122.94) .. (441.59,123.47) .. controls (440.67,124.01) and (439.49,123.69) .. (438.96,122.77) .. controls (438.43,121.85) and (438.74,120.67) .. (439.67,120.14) .. controls (440.59,119.6) and (441.77,119.92) .. (442.3,120.84) -- cycle ;
\draw  [color={rgb, 255:red, 0; green, 0; blue, 0 }  ,draw opacity=1 ][fill={rgb, 255:red, 0; green, 0; blue, 0 }  ,fill opacity=1 ] (432.21,103.36) .. controls (432.74,104.28) and (432.42,105.46) .. (431.5,105.99) .. controls (430.58,106.52) and (429.4,106.21) .. (428.87,105.29) .. controls (428.33,104.36) and (428.65,103.18) .. (429.57,102.65) .. controls (430.49,102.12) and (431.67,102.44) .. (432.21,103.36) -- cycle ;

\draw  [color={rgb, 255:red, 255; green, 255; blue, 255 }  ,draw opacity=1 ][fill={rgb, 255:red, 155; green, 155; blue, 155 }  ,fill opacity=0.5 ] (573.72,122.01) .. controls (573.72,98.53) and (554.69,79.49) .. (531.21,79.49) .. controls (507.72,79.49) and (488.69,98.53) .. (488.69,122.01) .. controls (488.69,145.49) and (507.72,164.53) .. (531.21,164.53) .. controls (554.69,164.53) and (573.72,145.49) .. (573.72,122.01) -- cycle ;
\draw [fill={rgb, 255:red, 0; green, 0; blue, 0 }  ,fill opacity=1 ]   (531.21,141.79) -- (531.21,102.23) ;
\draw  [fill={rgb, 255:red, 0; green, 0; blue, 0 }  ,fill opacity=1 ] (533.1,141.8) .. controls (533.1,142.84) and (532.25,143.68) .. (531.21,143.68) .. controls (530.16,143.68) and (529.32,142.84) .. (529.32,141.8) .. controls (529.32,140.75) and (530.16,139.91) .. (531.21,139.91) .. controls (532.25,139.91) and (533.1,140.75) .. (533.1,141.8) -- cycle ;
\draw  [fill={rgb, 255:red, 0; green, 0; blue, 0 }  ,fill opacity=1 ] (533.1,122.01) .. controls (533.1,123.05) and (532.25,123.9) .. (531.21,123.9) .. controls (530.16,123.9) and (529.32,123.05) .. (529.32,122.01) .. controls (529.32,120.97) and (530.16,120.12) .. (531.21,120.12) .. controls (532.25,120.12) and (533.1,120.97) .. (533.1,122.01) -- cycle ;
\draw  [fill={rgb, 255:red, 0; green, 0; blue, 0 }  ,fill opacity=1 ] (533.1,102.23) .. controls (533.1,103.27) and (532.25,104.12) .. (531.21,104.12) .. controls (530.16,104.12) and (529.32,103.27) .. (529.32,102.23) .. controls (529.32,101.18) and (530.16,100.34) .. (531.21,100.34) .. controls (532.25,100.34) and (533.1,101.18) .. (533.1,102.23) -- cycle ;

\draw    (116.72,122.01) -- (133.65,122.01) ;
\draw    (220.43,122.01) -- (237.36,122.01) ;
\draw    (324.13,122.01) -- (341.06,122.01) ;
\draw    (368.06,122.01) -- (384.99,122.01) ;
\draw    (471.76,122.01) -- (488.69,122.01) ;

\draw (342.06,114.91) node [anchor=north west][inner sep=0.75pt]    {$\dotsc $};

\end{tikzpicture}

%% file: figures/fig_long_path.tex
\tikzset{every picture/.style={line width=0.75pt}} 

\begin{tikzpicture}[x=0.75pt,y=0.75pt,yscale=-1,xscale=1]

\draw  [color={rgb, 255:red, 255; green, 255; blue, 255 }  ,draw opacity=1 ][fill={rgb, 255:red, 155; green, 155; blue, 155 }  ,fill opacity=0.5 ] (22.47,159.01) .. controls (22.47,142.34) and (35.99,128.82) .. (52.66,128.82) .. controls (69.32,128.82) and (82.84,142.34) .. (82.84,159.01) .. controls (82.84,175.67) and (69.32,189.19) .. (52.66,189.19) .. controls (35.99,189.19) and (22.47,175.67) .. (22.47,159.01) -- cycle ;

\draw  [color={rgb, 255:red, 255; green, 255; blue, 255 }  ,draw opacity=1 ][fill={rgb, 255:red, 155; green, 155; blue, 155 }  ,fill opacity=0.5 ] (112.39,159.01) .. controls (112.39,142.34) and (125.9,128.82) .. (142.57,128.82) .. controls (159.24,128.82) and (172.75,142.34) .. (172.75,159.01) .. controls (172.75,175.67) and (159.24,189.19) .. (142.57,189.19) .. controls (125.9,189.19) and (112.39,175.67) .. (112.39,159.01) -- cycle ;

\draw  [color={rgb, 255:red, 255; green, 255; blue, 255 }  ,draw opacity=1 ][fill={rgb, 255:red, 155; green, 155; blue, 155 }  ,fill opacity=0.5 ] (292.22,159.01) .. controls (292.22,142.34) and (305.73,128.82) .. (322.4,128.82) .. controls (339.07,128.82) and (352.58,142.34) .. (352.58,159.01) .. controls (352.58,175.67) and (339.07,189.19) .. (322.4,189.19) .. controls (305.73,189.19) and (292.22,175.67) .. (292.22,159.01) -- cycle ;

\draw  [color={rgb, 255:red, 255; green, 255; blue, 255 }  ,draw opacity=1 ][fill={rgb, 255:red, 155; green, 155; blue, 155 }  ,fill opacity=0.5 ] (202.3,159.01) .. controls (202.3,142.34) and (215.81,128.82) .. (232.48,128.82) .. controls (249.15,128.82) and (262.67,142.34) .. (262.67,159.01) .. controls (262.67,175.67) and (249.15,189.19) .. (232.48,189.19) .. controls (215.81,189.19) and (202.3,175.67) .. (202.3,159.01) -- cycle ;

\draw  [color={rgb, 255:red, 255; green, 255; blue, 255 }  ,draw opacity=1 ][fill={rgb, 255:red, 155; green, 155; blue, 155 }  ,fill opacity=0.5 ] (439.68,159.01) .. controls (439.68,142.34) and (453.19,128.82) .. (469.86,128.82) .. controls (486.53,128.82) and (500.04,142.34) .. (500.04,159.01) .. controls (500.04,175.67) and (486.53,189.19) .. (469.86,189.19) .. controls (453.19,189.19) and (439.68,175.67) .. (439.68,159.01) -- cycle ;

\draw  [color={rgb, 255:red, 255; green, 255; blue, 255 }  ,draw opacity=1 ][fill={rgb, 255:red, 155; green, 155; blue, 155 }  ,fill opacity=0.5 ] (529.59,159.01) .. controls (529.59,142.34) and (543.11,128.82) .. (559.77,128.82) .. controls (576.44,128.82) and (589.96,142.34) .. (589.96,159.01) .. controls (589.96,175.67) and (576.44,189.19) .. (559.77,189.19) .. controls (543.11,189.19) and (529.59,175.67) .. (529.59,159.01) -- cycle ;

\draw  [color={rgb, 255:red, 255; green, 255; blue, 255 }  ,draw opacity=1 ][fill={rgb, 255:red, 155; green, 155; blue, 155 }  ,fill opacity=0.5 ] (188.47,223.67) .. controls (188.47,212.48) and (197.55,203.41) .. (208.74,203.41) .. controls (219.93,203.41) and (229,212.48) .. (229,223.67) .. controls (229,234.87) and (219.93,243.94) .. (208.74,243.94) .. controls (197.55,243.94) and (188.47,234.87) .. (188.47,223.67) -- cycle ;
\draw  [color={rgb, 255:red, 255; green, 255; blue, 255 }  ,draw opacity=1 ][fill={rgb, 255:red, 155; green, 155; blue, 155 }  ,fill opacity=0.5 ] (240.74,223.67) .. controls (240.74,212.48) and (249.81,203.41) .. (261,203.41) .. controls (272.19,203.41) and (281.26,212.48) .. (281.26,223.67) .. controls (281.26,234.87) and (272.19,243.94) .. (261,243.94) .. controls (249.81,243.94) and (240.74,234.87) .. (240.74,223.67) -- cycle ;
\draw  [color={rgb, 255:red, 255; green, 255; blue, 255 }  ,draw opacity=1 ][fill={rgb, 255:red, 155; green, 155; blue, 155 }  ,fill opacity=0.5 ] (188.47,278.59) .. controls (188.47,267.4) and (197.55,258.32) .. (208.74,258.32) .. controls (219.93,258.32) and (229,267.4) .. (229,278.59) .. controls (229,289.78) and (219.93,298.85) .. (208.74,298.85) .. controls (197.55,298.85) and (188.47,289.78) .. (188.47,278.59) -- cycle ;

\draw  [color={rgb, 255:red, 255; green, 255; blue, 255 }  ,draw opacity=1 ][fill={rgb, 255:red, 155; green, 155; blue, 155 }  ,fill opacity=0.5 ] (619.47,159.01) .. controls (619.47,142.34) and (632.99,128.82) .. (649.66,128.82) .. controls (666.32,128.82) and (679.84,142.34) .. (679.84,159.01) .. controls (679.84,175.67) and (666.32,189.19) .. (649.66,189.19) .. controls (632.99,189.19) and (619.47,175.67) .. (619.47,159.01) -- cycle ;

\draw    (112.39,159.01) -- (82.84,159.01) ;
\draw    (202.3,159.01) -- (172.75,159.01) ;
\draw    (292.22,159.01) -- (262.67,159.01) ;
\draw    (382.13,159.01) -- (352.58,159.01) ;
\draw    (439.68,159.01) -- (410.13,159.01) ;
\draw    (529.59,159.01) -- (500.04,159.01) ;
\draw    (619.51,159.01) -- (589.96,159.01) ;
\draw    (559.77,128.82) -- (559.77,119.05) ;
\draw  [color={rgb, 255:red, 255; green, 255; blue, 255 }  ,draw opacity=1 ][fill={rgb, 255:red, 155; green, 155; blue, 155 }  ,fill opacity=0.5 ] (302.13,97.83) .. controls (302.13,86.64) and (311.21,77.57) .. (322.4,77.57) .. controls (333.59,77.57) and (342.66,86.64) .. (342.66,97.83) .. controls (342.66,109.02) and (333.59,118.1) .. (322.4,118.1) .. controls (311.21,118.1) and (302.13,109.02) .. (302.13,97.83) -- cycle ;
\draw  [color={rgb, 255:red, 255; green, 255; blue, 255 }  ,draw opacity=1 ][fill={rgb, 255:red, 155; green, 155; blue, 155 }  ,fill opacity=0.5 ] (302.13,46.59) .. controls (302.13,35.4) and (311.21,26.32) .. (322.4,26.32) .. controls (333.59,26.32) and (342.66,35.4) .. (342.66,46.59) .. controls (342.66,57.78) and (333.59,66.85) .. (322.4,66.85) .. controls (311.21,66.85) and (302.13,57.78) .. (302.13,46.59) -- cycle ;
\draw  [color={rgb, 255:red, 255; green, 255; blue, 255 }  ,draw opacity=1 ][fill={rgb, 255:red, 155; green, 155; blue, 155 }  ,fill opacity=0.5 ] (449.6,98.42) .. controls (449.6,87.23) and (458.67,78.16) .. (469.86,78.16) .. controls (481.05,78.16) and (490.12,87.23) .. (490.12,98.42) .. controls (490.12,109.62) and (481.05,118.69) .. (469.86,118.69) .. controls (458.67,118.69) and (449.6,109.62) .. (449.6,98.42) -- cycle ;
\draw  [color={rgb, 255:red, 255; green, 255; blue, 255 }  ,draw opacity=1 ][fill={rgb, 255:red, 155; green, 155; blue, 155 }  ,fill opacity=0.5 ] (539.51,98.79) .. controls (539.51,87.6) and (548.58,78.53) .. (559.77,78.53) .. controls (570.97,78.53) and (580.04,87.6) .. (580.04,98.79) .. controls (580.04,109.98) and (570.97,119.05) .. (559.77,119.05) .. controls (548.58,119.05) and (539.51,109.98) .. (539.51,98.79) -- cycle ;
\draw  [dash pattern={on 4.5pt off 4.5pt}] (18.27,119.39) .. controls (18.27,111.79) and (24.43,105.64) .. (32.03,105.64) -- (73.29,105.64) .. controls (80.88,105.64) and (87.04,111.79) .. (87.04,119.39) -- (87.04,198) .. controls (87.04,205.6) and (80.88,211.75) .. (73.29,211.75) -- (32.03,211.75) .. controls (24.43,211.75) and (18.27,205.6) .. (18.27,198) -- cycle ;
\draw  [dash pattern={on 4.5pt off 4.5pt}] (108.19,119.7) .. controls (108.19,112.1) and (114.34,105.95) .. (121.94,105.95) -- (163.2,105.95) .. controls (170.8,105.95) and (176.95,112.1) .. (176.95,119.7) -- (176.95,198.31) .. controls (176.95,205.91) and (170.8,212.06) .. (163.2,212.06) -- (121.94,212.06) .. controls (114.34,212.06) and (108.19,205.91) .. (108.19,198.31) -- cycle ;
\draw  [dash pattern={on 4.5pt off 4.5pt}] (185.23,124.89) .. controls (185.23,113.98) and (194.08,105.14) .. (204.99,105.14) -- (264.25,105.14) .. controls (275.16,105.14) and (284,113.98) .. (284,124.89) -- (284,298.25) .. controls (284,309.16) and (275.16,318) .. (264.25,318) -- (204.99,318) .. controls (194.08,318) and (185.23,309.16) .. (185.23,298.25) -- cycle ;
\draw  [dash pattern={on 4.5pt off 4.5pt}] (289.27,36.86) .. controls (289.27,29.26) and (295.43,23.1) .. (303.03,23.1) -- (344.29,23.1) .. controls (351.88,23.1) and (358.04,29.26) .. (358.04,36.86) -- (358.04,183.75) .. controls (358.04,191.34) and (351.88,197.5) .. (344.29,197.5) -- (303.03,197.5) .. controls (295.43,197.5) and (289.27,191.34) .. (289.27,183.75) -- cycle ;
\draw  [dash pattern={on 4.5pt off 4.5pt}] (435.48,89.25) .. controls (435.48,81.66) and (441.63,75.5) .. (449.23,75.5) -- (490.49,75.5) .. controls (498.09,75.5) and (504.24,81.66) .. (504.24,89.25) -- (504.24,183.75) .. controls (504.24,191.34) and (498.09,197.5) .. (490.49,197.5) -- (449.23,197.5) .. controls (441.63,197.5) and (435.48,191.34) .. (435.48,183.75) -- cycle ;
\draw  [dash pattern={on 4.5pt off 4.5pt}] (524.48,89.25) .. controls (524.48,81.66) and (530.63,75.5) .. (538.23,75.5) -- (579.49,75.5) .. controls (587.09,75.5) and (593.24,81.66) .. (593.24,89.25) -- (593.24,183.75) .. controls (593.24,191.34) and (587.09,197.5) .. (579.49,197.5) -- (538.23,197.5) .. controls (530.63,197.5) and (524.48,191.34) .. (524.48,183.75) -- cycle ;
\draw  [dash pattern={on 4.5pt off 4.5pt}] (614.19,105.14) .. controls (614.19,97.54) and (620.34,91.38) .. (627.94,91.38) -- (669.2,91.38) .. controls (676.8,91.38) and (682.95,97.54) .. (682.95,105.14) -- (682.95,183.75) .. controls (682.95,191.34) and (676.8,197.5) .. (669.2,197.5) -- (627.94,197.5) .. controls (620.34,197.5) and (614.19,191.34) .. (614.19,183.75) -- cycle ;
\draw    (322.4,66.85) -- (322.4,77.57) ;
\draw    (322.4,118.1) -- (322.4,128.82) ;
\draw    (208.74,243.94) -- (208.74,258.32) ;
\draw    (232.48,189.19) -- (208.74,203.41) ;
\draw    (232.48,189.19) -- (261,203.41) ;
\draw    (469.86,118.69) -- (469.86,128.82) ;

\draw (383.13,158.41) node [anchor=north west][inner sep=0.75pt]    {$\dotsc $};
\draw (44,219.4) node [anchor=north west][inner sep=0.75pt]    {$P_{0}$};
\draw (130,219.4) node [anchor=north west][inner sep=0.75pt]    {$P_{1}$};
\draw (222,323.9) node [anchor=north west][inner sep=0.75pt]    {$P_{2}$};
\draw (311,205.4) node [anchor=north west][inner sep=0.75pt]    {$P_{3}$};
\draw (456,205.4) node [anchor=north west][inner sep=0.75pt]    {$P_{l-2}$};
\draw (545,205.4) node [anchor=north west][inner sep=0.75pt]    {$P_{l-1}$};
\draw (639,205.4) node [anchor=north west][inner sep=0.75pt]    {$P_{l}$};
\draw (39.66,150.41) node [anchor=north west][inner sep=0.75pt]    {$M_{0}$};
\draw (130.07,150.91) node [anchor=north west][inner sep=0.75pt]    {$M_{1}$};
\draw (219.98,150.91) node [anchor=north west][inner sep=0.75pt]    {$M_{2}$};
\draw (309.9,150.91) node [anchor=north west][inner sep=0.75pt]    {$M_{3}$};
\draw (452.36,150.91) node [anchor=north west][inner sep=0.75pt]    {$M_{l-2}$};
\draw (542.27,150.91) node [anchor=north west][inner sep=0.75pt]    {$M_{l-1}$};
\draw (638.66,150.91) node [anchor=north west][inner sep=0.75pt]    {$M_{l}$};

\end{tikzpicture}

%% file: figures/fig_pair-up.tex
\tikzset{every picture/.style={line width=0.75pt}} 

\begin{tikzpicture}[x=0.75pt,y=0.75pt,yscale=-1,xscale=1]

\draw  [color={rgb, 255:red, 255; green, 255; blue, 255 }  ,draw opacity=1 ][fill={rgb, 255:red, 155; green, 155; blue, 155 }  ,fill opacity=0.5 ] (222.29,289.13) .. controls (222.29,271.92) and (236.24,257.97) .. (253.45,257.97) .. controls (270.66,257.97) and (284.61,271.92) .. (284.61,289.13) .. controls (284.61,306.34) and (270.66,320.29) .. (253.45,320.29) .. controls (236.24,320.29) and (222.29,306.34) .. (222.29,289.13) -- cycle ;
\draw    (272.21,289.14) -- (253.8,307.52) ;
\draw  [dash pattern={on 4.5pt off 4.5pt}]  (244.61,270.48) -- (272.21,289.14) ;
\draw  [dash pattern={on 4.5pt off 4.5pt}]  (234.69,297.73) -- (250,307.78) ;
\draw  [dash pattern={on 4.5pt off 4.5pt}]  (234.69,297.73) -- (272.21,289.14) ;
\draw  [dash pattern={on 4.5pt off 4.5pt}]  (244.61,270.48) -- (253.8,307.52) ;
\draw    (234.69,297.73) -- (244.61,270.48) ;
\draw  [color={rgb, 255:red, 0; green, 0; blue, 0 }  ,draw opacity=1 ][fill={rgb, 255:red, 255; green, 255; blue, 255 }  ,fill opacity=1 ] (255,308.21) .. controls (255.38,307.55) and (255.15,306.71) .. (254.49,306.32) .. controls (253.83,305.94) and (252.98,306.17) .. (252.6,306.83) .. controls (252.22,307.49) and (252.45,308.34) .. (253.11,308.72) .. controls (253.77,309.1) and (254.62,308.88) .. (255,308.21) -- cycle ;
\draw  [color={rgb, 255:red, 0; green, 0; blue, 0 }  ,draw opacity=1 ][fill={rgb, 255:red, 255; green, 255; blue, 255 }  ,fill opacity=1 ] (273.41,289.83) .. controls (273.79,289.17) and (273.57,288.32) .. (272.9,287.94) .. controls (272.24,287.56) and (271.4,287.78) .. (271.01,288.44) .. controls (270.63,289.11) and (270.86,289.95) .. (271.52,290.34) .. controls (272.18,290.72) and (273.03,290.49) .. (273.41,289.83) -- cycle ;
\draw  [color={rgb, 255:red, 0; green, 0; blue, 0 }  ,draw opacity=1 ][fill={rgb, 255:red, 255; green, 255; blue, 255 }  ,fill opacity=1 ] (264.21,299.02) .. controls (264.59,298.36) and (264.36,297.51) .. (263.7,297.13) .. controls (263.04,296.75) and (262.19,296.97) .. (261.81,297.64) .. controls (261.42,298.3) and (261.65,299.15) .. (262.31,299.53) .. controls (262.98,299.91) and (263.82,299.68) .. (264.21,299.02) -- cycle ;
\draw  [color={rgb, 255:red, 255; green, 255; blue, 255 }  ,draw opacity=1 ][fill={rgb, 255:red, 255; green, 255; blue, 255 }  ,fill opacity=1 ] (235.99,298.2) .. controls (235.73,298.92) and (234.94,299.29) .. (234.22,299.03) .. controls (233.5,298.77) and (233.13,297.98) .. (233.39,297.26) .. controls (233.65,296.54) and (234.45,296.17) .. (235.17,296.43) .. controls (235.89,296.69) and (236.26,297.49) .. (235.99,298.2) -- cycle ;
\draw  [color={rgb, 255:red, 255; green, 255; blue, 255 }  ,draw opacity=1 ][fill={rgb, 255:red, 255; green, 255; blue, 255 }  ,fill opacity=1 ] (240.95,284.58) .. controls (240.69,285.3) and (239.9,285.67) .. (239.18,285.41) .. controls (238.46,285.15) and (238.09,284.35) .. (238.35,283.63) .. controls (238.61,282.91) and (239.41,282.54) .. (240.13,282.8) .. controls (240.84,283.07) and (241.21,283.86) .. (240.95,284.58) -- cycle ;
\draw  [color={rgb, 255:red, 255; green, 255; blue, 255 }  ,draw opacity=1 ][fill={rgb, 255:red, 255; green, 255; blue, 255 }  ,fill opacity=1 ] (245.91,270.95) .. controls (245.65,271.67) and (244.86,272.04) .. (244.14,271.78) .. controls (243.42,271.52) and (243.05,270.73) .. (243.31,270.01) .. controls (243.57,269.29) and (244.37,268.92) .. (245.08,269.18) .. controls (245.8,269.44) and (246.17,270.24) .. (245.91,270.95) -- cycle ;

\draw  [color={rgb, 255:red, 255; green, 255; blue, 255 }  ,draw opacity=1 ][fill={rgb, 255:red, 155; green, 155; blue, 155 }  ,fill opacity=0.5 ] (144.87,289.13) .. controls (144.87,271.92) and (158.82,257.97) .. (176.03,257.97) .. controls (193.24,257.97) and (207.19,271.92) .. (207.19,289.13) .. controls (207.19,306.34) and (193.24,320.29) .. (176.03,320.29) .. controls (158.82,320.29) and (144.87,306.34) .. (144.87,289.13) -- cycle ;
\draw    (194.79,289.14) -- (176.37,307.52) ;
\draw  [dash pattern={on 4.5pt off 4.5pt}]  (167.19,270.48) -- (194.79,289.14) ;
\draw  [dash pattern={on 4.5pt off 4.5pt}]  (157.27,297.73) -- (172.57,307.78) ;
\draw  [dash pattern={on 4.5pt off 4.5pt}]  (157.27,297.73) -- (194.79,289.14) ;
\draw  [dash pattern={on 4.5pt off 4.5pt}]  (167.19,270.48) -- (176.37,307.52) ;
\draw  [fill={rgb, 255:red, 0; green, 0; blue, 0 }  ,fill opacity=1 ] (158.57,298.2) .. controls (158.31,298.92) and (157.51,299.29) .. (156.79,299.03) .. controls (156.08,298.77) and (155.7,297.98) .. (155.97,297.26) .. controls (156.23,296.54) and (157.02,296.17) .. (157.74,296.43) .. controls (158.46,296.69) and (158.83,297.49) .. (158.57,298.2) -- cycle ;
\draw  [fill={rgb, 255:red, 0; green, 0; blue, 0 }  ,fill opacity=1 ] (163.53,284.58) .. controls (163.27,285.3) and (162.47,285.67) .. (161.75,285.41) .. controls (161.03,285.15) and (160.66,284.35) .. (160.93,283.63) .. controls (161.19,282.91) and (161.98,282.54) .. (162.7,282.8) .. controls (163.42,283.07) and (163.79,283.86) .. (163.53,284.58) -- cycle ;
\draw  [fill={rgb, 255:red, 0; green, 0; blue, 0 }  ,fill opacity=1 ] (168.49,270.95) .. controls (168.23,271.67) and (167.43,272.04) .. (166.71,271.78) .. controls (165.99,271.52) and (165.62,270.73) .. (165.88,270.01) .. controls (166.15,269.29) and (166.94,268.92) .. (167.66,269.18) .. controls (168.38,269.44) and (168.75,270.24) .. (168.49,270.95) -- cycle ;

\draw    (157.27,297.73) -- (167.19,270.48) ;

\draw  [color={rgb, 255:red, 255; green, 255; blue, 255 }  ,draw opacity=1 ][fill={rgb, 255:red, 255; green, 255; blue, 255 }  ,fill opacity=1 ] (177.57,308.21) .. controls (177.96,307.55) and (177.73,306.71) .. (177.07,306.32) .. controls (176.4,305.94) and (175.56,306.17) .. (175.18,306.83) .. controls (174.79,307.49) and (175.02,308.34) .. (175.68,308.72) .. controls (176.34,309.1) and (177.19,308.88) .. (177.57,308.21) -- cycle ;
\draw  [color={rgb, 255:red, 255; green, 255; blue, 255 }  ,draw opacity=1 ][fill={rgb, 255:red, 255; green, 255; blue, 255 }  ,fill opacity=1 ] (195.99,289.83) .. controls (196.37,289.17) and (196.14,288.32) .. (195.48,287.94) .. controls (194.82,287.56) and (193.97,287.78) .. (193.59,288.44) .. controls (193.21,289.11) and (193.43,289.95) .. (194.09,290.34) .. controls (194.76,290.72) and (195.6,290.49) .. (195.99,289.83) -- cycle ;
\draw  [color={rgb, 255:red, 255; green, 255; blue, 255 }  ,draw opacity=1 ][fill={rgb, 255:red, 255; green, 255; blue, 255 }  ,fill opacity=1 ] (186.78,299.02) .. controls (187.16,298.36) and (186.94,297.51) .. (186.27,297.13) .. controls (185.61,296.75) and (184.76,296.97) .. (184.38,297.64) .. controls (184,298.3) and (184.23,299.15) .. (184.89,299.53) .. controls (185.55,299.91) and (186.4,299.68) .. (186.78,299.02) -- cycle ;

\draw    (394.6,289.14) -- (376.19,307.52) ;
\draw  [color={rgb, 255:red, 255; green, 255; blue, 255 }  ,draw opacity=1 ][fill={rgb, 255:red, 155; green, 155; blue, 155 }  ,fill opacity=0.5 ] (344.68,289.13) .. controls (344.68,271.92) and (358.63,257.97) .. (375.84,257.97) .. controls (393.05,257.97) and (407,271.92) .. (407,289.13) .. controls (407,306.34) and (393.05,320.29) .. (375.84,320.29) .. controls (358.63,320.29) and (344.68,306.34) .. (344.68,289.13) -- cycle ;
\draw  [dash pattern={on 4.5pt off 4.5pt}]  (367,270.48) -- (394.6,289.14) ;
\draw  [dash pattern={on 4.5pt off 4.5pt}]  (357.08,297.73) -- (372.38,307.78) ;
\draw  [dash pattern={on 4.5pt off 4.5pt}]  (357.08,297.73) -- (394.6,289.14) ;
\draw  [dash pattern={on 4.5pt off 4.5pt}]  (367,270.48) -- (376.19,307.52) ;
\draw  [fill={rgb, 255:red, 0; green, 0; blue, 0 }  ,fill opacity=1 ] (358.38,298.2) .. controls (358.12,298.92) and (357.32,299.29) .. (356.61,299.03) .. controls (355.89,298.77) and (355.52,297.98) .. (355.78,297.26) .. controls (356.04,296.54) and (356.83,296.17) .. (357.55,296.43) .. controls (358.27,296.69) and (358.64,297.49) .. (358.38,298.2) -- cycle ;
\draw  [fill={rgb, 255:red, 0; green, 0; blue, 0 }  ,fill opacity=1 ] (363.34,284.58) .. controls (363.08,285.3) and (362.28,285.67) .. (361.56,285.41) .. controls (360.85,285.15) and (360.48,284.35) .. (360.74,283.63) .. controls (361,282.91) and (361.79,282.54) .. (362.51,282.8) .. controls (363.23,283.07) and (363.6,283.86) .. (363.34,284.58) -- cycle ;
\draw  [fill={rgb, 255:red, 0; green, 0; blue, 0 }  ,fill opacity=1 ] (368.3,270.95) .. controls (368.04,271.67) and (367.24,272.04) .. (366.52,271.78) .. controls (365.81,271.52) and (365.43,270.73) .. (365.7,270.01) .. controls (365.96,269.29) and (366.75,268.92) .. (367.47,269.18) .. controls (368.19,269.44) and (368.56,270.24) .. (368.3,270.95) -- cycle ;

\draw    (357.08,297.73) -- (367,270.48) ;

\draw  [color={rgb, 255:red, 0; green, 0; blue, 0 }  ,draw opacity=1 ][fill={rgb, 255:red, 255; green, 255; blue, 255 }  ,fill opacity=1 ] (377.39,308.21) .. controls (377.77,307.55) and (377.54,306.71) .. (376.88,306.32) .. controls (376.22,305.94) and (375.37,306.17) .. (374.99,306.83) .. controls (374.6,307.49) and (374.83,308.34) .. (375.49,308.72) .. controls (376.16,309.1) and (377,308.88) .. (377.39,308.21) -- cycle ;
\draw  [color={rgb, 255:red, 0; green, 0; blue, 0 }  ,draw opacity=1 ][fill={rgb, 255:red, 255; green, 255; blue, 255 }  ,fill opacity=1 ] (395.8,289.83) .. controls (396.18,289.17) and (395.95,288.32) .. (395.29,287.94) .. controls (394.63,287.56) and (393.78,287.78) .. (393.4,288.44) .. controls (393.02,289.11) and (393.24,289.95) .. (393.91,290.34) .. controls (394.57,290.72) and (395.42,290.49) .. (395.8,289.83) -- cycle ;
\draw  [color={rgb, 255:red, 0; green, 0; blue, 0 }  ,draw opacity=1 ][fill={rgb, 255:red, 255; green, 255; blue, 255 }  ,fill opacity=1 ] (386.59,299.02) .. controls (386.97,298.36) and (386.75,297.51) .. (386.09,297.13) .. controls (385.42,296.75) and (384.58,296.97) .. (384.19,297.64) .. controls (383.81,298.3) and (384.04,299.15) .. (384.7,299.53) .. controls (385.36,299.91) and (386.21,299.68) .. (386.59,299.02) -- cycle ;

\draw    (207.12,289.13) -- (221.57,289.13) ;
\draw  [dash pattern={on 4.5pt off 4.5pt}] (133.72,267.62) .. controls (133.72,259.7) and (140.14,253.29) .. (148.06,253.29) -- (279.55,253.29) .. controls (287.47,253.29) and (293.89,259.7) .. (293.89,267.62) -- (293.89,310.63) .. controls (293.89,318.55) and (287.47,324.97) .. (279.55,324.97) -- (148.06,324.97) .. controls (140.14,324.97) and (133.72,318.55) .. (133.72,310.63) -- cycle ;

\draw    (394.6,109.44) -- (376.19,127.83) ;
\draw  [color={rgb, 255:red, 255; green, 255; blue, 255 }  ,draw opacity=1 ][fill={rgb, 255:red, 155; green, 155; blue, 155 }  ,fill opacity=0.5 ] (344.68,109.43) .. controls (344.68,92.22) and (358.63,78.27) .. (375.84,78.27) .. controls (393.05,78.27) and (407,92.22) .. (407,109.43) .. controls (407,126.64) and (393.05,140.59) .. (375.84,140.59) .. controls (358.63,140.59) and (344.68,126.64) .. (344.68,109.43) -- cycle ;
\draw  [dash pattern={on 4.5pt off 4.5pt}]  (367,90.78) -- (394.6,109.44) ;
\draw  [dash pattern={on 4.5pt off 4.5pt}]  (357.08,118.03) -- (372.38,128.08) ;
\draw  [dash pattern={on 4.5pt off 4.5pt}]  (357.08,118.03) -- (394.6,109.44) ;
\draw  [dash pattern={on 4.5pt off 4.5pt}]  (367,90.78) -- (376.19,127.83) ;
\draw  [fill={rgb, 255:red, 0; green, 0; blue, 0 }  ,fill opacity=1 ] (358.38,118.51) .. controls (358.12,119.23) and (357.32,119.6) .. (356.61,119.34) .. controls (355.89,119.07) and (355.52,118.28) .. (355.78,117.56) .. controls (356.04,116.84) and (356.83,116.47) .. (357.55,116.73) .. controls (358.27,116.99) and (358.64,117.79) .. (358.38,118.51) -- cycle ;
\draw  [fill={rgb, 255:red, 0; green, 0; blue, 0 }  ,fill opacity=1 ] (363.34,104.88) .. controls (363.08,105.6) and (362.28,105.97) .. (361.56,105.71) .. controls (360.85,105.45) and (360.48,104.65) .. (360.74,103.94) .. controls (361,103.22) and (361.79,102.85) .. (362.51,103.11) .. controls (363.23,103.37) and (363.6,104.16) .. (363.34,104.88) -- cycle ;
\draw  [fill={rgb, 255:red, 0; green, 0; blue, 0 }  ,fill opacity=1 ] (368.3,91.26) .. controls (368.04,91.98) and (367.24,92.35) .. (366.52,92.09) .. controls (365.81,91.82) and (365.43,91.03) .. (365.7,90.31) .. controls (365.96,89.59) and (366.75,89.22) .. (367.47,89.48) .. controls (368.19,89.74) and (368.56,90.54) .. (368.3,91.26) -- cycle ;

\draw    (357.08,118.03) -- (367,90.78) ;

\draw  [color={rgb, 255:red, 0; green, 0; blue, 0 }  ,draw opacity=1 ][fill={rgb, 255:red, 255; green, 255; blue, 255 }  ,fill opacity=1 ] (377.39,128.52) .. controls (377.77,127.86) and (377.54,127.01) .. (376.88,126.63) .. controls (376.22,126.24) and (375.37,126.47) .. (374.99,127.13) .. controls (374.6,127.8) and (374.83,128.64) .. (375.49,129.03) .. controls (376.16,129.41) and (377,129.18) .. (377.39,128.52) -- cycle ;
\draw  [color={rgb, 255:red, 0; green, 0; blue, 0 }  ,draw opacity=1 ][fill={rgb, 255:red, 255; green, 255; blue, 255 }  ,fill opacity=1 ] (395.8,110.13) .. controls (396.18,109.47) and (395.95,108.62) .. (395.29,108.24) .. controls (394.63,107.86) and (393.78,108.09) .. (393.4,108.75) .. controls (393.02,109.41) and (393.24,110.26) .. (393.91,110.64) .. controls (394.57,111.02) and (395.42,110.8) .. (395.8,110.13) -- cycle ;
\draw  [color={rgb, 255:red, 0; green, 0; blue, 0 }  ,draw opacity=1 ][fill={rgb, 255:red, 255; green, 255; blue, 255 }  ,fill opacity=1 ] (386.59,119.33) .. controls (386.97,118.66) and (386.75,117.82) .. (386.09,117.43) .. controls (385.42,117.05) and (384.58,117.28) .. (384.19,117.94) .. controls (383.81,118.6) and (384.04,119.45) .. (384.7,119.83) .. controls (385.36,120.22) and (386.21,119.99) .. (386.59,119.33) -- cycle ;

\draw  [color={rgb, 255:red, 255; green, 255; blue, 255 }  ,draw opacity=1 ][fill={rgb, 255:red, 155; green, 155; blue, 155 }  ,fill opacity=0.5 ] (144.8,109.43) .. controls (144.8,92.22) and (158.75,78.27) .. (175.96,78.27) .. controls (193.17,78.27) and (207.12,92.22) .. (207.12,109.43) .. controls (207.12,126.64) and (193.17,140.59) .. (175.96,140.59) .. controls (158.75,140.59) and (144.8,126.64) .. (144.8,109.43) -- cycle ;
\draw    (177.52,96.88) -- (192.02,121.99) ;
\draw    (174.41,96.88) -- (159.91,121.99) ;

\draw  [fill={rgb, 255:red, 0; green, 0; blue, 0 }  ,fill opacity=1 ] (173.21,96.33) .. controls (173.59,95.67) and (174.44,95.44) .. (175.1,95.82) .. controls (175.76,96.21) and (175.99,97.05) .. (175.61,97.72) .. controls (175.23,98.38) and (174.38,98.61) .. (173.72,98.22) .. controls (173.05,97.84) and (172.83,96.99) .. (173.21,96.33) -- cycle ;
\draw  [fill={rgb, 255:red, 0; green, 0; blue, 0 }  ,fill opacity=1 ] (165.96,108.89) .. controls (166.34,108.23) and (167.19,108) .. (167.85,108.38) .. controls (168.51,108.76) and (168.74,109.61) .. (168.36,110.27) .. controls (167.98,110.94) and (167.13,111.16) .. (166.47,110.78) .. controls (165.81,110.4) and (165.58,109.55) .. (165.96,108.89) -- cycle ;
\draw  [fill={rgb, 255:red, 0; green, 0; blue, 0 }  ,fill opacity=1 ] (158.71,121.44) .. controls (159.09,120.78) and (159.94,120.56) .. (160.6,120.94) .. controls (161.27,121.32) and (161.49,122.17) .. (161.11,122.83) .. controls (160.73,123.49) and (159.88,123.72) .. (159.22,123.34) .. controls (158.56,122.95) and (158.33,122.11) .. (158.71,121.44) -- cycle ;

\draw  [color={rgb, 255:red, 255; green, 255; blue, 255 }  ,draw opacity=1 ][fill={rgb, 255:red, 255; green, 255; blue, 255 }  ,fill opacity=1 ] (193.22,121.15) .. controls (193.6,121.81) and (193.37,122.66) .. (192.71,123.04) .. controls (192.05,123.42) and (191.2,123.2) .. (190.82,122.53) .. controls (190.43,121.87) and (190.66,121.02) .. (191.32,120.64) .. controls (191.99,120.26) and (192.83,120.49) .. (193.22,121.15) -- cycle ;
\draw  [color={rgb, 255:red, 255; green, 255; blue, 255 }  ,draw opacity=1 ][fill={rgb, 255:red, 255; green, 255; blue, 255 }  ,fill opacity=1 ] (185.97,108.59) .. controls (186.35,109.25) and (186.12,110.1) .. (185.46,110.48) .. controls (184.8,110.87) and (183.95,110.64) .. (183.57,109.98) .. controls (183.18,109.31) and (183.41,108.47) .. (184.07,108.09) .. controls (184.74,107.7) and (185.58,107.93) .. (185.97,108.59) -- cycle ;
\draw  [color={rgb, 255:red, 255; green, 255; blue, 255 }  ,draw opacity=1 ][fill={rgb, 255:red, 255; green, 255; blue, 255 }  ,fill opacity=1 ] (178.72,96.04) .. controls (179.1,96.7) and (178.87,97.54) .. (178.21,97.93) .. controls (177.55,98.31) and (176.7,98.08) .. (176.32,97.42) .. controls (175.94,96.76) and (176.16,95.91) .. (176.82,95.53) .. controls (177.49,95.15) and (178.33,95.37) .. (178.72,96.04) -- cycle ;

\draw  [color={rgb, 255:red, 255; green, 255; blue, 255 }  ,draw opacity=1 ][fill={rgb, 255:red, 155; green, 155; blue, 155 }  ,fill opacity=0.5 ] (221.57,109.43) .. controls (221.57,126.64) and (235.52,140.59) .. (252.73,140.59) .. controls (269.94,140.59) and (283.9,126.64) .. (283.9,109.43) .. controls (283.9,92.22) and (269.94,78.27) .. (252.73,78.27) .. controls (235.52,78.27) and (221.57,92.22) .. (221.57,109.43) -- cycle ;
\draw    (254.29,121.99) -- (268.79,96.88) ;
\draw    (251.18,121.99) -- (236.68,96.88) ;

\draw  [color={rgb, 255:red, 255; green, 255; blue, 255 }  ,draw opacity=1 ][fill={rgb, 255:red, 255; green, 255; blue, 255 }  ,fill opacity=1 ] (249.98,122.53) .. controls (250.36,123.2) and (251.21,123.42) .. (251.87,123.04) .. controls (252.54,122.66) and (252.76,121.81) .. (252.38,121.15) .. controls (252,120.49) and (251.15,120.26) .. (250.49,120.64) .. controls (249.83,121.02) and (249.6,121.87) .. (249.98,122.53) -- cycle ;
\draw  [color={rgb, 255:red, 255; green, 255; blue, 255 }  ,draw opacity=1 ][fill={rgb, 255:red, 255; green, 255; blue, 255 }  ,fill opacity=1 ] (242.73,109.98) .. controls (243.11,110.64) and (243.96,110.87) .. (244.62,110.48) .. controls (245.29,110.1) and (245.51,109.25) .. (245.13,108.59) .. controls (244.75,107.93) and (243.9,107.7) .. (243.24,108.09) .. controls (242.58,108.47) and (242.35,109.31) .. (242.73,109.98) -- cycle ;
\draw  [color={rgb, 255:red, 255; green, 255; blue, 255 }  ,draw opacity=1 ][fill={rgb, 255:red, 255; green, 255; blue, 255 }  ,fill opacity=1 ] (235.48,97.42) .. controls (235.86,98.08) and (236.71,98.31) .. (237.37,97.93) .. controls (238.04,97.54) and (238.26,96.7) .. (237.88,96.04) .. controls (237.5,95.37) and (236.65,95.15) .. (235.99,95.53) .. controls (235.33,95.91) and (235.1,96.76) .. (235.48,97.42) -- cycle ;
\draw  [color={rgb, 255:red, 0; green, 0; blue, 0 }  ,draw opacity=1 ][fill={rgb, 255:red, 255; green, 255; blue, 255 }  ,fill opacity=1 ] (269.99,97.57) .. controls (270.37,96.91) and (270.14,96.06) .. (269.48,95.68) .. controls (268.82,95.29) and (267.97,95.52) .. (267.59,96.18) .. controls (267.21,96.85) and (267.43,97.69) .. (268.09,98.08) .. controls (268.76,98.46) and (269.6,98.23) .. (269.99,97.57) -- cycle ;
\draw  [color={rgb, 255:red, 0; green, 0; blue, 0 }  ,draw opacity=1 ][fill={rgb, 255:red, 255; green, 255; blue, 255 }  ,fill opacity=1 ] (262.74,110.27) .. controls (263.12,109.61) and (262.89,108.76) .. (262.23,108.38) .. controls (261.57,108) and (260.72,108.23) .. (260.34,108.89) .. controls (259.96,109.55) and (260.18,110.4) .. (260.84,110.78) .. controls (261.51,111.16) and (262.35,110.94) .. (262.74,110.27) -- cycle ;
\draw  [color={rgb, 255:red, 0; green, 0; blue, 0 }  ,draw opacity=1 ][fill={rgb, 255:red, 255; green, 255; blue, 255 }  ,fill opacity=1 ] (255.49,122.83) .. controls (255.87,122.17) and (255.64,121.32) .. (254.98,120.94) .. controls (254.32,120.56) and (253.47,120.78) .. (253.09,121.44) .. controls (252.71,122.11) and (252.93,122.95) .. (253.6,123.34) .. controls (254.26,123.72) and (255.1,123.49) .. (255.49,122.83) -- cycle ;

\draw    (207.12,109.43) -- (221.57,109.43) ;

\draw  [dash pattern={on 4.5pt off 4.5pt}] (133.72,87.93) .. controls (133.72,80.01) and (140.14,73.59) .. (148.06,73.59) -- (279.55,73.59) .. controls (287.47,73.59) and (293.89,80.01) .. (293.89,87.93) -- (293.89,130.94) .. controls (293.89,138.86) and (287.47,145.28) .. (279.55,145.28) -- (148.06,145.28) .. controls (140.14,145.28) and (133.72,138.86) .. (133.72,130.94) -- cycle ;

\draw    (394.17,199.29) -- (375.76,217.67) ;
\draw  [color={rgb, 255:red, 255; green, 255; blue, 255 }  ,draw opacity=1 ][fill={rgb, 255:red, 155; green, 155; blue, 155 }  ,fill opacity=0.5 ] (344.25,199.28) .. controls (344.25,182.07) and (358.2,168.12) .. (375.41,168.12) .. controls (392.62,168.12) and (406.57,182.07) .. (406.57,199.28) .. controls (406.57,216.49) and (392.62,230.44) .. (375.41,230.44) .. controls (358.2,230.44) and (344.25,216.49) .. (344.25,199.28) -- cycle ;
\draw  [dash pattern={on 4.5pt off 4.5pt}]  (366.57,180.63) -- (394.17,199.29) ;
\draw  [dash pattern={on 4.5pt off 4.5pt}]  (356.65,207.88) -- (371.95,217.93) ;
\draw  [dash pattern={on 4.5pt off 4.5pt}]  (356.65,207.88) -- (394.17,199.29) ;
\draw  [dash pattern={on 4.5pt off 4.5pt}]  (366.57,180.63) -- (375.76,217.67) ;
\draw  [fill={rgb, 255:red, 0; green, 0; blue, 0 }  ,fill opacity=1 ] (357.95,208.35) .. controls (357.69,209.07) and (356.89,209.44) .. (356.17,209.18) .. controls (355.46,208.92) and (355.09,208.12) .. (355.35,207.41) .. controls (355.61,206.69) and (356.4,206.32) .. (357.12,206.58) .. controls (357.84,206.84) and (358.21,207.63) .. (357.95,208.35) -- cycle ;
\draw  [fill={rgb, 255:red, 0; green, 0; blue, 0 }  ,fill opacity=1 ] (362.91,194.73) .. controls (362.65,195.45) and (361.85,195.82) .. (361.13,195.56) .. controls (360.42,195.29) and (360.04,194.5) .. (360.31,193.78) .. controls (360.57,193.06) and (361.36,192.69) .. (362.08,192.95) .. controls (362.8,193.21) and (363.17,194.01) .. (362.91,194.73) -- cycle ;
\draw  [fill={rgb, 255:red, 0; green, 0; blue, 0 }  ,fill opacity=1 ] (367.87,181.1) .. controls (367.61,181.82) and (366.81,182.19) .. (366.09,181.93) .. controls (365.37,181.67) and (365,180.87) .. (365.27,180.16) .. controls (365.53,179.44) and (366.32,179.07) .. (367.04,179.33) .. controls (367.76,179.59) and (368.13,180.38) .. (367.87,181.1) -- cycle ;

\draw    (356.65,207.88) -- (366.57,180.63) ;

\draw  [color={rgb, 255:red, 0; green, 0; blue, 0 }  ,draw opacity=1 ][fill={rgb, 255:red, 255; green, 255; blue, 255 }  ,fill opacity=1 ] (376.96,218.36) .. controls (377.34,217.7) and (377.11,216.85) .. (376.45,216.47) .. controls (375.79,216.09) and (374.94,216.32) .. (374.56,216.98) .. controls (374.17,217.64) and (374.4,218.49) .. (375.06,218.87) .. controls (375.73,219.25) and (376.57,219.03) .. (376.96,218.36) -- cycle ;
\draw  [color={rgb, 255:red, 0; green, 0; blue, 0 }  ,draw opacity=1 ][fill={rgb, 255:red, 255; green, 255; blue, 255 }  ,fill opacity=1 ] (395.37,199.98) .. controls (395.75,199.32) and (395.52,198.47) .. (394.86,198.09) .. controls (394.2,197.7) and (393.35,197.93) .. (392.97,198.59) .. controls (392.59,199.26) and (392.81,200.1) .. (393.48,200.49) .. controls (394.14,200.87) and (394.98,200.64) .. (395.37,199.98) -- cycle ;
\draw  [color={rgb, 255:red, 0; green, 0; blue, 0 }  ,draw opacity=1 ][fill={rgb, 255:red, 255; green, 255; blue, 255 }  ,fill opacity=1 ] (386.16,209.17) .. controls (386.54,208.51) and (386.32,207.66) .. (385.65,207.28) .. controls (384.99,206.9) and (384.14,207.12) .. (383.76,207.79) .. controls (383.38,208.45) and (383.61,209.3) .. (384.27,209.68) .. controls (384.93,210.06) and (385.78,209.83) .. (386.16,209.17) -- cycle ;

\draw  [color={rgb, 255:red, 255; green, 255; blue, 255 }  ,draw opacity=1 ][fill={rgb, 255:red, 155; green, 155; blue, 155 }  ,fill opacity=0.5 ] (145.23,199.28) .. controls (145.23,182.07) and (159.18,168.12) .. (176.39,168.12) .. controls (193.6,168.12) and (207.56,182.07) .. (207.56,199.28) .. controls (207.56,216.49) and (193.6,230.44) .. (176.39,230.44) .. controls (159.18,230.44) and (145.23,216.49) .. (145.23,199.28) -- cycle ;
\draw    (177.95,186.72) -- (192.45,211.83) ;
\draw    (174.84,186.72) -- (160.34,211.83) ;

\draw  [fill={rgb, 255:red, 0; green, 0; blue, 0 }  ,fill opacity=1 ] (173.64,186.18) .. controls (174.02,185.51) and (174.87,185.29) .. (175.53,185.67) .. controls (176.2,186.05) and (176.42,186.9) .. (176.04,187.56) .. controls (175.66,188.22) and (174.81,188.45) .. (174.15,188.07) .. controls (173.49,187.69) and (173.26,186.84) .. (173.64,186.18) -- cycle ;
\draw  [fill={rgb, 255:red, 0; green, 0; blue, 0 }  ,fill opacity=1 ] (166.39,198.73) .. controls (166.77,198.07) and (167.62,197.84) .. (168.28,198.23) .. controls (168.95,198.61) and (169.17,199.46) .. (168.79,200.12) .. controls (168.41,200.78) and (167.56,201.01) .. (166.9,200.63) .. controls (166.24,200.24) and (166.01,199.4) .. (166.39,198.73) -- cycle ;
\draw  [fill={rgb, 255:red, 0; green, 0; blue, 0 }  ,fill opacity=1 ] (159.14,211.29) .. controls (159.52,210.63) and (160.37,210.4) .. (161.03,210.78) .. controls (161.7,211.17) and (161.92,212.01) .. (161.54,212.68) .. controls (161.16,213.34) and (160.31,213.56) .. (159.65,213.18) .. controls (158.99,212.8) and (158.76,211.95) .. (159.14,211.29) -- cycle ;

\draw  [color={rgb, 255:red, 255; green, 255; blue, 255 }  ,draw opacity=1 ][fill={rgb, 255:red, 255; green, 255; blue, 255 }  ,fill opacity=1 ] (193.65,210.99) .. controls (194.03,211.66) and (193.8,212.5) .. (193.14,212.89) .. controls (192.48,213.27) and (191.63,213.04) .. (191.25,212.38) .. controls (190.87,211.72) and (191.09,210.87) .. (191.75,210.49) .. controls (192.42,210.1) and (193.26,210.33) .. (193.65,210.99) -- cycle ;
\draw  [color={rgb, 255:red, 255; green, 255; blue, 255 }  ,draw opacity=1 ][fill={rgb, 255:red, 255; green, 255; blue, 255 }  ,fill opacity=1 ] (186.4,198.44) .. controls (186.78,199.1) and (186.55,199.95) .. (185.89,200.33) .. controls (185.23,200.71) and (184.38,200.48) .. (184,199.82) .. controls (183.62,199.16) and (183.84,198.31) .. (184.5,197.93) .. controls (185.17,197.55) and (186.01,197.77) .. (186.4,198.44) -- cycle ;
\draw  [color={rgb, 255:red, 255; green, 255; blue, 255 }  ,draw opacity=1 ][fill={rgb, 255:red, 255; green, 255; blue, 255 }  ,fill opacity=1 ] (179.15,185.88) .. controls (179.53,186.54) and (179.3,187.39) .. (178.64,187.77) .. controls (177.98,188.15) and (177.13,187.93) .. (176.75,187.27) .. controls (176.37,186.6) and (176.59,185.76) .. (177.26,185.37) .. controls (177.92,184.99) and (178.76,185.22) .. (179.15,185.88) -- cycle ;

\draw    (207.56,199.28) -- (222,199.28) ;
\draw  [dash pattern={on 4.5pt off 4.5pt}] (134.16,177.77) .. controls (134.16,169.85) and (140.57,163.44) .. (148.49,163.44) -- (279.98,163.44) .. controls (287.9,163.44) and (294.32,169.85) .. (294.32,177.77) -- (294.32,220.78) .. controls (294.32,228.7) and (287.9,235.12) .. (279.98,235.12) -- (148.49,235.12) .. controls (140.57,235.12) and (134.16,228.7) .. (134.16,220.78) -- cycle ;
\draw    (272.21,199.29) -- (253.8,217.67) ;
\draw  [color={rgb, 255:red, 255; green, 255; blue, 255 }  ,draw opacity=1 ][fill={rgb, 255:red, 155; green, 155; blue, 155 }  ,fill opacity=0.5 ] (222.29,199.28) .. controls (222.29,182.07) and (236.24,168.12) .. (253.45,168.12) .. controls (270.66,168.12) and (284.61,182.07) .. (284.61,199.28) .. controls (284.61,216.49) and (270.66,230.44) .. (253.45,230.44) .. controls (236.24,230.44) and (222.29,216.49) .. (222.29,199.28) -- cycle ;
\draw  [dash pattern={on 4.5pt off 4.5pt}]  (244.61,180.63) -- (272.21,199.29) ;
\draw  [dash pattern={on 4.5pt off 4.5pt}]  (234.69,207.88) -- (250,217.93) ;
\draw  [dash pattern={on 4.5pt off 4.5pt}]  (234.69,207.88) -- (272.21,199.29) ;
\draw  [dash pattern={on 4.5pt off 4.5pt}]  (244.61,180.63) -- (253.8,217.67) ;
\draw    (234.69,207.88) -- (244.61,180.63) ;
\draw  [color={rgb, 255:red, 0; green, 0; blue, 0 }  ,draw opacity=1 ][fill={rgb, 255:red, 255; green, 255; blue, 255 }  ,fill opacity=1 ] (255,218.36) .. controls (255.38,217.7) and (255.15,216.85) .. (254.49,216.47) .. controls (253.83,216.09) and (252.98,216.32) .. (252.6,216.98) .. controls (252.22,217.64) and (252.45,218.49) .. (253.11,218.87) .. controls (253.77,219.25) and (254.62,219.03) .. (255,218.36) -- cycle ;
\draw  [color={rgb, 255:red, 0; green, 0; blue, 0 }  ,draw opacity=1 ][fill={rgb, 255:red, 255; green, 255; blue, 255 }  ,fill opacity=1 ] (273.41,199.98) .. controls (273.79,199.32) and (273.57,198.47) .. (272.9,198.09) .. controls (272.24,197.7) and (271.4,197.93) .. (271.01,198.59) .. controls (270.63,199.26) and (270.86,200.1) .. (271.52,200.49) .. controls (272.18,200.87) and (273.03,200.64) .. (273.41,199.98) -- cycle ;
\draw  [color={rgb, 255:red, 0; green, 0; blue, 0 }  ,draw opacity=1 ][fill={rgb, 255:red, 255; green, 255; blue, 255 }  ,fill opacity=1 ] (264.21,209.17) .. controls (264.59,208.51) and (264.36,207.66) .. (263.7,207.28) .. controls (263.04,206.9) and (262.19,207.12) .. (261.81,207.79) .. controls (261.42,208.45) and (261.65,209.3) .. (262.31,209.68) .. controls (262.98,210.06) and (263.82,209.83) .. (264.21,209.17) -- cycle ;
\draw  [color={rgb, 255:red, 255; green, 255; blue, 255 }  ,draw opacity=1 ][fill={rgb, 255:red, 255; green, 255; blue, 255 }  ,fill opacity=1 ] (235.99,208.35) .. controls (235.73,209.07) and (234.94,209.44) .. (234.22,209.18) .. controls (233.5,208.92) and (233.13,208.12) .. (233.39,207.41) .. controls (233.65,206.69) and (234.45,206.32) .. (235.17,206.58) .. controls (235.89,206.84) and (236.26,207.63) .. (235.99,208.35) -- cycle ;
\draw  [color={rgb, 255:red, 255; green, 255; blue, 255 }  ,draw opacity=1 ][fill={rgb, 255:red, 255; green, 255; blue, 255 }  ,fill opacity=1 ] (240.95,194.73) .. controls (240.69,195.45) and (239.9,195.82) .. (239.18,195.56) .. controls (238.46,195.29) and (238.09,194.5) .. (238.35,193.78) .. controls (238.61,193.06) and (239.41,192.69) .. (240.13,192.95) .. controls (240.84,193.21) and (241.21,194.01) .. (240.95,194.73) -- cycle ;
\draw  [color={rgb, 255:red, 255; green, 255; blue, 255 }  ,draw opacity=1 ][fill={rgb, 255:red, 255; green, 255; blue, 255 }  ,fill opacity=1 ] (245.91,181.1) .. controls (245.65,181.82) and (244.86,182.19) .. (244.14,181.93) .. controls (243.42,181.67) and (243.05,180.87) .. (243.31,180.16) .. controls (243.57,179.44) and (244.37,179.07) .. (245.08,179.33) .. controls (245.8,179.59) and (246.17,180.38) .. (245.91,181.1) -- cycle ;

\end{tikzpicture}